\tikzset{%
  symbol/.style={
    draw=none,
    every to/.append style={
      edge node={node [sloped, allow upside down, auto=false]{$#1$}}
    },
  },
}
\newcommand*{\dd}{\ensuremath{\mathrm{d}}}
\newcommand*{\mathbbL}{\ensuremath{\mathbb{L}^2}}
\newcommand*{\din}{\ensuremath{d_{{in}}}}
\newcommand*{\dout}{\ensuremath{d_{{out}}}}
\newcommand*{\dhidden}{\ensuremath{d_{{h}}}}
\newcommand*{\numlayer}{\ensuremath{n_{{l}}}}
\newcommand*{\bigla}{\ensuremath{\Big\langle}}
\newcommand*{\bigra}{\ensuremath{\Big\rangle}}
\newcommand*{\ut}{\ensuremath{\textbf{U}_{\theta_t}}}
\newcommand*{\varphit}{\ensuremath{\boldsymbol{\Phi}_{\eta_t, \xi_t }}}
\newcommand*{\err}{\ensuremath{\textrm{err}}}
\newcommand*{\Utheta}{\ensuremath{\mathbf U_\theta}}
\newcommand*{\Phietaxi}{\ensuremath{\boldsymbol{\Phi}_{\eta, \xi}}}
\newcommand*{\Uthetat}{\ensuremath{\mathbf U_{\theta_t}}}
\newcommand*{\Phietaxit}{\ensuremath{\boldsymbol{\Phi}_{\eta_t, \xi_t}}}
\newcommand*{\varphietat}{\ensuremath{\boldsymbol{\varphi}_{\eta_t}}}
\newcommand*{\psixit}{\ensuremath{\boldsymbol{\psi}_{\xi_t}}}
\newcommand*{\parUt}{\ensuremath{\partial \mathbf U_{\theta_t}}}
\newcommand*{\parPhit}{\ensuremath{\partial \boldsymbol{\Phi}_{\eta_t, \xi_t}}}
\newcommand*{\parvarphit}{\ensuremath{\partial \boldsymbol{\varphi}_{\eta_t}}}
\newcommand*{\parpsit}{\ensuremath{\partial \boldsymbol{\psi}_{\xi_t}}}
\newcommand*{\dotut}{\ensuremath{\dot{\textbf{U}}_{\theta_t}}}
\newcommand*{\primalspc}{\ensuremath{\widetilde{\mathbb{H}}} }
\newcommand{\blue}[1]{\textcolor{black}{#1}}
\begin{document}

\title{A Natural Primal-Dual Hybrid Gradient Method for Adversarial Neural Network Training on Solving \\ Partial Differential Equations}

\author{\name Shu Liu \email sliu11@fsu.edu \\
       \addr Department of Mathematics\\
       Florida State University, \\
       Tallahassee, FL 32306, USA
       \AND
       \name Stanley Osher \email sjo@math.ucla.edu \\
       \addr Department of Mathematics\\
       University of California, Los Angeles\\
       Los Angeles, CA 90095, USA
       \AND
       \name Wuchen Li \email wuchen@mailbox.sc.edu \\
       \addr Department of Mathematics\\
       University of South Carolina\\
       Columbia, SC 29208, USA
       }

\editor{Weijie Su}

\maketitle
\begin{abstract}
We propose a scalable preconditioned primal-dual hybrid gradient algorithm for solving partial differential equations (PDEs). We multiply the PDE with a dual test function to obtain an inf-sup problem whose loss functional involves lower-order differential operators. The Primal-Dual Hybrid Gradient (PDHG) algorithm is then leveraged for this saddle point problem. By introducing suitable precondition operators to the proximal steps in the PDHG algorithm, we obtain an alternative natural gradient ascent-descent optimization scheme for updating the neural network parameters. We apply the Krylov subspace method (MINRES) to evaluate the natural gradients efficiently. Such treatment readily handles the inversion of precondition matrices via matrix-vector multiplication. An \textit{a posteriori} convergence analysis is established for the time-continuous version of the proposed algorithm for general linear PDEs. By incorporating appropriate boundary loss terms, we further obtain a refined \textit{a priori} convergence result for elliptic equations in divergence form. The algorithm is tested on various types of PDEs with dimensions ranging from $1$ to $50$, including linear and nonlinear elliptic equations, reaction-diffusion equations, and Monge-Amp\`ere equations stemming from the $L^2$ optimal transport problems. We compare the performance of the proposed method with several commonly used deep learning algorithms such as physics-informed neural networks (PINNs), the DeepRitz method and weak adversarial networks (WANs) using either the Adam or the L-BFGS optimizer. The numerical results suggest that the proposed method performs efficiently and robustly and converges more stably with higher accuracy.
\end{abstract}

\begin{keywords}
    Deep learning for solving PDEs; Neural Networks; Inf-sup problem; Primal-Dual Hybrid Gradient (PDHG) algorithm; Natural Gradient; Convergence analysis; Monge-Amp\`ere equation.
\end{keywords}

\section{Introduction}\label{sec: introduction}

Machine learning, particularly deep learning, is a fast-developing direction with modern computational technologies \citep{amari1998natural} and applications \citep{goodfellow2020generative,arjovsky2017wasserstein}. Typical examples of applications often come from computer science, including creating new images, videos, and voices and generating languages. During the development of modern applications, machine learning has introduced a variety of nonlinear methodologies, including computational nonlinear models such as neural networks, as well as variational frameworks such as generative adversarial networks \citep{goodfellow2020generative}. The impact of machine learning on scientific computing has therefore been profound and cannot be overstated.

In recent years, deep learning algorithms have been developed to solve partial differential equations (PDEs). The physics-informed neural networks (PINN) method \citep{raissi2019physics} employs neural networks to approximate PDE solutions by minimizing the discrepancy between observed data and the equation's residual. The DeepRitz method \citep{yu2018deep} computes neural network surrogate solutions for PDEs using a variational approach, minimizing the associated energy functional. The Forward-Backward Stochastic Differential Equation (FBSDE) method \citep{han2017deep} makes use of the nonlinear Feynman-Kac formula for semi-linear parabolic equations to derive numerical solutions at specific time-space points. \blue{Additionally, the Weak Adversarial Network (WAN) approach \citep{zang2020weak, cai2024weak} leverages the weak formulation of PDEs by multiplying the original equation with a test function, resulting in an inf-sup saddle point problem for computing the equation.} This approach is applicable to various types of equations and is scalable to PDEs in high dimensions.

While these methods demonstrate the potential of applying machine learning techniques in solving PDEs, challenges such as hyperparameter tuning, loss function designing, and convergence guarantees remain unresolved. More critically, due to the nonlinearity of neural networks, conventional optimizers such as Adam \citep{kingma2014adam} or RMSProp \citep{tieleman2012lecture} suffer from strong fluctuations and do not achieve stable convergence, which complicates the implementation of current algorithms.

In this research, we aim to tackle these challenges by adopting the adversarial training strategy and propose a crucial preconditioned optimizer that takes advantage of the primal-dual hybrid gradient (PDHG) algorithm \citep{zhu2008efficient, chambolle2011first}. We utilize suitable preconditioned gradients known as the natural gradients \citep{muller2023achieving} to update the parameters of the neural networks. The proposed algorithm, named the Natural Primal-Dual Hybrid Gradient (NPDG) method, performs efficiently and converges more stably than classical machine learning-based PDE solvers. In addition, we also provide a theoretical convergence guarantee for the proposed algorithm.

To illustrate the main idea, we consider the following linear equation posed with suitable boundary condition,
\begin{equation}
  \mathcal L u = f ~ \textrm{on } \Omega,  \quad  \mathcal B u = g ~ \textrm{on } \partial \Omega. \label{intro linear PDE}
\end{equation}
Here $\Omega\subset\mathbb{R}^d$ is a bounded open region, $\partial \Omega$ denotes the boundary of $\Omega$, $f\colon \Omega\rightarrow\mathbb{R}$, $g\colon\partial\Omega\rightarrow\mathbb{R}$ are $L^2$ functions, 
and $u\colon \Omega \rightarrow \mathbb{R}$ belongs to $H^2(\Omega)$. We assume $\mathcal L$ being a second-order elliptic operator and $\mathcal B$ as a linear boundary operator, which indicates the Dirichlet or Neumann or more general boundary conditions. Here we assume that $\mathcal L$ can be split as $\mathcal L = \mathcal M_d^* \widetilde{\mathcal L} \mathcal M_p$ with $\mathcal M_p, \mathcal M_d$ being first-order differential operators, $\widetilde{ \mathcal{L}}$ is a well-conditioned bounded linear operator, and $\mathcal M_d^*$ denotes the $L^2$ adjoint of $\mathcal M_d$. Suppose this equation admits a unique classical solution $u_*\in C^2(\Omega)\cap C(\bar{\Omega})$. The goal is to efficiently compute $u_*$. 

By introducing the test functions (dual variables) $\varphi \in H_0^1(\Omega)$ and $\psi \in L^2(\partial \Omega)$ into the equation \eqref{intro linear PDE}, we consider the following inf-sup problem with quadratic regularization terms. Here, $\blue{\nu} > 0$ denotes the regularization coefficient,
\begin{equation}
\begin{split}\label{intro inf sup problem for root-finding} 
  \inf_{u} ~ \sup_{\varphi, \psi} ~ \mathscr{E}(u, \varphi, \psi) := &  \langle \widetilde{\mathcal L} \mathcal M_p u, \mathcal M_d \varphi \rangle_{L^2(\Omega)} - \langle f , \varphi \rangle_{L^2(\Omega)} - \frac{\blue{\nu}}{2} \|\mathcal M_d\varphi\|_{L^2(\Omega)}^2   \\
  & + \langle \mathcal B u - g, \psi \rangle_{L^2(\partial \Omega)} - \frac{\blue{\nu}}{2}\|\psi\|^2_{L^2(\partial \Omega)}.  
\end{split}
\end{equation}
Note that $u=u_*, \varphi=0, \psi=0$ form the saddle point of $\mathscr{E}$. In order to approach this saddle point and hence solve for $u_*$, we apply a preconditioned version of the PDHG algorithm to the inf-sup problem \eqref{intro inf sup problem for root-finding}.  The algorithm utilizes alternative proximal point steps and an intermediate extrapolation to solve the inf-sup problem with selected preconditioning operators $\mathcal M_p, \widetilde{\mathcal L}, \mathcal M_d$. More specifically, the algorithm repeats the following three-line iteration
\begin{equation}
\begin{split}\label{intro precond PDHG functional space}
  & (\varphi_{n+1}, \psi_{n+1}) = \underset{ \varphi, \psi}{\mathrm{argmin}} ~ \left\{ \frac{1}{2\tau_\varphi}(\|\mathcal M_d \varphi - \mathcal M_d \varphi_n\|^2_{L^2(\Omega)} + \|\psi - \psi_n\|^2_{L^2(\partial \Omega)}) - 
  \mathscr{E}(u_n, (\blue{\varphi}, \blue{\psi}))
   \right\},  \\ 
  & \widetilde{\varphi}_{n+1} = \varphi_{n+1} + \omega (\varphi_{n+1} - \varphi_n),  \quad  \widetilde{\psi}_{n+1} = \psi_{n+1} + \omega (\psi_{n+1} - \psi_n) \\
  & u_{n+1} =  \underset{u}{\mathrm{argmin}} ~ \left\{ \frac{1}{ 2\tau_{u} }(\|\mathcal M_p u - \mathcal M_p u_n\|_{L^2(\Omega)}^2 + \|\mathcal B u - \mathcal B u_n\|_{L^2(\partial \Omega)}^2) + \mathscr{E} (\blue{u}, (\widetilde{\varphi}_{n+1}, \widetilde{\psi}_{n+1}))  \right\}.
\end{split}
\end{equation}
Here $\tau_u, \tau_\varphi > 0$ are the step sizes of the algorithm and $\omega > 0$ denotes the extrapolation coefficient. We briefly illustrate the motivation of preconditioning steps in \eqref{intro precond PDHG functional space}. In general, the differential operator $\mathcal L$ is usually ill-conditioned.  As shown in \citep{liuJCP2023}, the convergence rate of the un-preconditioned dynamic equals $1 - \mathcal O(\frac{1}{\kappa^2})$ with $\kappa$ denoting the condition number of the spatial discretization of $\mathcal L$. The convergence speed decreases fast as $\kappa$ gets larger. This pronounced slowdown motivates us to introduce appropriate preconditioning in the proximal steps (i.e., the first and third lines) of \eqref{intro precond PDHG functional space} in order to mitigate the resulting inefficiency. %

So far, the algorithm we have developed remains at the functional level, which is generally intractable for practical implementation. To realize the proposed PDHG algorithm, we parameterize $u(\cdot)$, $\varphi(\cdot)$ and $\psi(\cdot)$ as $u_\theta(\cdot), \varphi_\eta(\cdot)$ and $\psi_\xi(\cdot)$ with the tunable parameters $\theta\in \Theta_\theta \subseteq \mathbb{R}^{m_\theta}, \eta\in \Theta_\eta \subseteq \mathbb{R}^{m_\eta}$ and $\xi \in \Theta_\xi \subseteq \mathbb{R}^{m_\xi}$.  A straightforward parameterization approach involves expressing these functions as linear combinations of predefined basis functions---a method traditionally employed in finite element methods. However, as the problem's dimensionality increases, such parameterization becomes computationally prohibitive due to the curse of dimensionality, since it requires a significant number of basis functions to maintain accuracy \citep{hu2024tackling}. Recent advances in deep learning have highlighted the potential of neural networks as computational tools to solve PDEs. Given their flexibility and expressive power, we adopt three neural network functions, such as Multilayer Perceptrons (MLPs, see Appendix \ref{append: MLP} for detailed definition), to represent $u_\theta, \varphi_\eta,$ and $ \psi_\xi$. Therefore, we reduce the original algorithm in functional spaces to a time-discrete dynamic in which the parameters $\theta^n, \eta^n, \xi^n$ evolve together.

We replace the implicit proximal step for updating \( \eta, \xi, \theta \) with an explicit scheme known as the linearized PDHG algorithm. We come up with the following algorithm:
\begin{equation}
\begin{split}\label{def: intro alg NPDG}
  \left[\begin{array}{c}
          \eta^{n+1}\\
          \xi^{n+1}
        \end{array}
  \right] = & \left[\begin{array}{c} 
                        \eta^n    \\
                        \xi^n
                      \end{array}
                     \right]
                +  \tau_\varphi 
                \left[ \begin{array}{c} 
                M_d(\eta^n)^\dagger \nabla_\eta {\mathscr{E}}(u_{\theta^n}, \varphi_{\eta^n}, \psi_{\xi^n})  \\
                M_{bdd}(\xi^n)^\dagger  \nabla_\xi {\mathscr{E}}(u_{\theta^n}, \varphi_{\eta^n}, \psi_{\xi^n})
                \end{array} \right], \\
  \left[ 
  \begin{array}{c}
    \widetilde{\varphi}_{n+1} \\
    \widetilde{\psi}_{n+1}
  \end{array}
  \right] = & \left[ \begin{array}{c}
    \varphi_{\eta^{n+1}} \\
    \psi_{\xi^{n+1}}
  \end{array}
  \right] 
  + \omega \left(
  \left[ \begin{array}{c}
    \varphi_{\eta^{n+1}} \\
    \psi_{\xi^{n+1}}
  \end{array}
  \right] - \left[ \begin{array}{c}
    \varphi_{\eta^{n}} \\
    \psi_{\xi^{n}}
  \end{array}
  \right]
  \right),  \\
  \theta^{n+1} = & ~  \theta^n - \tau_u M_p(\theta^n)^\dagger \nabla_\theta {\mathscr{E}}(u_{\theta^n}, \widetilde{\varphi}_{n+1}, \widetilde{\psi}_{n+1}  ).
\end{split}
\end{equation}
Here $M_d(\eta)\in \mathbb{R}^{m_\eta\times m_\eta}$, $M_{bdd}(\xi)\in\mathbb{R}^{m_\xi\times m_\xi}$, $M_{p}(\theta)\in\mathbb{R}^{m_\theta\times m_\theta}$ are Gram type matrices. They are derived from the bilinear form approximation of the proximal steps in the PDHG algorithm \eqref{intro precond PDHG functional space}. Here, we denote ``$^\dagger$'' as the Moore–Penrose inverse of a matrix. The precondition matrix $M_d(\eta^n)$ incorporates the information of the precondition operator $\mathcal M_p$, which is built in the original operator $\mathcal L$; we call $M_d(\eta^n)^\dagger \nabla_\eta {\mathscr{E}}(u_{\theta^n}, \varphi_{\eta^n}, \psi_{\xi^n})$ the \textit{natural gradient} of ${\mathscr{E}}(u_\theta, \varphi_\eta, \psi_\xi)$ with respect to $\eta$. Similarly, we can define the natural gradient ascent and descent directions for variables $\xi$ and $\theta$. The algorithm alternatively updates the primal and dual parameters along the natural gradient directions. An additional extrapolation step in the functional space is introduced to enhance the convergence of the method. We denote the above updates as the \textbf{Natural Primal-Dual Hybrid Gradient} algorithm. For simplicity, we refer to this method as the \textbf{NPDG} algorithm in the following discussion. We refer the readers to Section \ref{sec: derivation} for a detailed derivation of the algorithm. 

While the NPDG algorithm is designed around linear PDEs, it effectively accommodates equations with nonlinear terms. Additionally, it can be extended to address certain fully nonlinear equations, such as the Monge-Ampère equation, which emerges in the context of the $L^2$ optimal transport (OT) problem \citep{villani2021topics, de2014monge}. Since the OT problem can be formulated as a constrained optimization problem, introducing the Lagrange multiplier method leads to a saddle point scheme. This scheme involves adversarial training with the pushforward map and the dual potential function to solve the Monge-Amp\`ere equation, substituting both the map and potential function with neural network approximations and applying the NPDG algorithm with precondition matrices. The $L^2$ Gram type matrices lead to stable and efficient numerical results. Further analysis around the saddle point of the loss function suggests a more canonical preconditioning approach, where the mapping still uses the $L^2$ Gram type matrix while the potential uses the $H^1$ Gram type matrix. For a detailed discussion, readers are referred to Section \ref{subsec: nonlinear eq}.

In this research, we provide an \textit{a posteriori} convergence analysis for the time-continuous version of the NPDG algorithm when applied to the general linear PDE \eqref{intro linear PDE}. Let $(\theta_t, \eta_t, \xi_t)$ be the solution obtained from the time-continuous algorithm for $0\leq t \leq T$. Under specific conditions regarding the approximation capabilities of the tangent spaces spanned by $\{\partial_{\theta_k}u_{\theta_t}\}, \{\partial_{\eta_k}\varphi_{\eta_t}\},$ and $\{\partial_{\xi_k}\psi_{\xi_t}\}$, we establish the linear convergence of the numerical solution $u_{\theta_t}$
\begin{equation*}
  \|\mathcal M_p (u_{\theta_t} - u_* )\|_{L^2( \Omega )}^2 + \lambda \|\mathcal B (u_{\theta_t} - u_*) \|_{L^2(\partial \Omega) }^2 \leq \blue{C_0} \cdot \exp(-rt)   \quad \textrm{for } 0 \leq t \leq T.
\end{equation*}
Here \blue{$C_0 > 0$} \blue{denotes the initial error}, $r>0$ is the convergence rate depending on the preconditioned operator $\widetilde{\mathcal L}$, the hyperparameters of the NPDG algorithm, and the neural network parameters. \blue{An explicit lower bound for \( r \) in the case where \( u_\theta \), \( \varphi_\eta \), and \( \psi_\xi \) are linear combinations of basis functions is provided in \eqref{explicit convergence rate origin analysis}. A fast convergence rate of the time-continuous algorithm can be expected when the operator $\widetilde{\mathcal L}$ is well-conditioned and the hyperparameters are appropriately chosen.} 

{\color{black} We further refine the above convergence analysis and remove the restriction on finite time horizon \( [0,T] \) for an important class of elliptic equations in divergence form. By incorporating a boundary loss term stronger than the standard \( L^2(\partial \Omega) \) norm, we establish an \emph{a priori} convergence bound in the setting where \( u_\theta \), \( \varphi_\eta \), and \( \psi_\xi \) are linear combinations of prescribed basis functions \( \{u_k\} \), \( \{\varphi_k\} \), and \( \{\psi_k\} \), respectively. More precisely, we have
\[
\Big(
    \|\nabla u_{\theta_t}-\nabla u_*\|_{L^2}^2
    + \lambda \|u_{\theta_t}-u_*\|_{\mathcal X}^2
\Big)^{\frac12}
\;\le\;
C_0 e^{-rt}
+ \frac{1-e^{-rt}}{r}
\Big(
    C_1 \sqrt{\mathcal E_u}
    + C_2 \sqrt{\mathcal E_{\nabla \varphi}}
    + C_3 \sqrt{\mathcal E_\psi}
\Big).
\]
Here, \( \|\cdot\|_{\mathcal X} \) denotes a boundary norm stronger than \( L^2(\partial \Omega) \), such as \( H^{1/2}(\partial \Omega) \) or \( H^1(\partial \Omega) \). The constant \( C_0 \) depends on the initial error, while \( C_1, C_2, \) and \( C_3 \) are coefficients determined by the {elliptic operator} and the choice of hyperparameters. The quantities \( \mathcal E_u \), \( \mathcal E_{\nabla \varphi} \), and \( \mathcal E_\psi \) represent the approximation errors of the chosen basis functions in approximating the exact solution \( u_* \) and the boundary data \( g \). Moreover, the convergence rate \( r \) admits a uniform lower bound \( r \ge \frac{2}{3\sqrt{3}} \) under a suitable selection of hyperparameters. This result reveals a clear two-phase behavior of the dynamics: at early stages, the numerical error is dominated by the decay of the initial error, while in the long-time regime, the error is governed by the intrinsic approximation of the chosen basis functions.} The readers are referred to Section \ref{section: Numer Anal} for detailed discussions on this series of results.

In implementation, we apply the Monte-Carlo algorithm to approximate $\mathscr{E}(u_\theta, \varphi_\eta, \psi_\xi)$; we use automatic differentiation to compute the derivatives of $\mathscr{E}(u_\theta, \varphi_\eta, \psi_\xi)$ with respect to the parameters $\theta, \eta, \xi$. It is usually prohibitively expensive to explicitly form the precondition matrices $M_p(\theta), M_d(\eta), M_{bdd}(\xi)$ given that $m_\theta, m_\eta, m_\xi$ might be very large. To cope with this, we evaluate the pseudo-inverse in \eqref{def: intro alg NPDG} via the iterative solver such as the Minimal residual method (MINRES) \citep{paige1975solution}, which, instead of forming entire matrices, only requires matrix-vector multiplication. Further details of the implementation can be found in Section \ref{sec: alg}.

Numerical examples of linear PDEs \eqref{intro linear PDE}, nonlinear PDEs \eqref{def: L+N equation }, and Monge-Ampère equations \eqref{insection Monge Ampere} in Section \ref{sec: numerical examples } illustrate the accuracy, efficiency, and robustness of the NPDG method compared to classical methods, including the Physics-Informed Neural Network (PINN), the DeepRitz method, and the Weak Adversarial Network (WAN). Based on these numerical results, the algorithm demonstrates linear convergence for the high-dimensional PDEs tested in this section. Additionally, the proposed method converges more efficiently and achieves higher accuracy in both $L^2$ and $H^1$ norms compared to the other tested methods.

\subsection{Related references}\label{subsec: literature}

In recent years, machine learning algorithms have attracted increasing attention from the scientific computing community due to their flexibility and scalability. A considerable amount of these investigations are based on the Physics-Informed Neural Network (PINN) algorithm \citep{raissi2019physics, lu2021deepxde}; further approaches that address the pathologies during PINN training include calibration of interior-boundary loss coefficients \citep{wang2022and}, and variable splitting techniques \citep{basir2022investigating, park2024beyond}. The adaptive sampling methods \citep{tang2022adaptive, tang2023pinns} are introduced to gain better accuracy of the neural network approximation. In addition to PINNs, a range of deep learning-based algorithms is introduced for solving various types of PDEs, demonstrating scalability to high-dimensional problems. These include the Deep Galerkin Method \citep{sirignano2018dgm}, Deep Ritz method \citep{yu2018deep, deepritzCOLT, liu2023deep}, Forward-Backward Stochastic Differential Equation (FBSDE) approaches \citep{han2017deep, han2018solving, hutzenthaler2021multilevel}, Extreme Learning Machines \citep{dong2021local, ni2023numerical, wang2024extreme}, Tensor Neural Networks \citep{wang2022tensor, wang2024solving}, etc.

Recent research trends leverage adversarial training strategies \citep{goodfellow2020generative, arjovsky2017wasserstein} to improve algorithm performance. In the Weak Adversarial Network (WAN) algorithm, discriminator neural networks are used to enhance training efficiency by employing the weak formulation of PDEs \citep{zang2020weak, bao2020numerical}. \blue{The weak formulation is further employed in \citep{cai2024weak} to train a generative model for generating samples from the invariant measure of stochastic dynamics.} Additionally, a residual-attention-based approach has been introduced in \citep{mcclenny2020self, mcclenny2023self, anagnostopoulos2023residual, zeng2022competitive} for seeking numerical solutions with higher precision.

The Primal-Dual Hybrid Gradient (PDHG) method, which is widely used in image processing problems \citep{zhu2008efficient, chambolle2011first}, has been introduced to handle nonlinear PDEs on classical numerical schemes \citep{liu2022primal, liuJCP2023, meng2023primal}. Suitable preconditioning is introduced to improve the convergence of the algorithm significantly. The method is shown to converge linearly in \citep{liu2024numerical}.

Large-scale optimization algorithms play a crucial role in machine learning research. Stochastic gradient descent (SGD) is a widely used first-order optimization method \citep{robbins1951stochastic, saad1998online, bottou-bousquet-2008}. One can improve the SGD's performance by incorporating momentum terms \citep{rumelhart1986learning, nag, su2016differential}. Various modified versions of SGD with per-parameter learning rates—such as AdaGrad \citep{duchi2011adaptive}, Adadelta \citep{zeiler2012adadelta}, RMSProp \citep{tieleman2012lecture}, and Adam \citep{kingma2014adam}—are popular optimizers in deep learning \citep{Pytorch}. Additionally, second-order algorithms like the BFGS method \citep{fletcher2000practical}, LBFGS method \citep{liu1989limited}, and inexact-Newton methods \citep{dembo1982inexact, brown1990hybrid, brown1994convergence, eisenstat1994globally, martens2010deep, roosta2022newton, rathore2024challenges} are also widely explored in machine learning research.

The natural gradient method is another critical category of second-order optimizers, initially introduced in \citep{amari1998natural} with further developments in \citep{amari2016information, thomas2016energetic, song2018accelerating}. An efficient, scalable variant known as the K-FAC (Kronecker-factored Approximate Curvature) method was proposed in \citep{martens2015optimizing}. The natural gradient method finds its application under different scenarios, including optimization involving combined loss functionals \citep{ying2021natural}, PDE-constrained optimization \citep{nurbekyan2023efficient}, simulation and acceleration of Wasserstein gradient flows \citep{LM1, CL, wang2020information, shen2020sinkhorn, liuPFPE}. A series of research that utilizes the concept of the natural gradient to solve general time-dependent PDEs have been conducted, as detailed in \citep{du2021evolutional, bruna2024neural, gaby2023neural, chenteng} and the references therein. Recently, a natural gradient primal-dual algorithm for decentralized learning problems is proposed in \citep{NGPDdecentralized}.

The natural gradient algorithm has recently been applied to training PINNs, achieving highly accurate solutions \citep{muller2023achieving}. The K-FAC method is exploited in the follow-up work \citep{dangel2024kronecker} to enable scalability in high-dimensional settings. Beyond natural gradients, the Gauss-Newton method has been introduced in \citep{hao2024gauss} for computing variational PDEs. Additional preconditioning techniques for solving PDEs include the multigrid-augmented method \citep{azulay2022multigrid}, domain decomposition strategies \citep{kopanivcakova2024enhancing}, and incomplete LU preconditioning \citep{liu2024preconditioning}. However, these methods typically need to scale more effectively to compute high-dimensional problems.

Compared to these methods, we summarize the advantages of the proposed approach in two key aspects: the primal-dual hybrid gradient algorithmic framework and the application of natural gradients in neural network functions.
\begin{list}{$\bullet$}{}
\item On the primal-dual framework:
\begin{itemize}
    \item By applying integration by parts, we reduce the order of the differential operator $\mathcal L$ in the primal-dual formulation, lowering computational complexity when performing automatic differentiation on the neural networks.
    \item The proposed primal--dual training scheme is versatile and adaptable, rendering the algorithm applicable to a broad class of partial differential equations, including \blue{linear elliptic problems, semi-linear equations with dominant viscosity terms, fully nonlinear PDEs such as the Monge--Amp\`ere equation, etc.}
\end{itemize}
\item On the primal-dual hybrid natural gradients:
\begin{itemize}
    \item Unlike other second-order optimization algorithms, such as L-BFGS, which are unable to handle the training involving random batches, the proposed algorithm is well-suited to data stochasticity, performing robustly under stochastic approximation.
    \item To address the computation of large-scale linear systems (specifically, the pseudo-inverse of preconditioning matrices), we introduce the iterative method (MINRES). Consequently, our approach readily accommodates high-dimensional PDEs requiring neural networks with a large number of parameters. In experiments, we handle neural networks with parameter counts ranging from 20,000 to 300,000.
\end{itemize}
\end{list}
Generally, the proposed algorithm converges smoothly, avoiding the intense fluctuations and spikes commonly observed in the loss decay curves of classical momentum-based optimizers such as Adam and RMSProp. With appropriate preconditioning, theoretical analysis (Theorem \ref{thm: convergence analysis of NPDG flow}, Theorem \ref{thm: convergence analysis on elliptic PDE of divergence type}) indicates linear convergence of the method. In practice, the approach performs more efficiently than classical machine learning methods and achieves higher precision in the norms $L^2$ and $H^1$. Furthermore, as reflected in later Table \ref{tab: GPU time to accuracy }, the method demonstrates robustness with respect to its hyperparameters, including the regularization coefficient $\blue{\nu}$, step sizes $\tau_\varphi$, $\tau_u$, and the extrapolation coefficient $\omega$. Typically, a standard configuration of $\blue{\nu} = 0.1, \tau_\varphi = 0.095$, $\tau_u = 0.05$, and $\omega = 1$ yields satisfactory performance.

This paper is organized as follows. In Section \ref{sec: derivation}, we provide a detailed derivation of the algorithm. Supplementary discussions on treating the semi-linear PDEs and the Monge-Amp\`ere equations are provided in Section \ref{subsec: nonlinear eq}. Then, in Section \ref{section: Numer Anal}, we establish a series of convergence analysis results for the time-continuous version of the algorithm. Implementation details are demonstrated in Section \ref{sec: alg}. We demonstrate the numerical examples in Section \ref{sec: numerical examples }. We provide further materials related to the algorithm, proof, and numerical examples in the Appendix.

\section{Derivation of Natural Primal-Dual Hybrid Gradient (NPDG) method}\label{sec: derivation}
In this section, we provide a detailed derivation of the proposed method by first introducing the Primal-Dual Hybrid Gradient algorithm for root-finding problems. We then apply this algorithm to solving PDEs in the functional space. We improve the algorithm's performance by introducing suitable preconditioning. Finally, we discuss how we realize the algorithm by substituting the functions with neural networks and introduce the Natural Primal-Dual Hybrid Gradient (NPDG) algorithm for adversarial training of the neural networks for solving PDEs.

\subsection{Primal-Dual algorithm for root-finding problem}
We first consider a root-finding problem defined on Hilbert space $\mathbb{X}$,
\begin{equation}
  \mathcal F(x) = 0.  \label{root-finding}
\end{equation}
Here, we assume that $\mathcal F:\mathbb{X}\rightarrow \mathbb{Y}$ is a function from $\mathbb{X}$ to another Hilbert space $\mathbb{Y}$. The goal is to find a solution $x\in \mathbb{X}$. For a certain convex functional $\iota: \mathbb{Y}\rightarrow \mathbb{R}$ that satisfies $\iota(y)>0$ iff $y\neq 0$ and $\iota(y)=0$ whenever $y=0$, the root-finding problem is equivalent to the following minimization problem
\begin{equation}
  \inf_{x\in\mathbb{X}} ~ \iota(\mathcal F(x)).  \label{minimization problem equiv to root finding problem}
\end{equation}
We denote the Legendre dual of $\iota(\cdot)$ as $\iota^*(\cdot)$ which is defined as $\iota^*(y) = \sup_{w\in\mathbb{Y}} ~ \langle y, w   \rangle_{\mathbb{Y}} - \iota(w)$. Here, we denote $\langle \cdot, \cdot \rangle_{\mathbb{Y}}$ as the inner product defined in the space $\mathbb{Y}$. Then 
\begin{equation}
  \iota(z) = \iota^{**}(z) = \sup_{y\in\mathbb{Y}}~ \langle z, y    \rangle_{\mathbb{Y}} - \iota^*(y).  \label{legendre dual of legendre dual}
\end{equation}
Substituting \eqref{legendre dual of legendre dual} into \eqref{minimization problem equiv to root finding problem} yields the following saddle point problem
\begin{equation}
  \inf_{x\in \mathbb{X} } ~ \sup_{y \in \mathbb{Y} } ~ \mathscr{E}(x, y) := \langle \mathcal F(x), y \rangle_{\mathbb{Y}} - \iota^*(y).  \label{inf sup problem for root-finding Hilbert space} 
\end{equation}
We now apply the PDHG algorithm to deal with the inf-sup problem \eqref{inf sup problem for root-finding Hilbert space}, yielding
\begin{align}
  y_{n+1} = & \underset{y\in\mathbb{Y}}{\mathrm{argmin}} ~ \frac{\|y - y_n \|_{\mathbb{Y}}^2}{2\tau_y} - \mathscr{E}(x_n, y) = (\mathrm{Id} - \blue{\tau_y D_y}\mathscr{E}(x_n, \cdot))^{-1}~y_n,  \label{PDHG  prox dual}\\ 
  \widetilde{y}_{n+1} = & y_{n+1} + \omega (y_{n+1} - y_n),  
  \nonumber\\  
  x_{n+1} = & \underset{x\in\mathbb{X}}{\mathrm{argmin}} ~ \frac{\|x - x_n\|_{\mathbb{X}}^2}{2\tau_x} + \mathscr{E}(x, \widetilde{y}_{n+1}) = (\mathrm{Id} + \blue{\tau_x D_x}\mathscr{E}(\cdot, \widetilde{y}_{n+1}))^{-1} ~ x_n.  \label{PDHG prox primal}
\end{align}
Here $\tau_x, \tau_y > 0$ are the step sizes of the PDHG algorithm, $D_x\mathscr{E}\in\mathbb{X}, D_y\mathscr{E}\in\mathbb{Y}$ are the Fr\'echet derivatives and $\omega > 0$ denotes the extrapolation coefficient. The proximal steps \eqref{PDHG  prox dual}, \eqref{PDHG prox primal} can be interpreted as the implicit update of the gradient ascent/descent algorithm of functional $\mathscr{E}$ as $\tau_x, \tau_y$ are small enough. In practice, one can choose $\iota(\cdot) = \chi(\cdot)$, where $\chi$ is the indicator function defined as $\chi(y)=+\infty$ for $y\neq 0$ and $\chi(0)=0$. In this case, the Legendre dual satisfies $\iota^*(\cdot)\equiv 0$. Another popular choice is $\iota(\cdot)=\frac{1}{2\blue{\nu}}\|\cdot\|_{\mathbb{Y}}^2$ with $\iota^*(\cdot)=\frac{\blue{\nu}}{2}\|\cdot\|_{\mathbb{Y}}^2$. Here, $\blue{\nu} > 0$ is a tunable hyperparameter. We will mainly focus on the latter throughout the subsequent discussion of the paper.

\subsection{Primal-Dual Hybrid Gradient algorithm for solving PDEs} \label{subsec: PDHG for PDE}
\blue{From now on, we assume that $\Omega \subset \mathbb{R}^d$ is a bounded open set. }Let us start by considering a linear equation defined on a Hilbert space $\mathbb H$,
\begin{equation}
  \mathcal L u = f ~ \textrm{on } \Omega, \quad \textrm{with boundary condition } \mathcal B u = g ~ \textrm{on } \partial \Omega. \label{linear PDE}
\end{equation}
\blue{We denote $\mathbb{K}\subseteq L^2(\Omega)$, $\mathbb{K}_{\partial \Omega}\subseteq L^2(\partial \Omega)$ as two Hilbert spaces.} Then,
$\mathcal L:\mathbb{H}\rightarrow \mathbb{K}\subseteq L^2(\Omega)$ is a linear differential operator, $\mathcal B: \mathbb{H} \rightarrow \mathbb{K}_{\partial \Omega} \subseteq L^2(\partial \Omega)$ is a linear boundary operator. We assume $u_*\in C^2(\Omega)\cap C(\bar{\Omega})\subset\mathbb{H}$ to be the classical solution to \eqref{linear PDE}.

We now set $\mathcal F: \mathbb{H} \rightarrow \mathbb{K} \times \mathbb{K}_{\partial \Omega},  u \mapsto (\mathcal L u - f, \mathcal B u - g)$. By introducing the \blue{test} variables $\varphi \in \blue{\mathbb{K}^{test}} \subseteq L^2(\Omega)$ and 
$\psi \in \blue{\mathbb{K}^{test}_{\partial \Omega}} \subseteq L^2(\partial \Omega)$, 
and by defining $\mathbb{L}^2 := L^2(\Omega)\times L^2(\partial \Omega)$, 
we arrive at the following saddle-point problem:
\begin{align}
  \inf_{u \in \mathbb{H} } ~ \sup_{
    \substack{ 
      \varphi  \in  \blue{\mathbb{K}^{test}}  \\
      \psi  \in  \blue{\mathbb{K}^{test}_{\partial \Omega}}}
      } ~ \blue{\mathscr{E}_0}(u, \varphi, \psi  ) := &  \langle \mathcal F(u), (\varphi, \psi) \rangle_{ \mathbb{L}^2 } - \frac{\blue{\nu}}{2}\|(\varphi, \psi)\|^2_{\mathbb{L}^2}.  \label{inf sup problem for root-finding} \\
      = & \langle \mathcal L u - f, \varphi \rangle_{L^2(\Omega)} - \frac{\blue{\nu}}{2}\|\varphi\|_{L^2(\Omega)}^2 + \langle \mathcal B u - g, \psi \rangle_{L^2(\partial \Omega)} - \frac{\blue{\nu}}{2}\|\psi\|_{L^2(\partial \Omega)}^2. \nonumber 
\end{align}
It is not hard to verify that $u=u_*, \varphi=0, \psi=0$ form the saddle point of the inf-sup problem \eqref{inf sup problem for root-finding}. We refer to \citep{huo2024inf} for further discussion of the saddle point structure of related inf-sup formulations. \blue{In practice, it is usually convenient to introduce a boundary loss coefficient $\lambda>0$ and consider\footnote{\footnotesize \blue{The new functional is obtained by considering root-finding problem $\mathcal F_\lambda(u)=0$ with $\mathcal F_\lambda: u\mapsto (\mathcal L u - f, \sqrt{\lambda}(\mathcal B u-g))$, and setting $\mathscr{E}_0(u, \varphi, \psi):=\langle \mathcal F_\lambda(u), (\varphi, \sqrt{\lambda}\psi) \rangle_{ \mathbb{L}^2 } - \frac{\blue{\nu}}{2}\|(\varphi, \sqrt{\lambda}\psi)\|^2_{\mathbb{L}^2}.$}},
\[ \mathscr{E}_0(u, \varphi, \psi)=\langle \mathcal L u - f, \varphi \rangle_{L^2(\Omega)} - \frac{\blue{\nu}}{2}\|\varphi\|_{L^2(\Omega)}^2 + \lambda(\langle \mathcal B u - g, \psi \rangle_{L^2(\partial \Omega)} - \frac{\blue{\nu}}{2}\|\psi\|_{L^2(\partial \Omega)}^2). \]}
In this work, we propose the following PDHG algorithm to deal with inf-sup problem \eqref{inf sup problem for root-finding},
{
\begin{equation}
\begin{split}\label{PDHG functional space}
  & \left[\begin{array}{c}
        {\varphi}_{n+1} \\
        {\psi}_{n+1}
      \end{array}\right] = \underset{  (\varphi, \psi)\in \blue{\mathbb{K}^{test}} \times \blue{\mathbb{K}^{test}_{\partial \Omega}} }{\mathrm{argmin}} ~ \left\{ \frac{1}{2\tau_\varphi}(\|\varphi - \varphi_n\|^2_{L^2(\Omega)} + \|\psi - \psi_n\|^2_{L^2(\partial \Omega)}) - 
  \mathscr{E}_0(u_n, \blue{\varphi}, \blue{\psi})
   \right\},  \\ 
  & \left[\begin{array}{c}
        \widetilde{\varphi}_{n+1} \\
        \widetilde{\psi}_{n+1}
      \end{array}\right] = \left[\begin{array}{c}
        \varphi_{n+1} \\
        \psi_{n+1}
      \end{array}\right] + \omega \left(\left[\begin{array}{c}
        \varphi_{n+1} \\
        \psi_{n+1}
      \end{array}\right] - \left[\begin{array}{c}
        \varphi_{n } \\
        \psi_{n }
      \end{array}\right]\right), \\
  & u_{n+1} =  \underset{u\in \mathbb{H}}{\mathrm{argmin}} ~ \left\{ \frac{1}{ 2\tau_{u} }(\|u - u_n\|_{L^2(\Omega)}^2 + \|\mathcal B u - \mathcal B u_n\|_{L^2(\partial \Omega)}^2) + \mathscr{E}_0(\blue{u}, \widetilde{\varphi}_{n+1}, \widetilde{\psi}_{n+1})  \right\}.
\end{split}
\end{equation}}
To develop an intuitive understanding of why the algorithm \eqref{PDHG functional space} has difficulties in approaching the PDE solution, we consider the square region $\Omega$ and discretize it into $N_x^d$ lattices. We apply the finite difference scheme to discretize \eqref{linear PDE} into grids. Solving the PDE yields a linear equation $Ax-b=0$. Here, $A$ is the matrix obtained upon discretizing $\mathcal L$. Roughly speaking, $A\in\mathbb{R}^{N_x^d\times N_x^d}$ is self-adjoint and non-singular, $x\in\mathbb{R}^{N_x^d}$ denotes the numerical solution of the PDE on the grid points, $b\in\mathbb{R}^{N_x^d}$ is the vector encoding $f$ and its boundary condition. The proposed PDHG algorithm yields 
\begin{equation*}
\begin{split}
  y_{n+1} = & \underset{y}{\mathrm{argmin}} ~ \frac{\|y - y_n \|^2}{2\tau_y}  -  (Ax_n-b)^\top y ,  \\ 
  \widetilde{y}_{n+1} = & 2y_{n+1} - y_n,   \\
  x_{n+1} = & \underset{x}{\mathrm{argmin}} ~ \frac{\|x - x_n\|^2}{2\tau_x} + (Ax-b)^\top  \widetilde{y}_{n+1}.
\end{split}
\end{equation*}
Here, we set $\blue{\nu}=0$ and $\omega=1$ to simplify the discussion. And $\|\cdot\|$ denotes the $\ell_2$ norm of $\mathbb{R}^N$. It is not hard to verify that the above algorithm is equivalent to the following update: 
\begin{equation*}
\begin{split}
\left[\begin{array}{c}
    \widehat{x}_{n+1}  \\
    y_{n+1}  
\end{array}\right] & =   \underbrace{\left[\begin{array}{cc}
     I - 2\tau_x\tau_y A^\top A & -\tau_x A^\top \\
      \tau_y A &    I
\end{array}\right] }_{\textrm{denote as }\Gamma }  \left[\begin{array}{c}
    \widehat{x}_n  \\
    y_n  
\end{array}\right]. \\ 
\end{split}
\end{equation*}
Here, we denote $x_*$ as the solution to $Ax-b=0$ and $\widehat{x}_n = x_n - x_*.$ The convergence rate of the PDHG algorithm depends on the spectrum radius $\rho(\Gamma)$ of $\Gamma$. The value of $\rho(\Gamma)$ equals $\sqrt{1 - \frac{c}{\kappa^2}}$, where $c\in [1, \frac{4}{3})$ and $\kappa$ denotes the condition number of $A$ \blue{(we refer readers to Theorem 1 in \citep{liuJCP2023} for a detailed discussion)}. For Laplace operator $\mathcal L = \Delta$, the matrix $A$ obtained via central difference scheme takes condition number $\kappa=\mathcal O(N_x^2)$ \citep{kulkarni1999eigenvalues}. This indicates that the convergence rate of the PDHG method is $\sqrt{1-\frac{c}{\kappa^2}}=1-\mathcal O(\frac{1}{N_x^4})$, which is very inefficient as $N_x$ increases.  

\subsection{Preconditioning of the primal-dual algorithm}\label{subsec: precond PDHG method for PDE}
The discussion in Section \ref{sec: introduction} suggests that we should introduce preconditioning to the original algorithm \eqref{PDHG functional space}. As mentioned previously, we assume that $\mathcal L$ admits the splitting $\mathcal L = \mathcal M_d^*\, \widetilde{\mathcal L}\, \mathcal M_p,$ where $\mathcal M_d^*$, $\widetilde{\mathcal L}$, and $\mathcal M_p$ are linear differential operators \blue{acting between the functional spaces specified below.}
\[
{\color{black}
\begin{tikzcd}[
  column sep=3.2em,
  row sep=2.2em,
  ampersand replacement=\&,
  every label/.append style={font=\small},
  nodes={font=\normalsize},
]
 \mathbb H
  \arrow[r, "\mathcal M_p"]
\& \widetilde{\mathbb H}
  \arrow[r, "\widetilde{\mathcal L}"]
\& \widetilde{\mathbb K}
  \arrow[r, "\mathcal M_d^{*}"]
\& \mathbb K \subseteq L^2(\Omega)
\\
\& \&[-0.6em]
L^2(\Omega;\mathbb R^r)\supseteq {\widetilde{\mathbb K}^{test}}
\&[-0.2em]
{\mathbb K^{test}}
  \arrow[l, "\mathcal M_d"']\subseteq L^2(\Omega)
\end{tikzcd}}
\]
Here we assume $\widetilde{\mathbb{H}}, \widetilde{\mathbb{K}}\subseteq L^2(\Omega; \mathbb{R}^r)$ are Hilbert spaces. Moreover, $\mathcal M_d:\blue{\mathbb{K}^{test}} \rightarrow \blue{\widetilde{\mathbb{K}}^{test}}$ is a linear operator. $\mathcal M_d^*$ is treated as the ``adjoint'' of $\mathcal M_d$ in the sense of
\begin{equation*}
  \langle \mathcal M_d^* \textbf{u}  , \varphi \rangle_{L^2(\Omega)} =  \langle \textbf{u} , \mathcal M_d \varphi \rangle_{L^2(\Omega; \mathbb{R}^r)} , \quad \forall ~ \textbf{u}  \in  \widetilde{\mathbb{K}},  ~\varphi \in \blue{\mathbb{K}^{test}}.
\end{equation*}
Now recall that $u_*\in\mathbb{H}$ is the solution to \eqref{linear PDE}. For any $u\in\mathbb{H}, \varphi\in\blue{\mathbb{K}^{test}},$ we have
\begin{align} 
  \langle \mathcal L u - f, \varphi \rangle_{L^2(\Omega)} = \langle \mathcal L (u - u_*), \varphi \rangle_{L^2(\Omega)}  &  =  \langle \mathcal M_d^* \widetilde{\mathcal L} \mathcal M_p (u - u_*), \varphi    \rangle_{L^2(\Omega)}  \nonumber\\
  & = \langle \widetilde{\mathcal L} ~ \mathcal M_p (u - u_*),  \mathcal M_d \varphi    \rangle_{L^2(\Omega; \mathbb{R}^r)}.  \label{apply adjoint motivate precond }
\end{align}
\begin{example}\label{ex: 1}
Taking the negative Laplace operator $\mathcal L = -\Delta$ as an example, by setting $\mathbb{H} = H^2(\Omega), \widetilde{\mathbb{H}} = \widetilde{\mathbb{K}} = H^1(\Omega, \mathbb{R}^d), \mathbb{K} = L^2(\Omega)$, and $\blue{\mathbb{K}^{test}} = H^1_0(\Omega), \blue{\widetilde{\mathbb{K}}^{test}} = L^2(\Omega; \mathbb{R}^d)$, and choosing $\mathcal M_d = \mathcal M_p =\nabla$ and $\widetilde{\mathcal{L}} = \mathrm{Id}$, we obtain
\begin{align*} 
  \langle -\Delta u - f, \varphi \rangle_{L^2(\Omega)} = &  \langle - \Delta (u - u_*), \varphi \rangle_{L^2(\Omega)} =  \langle \nabla (u - u_*),  \nabla \varphi  \rangle_{L^2(\Omega; \mathbb{R}^d)},
\end{align*}
for any $u\in \mathbb{H}=H^2(\Omega), \varphi \in \blue{\mathbb{K}^{test}}=H^1_0(\Omega)$.
\end{example}

\begin{example}\label{ex: 2}
 Consider the elliptic operator $\mathcal L = \mathrm{Id} - \Delta$, where $\mathrm{Id}$ is an identity operator. By setting $\mathbb{H}=H^2(\Omega)$, $\widetilde{\mathbb{H}} = \widetilde{\mathbb{K}} = H^2(\Omega)\times H^1(\Omega; \mathbb{R}^d)$, $\mathbb{K}=L^2(\Omega)$, and $\blue{\mathbb{K}^{test}} = H_0^1(\Omega)$, $\blue{\widetilde{\mathbb{K}}^{test}} = H^1_0(\Omega)\times L^2(\Omega; \mathbb{R}^d)$, we can split the elliptic operator as
\[  
    \mathrm{Id} - \Delta = [\begin{array}{cc}
    \mathrm{Id}  & -\nabla \cdot  \\
\end{array}] \left[\begin{array}{cc}
    \mathrm{Id}    &  \\
     &  \mathrm{Id}    
\end{array}\right] \left[\begin{array}{c}
     \mathrm{Id} \\
     \nabla
\end{array}\right] = \left[\begin{array}{c}
     \mathrm{Id} \\
     \nabla
\end{array}\right]^* \left[\begin{array}{cc}
    \mathrm{Id}    &  \\
     &  \mathrm{Id}    
\end{array}\right] \left[\begin{array}{c}
     \mathrm{Id} \\
     \nabla
\end{array}\right] =  \mathcal M_d^* \widetilde{\mathcal L} \mathcal M_p.   \] 
Denoting (by abuse of notation) ``$\mathrm{Id}$'' as the identity map on its corresponding space, we have
\begin{align*}
  \langle u - \Delta u - f, \varphi    \rangle_{L^2(\Omega)} & = \langle (\mathrm{Id} - \Delta)(  u - u_* ), \varphi    \rangle_{L^2(\Omega)}
  = \Big\langle  \left[\begin{array}{c}
       u - u_* \\
       \nabla(u - u_*) 
  \end{array}\right], \left[ \begin{array}{c}
       \varphi \\
       \nabla\varphi
  \end{array} \right]  \Big\rangle_{L^2(\Omega; \mathbb{R}^{1+d})}
\end{align*}
for any $u\in H^2(\Omega), \varphi \in H^1_0(\Omega)$.
\end{example}
Further examples of linear equations with elliptic operators $\mathcal L$ belonging to divergence form $\mathcal L = -\nabla\cdot(\kappa(x) \nabla \, )$ with $\kappa\in C^1(\Omega)$ are given in Section~\ref{sec: numerical examples }.

{\color{black}
\begin{remark}[$\mathcal L$ of non-divergence form]\label{rk: L non-divergence form}
  Consider a general second-order differential operator $\mathcal L = \sum_{i,j=1}^d a_{ij}(x)\frac{\partial^2}{\partial x_i \partial x_j}$ with $a_{ij}(\cdot)\in L^2(\Omega)$. The operator shows up as the generator of diffusion processes and acts as a fundamental role in a wide range of applications. It is not always tractable to split $\mathcal L$ into differential operators with lower orders as illustrated in Example \ref{ex: 1} and \ref{ex: 2}. In such a case, one possible treatment is to set $\mathcal M_p = \mathcal L, \widetilde{\mathcal L}=\mathrm{Id}, \mathcal M_d=\mathrm{Id}$ with $\mathbb{H}=H^2(\Omega), \widetilde{\mathbb{H}}=\widetilde{\mathbb{K}}= \mathbb{K} = L^2(\Omega)$ and apply the algorithm. However, in the current work, we will mainly focus on elliptic equations of the divergence form and leave the non-divergence cases for future investigation.
\end{remark}}

Similar to \eqref{apply adjoint motivate precond }, recall that $\mathcal B$ is a linear boundary operator, for any $u\in\mathbb{H}, \psi \in \blue{\mathbb{K}^{test}_{\partial \Omega}},$ we have
\[ \langle \mathcal B u - g, \psi \rangle_{L^2(\partial \Omega)} = \langle \mathcal B (u - u_*), \psi \rangle_{L^2(\partial \Omega)}.  \]
As mentioned in Section \ref{sec: introduction}, we shall substitute $u, \varphi, \psi$ in the proximal steps of \eqref{PDHG functional space} with $(\mathcal M_p(u - u_*), \blue{\sqrt{\lambda}}\mathcal B(u-u_*))$ and $\mathcal M_d \varphi$, $\blue{\sqrt{\lambda}\psi}$. Correspondingly, we use the following modified functional $\mathscr{E}:\mathbb{H}\times \blue{\mathbb{K}^{test}}\times\blue{\mathbb{K}^{test}_{\partial \Omega}}\rightarrow \mathbb{R}$ (\blue{we denote $L^2:=L^2(\Omega)$, $L_{\partial\Omega}^2:=L^2(\partial \Omega)$ for simplicity}),
\begin{equation}
\begin{split}\label{preconditioned J loss funcitional }
    {\mathscr{E}}(u, \varphi, \psi)  = & \langle  \widetilde{\mathcal L} \mathcal M_p (u - u_*), \mathcal M_d    \varphi    \rangle_{L^2} + \blue{\lambda} \langle \mathcal B (u - u_*), \psi \rangle_{L^2_{\partial \Omega}} - \frac{\blue{\nu}}{2}( \|\mathcal M_d \varphi\|^2_{L^2} + \blue{\lambda} \|\psi\|^2_{L^2}  )\\
    =& \langle  \widetilde{\mathcal L} \mathcal M_p u, \mathcal M_d \varphi    \rangle_{L^2} - \langle \mathcal L u_*, \varphi \rangle_{L^2} + \blue{\lambda} \langle \mathcal B (u - u_*), \psi \rangle_{L^2_{\partial \Omega}} - \frac{\blue{\nu}}{2}( \|\mathcal M_d \varphi\|^2_{L^2} + \blue{\lambda} \|\psi\|^2_{L^2} ) \\
    =& \langle  \widetilde{\mathcal L} \mathcal M_p  u , \mathcal M_d \varphi    \rangle_{L^2}  - \langle f,   \varphi \rangle_{L^2} + \blue{\lambda} \langle \mathcal B u - g, \psi \rangle_{L^2_{\partial \Omega}} - \frac{\blue{\nu}}{2}( \|\mathcal M_d \varphi\|^2_{L^2} + \blue{\lambda} \|\psi\|^2_{L^2}  ).
\end{split}
\end{equation}
{\color{black} 
We then consider the inf-sup problem
\begin{equation}
  \inf_{u \in \mathbb{H} } ~ \sup_{\varphi  \in  \blue{\mathbb{K}^{test}}, \, \psi \in \blue{\mathbb{K}^{test}_{\partial \Omega}}} ~ \mathscr{E}(u, \varphi, \psi).  \label{def: inf-sup new E}
\end{equation}}
{\color{black}
The following theorem proves the consistency between the solution to this inf-sup problem and the solution to the PDE \eqref{linear PDE}.
\begin{theorem}[Consistency]\label{thm: consistency inf-sup and sol to PDE}
Assume the test spaces $\blue{\mathbb{K}^{test}}, \blue{\mathbb{K}^{test}_{\partial \Omega }  }$ are dense in the spaces $L^2(\Omega, \mu)$, $L^2(\partial \Omega, \mu_{\partial \Omega})$, respectively. Suppose that $(\widehat{u}, \widehat{\varphi}, \widehat{\psi}) \in \mathbb{H}\times \blue{\mathbb{K}^{test}} \times \blue{\mathbb{K}^{test}_{\partial \Omega}} $ is a solution to the inf-sup problem \eqref{def: inf-sup new E}. Then $\widehat{u}$ is a strong solution to \eqref{linear PDE} in the sense that $\mathcal L u - f=0$, almost everywhere (a.e.) on $\Omega$ and $\mathcal Bu = g$, a.e. on $\partial \Omega.$
\end{theorem}
The proof of the theorem is provided in Appendix \ref{append: proof of consistency thm}. As long as \eqref{linear PDE} admits a unique strong solution, the function $\widehat{u}$ must coincide with the classical solution $u_*$.}

To seek for the solution of the inf-sup problem \eqref{def: inf-sup new E}, we treat $(\mathcal M_p (u - u_*), \blue{\sqrt{\lambda}}\mathcal B (u-u_*))$, together with $(\mathcal M_d \varphi, \blue{\sqrt{\lambda}}\psi)$, as the \textit{new} primal and dual variables of the algorithm. By doing so, we substitute $(u, \mathcal B u)$, $(\varphi, \psi)$ in the proximal steps (the 1st and the 3rd line) of \eqref{PDHG functional space} with $(\mathcal M_p(u - u_*), \blue{\sqrt{\lambda}}\mathcal B(u-u_*)), (\mathcal M_d \varphi, \blue{\sqrt{\lambda}}\psi)$. Therefore, we come up with the following preconditioned version of the PDHG algorithm. This treatment is also known as the G-prox PDHG algorithm introduced in \citep{pdhgjacobs2019solving}.
\begin{equation}
\begin{split}\label{precond PDHG functional space}
      & \left[\begin{array}{c}
        {\varphi}_{n+1} \\
        {\psi}_{n+1}
      \end{array}\right] =     \underset{\substack{\varphi  \in  \blue{\mathbb{K}^{test}} \\ \psi \in \blue{\mathbb{K}^{test}_{\partial \Omega}}}}{\mathrm{argmin}} ~ \left\{\frac{1}{2\tau_\varphi}{(\|\mathcal M_d \varphi - \mathcal M_d  \varphi_n\|^2_{L^2(\Omega)} + \blue{\lambda}\|\psi - \psi_n \|^2_{L^2(\partial \Omega)})} - {\mathscr{E}}(u_n, \varphi, \psi)\right\},  \\
      & \left[\begin{array}{c}
        \widetilde{\varphi}_{n+1} \\
        \widetilde{\psi}_{n+1}
      \end{array}\right] = \left[\begin{array}{c}
        \varphi_{n+1} \\
        \psi_{n+1}
      \end{array}\right] + \omega \left(\left[\begin{array}{c}
        \varphi_{n+1} \\
        \psi_{n+1}
      \end{array}\right] - \left[\begin{array}{c}
        \varphi_{n } \\
        \psi_{n }
      \end{array}\right]\right),  \\
      & u_{n+1} =   \underset{u \in \mathbb{H} }{\mathrm{argmin}} ~ \left\{ \frac{1}{ 2\tau_{u}}{(\|\mathcal M_p u - \mathcal M_p u_n\|^2_{L^2(\Omega)} + \blue{ \lambda} \|\mathcal B u - \mathcal B u_n\|^2_{L^2(\partial \Omega)} ) } + {\mathscr{E}}(u, \widetilde{\varphi}_{n+1}, \widetilde{\psi}_{n+1})  
      \right\},  
\end{split}
\end{equation}

\subsection{Natural Primal-Dual Hybrid Gradient (NPDG) algorithm for neural networks}\label{subsec: NPDG for NN training}
At the beginning of this section, we briefly introduce the idea of the Natural Gradient method \citep{amari1998natural, amari2016information, martens2020new}.
\subsubsection{Natural Gradient method}
For a wide range of machine learning problems, we assume that the loss function $J(\theta)=\mathscr{J}(u_\theta)$ where $\mathscr{J}:\mathbb{U}\rightarrow\mathbb{R}$ denotes the loss functional and $u_\theta$ is the parametrized function on the metric space $\mathbb{U}$ with the parameter $\theta \in \Theta \subset \mathbb{R}^m$ to be determined. The essential idea of the natural gradient algorithm is to conduct gradient descent on $u_\theta$ as an entity in the functional space rather than on the parameter $\theta$. This can be realized by considering the proximal algorithm
\begin{equation} 
  \inf_{u_\theta\in\mathbb{U}} ~ \frac{d^2(u_\theta, u_{\theta^n})}{2\tau} + J(\theta).  \label{def: proximal for NG}
\end{equation}
The preconditioning matrix $G(\theta)$ can thus be obtained by investigating the \blue{infinitesimal} distance $d^2(u_\theta, u_{\theta^n}) \approx (\theta-\theta^n)^\top G(\theta) (\theta - \theta^n)$, where $d(\cdot, \cdot)$ is a distance function enriches the Hessian information of the loss functional $\mathscr{J}$. By sending $\tau\rightarrow 0,$ the implicit scheme \eqref{def: proximal for NG} reduces to the natural gradient flow
\begin{equation*}
  \dot\theta_t = - G(\theta)^{-1}\nabla_\theta J(\theta).
\end{equation*}
As a result, viewing from the parameter space, the natural gradient algorithm can be realized by applying $G(\theta)$-preconditioned gradient descent steps to loss function $J(\theta)$. We refer the interested readers to \citep{amari2016information} for a comprehensive illustration of the Natural Gradient methods.

Let us continue the discussion on the derivation of NPDG algorithm. We substitute $u(\cdot), \varphi(\cdot), \psi(\cdot)$ with neural networks $u_\theta(\cdot), \varphi_\eta(\cdot)$ and $\psi_\xi(\cdot)$ with tunable parameters $\theta\in\Theta_\theta \subseteq \mathbb{R}^{m_\theta}, \eta\in\Theta_\eta \subseteq \mathbb{R}^{m_\eta},  \xi \in \Theta_\xi \subseteq  \mathbb{R}^{m_\xi}$. Here, we assume that the parameter spaces $\Theta_\theta, \Theta_\eta, \Theta_\xi$ are open sets of the Euclidean space. Then, algorithm \eqref{precond PDHG functional space} becomes
\begin{equation}
\begin{split}\label{precond PDHG nn space}
      & \left[\!\begin{array}{c}
        \eta^{n+1} \\
        \xi^{n+1}
      \end{array}\!\right] \! =  \underset{\substack{\blue{\eta}\in \mathbb{R}^{m_\eta}\\ \blue{\xi} \in \mathbb{R}^{m_\xi}}}{\mathrm{argmin}}  \left\{ { \frac{1}{2\tau_\varphi}{(\|\mathcal M_d \varphi_\eta - \mathcal M_d  \varphi_{\eta^n}\|^2_{L^2(\Omega)} + \blue{\lambda} \|\psi_{\xi} - \psi_{\xi^n} \|^2_{L^2(\partial \Omega)})} - {\mathscr{E}}(u_{\theta^n}, \varphi_\eta, \psi_\xi) } \right\},  \\
      & \left[ \! \begin{array}{c}
        \widetilde{\varphi}_{n+1} \\
        \widetilde{\psi}_{n+1}
      \end{array}\! \right] \! = \! \left[\!\begin{array}{c}
        \varphi_{\eta^{n+1}} \\
        \psi_{\xi^{n+1}}
      \end{array}\!\right] + \omega \left(\left[\!\begin{array}{c}
        \varphi_{\eta^{n+1}} \\
        \psi_{\xi^{n+1}}
      \end{array}\!\right] - \left[\!\begin{array}{c}
        \varphi_{\eta^n } \\
        \psi_{\xi^n }
      \end{array}\!\right]\right),  \\
      & \theta^{n+1} \! = \! \underset{  \theta \in \mathbb{R}^{m_\theta}  }{\mathrm{argmin}}  \left\{ \frac{1}{ 2\tau_{ u } } {(\|\mathcal M_p u_\theta - \mathcal M_p u_{\theta^n}\|^2_{L^2(\Omega)} + \blue{\lambda} \|\mathcal B u_\theta - \mathcal B u_{\theta^n}\|^2_{L^2(\partial\Omega)})}  + {\mathscr{E}}(u_\theta, \widetilde{\varphi}_{n+1}, \widetilde{\psi}_{n+1})  
      \right\}.
\end{split}
\end{equation}
Let us take a closer look at the first line of \eqref{precond PDHG nn space}. Since $\varphi$ and $\psi$ are separable in ${\mathscr{E}}$, it is not hard to verify that the updating rule of $\eta^{n+1}$ can be formulated as
\begin{equation}
  \eta^{n+1} = \underset{\eta \in \mathbb{R}^{m_\eta} }{\mathrm{argmin}} ~ \left\{ \frac{1}{2\tau_\varphi}{\|\mathcal M_d \varphi_\eta -   \mathcal M_d \varphi_{\eta^n} \|^2_{L^2(\Omega)}} - {\mathscr{E}}(u_{\theta^n}, \varphi_{\eta}, \psi_{\xi}) 
   \right\}.  \label{G-prox PDHG PDE param MLP dual varphi}
\end{equation}
As suggested in Section \ref{sec: introduction}, we approximate $\mathcal M_d \varphi_\eta(x) - \mathcal M_d \varphi_{\eta^n}(x)$ with $\frac{\partial }{\partial \eta} \mathcal M_d \varphi_{\eta^n}(x) (\eta - \eta^n)$, where we denote $\frac{\partial}{\partial \eta} \mathcal M_d \varphi_{\eta_n}(x)\in\mathbb{R}^{ r \times m_\eta }$ as the Jacobian matrix of $\mathcal M_d \varphi_\eta(\cdot)$ with respect to the parameter $\eta$ at $\eta^n$. Therefore,
\begin{align}
  {\|\mathcal M_d \varphi_\eta - \mathcal M_d \varphi_{\eta^n}\|^2_{L^2(\Omega; \mathbb{R}^r)}} \approx &  \int_{\Omega} \|\frac{\partial }{\partial \eta} \mathcal M_d \varphi_{\eta^n}(x) (\eta - \eta^n)\|^2 \,\dd\mu(x)\nonumber  \\
  = & \sum_{i=1}^{m_\eta} \sum_{j=1}^{m_\eta} \Big\langle \frac{\partial}{\partial \eta_i} \mathcal M_d \varphi_{\eta^n}, \frac{\partial}{\partial \eta_j} \mathcal M_d \varphi_{\eta^n} \Big\rangle_{L^2(\Omega; \mathbb{R}^r)}  ( \eta_i - \eta^n_i) (\eta_j - \eta^n_j)  \nonumber  \\
   = &  (\eta - \eta^n)^\top M_d(\eta^n) (\eta - \eta^n),  \label{quad proximal}
\end{align}
where we denote  
\begin{equation}
    M_d(\eta^n) = \int_{\Omega}  \frac{\partial}{\partial \eta} \mathcal M_d \varphi_{\eta^n}(x)^\top  \frac{\partial}{\partial \eta} \mathcal M_d \varphi_{\eta^n}(x) ~ \dd\mu(x),  \label{def: M_d}
\end{equation}
as an $m_\eta \times m_\eta$ symmetric, positive semidefinite Gram matrix that encodes the information of $\mathcal M_d$. 

Replacing the proximal term in \eqref{G-prox PDHG PDE param MLP dual varphi} with the quadratic term \eqref{quad proximal} yields 
\begin{equation}
  \frac{1}{2\tau_\varphi}\Delta \eta^\top M_d(\eta^n) \Delta \eta - \widehat{E}(\theta^n, \eta^n + \Delta\eta, \xi^n).  \label{quad term of proximal step}
\end{equation}
Here we denote $\Delta \eta = \eta - \eta^n$ and $\widehat{E}(\theta, \eta, \xi) = {\mathscr{E}}(u_\theta, \varphi_\eta, \psi_\xi)$ for shorthand. By linearizing $\widehat{E}(\theta^n, \eta, \xi^n)$ at $\eta=\eta^n$, the quantity \eqref{quad term of proximal step} yields 
\begin{align*}
  & \frac{1}{2}\Delta \eta^\top M_d(\eta^n) \Delta \eta - \tau_\varphi (\widehat{E}(\theta^n, \eta^n + \Delta\eta, \xi^n) - \widehat{E}(\theta^n, \eta^n, \xi^n))  \\
  \approx & \frac{1}{2}\Delta\eta^\top M(\eta^n) \Delta \eta -\tau_\varphi \nabla_\eta \widehat{E}(\theta^n, \eta^n, \xi^n)^\top \Delta \eta + \mathcal O(\tau_\varphi \blue{\|\Delta \eta\|}^2).
\end{align*}
We further omit the term $\mathcal O(\tau_\varphi \blue{\|\Delta \eta\|}^2)$ to obtain the linearized version of \eqref{G-prox PDHG PDE param MLP dual varphi}
\begin{equation}
  \min_{\Delta \eta \in \mathbb{R}^{ m_\eta }} \left\{ \frac{1}{2} \Delta \eta^\top M_d(\eta^n) \Delta \eta - \tau_\varphi \nabla_\eta \widehat{E}(\theta^n, \eta^n, \xi^n)^\top \Delta \eta \right\}.  \label{derive least square problem}
\end{equation}
\blue{According to Lemma \ref{lemma: orthogonal projection } from the next Section \ref{section: Numer Anal}, we have $\nabla_\eta \widehat{E}(\theta^n, \eta^n, \xi^n) \in \mathrm{Ran}(M_d(\eta^n))$. Therefore, the minimum value of \eqref{derive least square problem} is finite. Recall that $M_d(\eta^n)^\dagger$ denotes the Moore-Penrose inverse of $M_d(\eta^n)$, an optimal solution to this least squares problem \eqref{derive least square problem} can be denoted as}
\begin{equation*}
      \Delta\eta = \tau_\varphi \cdot M_d(\eta^n)^\dagger \nabla_\eta \widehat{E}(\theta^n, \eta^n, \xi^n). 
\end{equation*}
The resulting formula suggests that we explicitly update $\eta$ along the gradient ascent direction preconditioned by the Gram matrix $M_d(\eta^n)$,
\begin{equation*}
    \eta^{n+1} = \eta^n + \tau_\varphi \cdot M_d(\eta^n)^\dagger \nabla_\eta \widehat{E}(\theta^n, \eta^n, \xi^n).
\end{equation*}
By doing so, we exchange some of the numerical stability enjoyed by the proximal step for computational feasibility and efficacy.

\begin{remark}\label{rk: pseudo inverse}
    It is worth mentioning that the Moore-Penrose inverse used here is generally unnecessary. In order to determine a solution $\mathbf{v}$ to \eqref{derive least square problem}, one only needs to guarantee that $M_d(\eta^n)\mathbf{v} = \nabla_\eta \widehat{E}(\theta^n, \eta^n, \xi^n)$. Consider any pseudo-inverse matrix $M^+_d(\eta^n)$ such that $M_d(\eta^n)M^+_d(\eta^n)$ preserves the column vectors of $M_d(\eta^n)$, i.e., \(M_d(\eta^n)M_d^+(\eta^n)M_d(\eta^n) = M_d(\eta^n)\). Then, $\mathbf{v} = M_d^+(\eta^n)\nabla_\eta \widehat{E}(\theta^n, \eta^n, \xi^n)$ will be a solution to \eqref{derive least square problem}. Thus, we can set 
    \[ \Delta \eta = M_d^+(\eta^n)\nabla_\eta \widehat{E}(\theta^n, \eta^n, \xi^n). \]
    In this research, we pick $M^+_d(\eta^n)$ as the Moore-Penrose inverse for convenience in discussion.
\end{remark}

Moreover, we utilize the same idea to update the parameters $\xi$ and $\theta$, such that 
\begin{align*}
  \xi^{n+1} = &  \xi^n + \tau_\varphi \cdot M_{bdd}(\xi^n)^\dagger \nabla_\xi \widehat{E}(\theta^n, \eta^n, \xi^n),\\
  \theta^{n+1} = &  \theta^n - \tau_u \cdot M_p(\theta^n)^\dagger \nabla_\theta {\mathscr{E}}(u_{\theta^n}, \widetilde{\varphi}_{n+1},  \widetilde{\psi}_{n+1}),   \end{align*}
where $\widetilde{\varphi}_{n+1} = \varphi_{\eta^{n+1}} + \omega (\varphi_{\eta^{n+1}} - \varphi_{\eta^n}), \widetilde{\psi}_{n+1} = \psi_{\xi^{n+1}} + \omega (\psi_{\xi^{n+1}} - \psi_{\xi^n})$ are obtained via the extrapolation. The Gram type matrices $M_p(\theta), M_{bdd}(\xi)$ are computed as
\begin{align}
  M_p(\theta) = &  \int_{\Omega} \frac{\partial}{\partial \theta} \mathcal M_p u_\theta(x)^\top \frac{\partial}{\partial \theta} \mathcal M_p u_\theta(x) ~ \dd\mu + {\color{black} \lambda}\int_{\partial \Omega} \frac{\partial}{\partial \theta} \mathcal B u_\theta(y)^\top \frac{\partial}{\partial \theta} \mathcal B u_\theta(y)\,\dd\mu_{\partial \Omega}.  \label{def: M_p}\\
  M_{bdd}(\xi) = &  \blue{\lambda} \int_{\partial \Omega} \frac{\partial  \psi_{\xi}(y)  }{\partial \xi}^\top \frac{\partial \psi_\xi(y)}{\partial \xi} ~ \dd\mu_{\partial \Omega},  \label{def: M_bdd}  
\end{align}
This yields our NPDG algorithm \eqref{def: intro alg NPDG}.

In practice, choosing the stepsize $\tau_\varphi$ ranging from $10^{-2}$ to $10^{-1}$ usually yields stable and efficient performance of this explicit scheme. The study on the optimal choice of the stepsizes, as well as the application of more meticulous line search strategies will serve as the future research directions.

{\color{black}
\begin{remark}[Adoption of stronger boundary norm]
For Dirichlet problem, $\mathcal B$ is treated as the trace operator. Then $\mathcal B : H^1(\Omega) \to H^{1/2}(\partial \Omega) \subsetneq L^2(\partial \Omega)$ is a continuous and surjective mapping \citep{grisvard2011elliptic}, it is thus more natural to employ the $H^{1/2}(\partial \Omega)$ norm—or even stronger boundary norms—rather than $L^2(\partial \Omega)$ for the boundary loss in \eqref{G-prox PDHG PDE param MLP dual varphi}. A detailed treatment is deferred to Section~\ref{subsec: numerical analysis with stronger bdd norm}. 
\end{remark}}

{\color{black}
We conclude this subsection by briefly discussing the advantages and limitations of different ways to split the operator $\mathcal L$ associated with the linear PDE~\eqref{linear PDE}. In general, $\mathcal L$ may admit multiple splittings, each leading to a distinct preconditioning strategy. For example, the operator $-\Delta$ can be decomposed as in Example~\ref{ex: 1}, or alternatively as described in Remark~\ref{rk: L non-divergence form}. 

Comparing these two choices, the former is typically more computationally efficient, as it avoids the evaluation of second-order derivatives. Moreover, the theoretical analysis presented in Theorem~\ref{thm: convergence analysis of NPDG flow} in the following section shows that, under the former splitting, the error term $u_\theta-u_*$ can be bounded using a more effective $H^1$ seminorm, rather than the $(-\Delta)$-weighted $L^2$ norm that arises in the latter case.

As a trade-off, as discussed in Remark~\ref{rk: adaptive sampling} of Section~\ref{sec: alg}, adaptive sampling strategies cannot be readily incorporated into the former framework, whereas such techniques can be naturally integrated under the latter choice.

In practice, we focus on the first splitting for the elliptic equations considered in this work. A comprehensive investigation and systematic comparison of different splitting and preconditioning strategies will be pursued in future investigations.}

\vspace{0.3cm}

\subsection{Nonlinear Equations}\label{subsec: nonlinear eq}
A similar treatment can be applied to the \blue{semi-linear} equations taking the form of
\begin{equation}
  \mathcal L u + \mathcal N u = f, ~~\textrm{on } \Omega, \quad \mathcal B u = g ~~ \textrm{on } \partial \Omega,  \label{def: L+N equation }
\end{equation}
where $\mathcal L$, $\mathcal N$ denote the linear and nonlinear operators, respectively. $\mathcal B$ is the boundary operator. $f:\Omega\rightarrow \mathbb{R}$, $g:\partial \Omega \rightarrow \mathbb{R}$. Suppose that $\mathcal L$ splits as $\mathcal L = \mathcal M_d^* \widetilde{\mathcal L} \mathcal M_p$. We then multiply the equation and its boundary condition with the dual variables $\varphi, \psi$ and derive the functional
\begin{equation*}
    \begin{split}
    \mathscr{E}(u, \varphi, \psi ) =    \langle \widetilde{\mathcal L}  \mathcal M_p u , \mathcal M_d \varphi  \rangle_{L^2(\Omega; \mathbb{R}^r )} + \langle \mathcal N(u), \varphi \rangle_{L^2(\Omega)} - \langle f, \varphi\rangle_{L^2(\Omega)} + \blue{\lambda} \langle \mathcal B u, \psi \rangle_{L^2(\partial \Omega)}  &  \\
         - \frac{\blue{\nu}}{2}\left(\|\mathcal M_d \varphi\|^2_{L^2(\Omega; \mathbb{R}^r )} + \blue{\lambda}\|\psi\|_{L^2(\partial \Omega )}^2  \right)  &  .
       \end{split}
\end{equation*}
We can now apply algorithm \eqref{def: intro alg NPDG} with preconditioning matrices $M_d, M_p, M_{bdd}$ mentioned above in \eqref{def: M_d}, \eqref{def: M_p}, \eqref{def: M_bdd} to solve the equation. Related numerical examples and more detailed descriptions of our treatment can be found in Section \ref{subsec: nonlinear elliptic equ} and \ref{subsec: AllenCahn}. 

\subsubsection{Monge-Amp\`ere equation}\label{subsec: discuss on MA}
The algorithm can readily handle some fully nonlinear equation that possesses a saddle point formulation, such as the Monge-Amp\`ere equation. 
\begin{equation}
  |\mathrm{det}(D^2 u(x))| = \frac{\rho_0(x)}{\rho_1(\nabla  u (x))}, ~ \rho_0 \dd x-a.e., \quad    u ~ \textrm{is convex on } \mathbb{R}^d.  \label{insection Monge Ampere}
\end{equation}
Here, $\rho_0, \rho_1$ are probability density functions. $D^2u$ denotes the Hessian matrix of the potential function $u$. This equation takes an equivalent form of
\[ \nabla u_{\sharp} \mu_0 = \mu_1, \quad    u ~ \textrm{is convex on } \mathbb{R}^d, \]
where $\mu_0, \mu_1$ are probability distributions. We assume $\mu_0, \mu_1$ are absolutely continuous with respect to the Lebesgue measure on $\mathbb{R}^d$, with density functions $\rho_0, \rho_1$. And $\sharp$ denotes the ``pushforward'' of probability distribution $\mu_0$ by the map $\nabla u$ in the sense of 
\begin{equation*}
\int_{\mathbb{R}^d} h(\nabla u(x))\,\dd\mu_0(x) = \int_{\mathbb{R}^d} h(y)\,\dd\mu_1(y), \quad \textrm{for any measurable function }  h \textrm{ defined on } \mathbb{R}^d.
\end{equation*}
There is already adequate research on the classical numerical methods for the Monge-Amp\`ere equation. We refer the readers to \citep{benamou2010two,froese2011convergent,benamou2014numerical,neilan2020monge} and the references therein for further discussion.

In this research, we aim to propose a mesh-free algorithm based on the data samples drawn from $\rho_0$ and $\rho_1$ to evaluate $\nabla u(\cdot)$ of the equation. We should first point out that the Monge-Amp\`ere equation is closely related to the following Optimal Transport (OT) problem (also known as the Monge problem) \citep{villani2009optimal, de2014monge},
\begin{align}
    & \min_{\substack{T \in \mathcal M(\mathbb{R}^d, \mathbb{R}^d) \\ T_\sharp \mu_0 = \mu_1}} ~\int_{\mathbb{R}^d} \frac12 \|x-T(x)\|^2\,\dd\mu_0(x).   \label{def: OT Monge}
\end{align}
Here $\mathcal M(\mathbb{R}^d, \mathbb{R}^d)$ denotes the space of measurable maps from $\mathbb{R}^d$ to $\mathbb{R}^d$. We aim at computing for the optimal map $T$ that transports the probability distribution $\rho_0$ to $\rho_1$ by minimizing the $L^2$ transportation cost. One can show that (c.f. Section 3 of \citep{de2014monge}) the optimal map $T_*$ of \eqref{def: OT Monge} exists uniquely as long as $\mu_0, \mu_1$ possess densities $\rho_0, \rho_1$, and there exists a convex function $u: \mathbb{R}^d \rightarrow \mathbb{R}$ such that $T_*(x) = \nabla u(x)$ for $\mu_0$-a.e. $x\in\mathbb{R}^d$. Furthermore, if $\mu_0, \mu_1$ are supported on bounded smooth open sets $X, Y\subset \mathbb{R}^d$, and $\rho_0, \rho_1$ are bounded away from zero and infinity on $X$ and $Y$, then the potential $u$ solves the Monge-Amp\`ere equation \eqref{insection Monge Ampere}.

Given the connection between the Monge-Amp\`ere equation and the OT problem, we mainly focus on computing \eqref{def: OT Monge} instead of \eqref{insection Monge Ampere}. The goal is to compute the OT map $T_*$ (or $\nabla u$). Notice that \eqref{def: OT Monge} is a constrained optimization problem. By denoting $\mathcal C_b(\mathbb{R}^d)$ as the space of bounded continuous functions, it is natural to introduce the Lagrange multiplier $\varphi \in \mathcal C_b(\mathbb{R}^d)$ (also known as the Kantorovich potential of the OT problem) to the constraint $T_\sharp \rho_0 = \rho_1$, and obtain
\begin{align}
  \mathscr{E}(T, \varphi) = & \int_{\mathbb{R}^d} \frac12  \|x - T(x)\|^2\,\dd\mu_0(x) + \int_{\mathbb{R}^d} \varphi(T(x))\,\dd\mu_0(x) - \int_{\mathbb{R}^d} \varphi(y)\,\dd\mu_1(y).         \label{MA loss functional L }  
\end{align}
Upon solving \eqref{def: OT Monge}, we consider the sup-inf saddle point problem
\begin{align}
  \sup_{\varphi\in \mathcal C_b(\mathbb{R}^d)} \inf_{T\in\mathcal M(\mathbb{R}^d, \mathbb{R}^d)} ~ \mathscr{E}(T, \varphi).  \label{saddle Monge with map T}
\end{align}
It is shown in \citep{fan2023neural} that, as long as $\mu_0, \mu_1$ are compactly supported, and $\mu_0$ is absolutely continuous w.r.t. Lebesgue measure, the saddle point $({T}_\star, \varphi_\star)$ of \eqref{saddle Monge with map T} exists, and the map $T_\star(\cdot)$ equals the OT map $T_*(\cdot)$ $\mu_0-$almost surely.

In the computation, we substitute $T, \varphi$ with neural networks $T_\theta, \varphi_\eta$. A natural way of preconditioning this problem is to set $\mathcal M_p = \mathrm{Id}$ and $\mathcal M_d = \mathrm{Id}$ for $M_p(\theta), M_d(\eta)$, i.e.,
\begin{equation}
  \begin{split}\label{pull Id as precond Monge problem}
      M_p(\theta) = & \int_{\mathbb{R}^d} \frac{\partial T_\theta(x)}{\partial \theta}^\top \frac{\partial T_\theta(x)}{\partial \theta}~\rho_0(x)\,\dd x, \\ 
      \quad M_d(\eta; \theta) = & \int_{\mathbb{R}^d} \frac{\partial \varphi_\eta(T_\theta(x))}{\partial \eta} \frac{\partial \varphi_\eta(T_\theta(x))}{\partial \eta}^\top \rho_0(x)\,\dd x.
  \end{split}
\end{equation}

However, a more canonical choice is to set $\mathcal M_p = \mathrm{Id}$ and $\mathcal M_d = \nabla$. To motivate this preconditioning technique, we carry out the following calculation. Suppose $( T_\star, \varphi_\star)$ is the saddle point of the above problem \eqref{saddle Monge with map T}. As $T_\star(\cdot)=T_*(\cdot)$ $\mu_0-$almost surely, one can show that $T_{\star\sharp}\mu_0=\mu_1$, and
\begin{align} 
  T_\star(x) - x + \nabla \varphi_\star(T_\star(x)) = 0, \quad \mu_0-a.s.   \label{optimal equation Tstar, varphistar }
\end{align}
Here we denote $\nabla\varphi_\star(T_\star(x))$ as $ \nabla \varphi_\star(y)|_{y = T_\star(x)}$.

We have
\begin{align}
  \mathscr{E}(T, \varphi) &= \int_{\mathbb{R}^d} \frac{1}{2}\|T(x)-T_\star(x) + T_\star(x) - x\|^2\rho_0(x)\,\dd x + \int_{\mathbb{R}^d} (\varphi(T(x)) - \varphi(T_\star(x)))\rho_0(x)\dd x \nonumber\\
  & = \int_{\mathbb{R}^d} \! \left(\!\frac{1}{2}\|T_\star(x) - x\|^2 + (T(x) - T_\star(x))^\top(T_\star(x) - x) + \frac{1}{2}\|T(x) - T_\star(x)\|^2\!\right)\!\rho_0(x)\dd x \nonumber\\
  & \quad +  \int_{\mathbb{R}^d} \nabla \varphi(T(x))^\top(T(x) - T_\star(x)) \rho_0(x) \dd x \nonumber \\
  & \quad + \int_{\mathbb{R}^d}\frac12(T(x)-T_\star(x))^\top \nabla^2 \varphi(\xi(x)) (T(x)-T_\star(x)) \rho_0(x)\dd x \nonumber \\
  & \blue{ = \int_{\mathbb{R}^d} \left( (T(x)-T_\star(x))^\top (T_\star(x)-x) + \nabla\varphi(T(x))^\top(T(x) - T_\star(x)) \right) \rho_0(x) \dd x} \nonumber \\
  & \blue{ \quad + \int_{\mathbb{R}^d} \frac12 (T(x)-T_\star(x))^\top(I - \nabla^2 \varphi(\xi(x)))(T(x)-T_\star(x)) \rho_0(x)\dd x} \nonumber\\
  & \blue{\quad + \int_{\mathbb{R}^d} \frac12\|T_\star(x)-x\|^2\rho_0(x)\dd x} \nonumber \\
  & \blue{= \int_{\mathbb{R}^d} (T_\star(x)-x+\nabla\varphi(T(x)))^\top(T(x) - T_\star(x)) \rho_0(x)\dd x}    \label{dominating term of L3 }   \\
  & \blue{\quad +  \int_{\mathbb{R}^d} \frac12 (T(x)-T_\star(x))^\top(I - \nabla^2 \varphi(\xi(x)))(T(x)-T_\star(x)) \rho_0(x)\dd x }  \nonumber \\
  & \blue{\quad + \mathrm{Const}.} \nonumber
\end{align}
Here we denote $\xi(x) = (1-s)T(x)+sT_\star(x)$ with $0 \leq s\leq 1$.

Now recall the optimal relation \eqref{optimal equation Tstar, varphistar }, we can reformulate the term \eqref{dominating term of L3 } as
\begin{align}
  & \int_{\mathbb{R}^d} (x-\nabla\varphi_\star(T_\star(x))-x+\nabla\varphi(T(x)))^\top(T(x) - T_\star(x)) \rho_0(x)\,\dd x \nonumber \\
  = & \int_{\mathbb{R}^d} (\nabla\varphi(T(x))-\nabla\varphi_\star(T_\star(x)))^\top(T(x) - T_\star(x)) \rho_0(x)\,\dd x. \label{Linear part of L3 }
\end{align}
Now \eqref{dominating term of L3 } and \eqref{Linear part of L3 } suggest that as $(T, \varphi)$ approaches the optimal solution $(T_\star, \varphi_\star)$, the loss functional $\mathscr{E}(T, \varphi)$ is \blue{roughly the sum of $\Big\langle \nabla\varphi(T(\cdot)) - \nabla\varphi_\star(T_\star(\cdot)), \; T(\cdot) - T_\star(\cdot) \Big\rangle_{L^2(\rho_0)}$ and a quadratic term of $T(\cdot)$.} This suggests setting $T(\cdot) - T_*(\cdot)$ and $\nabla \varphi(T(\cdot))$ as new entities of primal and dual variables in optimization. This yields the preconditioning matrices:
\begin{equation}
  \begin{split}\label{canonical precond for Monge problem}
          M_p(\theta) = & \int_{\mathbb{R}^d} \frac{\partial T_\theta(x)}{\partial \theta}^\top \frac{\partial T_\theta(x)}{\partial \theta}~\rho_0(x)\,\dd x, \\ 
          M_d(\eta ; \theta) = & \int_{\mathbb{R}^d} \frac{\partial}{\partial \eta} (\nabla\varphi_\eta(T_\theta(x)))^\top \frac{\partial}{\partial \eta} (\nabla\varphi_\eta(T_\theta(x))) \rho_0(x)\,\dd x.
  \end{split}
\end{equation}
Applying \eqref{def: intro alg NPDG} to the adversarial training of $T_\theta, \varphi_\eta$ leads to a faster, more robust algorithm for computing the saddle point $(T_\star, \varphi_\star)$ of \eqref{saddle Monge with map T}, where $T_\star$ yields the desired OT map $T_*$, or equivalently, the solution $\nabla u$ to the Monge-Amp\`ere equation. We refer the readers to Section \ref{sec: MA} for details on implementation and numerical examples.

\begin{remark}
It is worth mentioning that the idea of optimizing $\varphi$ with respect to the $H^1$ metric has also been discussed in \citep{jacobs2020fast}, in which the authors introduce a back-and-forth algorithm with $H^1$-preconditioned optimization to deal with the Kantorovich dual problem of \eqref{def: OT Monge}.
\end{remark}

\section{Convergence Analysis of the NPDG flow}\label{section: Numer Anal}
In this section, we provide an \textit{a posteriori} convergence analysis on the time-continuous version of the NPDG algorithm \eqref{def: intro alg NPDG} as $\tau_u, \tau_\varphi \rightarrow 0$ and $\omega\tau_\varphi \rightarrow \gamma>0$. 

Recall that \eqref{def: intro alg NPDG} can be reformulated as
\begin{equation*}
\begin{split}
 &  \left(\left(\begin{array}{c}
          \eta^{n+1}\\
          \xi^{n+1}
        \end{array}
  \right) - 
  \left( \begin{array}{c} 
    \eta^n    \\
    \xi^n
  \end{array}
 \right)\right)\bigg/{\tau_\varphi}  =     
\left( \begin{array}{c} 
M_d(\eta^n)^\dagger \nabla_\eta {\mathscr{E}}(u_{\theta^n}, \varphi_{\eta^n}, \psi_{\xi^n})  \\
M_{bdd}(\xi^n)^\dagger  \nabla_\xi {\mathscr{E}}(u_{\theta^n}, \varphi_{\eta^n}, \psi_{\xi^n})
\end{array} \right), \\
  & \left( 
  \begin{array}{c}
    \widetilde{\varphi}_{n+1} \\
    \widetilde{\psi}_{n+1}
  \end{array}
  \right)  =  \left( \begin{array}{c}
    \varphi_{\eta^{n+1}} \\
    \psi_{\xi^{n+1}}
  \end{array}
  \right) 
  + \omega \tau_\varphi \left(
  \left( \begin{array}{c}
    \varphi_{\eta^{n+1}} \\
    \psi_{\xi^{n+1}}
  \end{array}
  \right) - \left( \begin{array}{c}
    \varphi_{\eta^{n}} \\
    \psi_{\xi^{n}}
  \end{array}
  \right)
  \right)\bigg/\tau_\varphi,  \\
 &  \frac{\theta^{n+1} - \theta^n}{\tau_u}  = ~  - M_p(\theta^n)^\dagger \nabla_\theta {\mathscr{E}}(u_{\theta^n}, \widetilde{\varphi}_{n+1}, \widetilde{\psi}_{n+1}  ).
\end{split}
\end{equation*}
By replacing the finite differences by the time derivatives, as $\tau_u, \tau_\varphi \rightarrow 0$ and $\omega \tau_\varphi \rightarrow \gamma$, we verify that \eqref{def: intro alg NPDG} converges to
\begin{equation}
    \begin{split}\label{PDHG flow}
        \dot \eta_t = & M_d(\eta_t)^\dagger \nabla_\eta \mathscr{E}(u_{\theta_t}, \varphi_{\eta_t}, \psi_{\xi_t}),  \\
        \dot \xi_t = & M_{bdd}(\xi_t)^\dagger \nabla_\xi \mathscr{E}(u_{\theta_t}, \varphi_{\eta_t}, \psi_{\xi_t}), \\
        \dot \theta_t = & - M_p(\theta_t)^\dagger \nabla_\theta \mathscr{E}(u_{\theta_t}, \widetilde \varphi_t, \widetilde \psi_t),
    \end{split} 
\end{equation}
where we denote
\begin{align}
  \left(\begin{array}{c}
          \widetilde \varphi_{t }    \\
           \widetilde \psi_{t }  
     \end{array}\right) = & \left(\begin{array}{c}
           \varphi_{\eta_t }    \\
           \psi_{\xi_t }  
     \end{array}\right) + \gamma \left(\begin{array}{c}
           \dot \varphi_{\eta_t }    \\
           \dot \psi_{\xi_t }  
     \end{array}\right).
     \label{tilde phi tilde phi}
\end{align}
We call the above time-continuous dynamic \eqref{PDHG flow} the \textbf{NPDG flow}. In this section, we analyze the convergence of the numerical solution $\{u_{\theta_t}\}$ along \eqref{PDHG flow}.

\subsection{Natural gradient induces orthogonal projections of Fr\'echet derivatives}
Before our discussion, we need the following lemma. Similar results have already been proved in several references, including \citep{liuPFPE, nurbekyan2023efficient, wu2023parameterized, muller2023achieving, zuo2024numerical}. We restate the lemma here for the sake of completeness.
\begin{lemma}\label{lemma: orthogonal projection }

Given a certain Hilbert space $\mathbb{X}$, we consider a Fr\'echet differentiable functional $\mathscr{F}:\mathbb{X} \rightarrow \mathbb R$. Suppose  $\Theta \subseteq \mathbb{R}^m $ denotes the parameter space, we consider a parametrized family of functions $\{u_\theta\}_{ \theta \in \Theta } $ which belong to $\mathbb{X}$. We denote $D_u \mathscr{F}(u)\in(\mathbb X)^*=\mathbb{X}$ as the Fr\'echet derivative at $u$. Assume that $u_\theta$ is differentiable with respect to $\theta$ and $\frac{\partial u_\theta}{\partial \theta_i}\in\mathbb{X}$ for arbitrary $1\leq i\leq m$, $\theta\in\Theta$. We define the $m\times m$ Gram matrix $M(\theta)$ as
\begin{equation*}
    (M(\theta))_{ij} = \Big\langle \frac{\partial u_\theta}{\partial \theta_i}, \frac{\partial u_\theta}{\partial \theta_j} \Big\rangle_{\mathbb{X}}, \quad 1\leq i, j\leq m. 
\end{equation*}

Furthermore, we denote $F(\theta) = \mathscr{F}(u_\theta)$. Then one can show that 
\begin{itemize}
\item $\nabla_\theta F(\theta)\in \mathrm{Ran}(M(\theta))$,
\item For any $\mathbf{v}\in\mathbb{R}^{m}$ such that $M(\theta)\mathbf{v} = \nabla_\theta F(\theta)$, we can show that $\mathbf{v}$ is the solution to the following least squares problem\footnote{It is worth mentioning that for fixed $x$, $\frac{\partial u_\theta(x)}{\partial\theta}$ is a $k\times m$ matrix.}. 
\begin{align*} 
     \mathbf{v}  
     & \in \underset{\zeta\in \mathbb{R}^m}{\mathrm{argmin} } \left\{ \Big\|D_u \mathscr F(u_\theta) - \frac{\partial u_\theta}{\partial \theta} \zeta \Big\|_{\mathbb{X}}^2  \right\} = \underset{\zeta\in\mathbb{R}^m, \zeta_1,\dots, \zeta_m\in \mathbb{R}}{\mathrm{argmin} } \left\{ \Big\|D_u \mathscr F(u_\theta) - \sum_{i=1}^{m} \zeta_i \frac{\partial u_\theta}{\partial \theta_i} \Big\|_{\mathbb{X}}^2  \right\}.
\end{align*}
One can also verify that
\[ 
    D_u \mathscr F(u_\theta) - \frac{\partial u_\theta}{\partial\theta} \mathbf{v}
\]
as a vector in $\mathbb{X}$, is orthogonal (w.r.t. inner product defined on $\mathbb{X}$) to the subspace spanned by $\{\frac{\partial u_\theta}{\partial\theta_1},\dots, \frac{\partial u_\theta}{\partial\theta_m}\}$. Or equivalently, $\frac{\partial u_\theta}{\partial\theta} \mathbf{v}$
is the orthogonal projection of $D_u \mathscr{F}(u_\theta)$ on $\mathrm{span} \{\frac{\partial u_\theta}{\partial\theta_1},\dots, \frac{\partial u_\theta}{\partial\theta_m}\}$. 
\end{itemize}
\end{lemma}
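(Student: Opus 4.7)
The plan is to recast the statement in terms of a single linear map $J_\theta \colon \mathbb{R}^m \to \mathbb{X}$ defined by $J_\theta \zeta = \sum_{i=1}^m \zeta_i \, \frac{\partial u_\theta}{\partial \theta_i} = \frac{\partial u_\theta}{\partial \theta}\zeta$, together with its Hilbert-space adjoint $J_\theta^* \colon \mathbb{X} \to \mathbb{R}^m$, which by definition sends $h \mapsto \bigl(\langle h, \frac{\partial u_\theta}{\partial \theta_i}\rangle_\mathbb{X}\bigr)_{i=1}^m$. The chain rule for Fr\'echet derivatives gives $\partial_{\theta_i} F(\theta) = \langle D_u \mathscr{F}(u_\theta), \frac{\partial u_\theta}{\partial \theta_i}\rangle_\mathbb{X}$, which in vector form is precisely $\nabla_\theta F(\theta) = J_\theta^* \, D_u \mathscr{F}(u_\theta)$. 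Moreover, by comparing entries one sees $M(\theta) = J_\theta^* J_\theta$. With this reformulation, both bullets reduce to standard facts about $J_\theta^* J_\theta$.

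For the first bullet, I would establish the algebraic identity $\ker(J_\theta^* J_\theta) = \ker(J_\theta)$, which follows from $\langle J_\theta^* J_\theta \zeta, \zeta\rangle = \|J_\theta \zeta\|_\mathbb{X}^2$. Since the codomain $\mathbb{R}^m$ is finite-dimensional, $\mathrm{Ran}(J_\theta^*)$ is automatically closed, so one has the chain of equalities $\mathrm{Ran}(M(\theta)) = \mathrm{Ran}(J_\theta^* J_\theta) = \ker(J_\theta^* J_\theta)^\perp = \ker(J_\theta)^\perp = \mathrm{Ran}(J_\theta^*)$, where self-adjointness of $M(\theta)$ is used for the first equality and the orthogonal decomposition of $\mathbb{R}^m$ for the last. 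Since $\nabla_\theta F(\theta) = J_\theta^* D_u \mathscr{F}(u_\theta)$ already lies in $\mathrm{Ran}(J_\theta^*)$, the first bullet follows at once.

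For the second bullet, I would expand the quadratic $\|D_u \mathscr{F}(u_\theta) - J_\theta \zeta\|_\mathbb{X}^2 = \|D_u \mathscr{F}(u_\theta)\|_\mathbb{X}^2 - 2\langle J_\theta^* D_u \mathscr{F}(u_\theta), \zeta\rangle + \zeta^\top M(\theta) \zeta$, which is convex in $\zeta\in\mathbb{R}^m$. Setting its gradient to zero yields the normal equation $M(\theta)\zeta = \nabla_\theta F(\theta)$, so any $\mathbf{v}$ satisfying this system is a global minimizer, which is the least-squares characterization. The orthogonal projection claim is then a direct verification: for every $\zeta\in\mathbb{R}^m$,
\begin{equation*}
\Big\langle D_u \mathscr{F}(u_\theta) - J_\theta \mathbf{v},\, J_\theta \zeta \Big\rangle_\mathbb{X} = \Big\langle J_\theta^* D_u \mathscr{F}(u_\theta) - M(\theta)\mathbf{v},\, \zeta \Big\rangle_{\mathbb{R}^m} = 0,
\end{equation*}
so the residual is orthogonal to $\mathrm{Ran}(J_\theta) = \mathrm{span}\{\partial u_\theta/\partial \theta_i\}_{i=1}^m$, which is exactly the defining property of the orthogonal projection of $D_u \mathscr{F}(u_\theta)$ onto that subspace. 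The whole argument is bookkeeping with adjoints; the only delicate point worth flagging is the closedness of $\mathrm{Ran}(J_\theta^*)$ used in the first bullet, but this is immediate because $J_\theta^*$ has finite-dimensional codomain, so no subtler Hilbert-space machinery is required.
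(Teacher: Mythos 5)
Your argument is correct, and for the second bullet it essentially coincides with the paper's proof: both expand the least-squares objective as $\zeta^\top M(\theta)\zeta - 2\zeta^\top\nabla_\theta F(\theta) + \mathrm{Const}$, invoke convexity (positive semidefiniteness of the Gram matrix) to identify minimizers with solutions of the normal equation, and obtain orthogonality of the residual --- you by moving the adjoint across the inner product, the paper by differentiating $f(\mathbf{v}+s\mathbf{w})$ at $s=0$, which is the same computation in different clothing. Where you genuinely differ is the first bullet. The paper argues constructively: it decomposes $D_u\mathscr{F}(u_\theta)$ into its orthogonal projection onto $\mathrm{span}\{\partial u_\theta/\partial\theta_i\}$ plus a perpendicular part, writes the projection as $\frac{\partial u_\theta}{\partial\theta}\mathbf{u}$, and notes the perpendicular part is annihilated when pairing with $\frac{\partial u_\theta}{\partial\theta}$, so that $\nabla_\theta F(\theta) = M(\theta)\mathbf{u}$ with an explicit preimage. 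You instead work at the operator level, writing $M(\theta)=J_\theta^*J_\theta$ and $\nabla_\theta F(\theta)=J_\theta^*D_u\mathscr{F}(u_\theta)$, and proving $\mathrm{Ran}(J_\theta^*J_\theta)=\mathrm{Ran}(J_\theta^*)$ via $\ker(J_\theta^*J_\theta)=\ker(J_\theta)$ and the orthogonal range--kernel decomposition of $\mathbb{R}^m$; you correctly flag that closedness of $\mathrm{Ran}(J_\theta^*)$ is automatic from the finite-dimensional codomain. Your route yields the stronger by-product $\mathrm{Ran}(M(\theta))=\mathrm{Ran}(J_\theta^*)$ and avoids invoking the projection operator until the second bullet (where the paper's first-bullet argument implicitly relies on the projection onto the finite-dimensional, hence closed, span), while the paper's route is shorter and hands you the solution vector $\mathbf{u}$ explicitly. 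Both are complete proofs.
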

We defer the proof of this lemma to Appendix \ref{append: subsec proof of NG lemma}.

We should mention that the Moore-Penrose inverse $M(\theta)^\dagger \nabla_\theta F(\theta)$ yields a solution to the least square problem mentioned above. For the convenience of our future discussion, we denote the orthogonal projection (w.r.t. inner product on $\mathbb{X}$) onto $\mathrm{span}  \{\frac{\partial u_\theta}{\partial \theta_1}, \dots, \frac{\partial u_\theta}{\partial \theta_m}\}$ as $\Pi_{\partial_\theta u_\theta}:\mathbb X \rightarrow \mathbb{X}$, we thus have
\[ \frac{\partial u_\theta}{\partial \theta} M(\theta)^\dagger \nabla_\theta F(\theta) = \Pi_{\partial_\theta u_\theta}\left[ D_u \mathscr{ F } (u_\theta)  \right]. \]
Correspondingly, we denote the orthogonal projection onto the orthogonal complement of $\mathrm{span}  \{\frac{\partial u_\theta}{\partial \theta_1}, \dots, \frac{\partial u_\theta}{\partial \theta_m}\}  $ as $\Pi_{\partial_\theta u_\theta^\perp}:\mathbb X \rightarrow \mathbb{X}$, we have,
\[ D_u \mathscr{F}(u_\theta)    - \frac{\partial u_\theta}{\partial \theta} M(\theta)^\dagger \nabla_\theta F(\theta) = \Pi_{\partial_\theta u_\theta^\perp}\left[ D_u \mathscr{ F } (u_\theta)  \right]. \]

\subsection{Convergence analysis of the NPDG flow}\label{subsection: convergence of NPDG for linear PDE }

\blue{Throughout this section, we assume that $\Omega \subset \mathbb{R}^d$ is a bounded open domain with Lipschitz boundary $\partial \Omega$ \citep{grisvard2011elliptic}. That is, $\partial \Omega$ can be locally represented as the graph of a Lipschitz function.}

{\color{black}
\subsubsection{\textit{A posteriori} convergence result for general linear PDEs}}
Recall that we consider the linear equation \eqref{linear PDE} defined on $\mathbb{H}$. We assume $u_*$ as a real solution to \eqref{linear PDE}. We will adopt the notations used in previous Section \ref{subsec: PDHG for PDE} and \ref{subsec: precond PDHG method for PDE}. In our discussion, we always assume that the operator $\widetilde{\mathcal L}$ is bounded from above and below in the sense of
\begin{equation}
  0 < L_0 \leq \inf_{ \textbf{u} \in \primalspc }\frac{\|\widetilde{\mathcal L}  \textbf{u}  \|_{L^2(\Omega)}}{\|\textbf{u}\|_{L^2(\Omega)}} \leq
  \sup_{ \textbf{u} \in \primalspc }\frac{\|\widetilde{\mathcal L} \textbf{u}\|_{L^2(\Omega)}}{\|\textbf{u}\|_{L^2(\Omega)}} \leq L_1 <\infty.  \label{boundness of L, L_0, L_1}
\end{equation}
We denote $L_1 \vee 1 = \max\{L_1, 1\}$ and $L_0 \wedge 1 = \min\{L_0, 1\}$, and 
\begin{equation}
\widetilde{\kappa} = \frac{L_1\vee 1}{L_0 \wedge 1}\label{def: tilde kappa}
\end{equation}
for shorthand.

Suppose that we perform the NPDG flow up to a time $T$. We denote $\alpha, \beta_1, \beta_2\in [0, 1]$ as coefficients quantifying the approximation power of the subspaces spanned by $\left\{(\frac{\partial \mathcal M_p  u_{\theta_t}}{\partial \theta_k}, \sqrt{\lambda}\frac{\partial\mathcal B u_{\theta_t}}{\partial \theta_k})\right\}_{1\leq k \leq m_\theta}$, $\left\{ \frac{\partial \mathcal M_d \varphi_{\eta_t}}{\partial \eta_k}\right\}_{1\leq k \leq m_{\eta}}$, and $\left\{\frac{\partial \psi_{\xi}}{\partial \xi_k}\right\}_{1\leq k \leq m_\xi}$ for $t\in [0, T]$. To be more specific, $\alpha$ is a constant satisfying
{\small\begin{align*}
    \min_{\substack{\zeta\in\mathbb{R}^{m_\theta}\\ \zeta_1, \dots, \zeta_{m_\theta} \in \mathbb{R}}} & \left\{ {\Big\|\sum_{k=1}^{m_\theta} \zeta_k \frac{\partial \mathcal M_p u_{\theta_t}}{\partial \theta_k} - \mathcal M_p (u_{\theta_t} - u_*) \Big\|_{L^2(\Omega)}^2 + \Big\|\sum_{k=1}^{m_\theta} \zeta_k \sqrt{\lambda} \frac{\partial \mathcal B u_{\theta_t}}{\partial \theta} - \sqrt{\lambda}  \mathcal B (u_{\theta_t} - u_*)\Big\|_{L^2(\partial \Omega)}^2} \right\}\\
    &  \quad \quad \quad \quad \quad \quad   \leq  \alpha^2 (\|\mathcal M_p (u_{\theta_t} - u_*)\|_{L^2(\Omega)}^2 + \| \sqrt{\lambda} \mathcal B (u_{\theta_t} - u_*)\|_{L^2(\partial \Omega)}^2), \quad \textrm{for all } t \in [0, T]. 
\end{align*}}
Recall that we define $  \mathbb{L}^2  = L^2(\Omega)\times L^2(\partial\Omega)$, we denote the subspace 
\begin{equation*}
  \partial_\theta \textbf{U}_\theta = \mathrm{span} \left\{\left(\frac{\partial \mathcal M_p  u_\theta}{\partial \theta_1}, \sqrt{\lambda}\frac{\partial\mathcal B u_\theta}{\partial \theta_1}\right),\dots,\left(\frac{\partial \mathcal M_p  u_\theta}{\partial \theta_{m_\theta}}, \sqrt{\lambda}\frac{\partial\mathcal B u_\theta}{\partial \theta_{m_\theta}}\right)\right\} \subset \mathbb{L}^2.
\end{equation*}
Then, $\alpha$ quantifies the upper bound of the relative $\mathbb{L}^2$ norm of $\partial_\theta \textbf{U}_{\theta_t}^\perp$ component of $(\mathcal M_p (u_{\theta_t} - u_*), \mathcal B (u_{\theta_t} - u_*))$ for $t \in  [0, T]$. Here we denote $\partial_\theta \textbf{U}_{\theta_t}^\perp$ as the subspace of $\mathbb{L}^2$ that is orthogonal to $\partial_\theta \textbf{U}_{\theta_t}$ w.r.t. $\mathbb{L}^2$ inner product. Similarly, we denote the subspace
\blue{
  \[  \partial_{\eta, \xi} \boldsymbol{\Phi}_{\eta, \xi} = \mathrm{span}\left\{ \frac{\partial \mathcal M_d \varphi_{\eta}}{\partial \eta_k} \right\}_{ k=1 }^{m_{\eta}} \times   \mathrm{span}\left\{ \sqrt{\lambda}\frac{\partial \psi_{\xi}}{\partial \xi_k} \right\}_{ k = 1 }^{m_\xi} \subset \mathbb{L}^2. \]
}
$\beta_1, \beta_2$ denote the upper bounds of the relative $\mathbb{L}^2$ norms of $\partial_{\eta, \xi} \boldsymbol{\Phi}_{\eta_t, \xi_t}^\perp$ components of $(\widetilde{\mathcal L} \mathcal M_p ( u_{\theta_t} - u_* ), \sqrt{\lambda}\mathcal B (u_{\theta_t} - u_* ))$ and $(\mathcal M_d \varphi_{\eta_t}, \sqrt{\lambda}\psi_{\xi_t})$. The detailed definitions of $\alpha, \beta_1, \beta_2$ can be found later in \eqref{def: alpha}, \eqref{def: beta1} and \eqref{def: beta2}. 

The following Theorem analyzes the convergence of the numerical solution $u_{\theta_t}$ solved from \eqref{PDHG flow} on $[0, T]$.
\begin{theorem}[\textit{A posteriori} convergence analysis of NPDG flow]\label{thm: convergence analysis of NPDG flow} 
Suppose $\{(\theta_t, \eta_t, \xi_t)\}$ solves the NPDG flow \eqref{PDHG flow} on $[0, T]$. Recall that $\alpha, \beta_1, \beta_2$ quantify the approximation quality of neural networks $u_{\theta_t}, \varphi_{\eta_t}, \psi_{\xi_t}$ through $[0, T]$, and $\widetilde{\kappa}$ denotes the condition number \eqref{def: tilde kappa}. Suppose $\alpha + \beta_1 < \frac{1}{\widetilde{\kappa}^2}$, $\beta_2 < 1$, if we further assume that the hyperparameters of the NPDG flow $\gamma, \blue{\nu} > 0$ satisfy 
\begin{equation}
    \left(\frac{1}{\widetilde{\kappa}^2} - (\alpha + \beta_1)\right) \cdot (1-\beta_2) > \frac{ ((1+\beta_1)\gamma\blue{\nu} + \beta_2 + \alpha|1-\gamma\blue{\nu}|)^2}{ 4 \gamma\blue{\nu} }.  \label{ineq condition for posdef of Gamma}
\end{equation} 
Then there exists a constant $r > 0$, such that 
\begin{equation*}
  \|\mathcal M_p (u_{\theta_t} - u_* )\|_{L^2(\Omega; \mathbb{R}^r)}^2 + \lambda \|\mathcal B (u_{\theta_t} - u_*) \|_{L^2(\partial \Omega) }^2 \leq 2  \exp(-rt)  \cdot C_0 \quad \textrm{for } 0 \leq t \leq T.
\end{equation*}
Here $C_0  \geq  0$ is a constant depending on the initial value $(\theta_0, \eta_0, \xi_0)$ of the NPDG flow. We note that $r>0$ is the convergence rate depending on $\widetilde{\mathcal L}$, the hyperparameters $\gamma, \blue{\nu}$, and the relative errors $\alpha, \beta_1, \beta_2$. The explicit form of $r$ is provided in \eqref{def: convergence rate}.
\end{theorem}
The proof of Theorem \ref{thm: convergence analysis of NPDG flow} is provided in Appendix \ref{append: sec proof a posteriori convergence analysis}.

\blue{The dependence of the convergence rate \( r \) on \( \gamma, \nu, \alpha, \beta_1, \beta_2 \) can be significantly simplified as \( \alpha \) and \( \beta_2 \) approach $0$. This assumption is reasonable provided that the tangent spaces associated with the primal and test networks, \( \partial \mathbf U_\theta \) and \( \partial \Phi_{\eta,\xi} \), possess sufficiently strong approximation power for \( \mathbf U_\theta \) and \( \Phi_{\eta,\xi} \), respectively.} Actually, if we assume that $u_\theta(\cdot), \varphi_\eta(\cdot), \psi_\xi(\cdot)$ are linear combinations of basis functions, i.e., 
\begin{equation}
    u_\theta(x) = \sum_{k=1}^{m_\theta} \theta_k u_k(x),  \quad \varphi_\eta(x) = \sum_{k=1}^{m_\eta} \eta_k \varphi_k(x), \quad \psi_\xi(x) = \sum_{k=1}^{m_\xi} \xi_k \psi_k(x),  \label{linear combination}
\end{equation}
with $\theta, \eta, \xi$ serving as the coefficients of the basis functions and $u_k\in H^2(\Omega)$, $\varphi_k\in H_0^1(\Omega)$, $\psi_k\in L^2(\partial \Omega)$. If $u_*$ can further be represented by linear combination of $\{u_k\}_{k=1}^{m_\theta}$, then it holds exactly that $\alpha = \beta_2 = 0$. And the sufficient condition \eqref{ineq condition for posdef of Gamma} for the convergence of NPDG flow reduces to $ \gamma\blue{\nu} \leq \frac{4(\widetilde{\kappa}^{-2} - \beta_1)}{(1+\beta_1)^2}.$
\blue{Furthermore, we have the explicit lower bound of $r$
\begin{equation}\label{explicit convergence rate origin analysis}
  r \geq \frac{\left(4\left(\frac{1}{\widetilde{\kappa}^2} - \beta_1\right) - (1+\beta_1)^2\gamma\nu\right)\cdot\gamma\nu}{8\left((\frac{1}{\widetilde{\kappa}^2  } - \beta_1 )\gamma + \frac{\nu}{(L_1\vee 1)^2}\right)}.
\end{equation}}

\blue{Theorem \ref{thm: convergence analysis of NPDG flow} is \textit{a posteriori} analysis as one may verify whether the estimate in the theorem is valid \textit{after} obtaining the solution $(\theta_t,\eta_t,\xi_t)$, by checking whether condition~\eqref{ineq condition for posdef of Gamma} holds on a finite time interval $[0,T]$.} It is worth mentioning that as $t$ increases, $u_{\theta_t}$ will approach the real solution $u_*$; however, as the approximation gets better, the error term $\left( \mathcal M_p  (u_{\theta_t} - u_*), \mathcal B  (u_{\theta_t} - u_*) \right)$ will erect orthogonally away from the exploration space $\partial_\theta \textbf{U}_{\theta_t}$. Consequently, the quantity $\alpha$ will approach $1$, and so will $\beta_1 \rightarrow 1$. That may prevent condition $\alpha+\beta_1 < \frac{1}{\widetilde{\kappa}^2}$ at a certain time $t$ along the NPDG flow (recall that $\widetilde \kappa > 1$), thus yielding the analysis only applicable on a finite time interval.

{\color{black}
\subsubsection{A refined convergence analysis for elliptic PDEs of divergence form}\label{subsec: numerical analysis with stronger bdd norm}

The main challenge in establishing an \emph{a priori} convergence result that extends to the infinite time horizon stems from the imbalance between the primal function space \( \mathbb{H} \) and the test function space \( \blue{\mathbb{K}^{\mathrm{test}}} \). To be more specific, let us consider the Dirichlet boundary problem associated with Example \ref{ex: 1}
\[ -\Delta u = f, \quad \mathcal B u = g, \]
where $\mathcal B$ denotes the trace operator. Then we set $\mathbb{H}=H^2(\Omega), \mathbb{K}^{test}=H_0^1(\Omega)$. Suppose we compute the equation using $u_\theta, \varphi_\eta, \psi_\xi$ as defined in \eqref{linear combination}. Then, the convergence of the NPDG flow is anticipated as long as the tangent space $\partial_\theta \textbf{U}_\theta=\mathrm{span}\{\nabla u_k, \mathcal B u_k\}$ and $\partial_{\eta, \xi} \boldsymbol{\Phi}_{\eta, \xi}=\mathrm{span}\{\nabla\varphi_k\}, \times \mathrm{span}\{\psi_k\}$ along which the primal function $(\nabla u_\theta, \mathcal B u_\theta)$ and the test functions $(\varphi_\eta, \psi_\xi)$ move can effectively approximate the gradients of $\mathscr{E}(u_{\theta}, \varphi_{\eta}, \psi_{\xi})$. The main issue arises when using $\mathrm{span}\{\nabla \varphi_k\}$ to approximate the term $\nabla(u_\theta - u_*).$ 
Since each $\varphi_k \in \blue{\mathbb{K}^{test}} = H_0^1(\Omega)$, however, $u_\theta - u_*\in \mathbb{H}=H^2(\Omega)$ is not guaranteed to lie in $H_0^1(\Omega)$. Therefore, a significant approximation error
\begin{equation} 
  \|\Pi_{\mathrm{span}\{\nabla\varphi_1, \dots, \nabla\varphi_{m_\eta}\}^\perp}[\nabla(u_\theta - u_*)]\|_{L^2(\Omega)}^2 = \min_{\varphi \in \mathrm{span}\{\varphi_1, \dots, \varphi_{m_\eta}\}}\| \nabla (u_\theta - u_*) - \nabla \varphi \|^2_{L^2(\Omega)},  \label{proj on H_0 error term}
\end{equation}
will arise and it will bound $\beta_1$ away from $0$, and hence impeding the convergence of the NPDG flow. Intuitively, if we assume that $\mathrm{span}\{\varphi_1, \dots, \varphi_{m_\eta}\}$ constitutes a basis of $H_0^1(\Omega)$ as $m_\eta\rightarrow \infty,$ the term \eqref{proj on H_0 error term} can exactly be bounded from above by the fractional Sobolev norm $\|u_\theta - u_*\|^2_{H^{1/2}(\partial \Omega)}$ \citep{gagliardo1957caratterizzazioni, grisvard2011elliptic}.  
This key observation yields a remedy to substitute the original boundary loss term $L^2(\partial \Omega)$ with a stronger norm such as $H^{1/2}(\partial \Omega)$ in order to offset the approximation error caused by the imbalance between the primal and the test functional spaces during natural gradient optimization.

In the following discussion, we establish a modified version of Theorem \ref{thm: convergence analysis of NPDG flow} by incorporating suitable boundary loss. We primarily focus on an important class of linear elliptic equations in divergence form,
\[ 
  -\nabla\cdot(A(x)\nabla u(x))=f(x) ~ \textrm{on } \Omega, \quad u(x)=g(x) ~\textrm{on } \partial \Omega,
\]
where $f\in L^2(\Omega), g\in H^1(\partial \Omega) \subset H^{1/2}(\partial \Omega),$ $A(\cdot):\Omega\rightarrow \mathbb{R}^{d\times d}$ with each entry $A_{ij}(\cdot)\in L^2(\Omega)\cap C^1(\Omega)$. We further assume $A(\cdot)$ is bounded from both above and below, i.e., for any $x\in\mathbb{R}^d$, we always have $\underline{A}\cdot\|\xi\|^2 \leq \xi^\top A(x) \xi \leq \overline{A} \|\xi\|^2$, with the constants $\overline{A} \geq \underline{A} > 0.$ 

Let us denote the Hilbert space $\mathcal X$ such that $H^{3/2}(\partial \Omega)\subseteq\mathcal X\subseteq H^{1/2}(\partial \Omega)$ with continuous inclusion map $\mathcal X \hookrightarrow H^{1/2}(\partial \Omega).$ Then, there exists a constant $C_\mathcal X>0$ such that
\[ \|\mathcal B w\|_{\mathcal X} \geq C_\mathcal X \|\mathcal B w\|_{H^{1/2}(\partial \Omega)}, \quad \textrm{for any } w \in \mathcal X. \]
Denote $\langle\cdot\, , \, \cdot\, \rangle_{\mathcal X}$ as the inner product on $\mathcal X$, we slightly modify the original functional $\mathscr{E}$ by using the $\mathcal X-$boundary norm,
\begin{equation}
\begin{split}\label{def: new loss funcitional with modif bdd err}
  {\mathscr{E}_\mathcal X}(u, \varphi, \psi)   
  \!=\! \langle \widetilde{\mathcal L} \mathcal M_p  u , \mathcal M_d \varphi \rangle_{L^2} \! - \! \langle f, \varphi \rangle_{L^2} \! + \! \lambda \langle \mathcal B u - g, \psi \rangle_{\mathcal X} \! - \! \frac{\blue{\nu}}{2}(\|\mathcal M_d \varphi\|^2_{L^2} \! + \! \lambda \|\psi\|^2_{\mathcal X}).
\end{split}
\end{equation}
Suppose we conduct the NPDG flow \eqref{PDHG flow} 
associated with $\mathscr{E}_{\mathcal X}(u_\theta, \varphi_\eta, \psi_\xi)$. 
We further introduce a useful seminorm $|\cdot|_{H^1(\Omega, A)}$ on $H^1(\Omega)$:
\begin{equation}
  |u|_{H^1(\Omega, A)}^2 = \int_\Omega \nabla u(x)^\top A(x) \nabla u(x) \, \dd x, \quad \textrm{ for } u \in H^1(\Omega).  \label{def: seminorm dot H1(Omega, A)}
\end{equation}
We have the following theorem.
\begin{theorem}[A refined convergence analysis of NPDG flow]\label{thm: convergence analysis on elliptic PDE of divergence type}
  Suppose we pick the hyperparameters $\gamma, \blue{\nu}>0$ such that $\gamma\blue{\nu} < 2$. Assume the coefficient $\lambda$ associated with the boundary loss is sufficiently large, i.e., $\lambda \geq 8\overline{A}\left(\frac{ \gamma(1+\blue{\nu}) }{C_\mathcal X \cdot c_\Omega}\right)^2,$
  where $c_\Omega>0$ is a constant whose value is discussed in detail in Appendix~\ref{append: subsec frac Sobolev spc}. Then we have
  \begin{equation}\label{refined thm bound}
    |u_{\theta_t}-u_*|_{ H_1(\Omega, A)}^2 + \lambda\|u_{\theta_t} - u_*\|_{\mathcal X}^2 \leq  \left( \! e^{-rt} \! \sqrt{E_0} \! + \! \int_0^t \! \frac{1}{2}e^{-r(t-\tau)} \mathrm{Err}(\theta_\tau, \eta_\tau, \xi_\tau, \gamma, \nu, \lambda) \dd \tau \! \right)^2. 
  \end{equation}
 Here, $r$ denotes the convergence rate. It can be shown that the convergence rate 
 \begin{equation}\label{convergence rate of refined theorem}
    r \geq \frac12\cdot\frac{\gamma\blue{\nu}(2-\gamma\blue{\nu})}{\gamma+2\blue{\nu}}.
 \end{equation}
 We set $E_0 = |u_{\theta_0}-u_*|_{ H_1(\Omega, A)}^2 + \lambda\|u_{\theta_0} - u_*\|_{\mathcal X}^2$ as the initial error. $\mathrm{Err}(\theta_t, \eta_t, \xi_t, \gamma, \nu, \lambda)$ represents the summation of the approximation errors associated with the tangential spaces of primal and test networks. The detailed formulation is provided in \eqref{def: Err(t)}.  
\end{theorem}
We prove Theorem \ref{thm: convergence analysis on elliptic PDE of divergence type} in Appendix \ref{append: subsec proof of refined thm}.

As a corollary of Theorem \ref{thm: convergence analysis on elliptic PDE of divergence type}, an \textit{a priori} convergence result can be obtained under the case in which $u_\theta, \varphi_\eta, \psi_\xi$ are linear combinations of basis functions.
\begin{corollary}[\textit{A priori} convergence of NPDG flow]\label{coro: a priori convergence analysis}
  Suppose that $u_\theta, \varphi_\eta, \psi_\xi$ are linear combinations \eqref{linear combination} of suitable basis functions $u_k\in H^2(\Omega)$, $\varphi_k\in H_0^1(\Omega)$, $\psi_k \in \mathcal X$. We keep all the notations and the conditions on $\gamma,\blue{\nu},\lambda$ mentioned in Theorem \ref{thm: convergence analysis on elliptic PDE of divergence type}. We denote
  \begin{equation}
  \begin{split}
    & \mathcal E_u := \min_{\zeta\in\mathbb{R}^{m_\theta}} \int_\Omega \|\sum_{k=1}^{m_\theta}\zeta_k\nabla u_k - \nabla u_*\|^2 \dd x + \frac{\lambda}{\overline{A}}\|\sum_{k=1}^{m_\theta}\zeta_k \mathcal B u_k - g\|_{\mathcal X}^2. \\
    & \mathcal E_{\nabla\varphi} := \min_{\zeta\in\mathbb{R}^{m_\eta}} \int_\Omega \|\sum_{k=1}^{m_\eta}\zeta_k\nabla \varphi_k - \nabla \varphi_*\|^2 \dd x, \quad \mathcal E_{\psi} := \min_{\zeta\in\mathbb{R}^{m_\xi}} \|\sum_{k=1}^{m_\xi}\zeta_k \psi_k - g\|_{\mathcal X}^2.
     \end{split}
  \end{equation}
  Here we denote $\varphi_*\in H_0^1(\Omega)$ as the (weak) solution $\varphi$ to
  \begin{equation*}
    -\nabla\cdot(A(x)\nabla\varphi(x)) = -\nabla\cdot(A(x)\nabla u_*(x)), ~ \textrm{on }\Omega,  \quad  \varphi = 0 ~ \textrm{on }\partial \Omega. 
  \end{equation*}
  Suppose the basis $\{u_k\}$ are picked so that $\mathrm{span}\{\mathcal T u_k\}\subseteq \mathrm{span}\{\varphi_k\}, \mathrm{span}\{\mathcal B u_k\} \subseteq \mathrm{span}\{\psi_k\},$ we then have 
  \begin{equation}\label{thm refine for linear comb u varphi psi bound}
  \begin{split}
    \Big(
        |u_{\theta_t}-u_*|_{ H_1(\Omega, A)}^2
        + \lambda \|u_{\theta_t}-u_*\|_{\mathcal X}^2
    \Big)^{\frac12}
    \;\le\;&
    e^{-rt}\sqrt{E_0} \\
    &+ \frac{1-e^{-rt}}{r}
    \Bigg[
    (3 \vee \gamma)\sqrt{\overline{A}\,\mathcal E_u}
    + \frac{\gamma+3}{\sqrt{2}}
    \sqrt{\overline{A}\,\mathcal E_{\nabla \varphi} + \mathcal E_\psi}
    \Bigg].
  \end{split}
  \end{equation}
  The convergence rate $r > 0$ satisfies the same lower bound as described in \eqref{convergence rate of refined theorem}.
\end{corollary}
The corollary is proved in Appendix \ref{append: coro proof}.

\begin{remark}
It is straightforward to verify the following bound
\begin{equation}\label{thm refine for linear comb u varphi psi bound2}
\begin{split}
    \Big(
        \|\nabla u_{\theta_t}-\nabla u_*\|_{L^2}^2
        \! + \! \lambda \|u_{\theta_t}-u_*\|_{\mathcal X}^2 
    \Big)^{\frac12} &  
    \! \le 
    e^{-rt}\sqrt{\frac{E_0}{\underline{A}}} \\
    & \! + \! \frac{1-e^{-rt}}{r} \!
    \Bigg[
    (3 \vee \gamma)\sqrt{\kappa(A)\,\mathcal E_u}
    \!+\! \frac{\gamma+3}{\sqrt{2}}
    \sqrt{\kappa(A)\,\mathcal E_{\nabla \varphi} + \frac{\mathcal E_\psi}{\underline{A}}}
    \Bigg].
\end{split}
\end{equation}
provided $\xi^\top A(\cdot) \xi \geq \underline{A}\|\xi\|^2$ for $\xi\in\mathbb{R}^d$. Here we denote $\kappa(A):=\overline{A}/\underline{A}\geq 1$.
\end{remark}

A natural choice of the boundary norm $\mathcal X$ is the fractional Sobolev norm $H^{1/2}(\partial \Omega)$. However, the definition \eqref{def: H^1/2 norm} might not be convenient to evaluate. A more practical choice could be the $H^1(\partial \Omega)$ norm, which has been considered in recent reference such as \citep{shao2025solving} for nonlinear equations. In Section \ref{subsec: varcoeff}, we present numerical examples that incorporate the $H^{1}(\partial \Omega)$ boundary norm computed using Monte-Carlo approximations. The numerical evidence provided in Figure \ref{fig: VarCoeff1} suggests improved convergence and accuracy of the algorithm.

\begin{remark}[Hyperparameters $\gamma, \blue{\nu}, \lambda$]
Notice that in the \textit{a posteriori} analysis presented in Theorem~\ref{thm: convergence analysis of NPDG flow}, the convergence rate \( r \) depends not only on the hyperparameters \( \gamma \) and \( \nu \), but also on the relative approximation errors \( \alpha, \beta_1, \beta_2 \), which themselves depend on \( \gamma \) and \( \nu \). This interdependence makes it infeasible to determine suitable values of the hyperparameters \( \gamma \) and \( \nu \) in practice.

In contrast, under the current framework, the convergence rate \( r \) depends solely on \( \gamma,\nu \). As a result, we can explicitly maximize the lower bound $\frac{1}{2}\cdot \frac{\gamma \nu \bigl(2 - \gamma \nu\bigr)}{\gamma + 2\nu},$
which is attained at \( \gamma_* = \frac{2}{\sqrt{3}} \) and \( \nu_* = \frac{1}{\sqrt{3}} \). Consequently, the convergence rate satisfies $r \;\ge\; \frac{2}{3\sqrt{3}}.$

In both Theorem \ref{thm: convergence analysis on elliptic PDE of divergence type} and Corollary \ref{coro: a priori convergence analysis}, the choice of $\lambda>0$ relies on the constant $c_\Omega$ associated with the $H^{1/2}(\partial \Omega)$ norm. It is usually intractable to determine $c_\Omega$ for general $\Omega$. In practice, we pick a rather large $\lambda=10$ to ensure better performance of the method. 
\end{remark}}

{\color{black}
\begin{remark}
The Lyapunov-based proof framework for the NPDG flow developed in this section is inspired by earlier studies in \citep{liu2022primal}, \citep{liu2024numerical} which focus on fully time-implicit schemes for conservation laws and reaction--diffusion equations. We clarify several fundamental differences that distinguish the present theoretical analysis from these previous works in Appendix \ref{apped: subsec_comp_prev_works}. 
\end{remark}}

\section{Algorithm}  \label{sec: alg}

In this section, we provide a detailed description on implementing the NPDG algorithm \eqref{def: intro alg NPDG}. We take the linear PDE \eqref{linear PDE} as an illustrative example.

\subsection{Loss functional and the precondition matrices}
Recall our discussions in Section \ref{subsec: precond PDHG method for PDE}. We introduce the pair of dual neural networks $(\varphi_\eta, \psi_\xi )$ to equation \eqref{linear PDE} and consider the loss functional
\begin{equation*}
\begin{split}    
  \mathscr{E}(u, \varphi, \psi) =& \left( \int_\Omega \widetilde{\mathcal{L}} \mathcal M_p u(x)\cdot\mathcal M_d \varphi(x) - f(x)\varphi(x) \,\dd\mu - \frac{\blue{\nu}}{2} \int_\Omega \mathcal M_d \varphi(x)\cdot \mathcal M_d \varphi(x) \,\dd\mu \right) \\
  & + {\lambda} \left( \int_{\partial \Omega} (\mathcal B u(y) - g(y))\cdot \psi(y) \,\dd\mu_{\partial \Omega} - \frac{\blue{\nu}}{2} \int_{\partial \Omega} \psi(y)\cdot \psi(y) \,\dd\mu_{\partial \Omega} \right).
\end{split}
\end{equation*}
\blue{It is worth noting that the loss functional $\mathscr{E}(u,\varphi,\psi)$ considered here differs slightly from that introduced in~\eqref{preconditioned J loss funcitional }, as we incorporate probability measures $\mu$ and $\mu_{\partial\Omega}$ in the evaluation of the integrals so that the loss can be efficiently approximated using Monte Carlo methods. Moreover, we assume that both measures are absolutely continuous with respect to the Lebesgue measures on $\Omega$ and $\partial\Omega$, respectively, with strictly positive densities, ensuring that the algorithm adequately explores the entire domain and boundary. A convenient choice is to take both $\mu$ and $\mu_{\partial\Omega}$ to be uniform distributions over $\Omega$ and $\partial\Omega$.}

Sometimes, it also helps if we add the $L^2$ boundary loss
\[ \|\mathcal B u - g\|_{L^2(\partial \Omega, \mu_{\partial \Omega})}^2 = \int_{\partial \Omega} (\mathcal B u - g)^2 \,\dd\mu_{\partial \Omega} \]
into $\mathscr{E}(u, \varphi, \psi)$, and consider
\[ \widetilde{\mathscr{E}}(u, \varphi, \psi) = \mathscr{E}(u, \varphi, \psi) + \lambda \|\mathcal B u - g\|_{L^2(\partial \Omega, \mu_{\partial \Omega})}^2. \]
We also recall that the precondition matrices $M_p(\theta)\in\mathbb{R}^{m_\theta\times m_\theta}, M_d(\eta)\in\mathbb{R}^{m_\eta\times m_\eta}, M_{bdd}(\xi)\in\mathbb{R}^{m_\xi \times m_\xi}$ are defined in \eqref{def: M_p}, \eqref{def: M_d} and \eqref{def: M_bdd}.

\subsection{Monte-Carlo approximation }
We apply the Monte Carlo algorithm to approximate the loss function ${\mathscr{E}}(u_\theta; \varphi_\eta, \psi_\xi)$ throughout our computation. Assume that $\{\boldsymbol{X}_i\}_{1\leq i\leq N_{in}}$, $\{\boldsymbol{Y}_j\}_{1\leq j\leq N_{bdd }}$ are samples uniformly drawn from the domain $\Omega$ and its boundary $\partial \Omega$, respectively. 
We compute
\begin{align}
  {\mathscr{E}}(u_\theta; \varphi_\eta, \psi_\xi) \approx & \frac{ 1}{N_{{in}}} \sum_{i=1}^{N_{in} } \widetilde{\mathcal L} \mathcal M_p u(\boldsymbol{X}_i)\cdot\mathcal M_d \varphi(\boldsymbol{X}_i) -f(\boldsymbol{X}_i)\varphi(\boldsymbol{X}_i) - \frac{\blue{\nu}}{2}\mathcal M_d \varphi_\eta(\boldsymbol{X}_i)\cdot \mathcal M_d \varphi_\eta(\boldsymbol{X}_i)  \nonumber\\
  & + \lambda \left( \frac{  1  }{N_{bdd}} \sum_{j=1}^{N_{bdd}} (\mathcal B u(\boldsymbol{Y}_j)-g(\boldsymbol{Y}_j))\psi_\xi(\boldsymbol{Y}_j) - \frac{\blue{\nu}}{2}\psi_\xi(\boldsymbol{Y}_j)\psi_\xi(\boldsymbol{Y}_j)  \right).  \nonumber
\end{align}
Here, ``$\cdot$'' denotes the inner product of vectors. For example, if $\mathcal M_p \! = \! \mathcal M_d \! = \! \nabla$, $\widetilde{\mathcal L}=\mathrm{Id}$, then
\[ \widetilde{\mathcal L} \mathcal M_p u(\boldsymbol{X}_i) \cdot \mathcal M_d \varphi(\boldsymbol{X}_i) = \nabla u(\boldsymbol{X}_i) \cdot \nabla \varphi(\boldsymbol{X}_i), \quad \mathcal M_d \varphi_\eta(\boldsymbol{X}_i)\cdot \mathcal M_d \varphi_\eta(\boldsymbol{X}_i) = \|\nabla \varphi(\boldsymbol{X}_i)\|^2.\]
For general nonlinear PDE, the loss function $\mathscr{E}(u_\theta; \varphi_\eta, \psi_\xi)$ can also be approximated via the Monte-Carlo algorithm. Its gradient $\nabla_\theta \mathscr{E}(u_\theta;\varphi_\eta,\psi_\xi)$ can be computed using auto-differentiation \citep{baydin2018automatic}.

Furthermore, it is also straightforward to evaluate the preconditioning matrices $M_p(\theta)$ via Monte Carlo method, for example $M_p(\theta)$ can be computed as \footnote{Recall that we denote $\frac{\partial}{\partial \theta} \mathcal M_p u_\theta(\boldsymbol{X}_i)$ as the Jacobian of $\mathcal M_p u_\theta$ at $\boldsymbol{X}_i$. For example, if one sets $\mathcal M_p = \nabla$, then $\frac{\partial}{\partial \theta} \mathcal M_p u_\theta(\boldsymbol{X}_i)$ is a $d\times m$ matrix. Similarly, we assume $\frac{\partial u_\theta(\boldsymbol{Y}_j)}{\partial \theta}$ is $1\times d$. }
\begin{align*}
 &  M_p(\theta) \approx \frac{ 1}{N_{{in}}} \sum_{i=1}^{N_{in}}  
 \frac{\partial}{\partial \theta}(\mathcal M_p  u_\theta(\boldsymbol{X}_i))^\top \frac{\partial}{\partial\theta}(\mathcal M_p u_\theta(\boldsymbol{X}_i)) + \frac{ \lambda }{N_{{in}}} \sum_{j=1}^{N_{bdd}}  
 \frac{\partial}{\partial \theta} u_\theta(\boldsymbol{Y}_j)^\top \frac{\partial}{\partial\theta}u_\theta(\boldsymbol{Y}_j) ,\\
\textrm{i.e., } & (M_p(\theta))_{ij} = \frac{ 1 }{N_{{in}}} \sum_{i=1}^{N_{in}} \frac{\partial \mathcal M_p  u_\theta(\boldsymbol{X}_i)}{\partial \theta_i} \cdot \frac{\partial \mathcal M_p u_\theta(\boldsymbol{X}_i) }{\partial\theta_j} + \frac{ \lambda}{N_{bdd}} \sum_{j=1}^{N_{bdd}} \frac{\partial u_\theta(\boldsymbol{Y}_j)}{\partial \theta_i}\frac{\partial u_\theta(\boldsymbol{Y}_j)}{\partial \theta_j}
.  
\end{align*}
It is worth mentioning that we use the \textit{same} set of samples for computation of both the loss function and the preconditioning matrices. \blue{A new set of samples is generated at each iteration of the NPDG algorithm.}

\subsection{Inverting the preconditioning matrices via Krylov iterative solver }
We then solve the least square problem \eqref{derive least square problem} for $\mathbf{v}$. As mentioned in Remark \ref{rk: pseudo inverse}, this is equivalent to \blue{solving} the linear equation 
\begin{equation} 
M_p(\theta_k) \textbf{v} = \nabla_\theta \mathscr{E}(u_{\theta_k}; \varphi_{\eta_k}, \psi_{\xi_k}).  \label{linear equ precondition }
\end{equation}
However, this may suffer from the limitation on scalability: The method always computes and records the entire preconditioning matrix $M_p(\theta_k)$ at each optimization step. For neural networks such as Multilayer Perceptron, $M_p(\theta)$ is generally non-sparse, which suggests that forming this $m\times m$ matrix will occupy immense memory space of the computing resources as the number of parameters of the neural networks increases. For example, in numerical experiment \ref{subsec: varcoeff}, we deal with MLP $u_\theta(\cdot)$ with $d_{in}=50, d_h=256, d_{out}=1 $ and $n_l = 6$, this neural network contains $m_\theta = 279090$ parameters. Forming such $m_\theta \times m_\theta$ matrix is generally infeasible.

As a mitigation, instead of the direct evaluation of the preconditioning matrices, we apply the MINRES algorithm, which is an iterative solver, to solve \eqref{linear equ precondition }. The MINRES iterative solver only requires matrix-vector multiplications that can readily avoid the direct formation of the preconditioning matrices. Similar treatment is also utilized in \citep{dembo1982inexact, martens2010deep, roosta2022newton, rathore2024challenges} and the references therein in optimization problems. The same technique is also used in \citep{wu2023parameterized, jin2024parameterized} to handle the computation of Wasserstein geometric flows. 

We briefly describe how we evaluate $M_p(\theta)\textbf{v}$ for arbitrary vector $\textbf{v}\in\mathbb{R}^m$ under the deep learning framework. Given neural network $u_\theta(\cdot):\mathbb{R}^d\rightarrow \mathbb{R} $ with parameter $\theta\in\mathbb{R}^m$, we make a copy $u_{\theta'}^{\textrm{copy}}(\cdot)$ by inheriting the architecture of $u_\theta(\cdot)$ and by setting $\theta' = \theta.$ We apply auto-differentiation to evaluate $\{\mathcal{M}_p u_\theta(\boldsymbol{X}_i)\}_{i=1}^{N_{in}}$ and $\{\mathcal{M}_p u_{\theta'}^{\textrm{copy}}(\boldsymbol{X}_i)\}_{i=1}^{N_{in}}$, we also evaluate $\{u_{\theta}(\boldsymbol{Y}_j)\}_{j=1}^{N_{bdd}}, \{u_{\theta'}^{\textrm{copy}}(\boldsymbol{Y}_j)\}_{j=1}^{N_{bdd}}$.  Then we compute the scalar 
\begin{equation} 
  \Gamma(\theta', \theta) = \frac{ 1 }{N_{in}} \sum_{i=1}^{ N_{in} }  \mathcal M_p u_{\theta'}^{\textrm{copy}}(\boldsymbol{X}_i)  \cdot  \mathcal M_p u_\theta(\boldsymbol{X}_i) + \frac{ \lambda }{N_{bdd}}\sum_{j=1}^{N_{bdd}}  u_{\theta'}^{\textrm{copy}}(\boldsymbol{Y}_j)u_{\theta}(\boldsymbol{Y}_j).   \label{def: Gamma }
\end{equation}
Now, by applying auto-differentiation again, we take the partial derivative of $\Gamma_{in}(\theta', \theta)$ w.r.t. $\theta$, and making an inner product with $\textbf{v}$, this yields $\partial_\theta \Gamma_{in }(\theta', \theta)^\top \textbf{v}$. Finally, taking the partial derivative w.r.t. $\theta'$ yields $\partial_{\theta'}(\partial_\theta \Gamma(\theta', \theta)\textbf{v})|_{\theta'=\theta} \approx M_p(\theta)\textbf{v}.$ This suggests an effective way of evaluating $M_p(\theta)\textbf{v}$ without forming $M_p(\theta)$ explicitly. We summarize this procedure in the following Algorithm \ref{alg: mat-vec mult}.
\begin{algorithm}
\caption{Evaluating $M_p(\theta)\textbf{v}$}\label{alg: mat-vec mult precond}\label{alg: mat-vec mult}
\begin{algorithmic}[1]
\Require Preconditioning operators $\mathcal M_p, \mathcal M_d$. Neural network $u_\theta(\cdot)$, samples $\{\boldsymbol{X}_i\}_{i=1}^{N_{in}}\subset \Omega$, $\{\boldsymbol{Y}_j\}_{j=1}^{N_{bdd}}\subset\partial\Omega$, vector $\textbf{v}\in\mathbb{R}^m$. 
\State Make a copy $u_{\theta'}^{\textrm{copy}}(\cdot)$ of the given $u_\theta(\cdot)$ with $\theta'=\theta$.
\State Evaluate $\Gamma(\theta', \theta)$ as defined in \eqref{def: Gamma }.
\State Apply auto-differentiation to evaluate $\partial_\theta \Gamma(\theta', \theta)\textbf v$.
\State Apply auto-differentiation to evaluate $\textbf u = \partial_{\theta'}(\partial_\theta \Gamma(\theta', \theta)\textbf{v} )$.
\Ensure $\textbf{u}$
\end{algorithmic}
\end{algorithm}
Similarly, the matrix-vector multiplication involving $M_d(\eta), M_{bdd}(\xi)$ can be computed by using the same technique.

\begin{remark}
  Calculating $M_p(\theta)\mathbf{v}$ can be further simplified by using the finite-difference approximation, which may lead to faster speed and lower memory cost. This technique has been conducted in several Hessian-free optimization algorithms \citep{martens2010deep, knoll2004jacobian, rathore2024challenges}. This possible improvement will serve as the future research directions.
\end{remark}

We adopt the \texttt{scipy.sparse.linalg.minres} solver in SciPy \citep{2020SciPy-NMeth} throughout our implementation. This algorithm involves two key hyperparameters: the maximum number of iterations $n_{\textrm{MINRES}}$, and the tolerance value $tol_{\textrm{MINRES}}$, which determines the acceptable relative residual. For more details on selecting these parameters, please refer to Section \ref{sec: numerical examples }.

\subsection{Sketch of main algorithm}
We summarize the proposed method in Algorithm \ref{alg: NPDG}.
\begin{algorithm}[htb!]
\caption{Natural Primal-Dual Hybrid Gradient method (NPDG)}\label{alg: NPDG}
\begin{algorithmic}[1]
\Require The equation $F(u, \nabla u, \nabla^2 u, \dots) = 0$ on $\Omega$ with (if any) boundary condition $\mathcal B u = g$ on $\partial \Omega$. Preconditioning operators $\mathcal M_p, \mathcal M_d$. The functional $\mathscr{E}(u, \varphi, \psi)$. Stepsizes $\tau_u, \tau_\varphi$, $\tau_\psi$ of the NPDG algorithm; extrapolation coefficient $\omega$; Total iteration number of the NPDG algorithm $N_{iter}$. Number of samples drawn from $\Omega$ and $\partial \Omega$: $N_{in}, N_{bdd}$. Max iteration number $n_{\textrm{MINRES}}$ and tolerance of relative residual $tol_{\textrm{MINRES}}$ of the MINRES algorithm.
\State Initialize the primal neural network $u_\theta(\cdot)$, dual neural network(s) $\varphi_\eta(\cdot)$ and $\psi_\xi(\cdot)$ if the equation is equipped with boundary condition(s).
\For {$iter=1~\textrm{to}~N_{iter}$} 
    \State Set $\eta_0 = \eta, \xi_0 = \xi$.
    \State \blue{Resample points for the Monte-Carlo estimations in each iteration.}
    \State Apply Monte-Carlo algorithm and auto-differentiation to evaluate 
    \[(\textbf{w}_\varphi^\top, \textbf{w}_\psi^\top)^\top = \nabla_{(\eta, \xi)}\mathscr{E}(u_\theta, \varphi_\eta, \psi_\xi).\]
    \State Apply MINRES algorithm ($n_{\textrm{MINRES}}, tol_{\textrm{MINRES}}$) with Algorithm \ref{alg: mat-vec mult} to solve 
    \[M_d(\eta)\textbf{v}_\varphi=\textbf{w}_\varphi, M_{bdd}(\xi)\textbf{v}_\psi=\textbf{w}_\psi.\]
    \State Update $\eta = \eta + \tau_\varphi \textbf{v}_\varphi, ~ \xi = \xi + \tau_\psi \textbf{v}_\psi$.   \Comment{Natural gradient ascent}
    \State Set $\widetilde{\varphi} = \varphi_\eta + \omega(\varphi_\eta - \varphi_{\eta_0})$, $\widetilde{\psi} = \psi_\xi + \omega(\psi_{ \xi } - \psi_{\xi_0})$.  \Comment{Extrapolation in}
    \Statex \hfill  \textit{functional space}
    \State Apply Monte-Carlo algorithm and auto-differentiation to evaluate 
    \[\textbf{w}_u = \nabla_\theta \mathscr{E}(u_\theta, \widetilde{\varphi}, \widetilde{\psi}).\]
    \State Apply MINRES algorithm ($n_{\textrm{MINRES}}$, $tol_{\textrm{MINRES}}$) with Algorithm \ref{alg: mat-vec mult} to solve
    \[ M_p(\theta) \textbf{v}_u  = \textbf{w}_u. \]
    \State Update $\theta = \theta - \tau_u\textbf{v}_u$  \Comment{Natural gradient descent}
\EndFor
\Ensure $u_{\theta}(\cdot)$
\end{algorithmic}
\end{algorithm}

{\color{black}
\begin{remark}
\label{rk: adaptive sampling}
  The probability measures $\mu$ and $\mu_{\partial\Omega}$ used in our implementation need not be uniform. In fact, it is often more appealing to choose these measures adaptively according to the residuals $\big|\mathcal L u_\theta(\cdot)-f(\cdot)\big|$, $\big|\mathcal B u_\theta(\cdot)-g(\cdot)\big|$. Samples from such adaptive distributions can be generated using Markov chain Monte Carlo methods or deep generative models~\citep{tang2023pinns}. However, when the density of $\mu$ is non-constant, applying integration by parts introduces additional score terms associated with $\mu$, which may significantly complicate the construction of the corresponding preconditioning matrices. One possible alternative is to choose $\mathcal M_p=\mathcal L$, $\widetilde{\mathcal L}=\mathrm{Id}$, and $\mathcal M_d=\mathrm{Id}$, thereby avoiding integration by parts altogether. The integration of the NPDG framework with adaptive sampling strategies will remain as an important direction for future research.
\end{remark}
}

\section{Numerical Examples}\label{sec: numerical examples }
In this section, we apply the proposed Natural Primal-Dual Hybrid Gradient (NPDG) algorithm to various types of PDEs, including linear and nonlinear, static, and time-dependent equations. We denote our method as the NPDG algorithm for simplicity.

Throughout numerical experiments, we set neural networks as Multilayer Perceptron (MLP). That is, the fully connected neural network with the input dimension $d_{\textrm{in}}$, the hidden dimension $d_{\textrm{hidden}}$, the output dimension $d_{\textrm{out}}$, and the number of layers $n_{\textrm{MLP}}$. We denote such MLP with activation function $f$ as $\texttt{MLP}_f(d_{\textrm{in}}, d_{\textrm{hidden}}, d_{\textrm{out}}, n_{\textrm{MLP}})$. Readers are referred to Appendix \ref{append: MLP} for further details on MLP. 

\blue{Throughout the experiments in this section, we fix the hyperparameters $\nu = 1$ and $\omega = 1$ for the NPDG algorithm. When applicable, unless otherwise specified, the boundary loss coefficient is always set to $\lambda = 10$.  We always choose the maximum iteration number for the MINRES algorithm as $n_{\textrm{MINRES}}=1000$.}

We compare the proposed algorithm with a series of commonly used deep-learning solvers, namely, Physics-Informed Neural Network (PINN) \citep{raissi2019physics}, Deep Ritz method \citep{yu2018deep}, and primal-dual-type algorithms for PDEs/optimal transport \citep{zang2020weak} \citep{fan2023neural}. We apply Adam \citep{kingma2014adam, Pytorch} and (or) L-BFGS \citep{liu1989limited, Pytorch} algorithms to PINN. When we use the L-BFGS method, we choose $lr=1.0$ as the default. The L-BFGS method does not perform stably with the Deep Ritz and primal-dual type methods. We will only apply the Adam algorithm to these two methods. \blue{To remain consistent with NPDG Algorithm~\ref{alg: NPDG}, we resample at every iteration for all tested algorithms, except for the L-BFGS method, which exhibits severe instability when trained with randomly resampled batches.}

To keep the comparison fair, we keep the same neural network architecture for all the methods tested. We justify the computational efficiency of the proposed methods by summarizing the GPU-time costs of each method for different PDEs with various dimensions in Table \ref{tab: GPU time to accuracy }. The robustness of the proposed method is reflected in the semi-log plots of the relative $L^2$-loss for different equations. Necessary plots are also provided to visualize the numerical results produced by the proposed method.

The Python codes associated with the examples tested in this section can be accessed from the GitHub repository \href{https://github.com/LSLSliushu/NPDG}{https://github.com/LSLSliushu/NPDG}.
 
\subsection{Poisson's equation (10D, 50D)}\label{subsec: Poisson}
We consider the following Poisson's equation defined on the region $\Omega = [0,1]^d$.
\begin{equation}
  -\Delta u = f,~\textrm{on } \Omega, \quad u=g, ~\textrm{on }\partial \Omega,  \label{eq: Laplace}
\end{equation}   
where we define $f(x) = \sum_{k=1}^d  \frac{\pi^2}{4} \sin(\frac{\pi}{2}x_k)$, and $g(x)=\sum_{k=1}^d \sin(\frac{\pi}{2}x_k)$ on $\partial\Omega$. The exact solution of this equation is 
\[u_*(x) = \sum_{k=1}^d \sin(\frac{\pi}{2}x_k).\]
In this example, by multiplying the dual functions $\varphi$ and $\psi$ to the equation $-\Delta u=f$, and its boundary condition $u|_{\partial \Omega}=g$, we introduce the loss functional $\mathscr{E}:H^2(\Omega)\times H^1_0(\Omega)\times L^2(\partial \Omega)\rightarrow \mathbb{R}$ as
\begin{align*}
  \mathscr{E}(u;\varphi, \psi) = &   \int_\Omega (-\Delta u - f)\varphi \dd \mu - \frac{\blue{\nu}}{2}\int_{\Omega}\|\nabla\varphi\|^2\dd \mu + \lambda \left( \! \int_{\partial \Omega} (u - g)\psi \dd \mu_{\partial \Omega} - \frac{\blue{\nu}}{2}\int_{\partial \Omega} \psi^2\dd \mu_{\partial \Omega}\! \right) \\
  = &  \int_\Omega \nabla u\cdot \nabla \varphi -f\varphi\,\dd \mu - \frac{\blue{\nu}}{2}\int_{\Omega}\|\nabla\varphi\|^2\dd \mu    + \lambda \left(\!\int_{\partial \Omega} (u - g)\psi \dd \mu_{\partial \Omega} - \frac{\blue{\nu}}{2}\int_{\partial \Omega} \! \psi^2\dd \mu_{\partial \Omega}\!\right).
\end{align*}
In practice, we discover that it is helpful to add the $L^2(\partial \Omega, \mu_{\partial \Omega})$ loss functional to $\mathscr{E}(u, \varphi, \psi)$. Thus, we obtain
\begin{equation*}
  \widetilde{\mathscr{E}}(u,\varphi, \psi) = \mathscr{E}(u,\varphi, \psi) + \lambda \|\mathcal B u-g\|^2_{L^2(\partial \Omega, \mu_{\partial \Omega})}.
\end{equation*}
In short, we use the functional $\widetilde{\mathscr{E}}$. We substitute $u, \varphi, \psi$ with MLPs with number of layers $n_l$, and $\mathrm{tanh}$ as activation,
\[u_\theta = \texttt{MLP}_{\mathrm{tanh}}(d, 256, 1, n_l), \quad \varphi_\eta = \texttt{MLP}_{\mathrm{tanh}}(d, 256, 1, n_l)\cdot \zeta, \quad \psi_\xi = \texttt{MLP}_{\mathrm{tanh}}(d, 64, 1, n_l).  \] 
Here, we multiply the MLP with the truncation function 
\begin{equation*}
 \zeta(x) = \min_{1\leq k\leq d} ~\{x_k, 1-x_k\},  
\end{equation*}
in order to enforce $\varphi_\eta\in H^1_0(\Omega)$. Furthermore, based on the definition of $\mathscr{E}$, we set 
\[ \mathcal M_p=\mathcal M_d=\nabla    \]
as discussed in Section \ref{section: Numer Anal}. And recall the definition \eqref{def: M_p}, \eqref{def: M_d} and \eqref{def: M_bdd}, we define the preconditioning matrices in the proposed NPDG algorithm as
\[  M_p(\theta) =    \int_\Omega \frac{\partial}{\partial \theta}(\nabla u_\theta(x))\blue{^\top} \frac{\partial}{\partial\theta}(\nabla u_\theta(x)) \,\dd\mu + \lambda  \int_{\partial \Omega} {\frac{\partial  u_\theta(y) }{\partial \theta}}\blue{^\top} \frac{\partial  u_\theta(y) }{\partial\theta} \, \dd\mu_{\partial\Omega}   \]
\[ M_d(\eta) =  \int_\Omega \frac{\partial}{\partial \blue{\eta}}(\nabla \varphi_\eta(x))\blue{^\top} \frac{\partial}{\partial\blue{\eta}}(\nabla \varphi_\eta(x)) \,\dd\mu, \quad M_{bdd}(\xi) = {\lambda}  \int_{\partial \Omega}  {\frac{\partial}{\partial \xi} \psi_\xi(y)}\blue{^\top} \frac{\partial}{\partial \xi}\psi_\xi(y) \dd\mu_{\partial \Omega}. \]
In this example, we pick $N_{in}=2000$ for $d=10$, $N_{in}=4000$ for $d=50$, and $N_{bdd} = 80d$. We set the stepsizes $\tau_u = 0.5 \cdot 10^{-1}$, $\tau_\varphi=\tau_\psi = 0.95 \cdot 10^{-1}$. We test the thresholds $tol_{\textrm{MINRES}} = 10^{-3}, 10^{-4}$ in the algorithm. Throughout this research, we consider the relative $L^2$ error of $u_\theta$ and $\nabla u_\theta.$

In the 10D case, we set $n_l=4$. We plot MINRES iteration numbers at each NPDG step in Figure~\ref{subfig: Laplace Number of MINRES iter vs NPDG iter}. We investigate the effectiveness of our natural({preconditioned})-gradient method by comparing it with the same algorithm using flat gradients. That is, we replace line 7, line 11 in Algorithm \ref{alg: NPDG} by $\eta = \eta + \tau_\varphi \textbf{w}_\varphi, ~ \xi = \xi + \tau_\psi \textbf{w}_\psi$, and $\theta = \theta - \tau_u \textbf{w}_u$. This is demonstrated in Figure~\ref{subfig: compare with no extrapolation, flat grad}. In the same plot, it is also observed that the extrapolation step (line 7 of Algorithm \ref{alg: NPDG}) will slightly enhance the convergence of the proposed algorithm. Furthermore, choosing suitable preconditioning matrices compatible with the mathematical nature of the PDE is crucial for the proposed method. In Figure~\ref{subfig: compare with Md=Mp=Id}, we compare our treatment with the NPDG algorithm with $M_p(\theta), M_d(\eta)$ obtained by setting $\mathcal M_p = \mathcal M_d = \mathrm{Id}$. As reflected in the plot, naïve preconditioning may lead to instabilities in the optimization procedure. \blue{We tested the NPDG algorithms using exponentially growing sample sizes $N_r = 2^k$ and $N_r = 10 \cdot 2^{k-3}$ with $k = 4,6,8,10,12,14$. Figure~\ref{subfig: L2err_v_sample_size} illustrates how the relative $L^2$ error of $u_\theta$, computed by the NPDG algorithm, varies with the sample complexity. The relative error gradually plateaus as the sample size increases beyond $2^{10}=1024$. Based on this observation, throughout the remaining examples in this section, we consistently choose sample sizes in the range of $10^3 \sim 10^4$.}
\begin{figure}[htb!]
  \centering
  \begin{subfigure}[b]{0.24\textwidth}
      \centering
      \includegraphics[width=\textwidth]{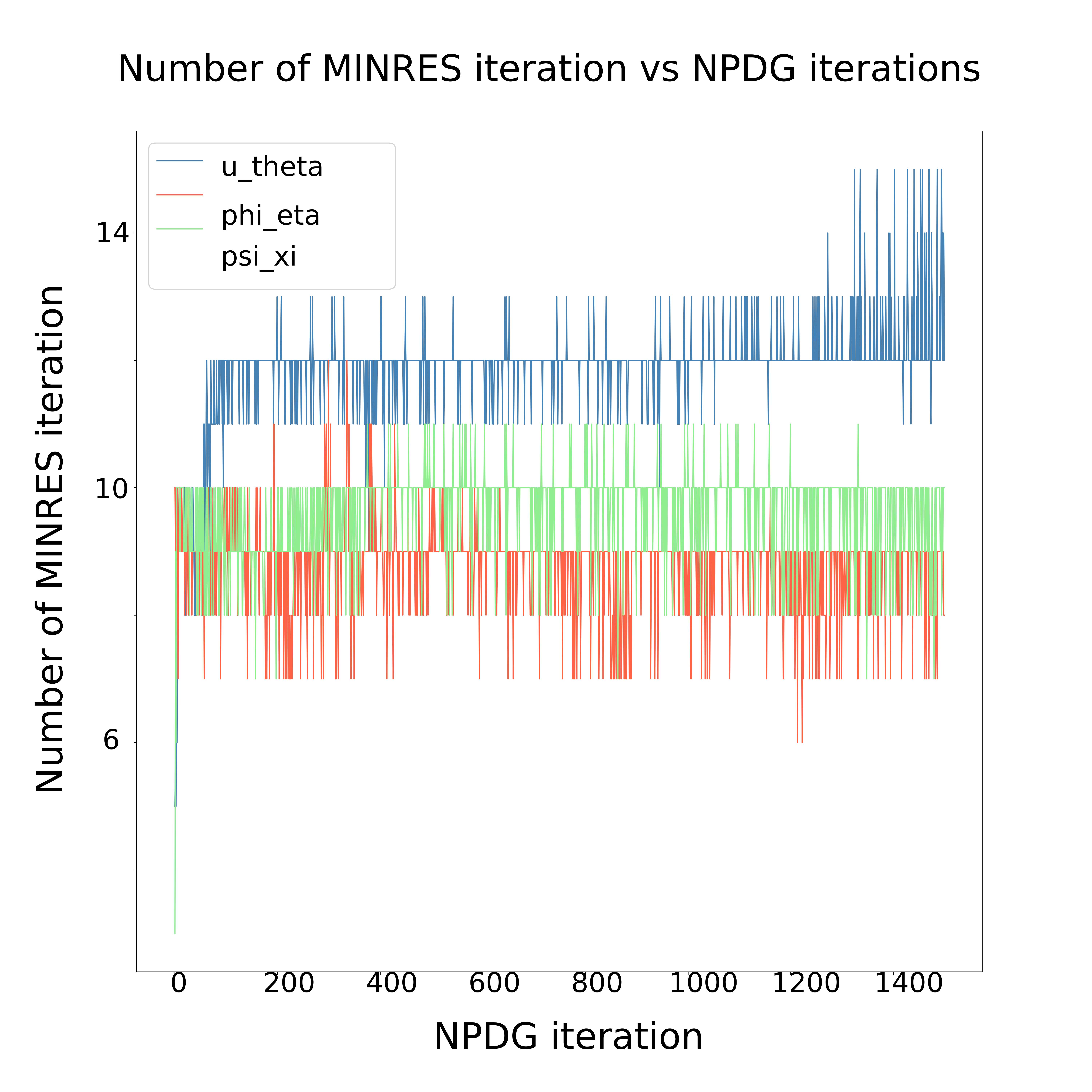}
      \caption{}
      \label{subfig: Laplace Number of MINRES iter vs NPDG iter}
  \end{subfigure}
  \begin{subfigure}[b]{0.24\textwidth}
      \includegraphics[width=\textwidth]{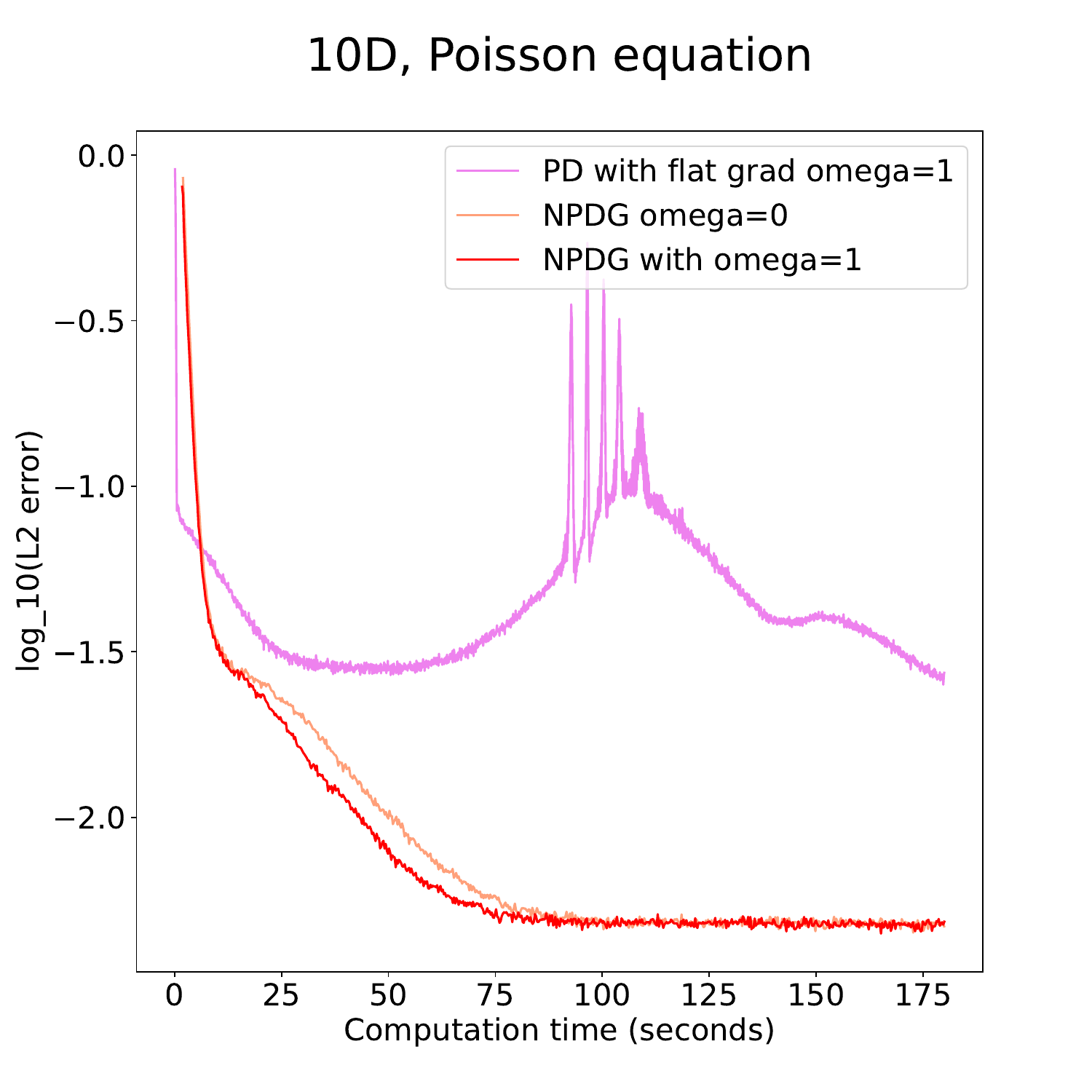}
      \caption{}
      \label{subfig: compare with no extrapolation, flat grad}
  \end{subfigure}
  \begin{subfigure}[b]{0.24\textwidth}
      \includegraphics[width=\textwidth]{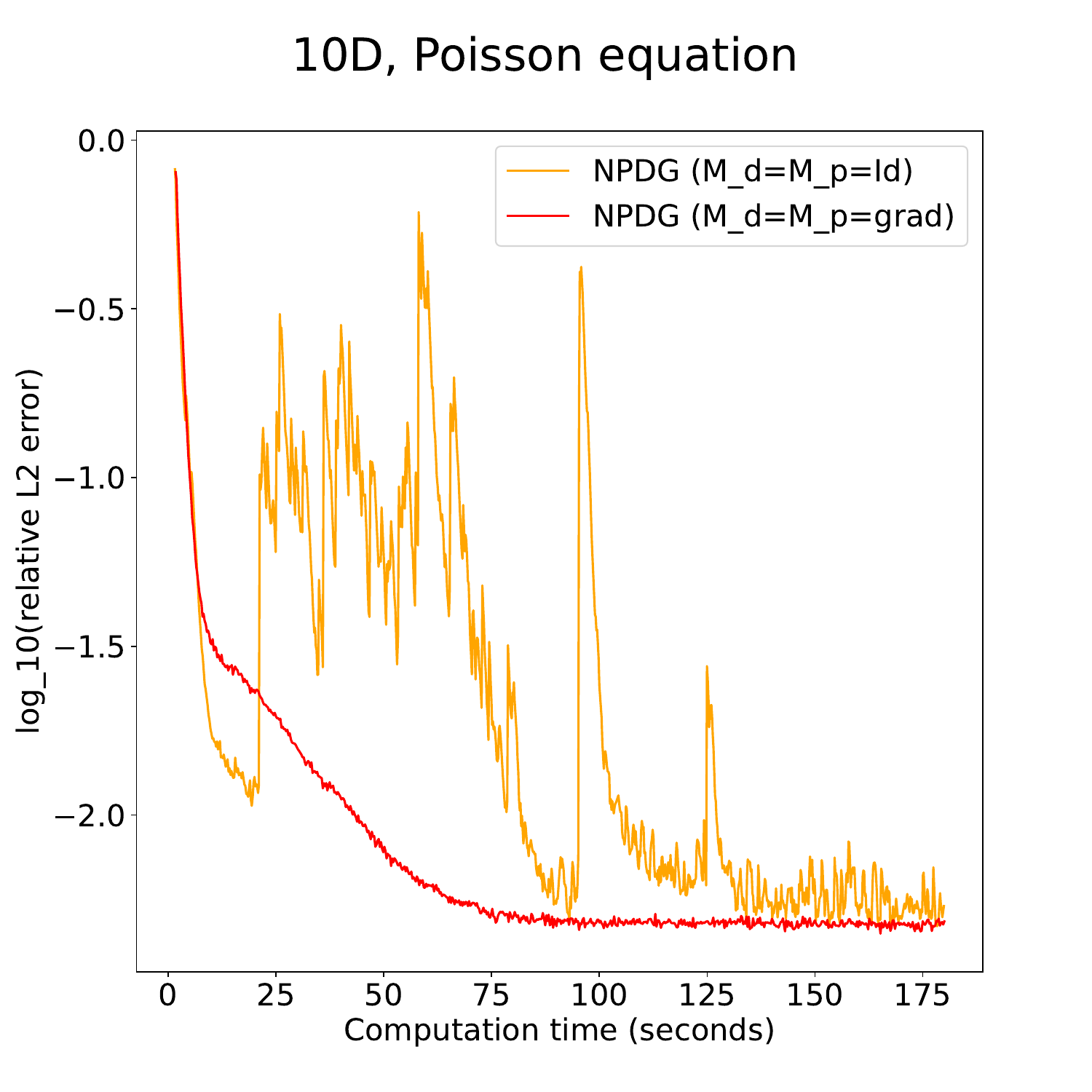}
      \caption{}
      \label{subfig: compare with Md=Mp=Id}
  \end{subfigure}
  \begin{subfigure}[b]{0.24\textwidth}
      \centering
      \includegraphics[width=\textwidth]{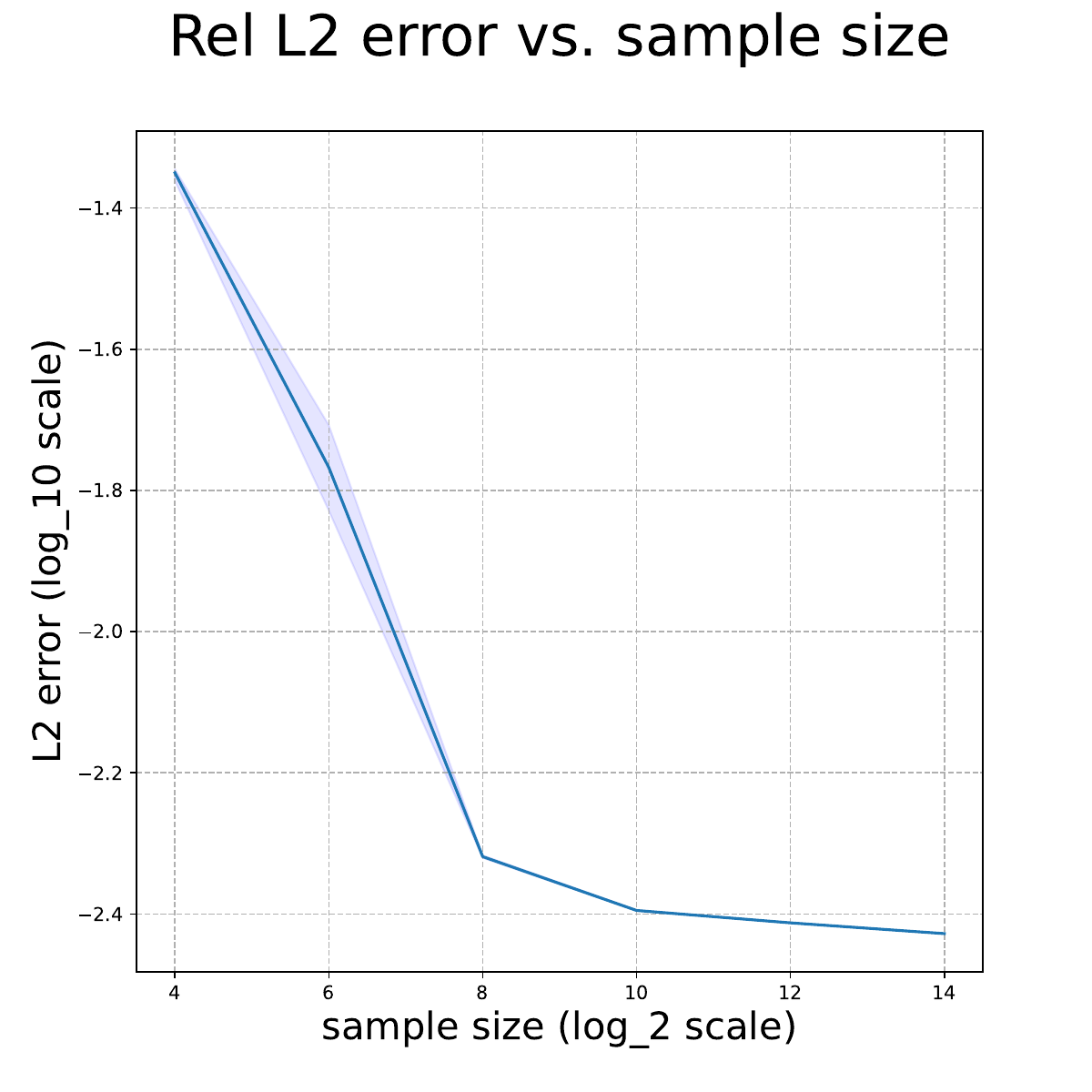}
      \caption{}
      \label{subfig: L2err_v_sample_size}
  \end{subfigure}
  \caption{(10D Poisson equation) \ref{subfig: Laplace Number of MINRES iter vs NPDG iter}: Numbers of iterations required by the MINRES algorithm for updating $\theta, \eta, \xi$ at each NPDG step vs. NPDG iteration. \ref{subfig: compare with no extrapolation, flat grad}: Comparison with the same algorithm using flat gradients instead (pink), and with the same algorithm without extrapolation ($\omega=0$) (light red); \ref{subfig: compare with Md=Mp=Id}: Comparison with our NPDG method, but using $M_p(\theta), M_d(\eta)$ obtained by $\mathcal M_p = \mathcal M_d = \mathrm{Id}$ as our preconditioning (orange). All the plots in these two figures are relative L2 error vs. computational time (seconds). \blue{\ref{subfig: L2err_v_sample_size}: Log-log plot of relative $L^2$ error vs. different sample sizes.}}\label{fig: Laplace2}
\end{figure}

In addition, we also test the same example with $d=50$. We set MLP depth $n_l=6$. We choose the tolerance $tol_{\textrm{MINRES}}=10^{-4}$ to ensure higher accuracy in computing the natural gradient.  We compare the algorithms with the PINN, DeepRitz, and WAN methods. The detailed settings for these three methods are provided in Table \ref{tab: setup for PINN DeepRitz WAN PDAdam}. We run each method up to $8000$ seconds and make semi-log plots of relative error vs. computational time for all the methods tested. Figure \ref{fig: Laplace 50D} presents the associated numerical results. The loss plot \ref{subfig: Laplace semi-log l2 rel err comparison 50D} suggests that our proposed NPDG algorithm converges faster and more stably compared with the algorithms based on the Adam optimizer. 

Furthermore, we record the GPU time spent by each method to achieve a certain accuracy for various dimensions $d=5, 10, 20, 50$. Details are provided in Table \ref{tab: GPU time to accuracy } of Appendix \ref{append: compare among diff methods}. The proposed method performs more efficiently than the other methods as the dimension $d$ increases.
\begin{figure}[htb!]
  \centering
  \begin{subfigure}[b]{0.327\textwidth}
    \centering
    \includegraphics[width=\textwidth]{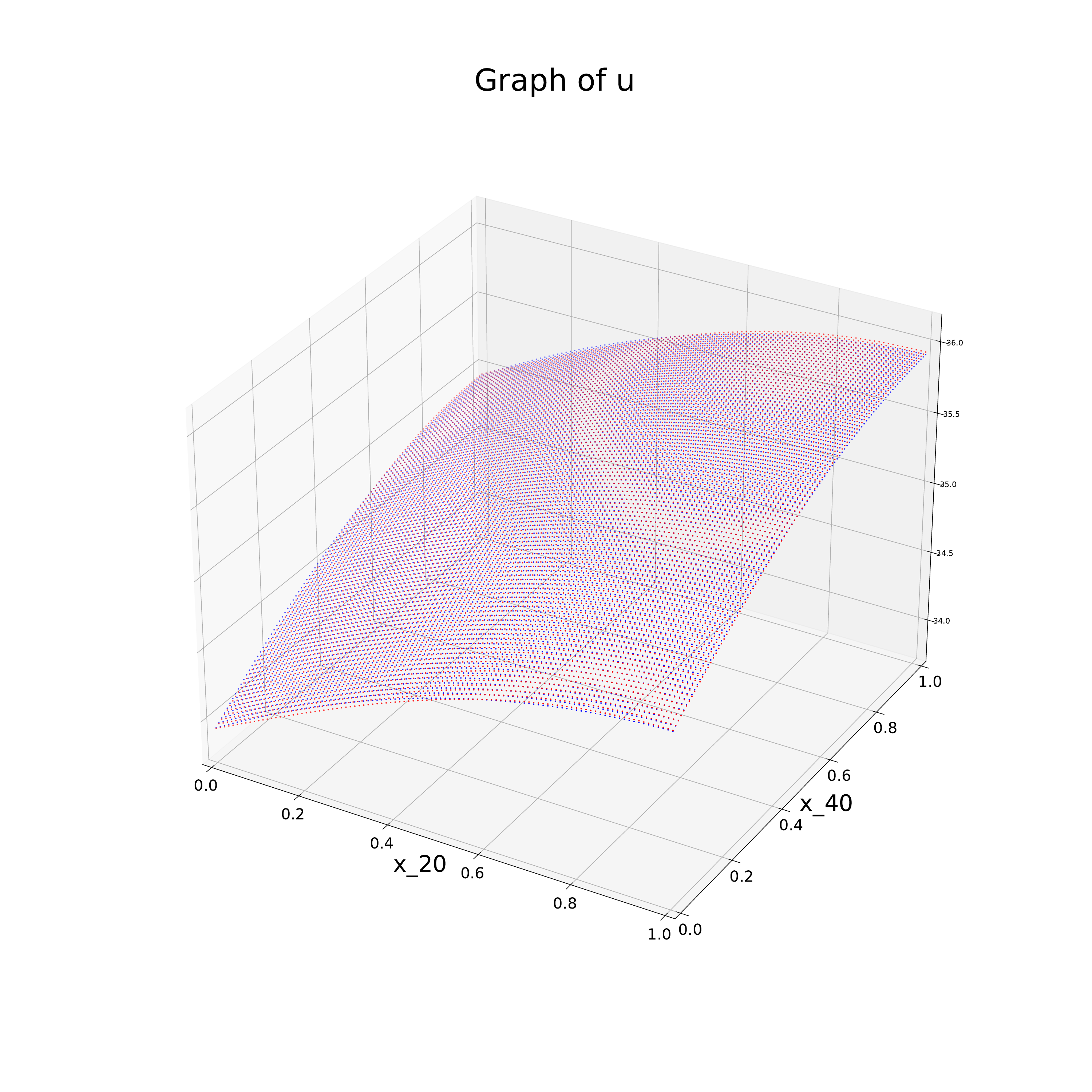}
    \caption{}
    \label{fig: graph on 20 40 plane 50D}
  \end{subfigure}
  \begin{subfigure}[b]{0.327\textwidth}
    \centering
    \includegraphics[width=\textwidth]{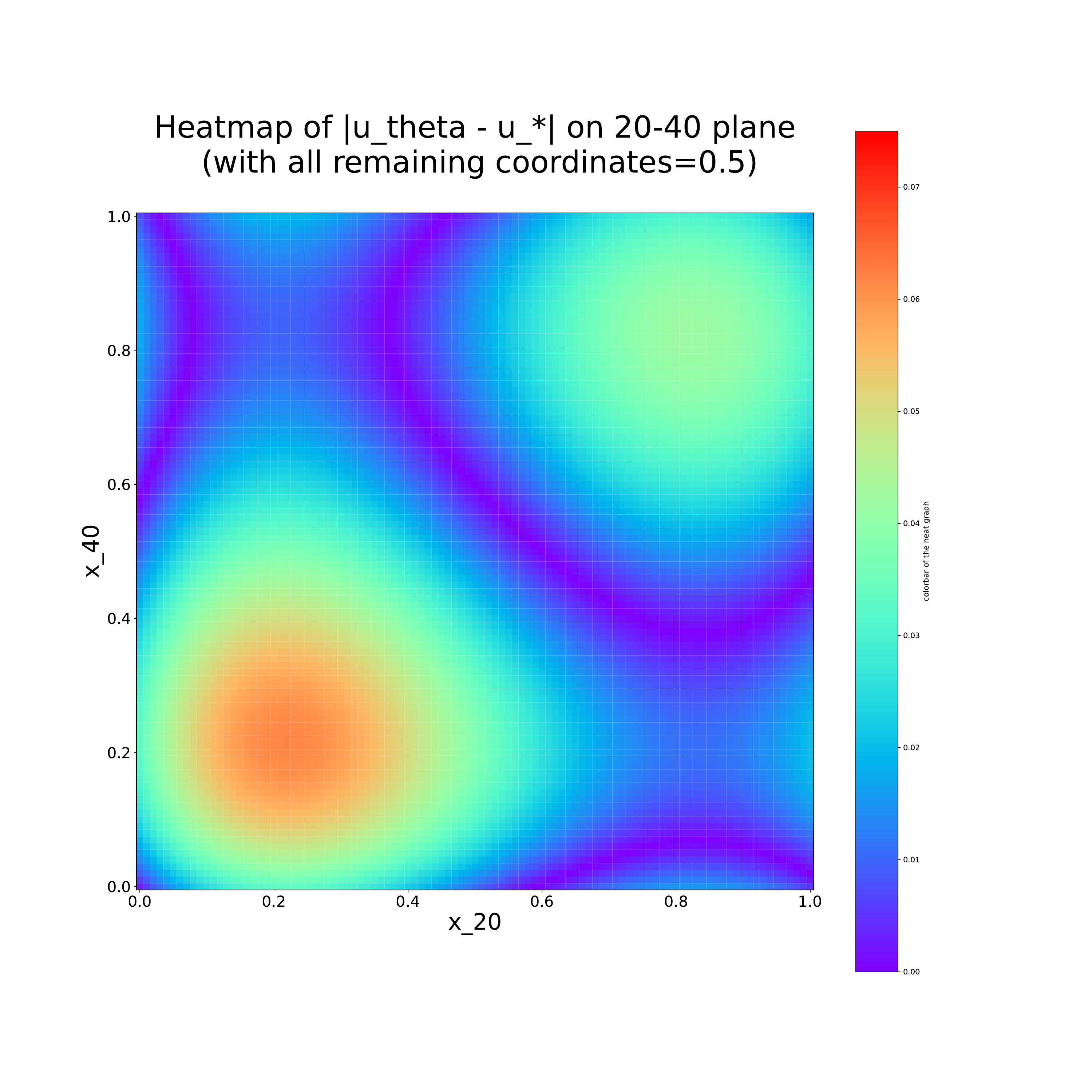}
    \caption{}
    \label{subfig: Laplace error heatmap 50D}
  \end{subfigure}
  \begin{subfigure}[b]{0.327\textwidth}
    \centering
    \includegraphics[width=\textwidth]{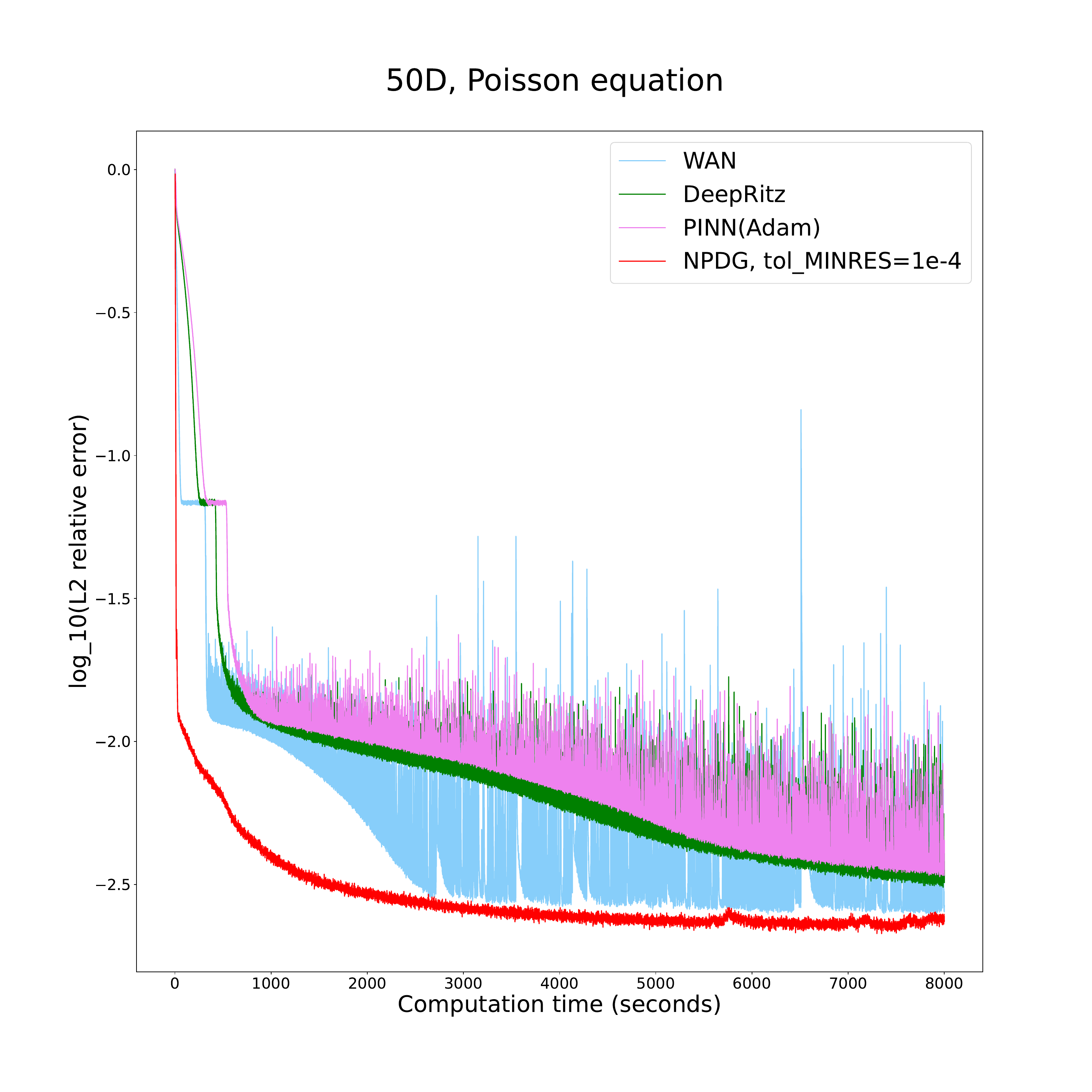}
    \caption{}
    \label{subfig: Laplace semi-log l2 rel err comparison 50D}
  \end{subfigure}
    \caption{(50D Poisson equation) \textbf{Left}: Graph of learned $u_\theta$ and real solution $u_*$ on the 20-40 coordinate plane (with remaining coordinates equal to $\frac12$) in $\mathbb{R}^{50}$; \textbf{Middle}: Heatmap of $|u_\theta(x)-u_*(x)|$ on the same plane. \textbf{Right}: Semi-log plot of relative L2 error vs. computational time (seconds). The values of $\|u_*\|_{L^2(\Omega, \mu)}$ and $\|\nabla u_*\|_{L^2(\Omega, \mu)}$ are provided in Table \ref{tab: real sol and norms}.
  }\label{fig: Laplace 50D}
\end{figure}

\subsection{Elliptic equation with variable coefficients (10D, 20D, 50D)}\label{subsec: varcoeff}
We consider the following elliptic equation with a variable coefficient
\begin{equation}
    -\nabla\cdot(\kappa(x)\nabla u(x)) = f(x), \quad u(y)=g(y) ~\textrm{on } \partial \Omega.  \label{eq: VarCoeff}
\end{equation}
Here we assume $\Omega = [-1, 1]^d$ with even dimension $d$. We set 
\[\kappa(x) = \frac{x^\top \Lambda x + 1}{2}, \quad \textrm{with } \Lambda = \mathrm{diag}(\lambda_0,\lambda_1,\dots,\lambda_0,\lambda_1),\]
where $\lambda_0=1, \lambda_1=4$ appear alternately for $\frac{d}{2}$ times, and choose
\[ f(x) = -\frac{\mathrm{Tr}(\Lambda^{-1})}{2}(x^\top \Lambda x + 1 ) - \|x\|^2, \quad \textrm{and }  g(y)=\frac12 y^\top\Lambda^{-1}y, ~ y\in\partial \Omega.  \]
The solution to this equation is $u_*(x) = \frac{1}{2}x^\top\Lambda^{-1}x$.

Similar to the previous example, we introduce $\varphi, \psi$ to the equation and its boundary condition. Integration by parts yields the functional
\begin{align*}
\mathscr{E}(u,\varphi,\psi) = &  \int_\Omega \kappa(x) \nabla\varphi(x)\cdot\nabla u(x) - f(x)\varphi(x) ~ \dd\mu  - \frac{\blue{\nu}}{2}\int_\Omega \|\nabla \varphi(x)\|^2\,\dd\mu      \\
 & + \lambda \left(\int_{\partial \Omega} (u-g)\psi\,\dd\mu_{\partial \Omega}  - \frac{\blue{\nu}}{2} \int_{\partial \Omega} \psi^2 \,\dd\mu_{\partial \Omega} \right).
\end{align*}
Similarly, we add the boundary loss function to $\mathscr{E}(u, \varphi, \psi)$ to obtain
\begin{equation*}
  \widetilde{\mathscr{E}}(u,\varphi, \psi) = \mathscr{E}(u,\varphi, \psi) + \lambda \|\mathcal B u-g\|^2_{L^2(\partial \Omega, \mu_{\partial \Omega})}.
\end{equation*}
We use $\widetilde{\mathscr{E}}$ in the computation. We set 
\[ \mathcal M_p = \mathcal M_d = \nabla \]
for the preconditioning matrices $M_p(\theta), M_d(\eta), M_{bdd}(\xi)$ as defined in \eqref{def: M_p}, \eqref{def: M_d} and \eqref{def: M_bdd}.

{\color{black} In this example, we also employ the stronger $H^1(\partial \Omega, \mu_{\partial \Omega})$ boundary loss discussed in Section \ref{subsec: numerical analysis with stronger bdd norm}. In our implementation, the boundary integral of the functional $\mathscr{E}(u, \varphi, \psi)$ is now replaced by
\[ \sum_{j=1}^{2d}\int_{S_j} (u-g)\psi + \nabla^{S_j}(u-g)\cdot\nabla^{S_j}\psi \,\dd \mu_{\partial \Omega} - \frac{\nu}{2}\int_{S_j}\psi^2 + \|\nabla^{S_j}\psi\|^2   \,\dd \mu_{\partial \Omega}. \]
Here, for each $j=1,\dots,d-1,$ we denote 
\[S_j^{\pm}:= \Bigl\{(\hat x_1,\dots,\hat x_{j-1},\pm1,\hat x_{j},\dots,\hat x_{d-1})\;:\;\hat x \in[-1,1]^{d-1}\Bigr\}\]
as each face of the cubic region $\Omega$. Then $\partial \Omega=\bigcup_{i=1}^d (S_i^{+} \cup S_i^{-})$. $\nabla^{S_j}$ denotes the gradient of a function restricted to the face $S_j^{\pm}$. The functional is modified correspondingly as
\[ \widetilde{\mathscr{E}}(u, \varphi, \psi) \! = \! \mathscr{E}(u, \varphi, \psi) + \lambda \|\mathcal B u - g\|_{H^{1}(\partial \Omega, \mu_{\partial \Omega})}^2. \] 
Furthermore, to ensure consistency with the strengthened boundary norm, we modify the preconditioning matrices accordingly. In particular, the matrices
$M_p(\theta)$, $M_d(\eta)$ are obtained from
\eqref{def: M_p} and \eqref{def: M_d} by setting
\[
    \mathcal M_p = \mathcal M_d = \sqrt{\kappa(\cdot)}\,\nabla .
\]
Moreover, in the definition \eqref{def: M_p} of the primal preconditioning matrix $\bigl(M_p(\theta)\bigr)_{ij}$, the boundary integration is replaced by
\begin{equation}
  \lambda \sum_{j=1}^{2d}\int_{S_j}
  \frac{\partial u_\theta(y)}{\partial \theta_i}
  \frac{\partial u_\theta(y)}{\partial \theta_j}
  + \frac{\partial}{\partial \theta_i}\!\left(\nabla^{S_j} u_\theta(y)\right)
  \cdot
  \frac{\partial}{\partial \theta_j}\!\left(\nabla^{S_j} u_\theta(y)\right)
  \,\dd \mu_{\partial\Omega} .
  \label{def: precond matrix stronger bdd norm}
\end{equation}
Matrix  $(M_{bdd}(\xi))_{ij}$ is reformulated analogously to \eqref{def: precond matrix stronger bdd norm}, with the partial derivatives $\frac{\partial}{\partial \theta}$ and the function $u_\theta$ replaced by $\frac{\partial}{\partial \xi}$ and $\psi_\xi$, respectively.
}

We test this example with $d=10, 20, 50$. We substitute $u, \varphi, \psi$ with MLPs with $\mathrm{softplus}(\cdot)$ as activation functions. Here, $\mathrm{softplus}(\cdot)$ is a smooth approximation of the ReLU function defined as\footnote{In PyTorch, for numerical stability, the implementation of $\mathrm{softplus}(\cdot)$ reverts to the linear function when $x>\frac{\mathrm{threshold}}{\beta}.$ The default value for the threshold equals $20$.}
\[ \mathrm{softplus}(x) = \frac{1}{\beta} \log(1+\exp(\beta x)) \]
with $\beta = \frac14$. We summarize the neural net architecture of our experiments in Table \ref{tab: VarCoeff experiment basic settings}. Similar to our treatment for the Poisson's equation, we multiply $\varphi_\eta$ by the truncation function $\zeta(\cdot)$ to enforce $\varphi_\eta\in H^1_0(\Omega)$.
\begin{table}[htb!]
{\footnotesize
\begin{center}
\begin{tabular}{|c|c|c|c|c|c|c|}
\hline
        &  \multicolumn{3}{c|}{ Primal \& Dual Neural Networks } &  \multirow{2}{*}{  $N_{in}, N_{bdd}$  }  & \multirow{2}{*}{$\tau_u, \tau_\varphi, \tau_\psi$}  &  \multirow{2}{*}{MINRES tol}    \\ \cline{2-4} 
        &  $u_\theta$  &  $\varphi_\eta$  &  $\psi_\xi$  &    &    &         \\ \hline \hline  
   $d=10$  &  \multirow{2}{*}{$(d, 256, 1, 4)$} & \multirow{2}{*}{$(d, 256, 1, 4)$} & \multirow{2}{*}{$(d, 128, 1, 4)$} & \multirow{2}{*}{$4000, 80d$} & $0.1, 0.19, 0.19$ &  \multirow{2}{*}{ $0.5\cdot 10^{-3}$ } \\ \cline{1-1}\cline{6-6}
 $d=20$ &   &  &  &  & \multirow{2}{*}{ $0.05, 0.095, 0.095$ } &     \\\cline{1-5}\cline{7-7}
 $d=50$ &  $(d, 256, 1, 6)$ & $(d, 256, 1, 6)$ & $(d, 128, 1, 6)$ & $6000, 80d$ &  &  $10^{-4}$   \\ \hline
\end{tabular}
\end{center}
}
\caption{Basic setting of our experiments on computing \eqref{eq: VarCoeff}.}\label{tab: VarCoeff experiment basic settings}
\end{table}

In this example, for all dimensions $d=10, 20, 50,$ the stepsizes $\tau_u, \tau_\varphi,  \tau_\psi$, the number of samples $N_{in}, N_{bdd}$, as well as the tolerance of MINRES, are summarized in Table \ref{tab: VarCoeff experiment basic settings}. We improve the tolerance of the MINRES algorithm from $0.5 \cdot 10^{-3}$ to $10^{-4}$ as the dimension $d$ increases to $50.$ We run the proposed method for 500 and 1000 seconds for $10$D and $20$D problems. For $d=50$, we perform the proposed method \blue{using $L^2(\partial \Omega, \mu_{\partial \Omega})$ boundary loss for $36000$ iterations and the method using $H^1(\partial \Omega, \mu_{\partial \Omega})$ loss for $3000$ iterations.} For all $d=10, 20, 50$, we compare the algorithm with the PINN, DeepRitz, and WAN methods. The detailed settings for these three methods are provided in Table \ref{tab: setup for PINN DeepRitz WAN PDAdam}. We make semi-log/log-log plots of relative error vs. computational time for all methods. The error plots are presented in \blue{Figure \ref{fig: VarCoeff2}}. The plots justify the linear convergence of the proposed method. \blue{The experimental results further demonstrate improved convergence rates and accuracy for the NPDG algorithms, both with and without the use of a stronger boundary norm.} Compared with the other algorithms based on Adam optimizers, the proposed method performs more stably and achieves higher accuracy in this example. We also record the GPU time spent by each method to achieve a certain accuracy. One can find the details in Table \ref{tab: GPU time to accuracy } of Appendix \ref{append: compare among diff methods}. It turns out that only the proposed method can achieve an accuracy such that $\frac{\|u_\theta-u_*\|_{L^2(\Omega, \mu)}}{\|u_*\|_{L^2(\Omega, \mu)}} \leq 0.005$.

For $d=20$, we visualize the solution $u_\theta$ learned by the NPDG algorithm by plotting the graph of $u_\theta$ on the $9-10$ plane while fixing the remaining coordinates to $0$ and $0.5$ for $d=20$ in Figure \ref{fig: VarCoeff1}. The associated heatmaps of $|u_\theta(x)-u_*(x)|$ on the $9-10$ plane are also provided in Figure \ref{fig: VarCoeff1}. To investigate the accuracy of $u_\theta$ over the entire space of $\Omega$, we separate $\Omega=\bigcup_{l=1}^{50} \Omega_l$ into 50 square shells with gradually increasing sizes, $$\Omega_l:=\{x=(x_1,\dots,x_d)^\top \in\mathbb{R}^d| (l-1)/50 \leq |x_k| < l/50, ~1\leq k \leq d\}.$$ We plot the average $L^2$ error of $u_\theta$ computed via different methods on $\Omega_l$ with respect to the size $l/50$ of each square shell $\Omega_l$ in \blue{Figure \ref{subfig: VarCoeff log10(average error on each shell) vs shell size}}.
\begin{figure}[htb!]
\centering
  \begin{subfigure}[b]{0.32\textwidth}
      \includegraphics[width=\textwidth]{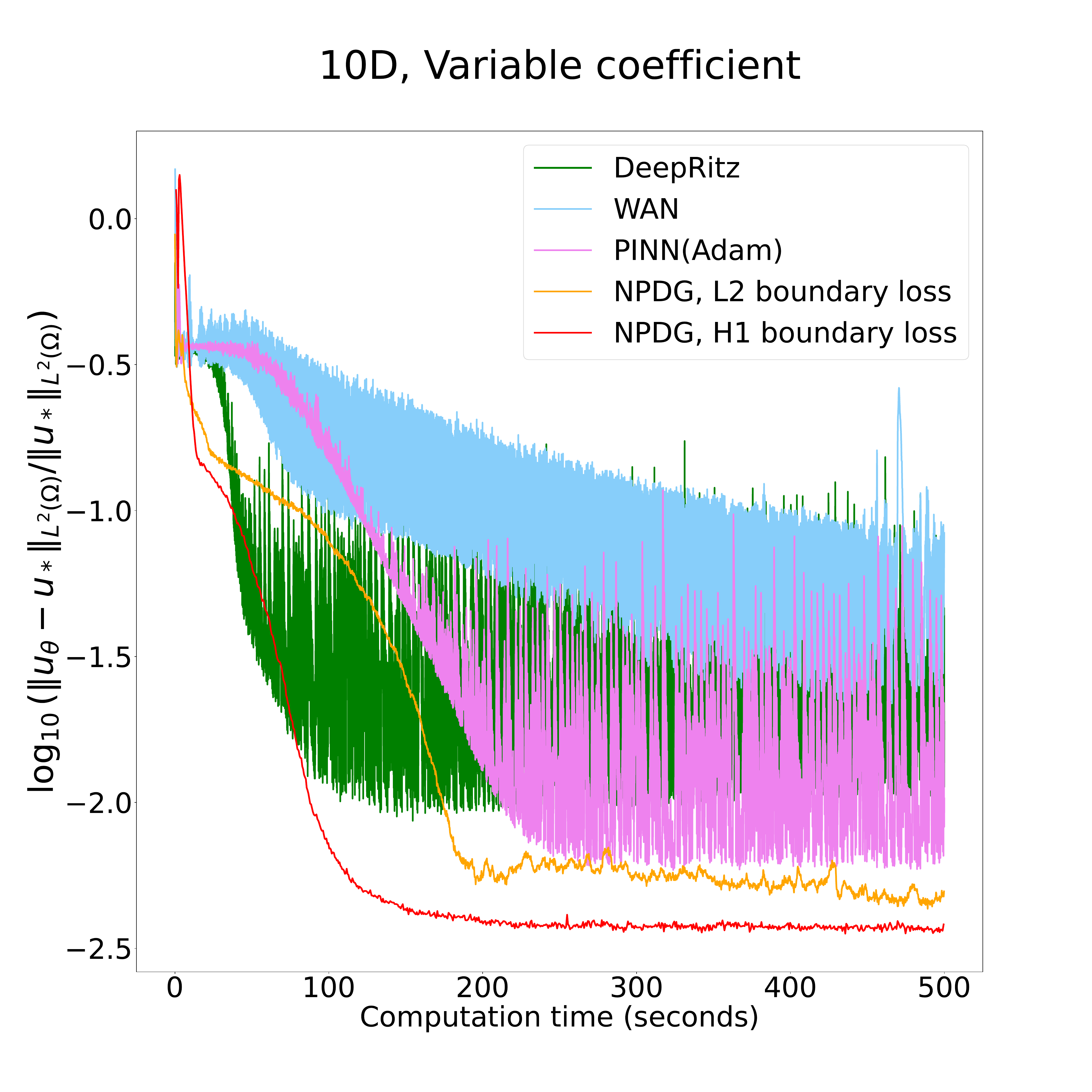}
      \caption{}\label{subfig: VarCoeff log rel l2 err vs comp time 10d}
  \end{subfigure}
  \begin{subfigure}[b]{0.32\textwidth}
      \includegraphics[width=\textwidth]{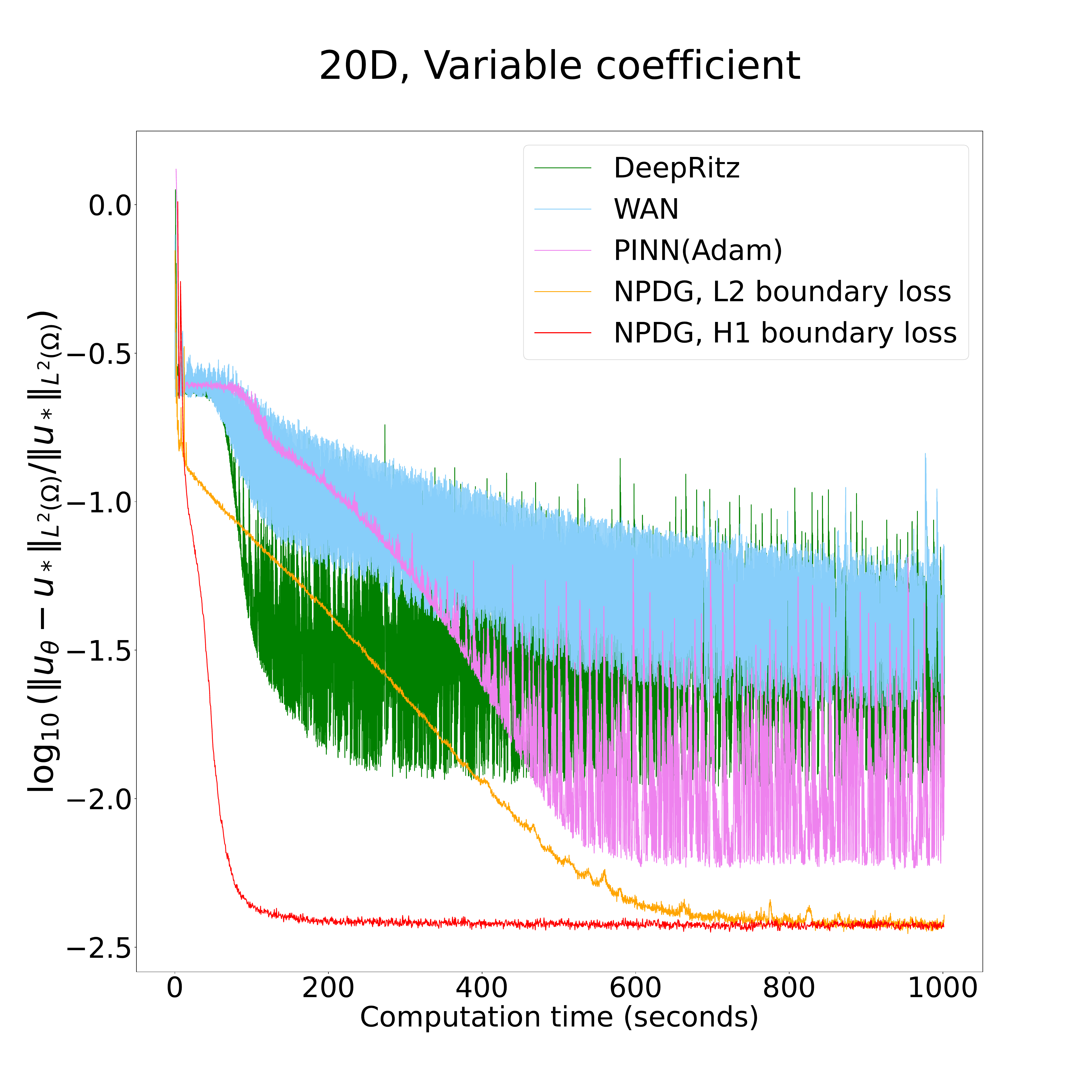}
      \caption{}\label{subfig: VarCoeff log rel l2 err vs comp time 20d}
  \end{subfigure}
  \begin{subfigure}[b]{0.32\textwidth}
      \includegraphics[width=\textwidth]{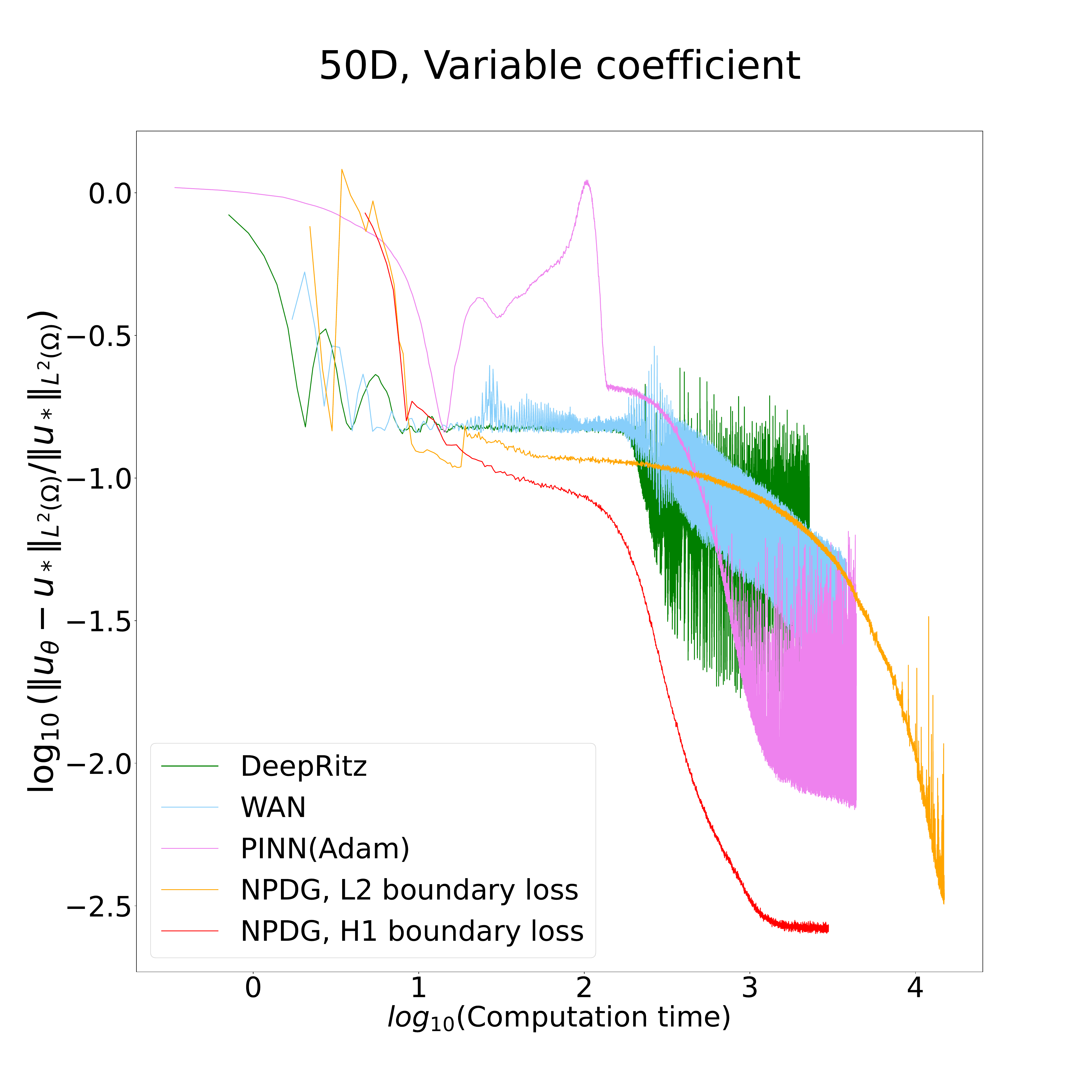}
      \caption{}\label{subfig: VarCoeff log rel l2 err vs log comp time 50d}
  \end{subfigure}
  \begin{subfigure}[b]{0.32\textwidth}
      \includegraphics[width=\textwidth]{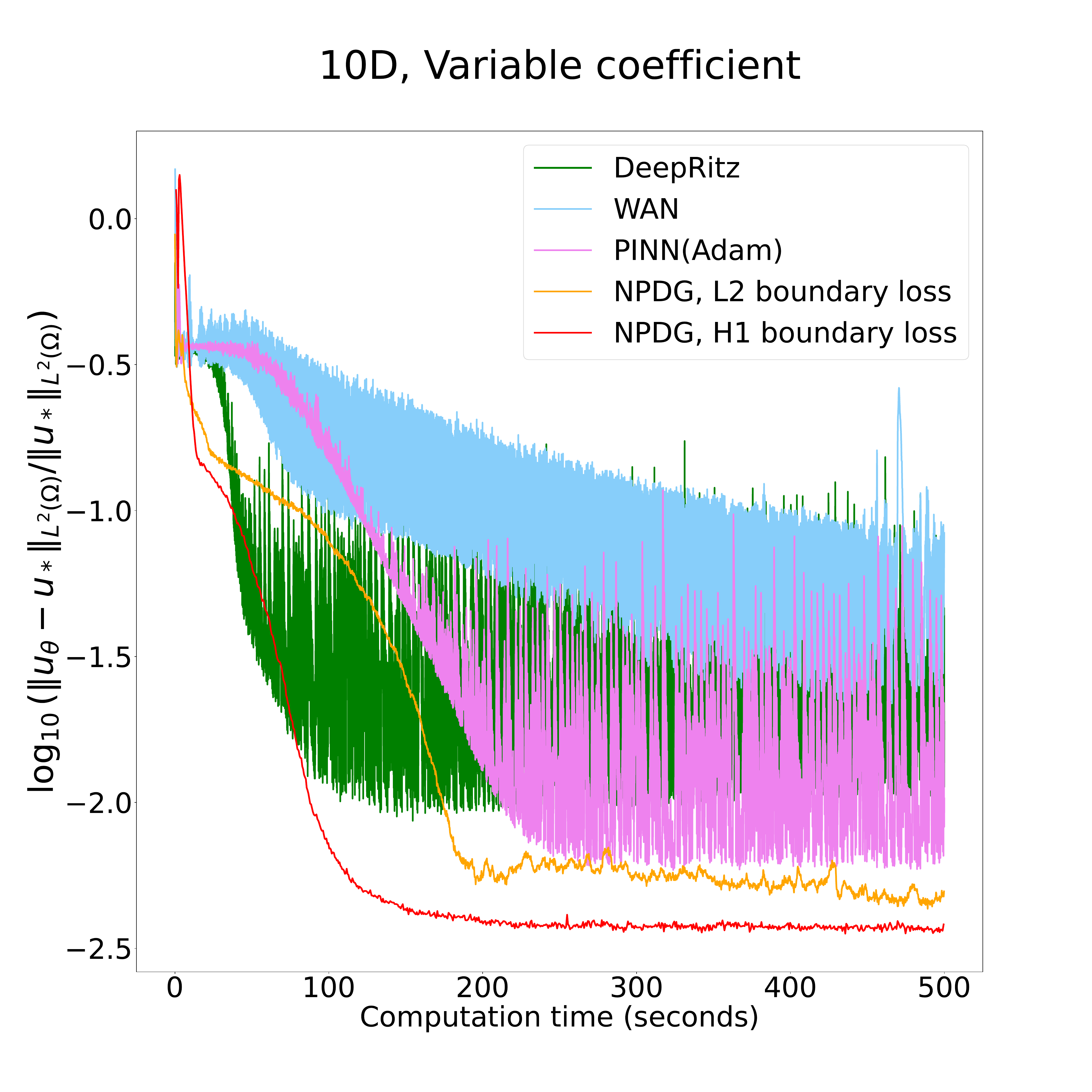}
      \caption{}\label{subfig: VarCoeff log rel H1 err vs comp time 10d}
  \end{subfigure}
  \begin{subfigure}[b]{0.32\textwidth}
      \includegraphics[width=\textwidth]{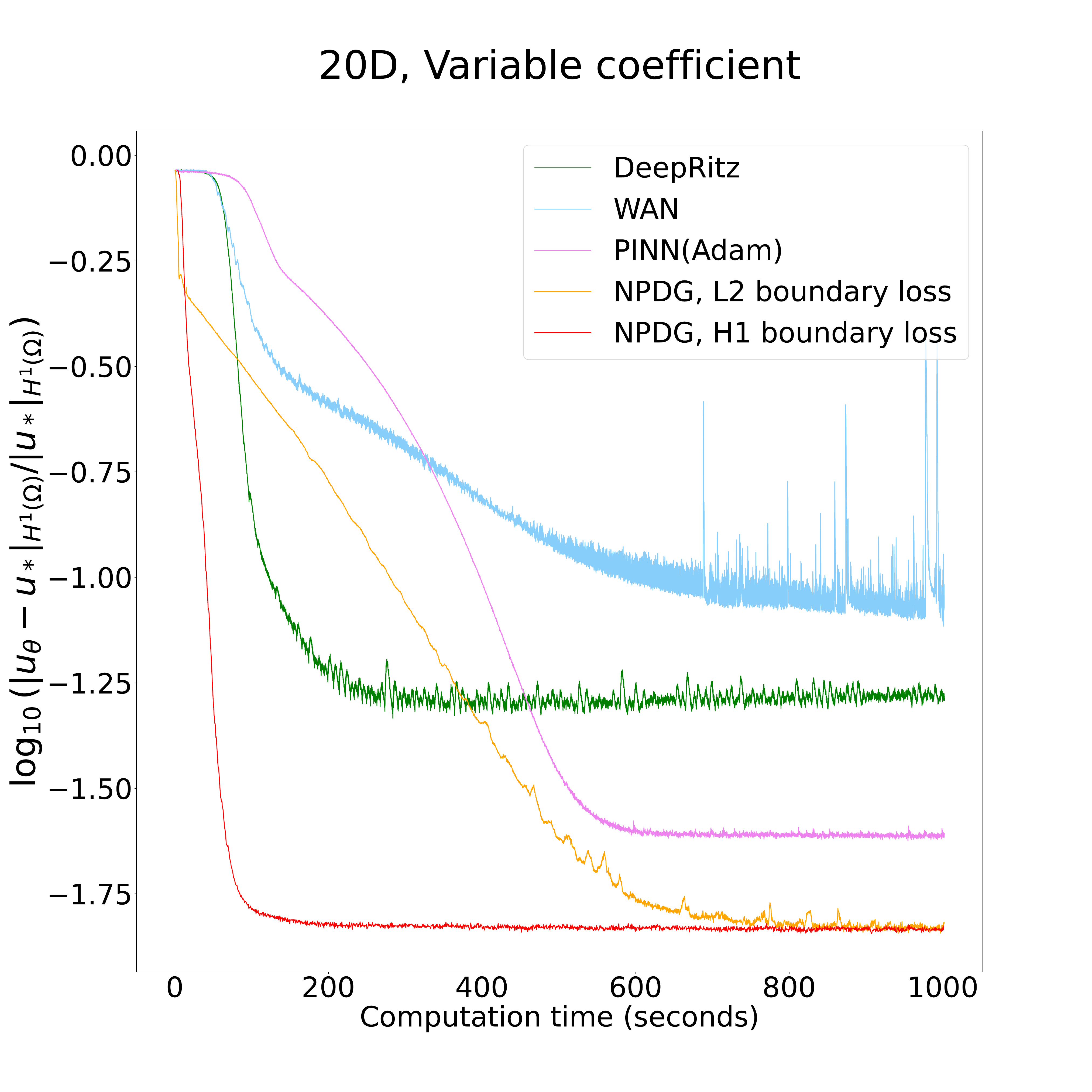}
      \caption{}\label{subfig: VarCoeff log rel H1 err vs comp time 20d}
  \end{subfigure}
  \begin{subfigure}[b]{0.32\textwidth}
      \includegraphics[width=\textwidth]{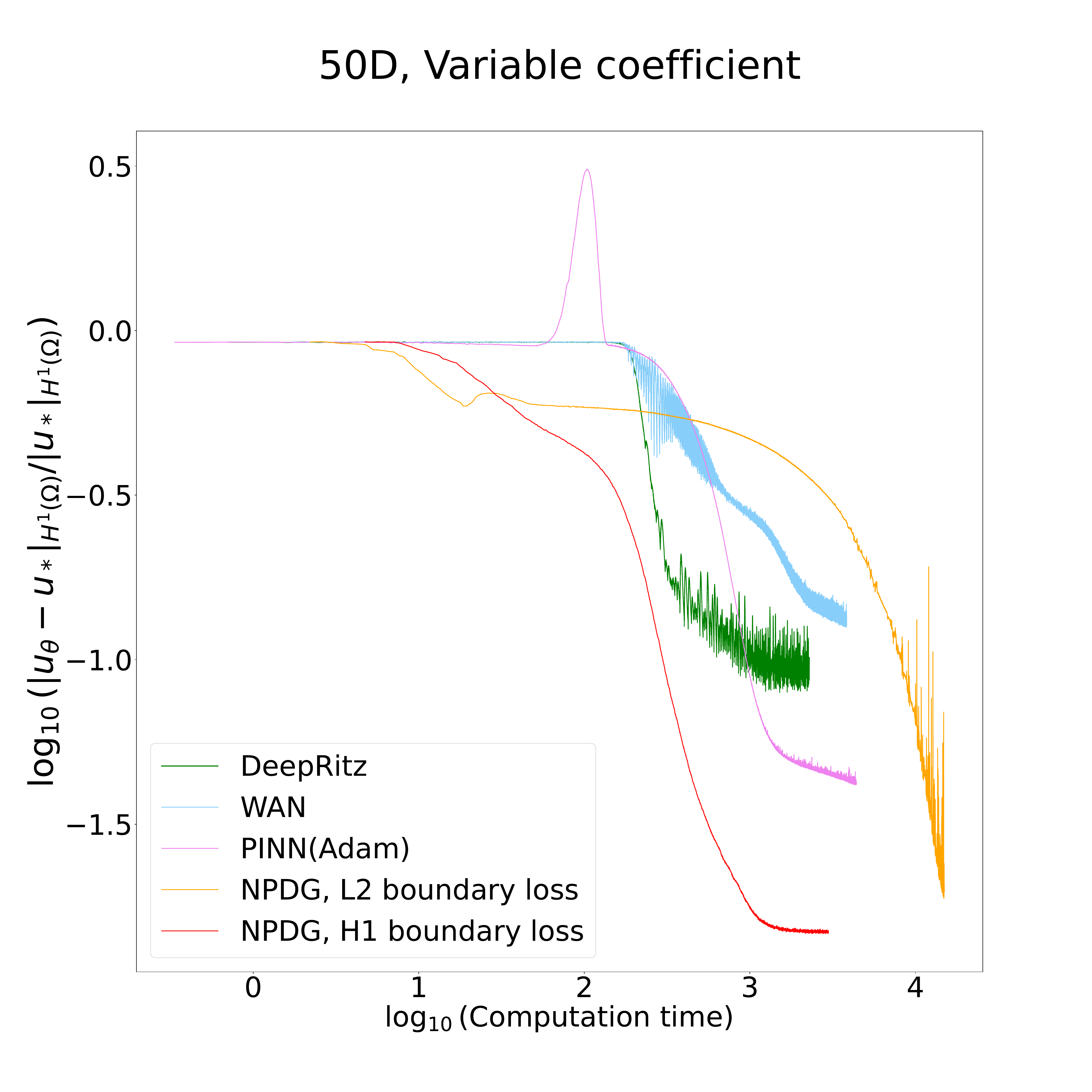}
      \caption{} \label{subfig: VarCoeff log rel H1 err vs log comp time 50d}
  \end{subfigure}
  \caption{\blue{\textbf{Left} column (\ref{subfig: VarCoeff log rel l2 err vs comp time 10d}) (\ref{subfig: VarCoeff log rel H1 err vs comp time 10d}): Semi-log plot (up) of relative L2 error vs. computational time(seconds) and semi-log plot (down) of relative $H^1$ seminorm error ($\frac{\|\nabla u_\theta - \nabla u_*\|_{L^2(\Omega, \mu)}}{\|\nabla u_*\|_{L^2(\Omega, \mu)}}$)
  vs. computational time. Dimension $d=10$; \textbf{Middle} column (\ref{subfig: VarCoeff log rel l2 err vs comp time 20d}) (\ref{subfig: VarCoeff log rel H1 err vs comp time 20d}): The same plots for $d=20$; \textbf{Right} column (\ref{subfig: VarCoeff log rel l2 err vs log comp time 50d}) (\ref{subfig: VarCoeff log rel H1 err vs log comp time 50d}): The same plots (but in Log-log form) for $d=50$. The values of $\|u_*\|_{L^2(\Omega, \mu)}$ and $\|\nabla u_*\|_{L^2(\Omega, \mu)}$ are provided in Table \ref{tab: real sol and norms}.}}\label{fig: VarCoeff2}
\end{figure}

\begin{figure}[htb!]
    \centering
    \begin{subfigure}[b]{0.187\textwidth}
        \includegraphics[width=\textwidth]{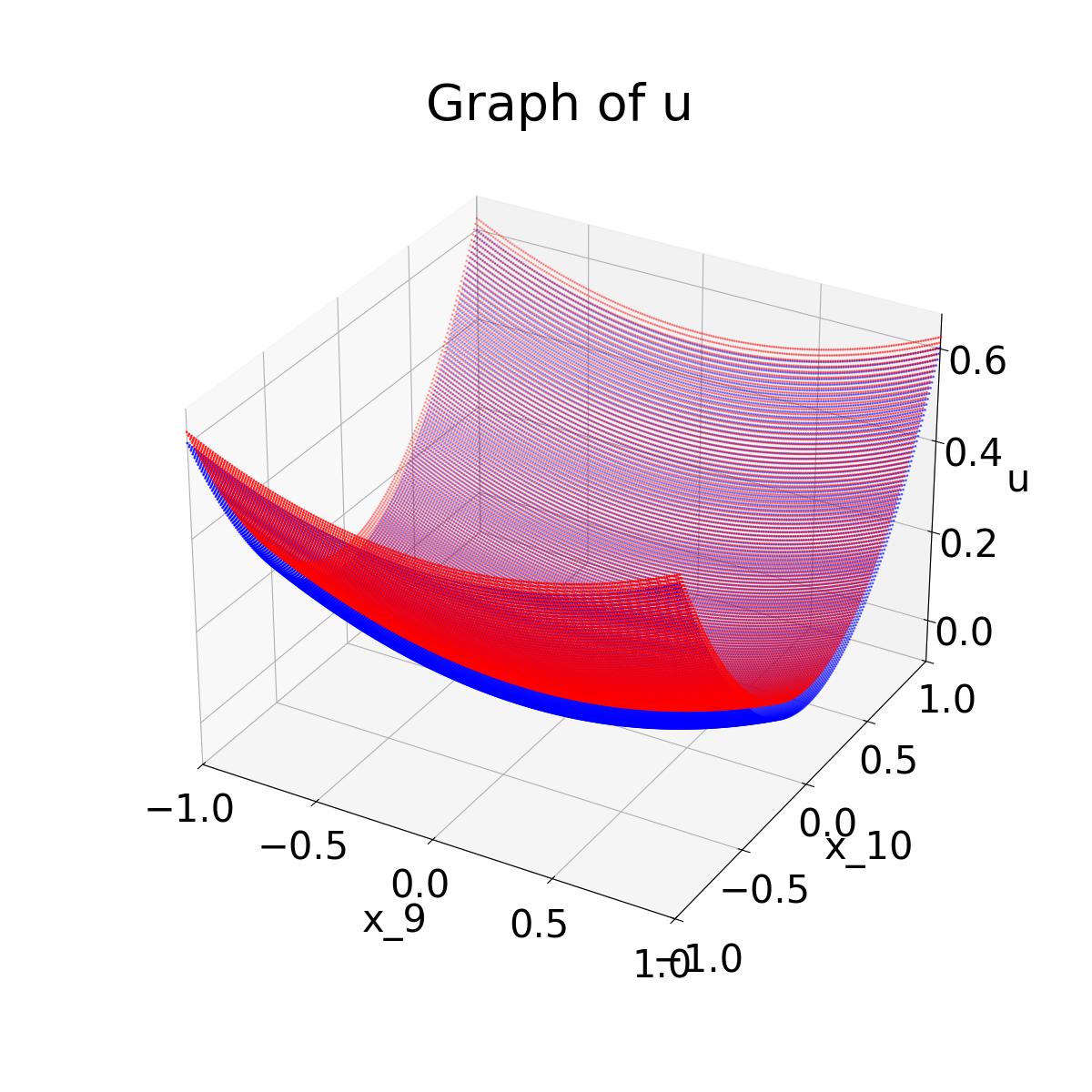}
        \caption{}\label{subfig: VarCoeff graph u 9-10 plt coord = 0.0}
    \end{subfigure}
    \begin{subfigure}[b]{0.187\textwidth}
        \includegraphics[width=\textwidth]{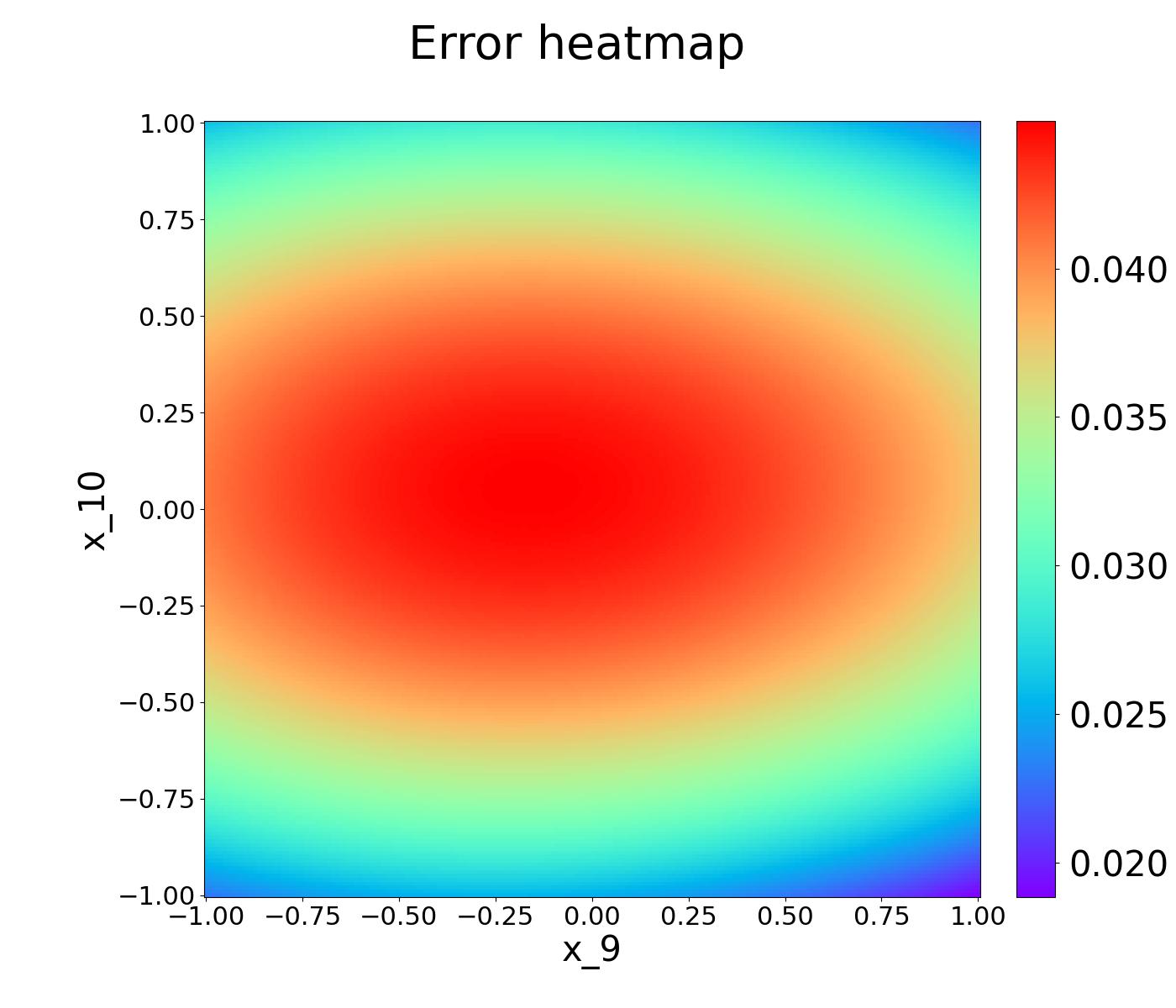}
        \caption{}\label{subfig: VarCoeff error heatmap 9-10 plt coord = 0.0}
    \end{subfigure}
    \begin{subfigure}[b]{0.187\textwidth}
        \includegraphics[width=\textwidth]{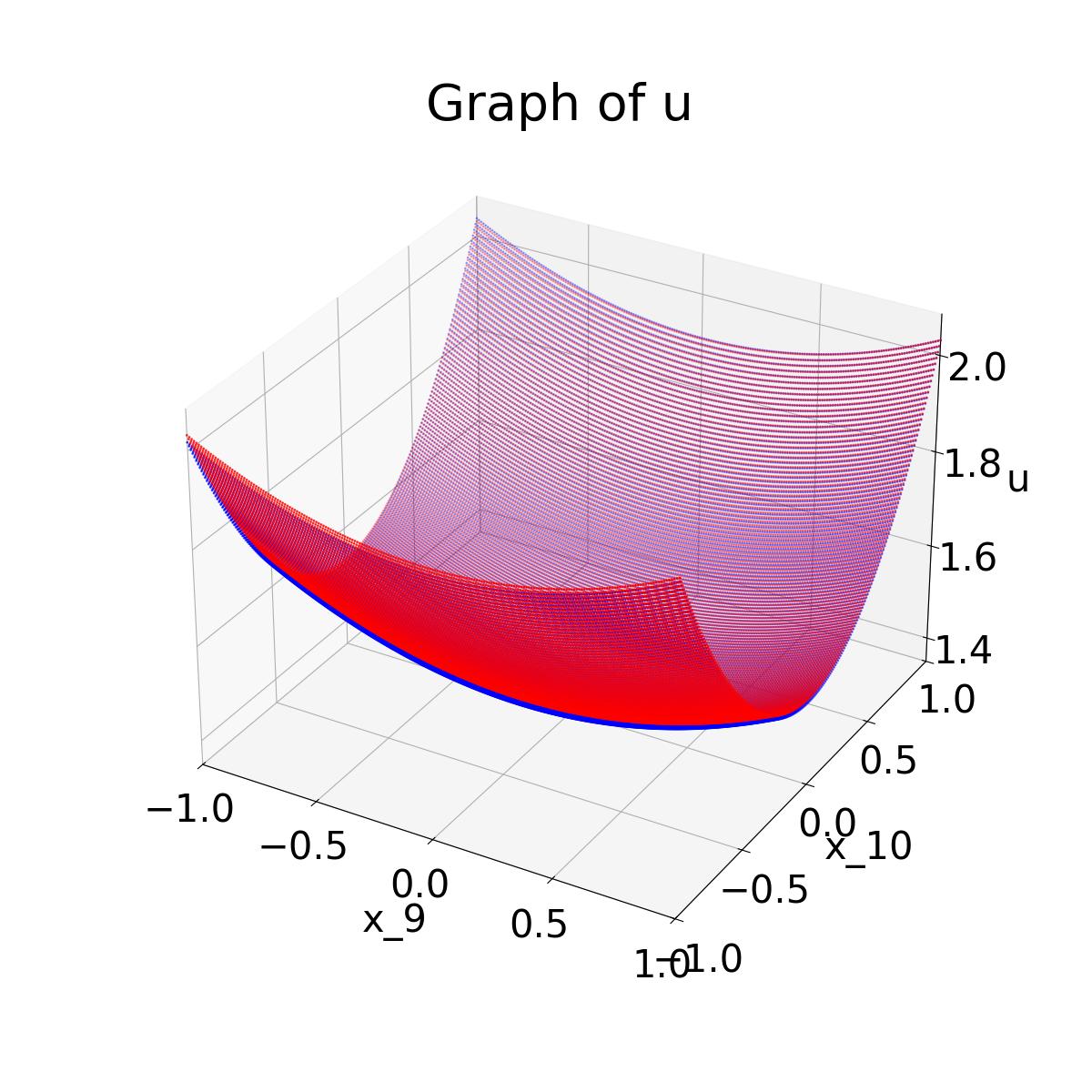}
        \caption{}\label{subfig: VarCoeff graph u 9-10 plt coord = 0.5}
    \end{subfigure}
    \begin{subfigure}[b]{0.187\textwidth}
        \includegraphics[width=\textwidth]{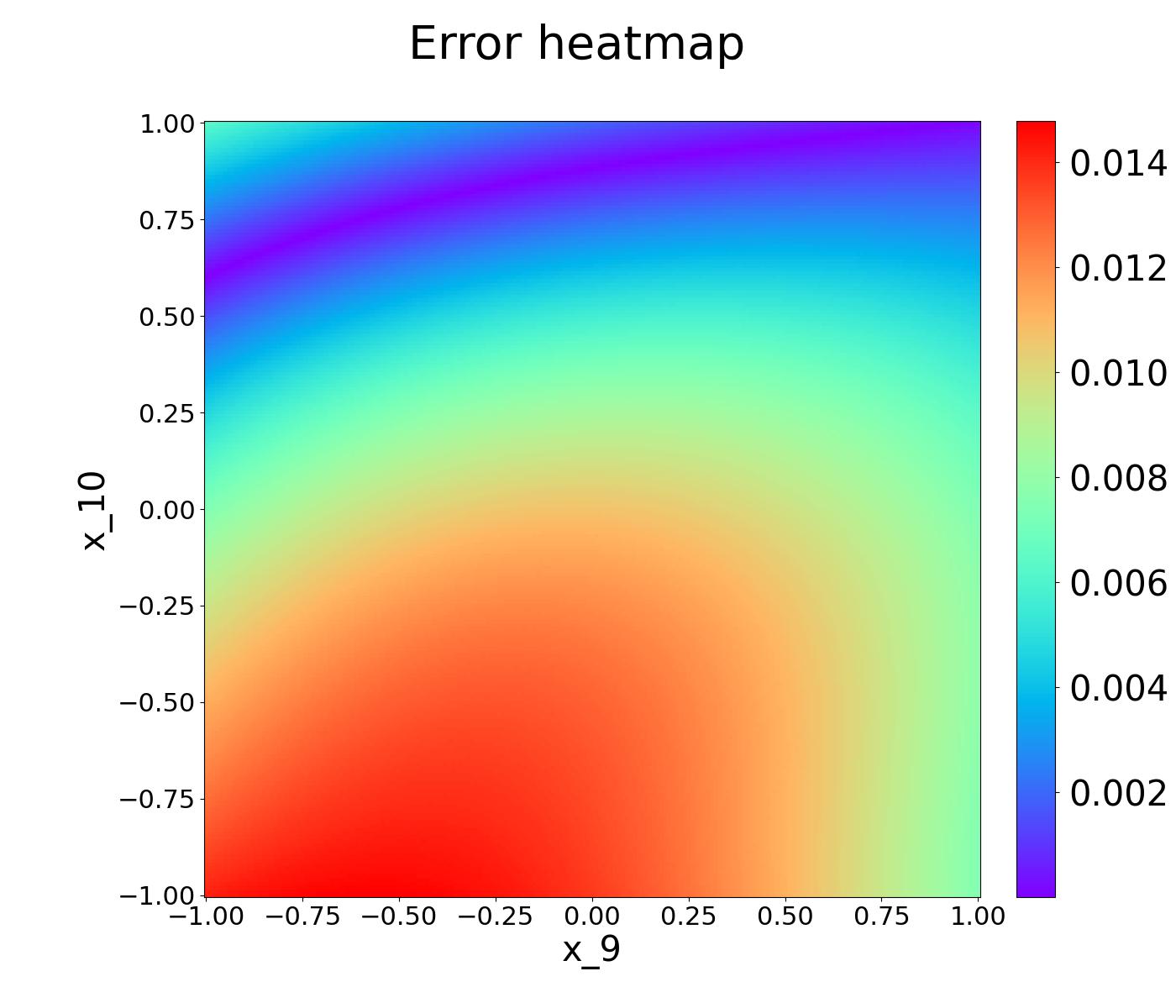}
        \caption{}\label{subfig: VarCoeff error heatmap 9-10 plt coord = 0.5}
    \end{subfigure}
    \begin{subfigure}[b]{0.221\textwidth}
        \includegraphics[width=\textwidth]{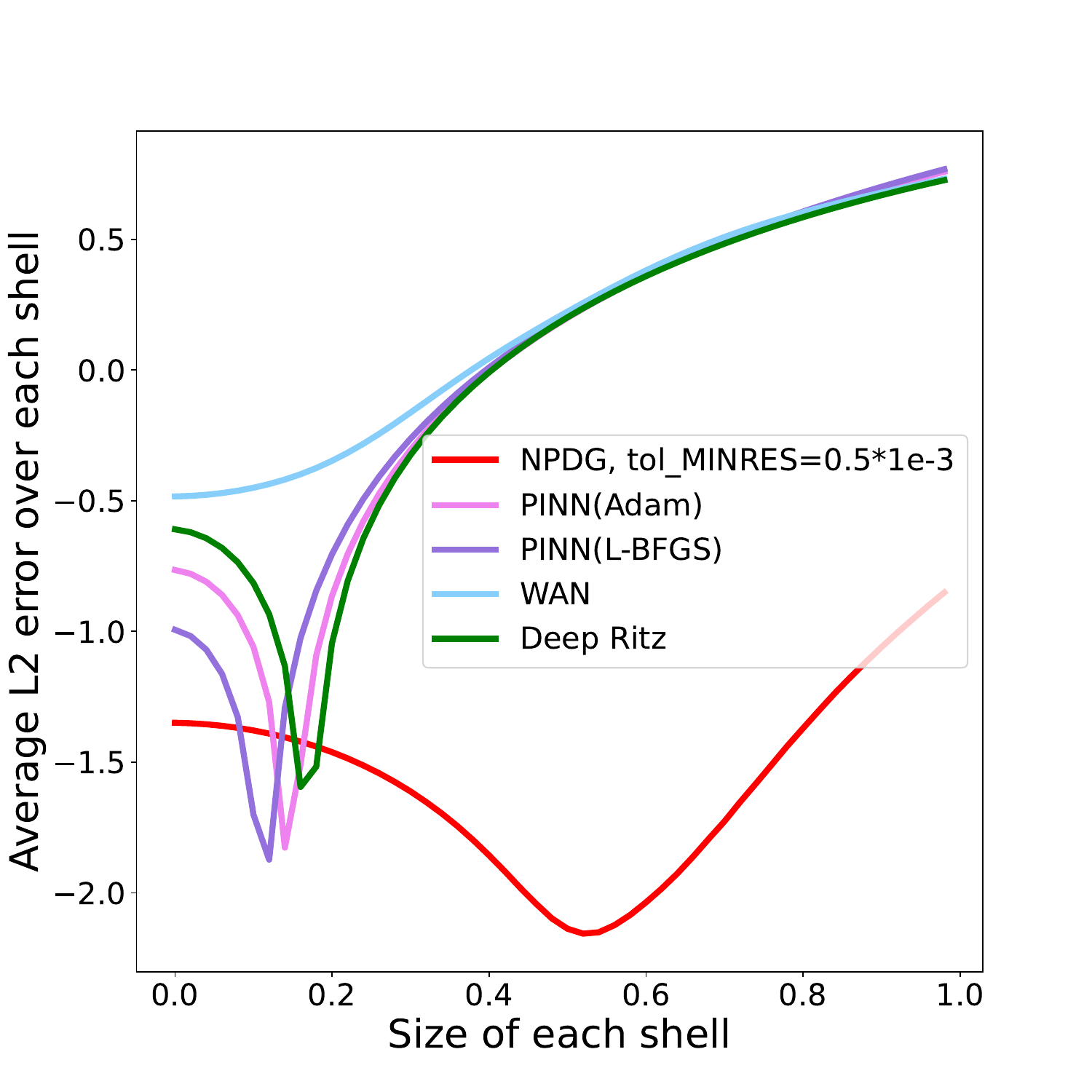}
        \caption{}\label{subfig: VarCoeff log10(average error on each shell) vs shell size}
    \end{subfigure}
    \caption{Plots for $d=20$. \ref{subfig: VarCoeff graph u 9-10 plt coord = 0.0}: Graph of $u_\theta$ obtained by NPDG method (blue) with real solution (red) plotted on $9-10$ plane with remaining coordinates fixed to $0$; \ref{subfig: VarCoeff error heatmap 9-10 plt coord = 0.0} Heatmap of error $|u_\theta(x)-u_*(x)|$ plotted on $9-10$ plane with remaining coordinates fixed to $0$. \ref{subfig: VarCoeff graph u 9-10 plt coord = 0.5}, \ref{subfig: VarCoeff error heatmap 9-10 plt coord = 0.5}: Same plots plotted on $9-10$ plane with remaining coordinates fixed to $0.5$; \ref{subfig: VarCoeff log10(average error on each shell) vs shell size}: Semi-log plot of $\log_{10}\left(\frac{1}{|\Omega_l|}{\| u_\theta -  u_*\|_{L^2(\Omega_l)}}\right)$ vs. size of each square shell $\Omega_l$.}
    \label{fig: VarCoeff1}
\end{figure}

\vspace{0.3cm}
\noindent
\textbf{Different MINRES tolerances}: Slightly improving (i.e., decreasing) the tolerance $tol_{\textrm{MINRES}}$ of the MINRES algorithm yields more accurate directions of the natural gradients and enhances the convergence of the NPDG algorithm. However, selecting $tol_{\textrm{MINRES}}$ too small makes the algorithm sensitive with respect to data stochasticity and thus may introduce instability to the method. This is reflected in Figure \ref{subfig: NPDG diff tol_minres l2 error 20d} and \ref{subfig: NPDG diff tol_minres H1 error 20d}.

\noindent
\textbf{Comparing with L-BFGS optimizer}: We apply the L-BFGS optimizer to PINN and compare its convergence speed with the proposed method. L-BFGS utilizes the second-order information from the loss function in optimization. However, L-BFGS is known to be unstable in stochastic settings--using random batches is not a feasible strategy for L-BFGS method. In this example, we fix the Monte-Carlo samples in the algorithm and optimize the PINN loss function with L-BFGS method. For $d=20$, as shown in Figure \ref{subfig: NPDG vs LBFGS l2 error 20d}, our NPDG algorithm with $tol_{\textrm{MINRES}}=10^{-4}$ converges faster than the L-BFGS method. Moreover, the L-BFGS method faces instability even without data stochasticity. As demonstrated in Figure \ref{subfig: LBFGS l2 error blowup}, the L-BFGS method always blows up given a long enough running time for dimensions $d=20$ and $d=50$.
\begin{figure}
    \centering
    \begin{subfigure}[b]{0.24\textwidth}
        \includegraphics[width=\textwidth]{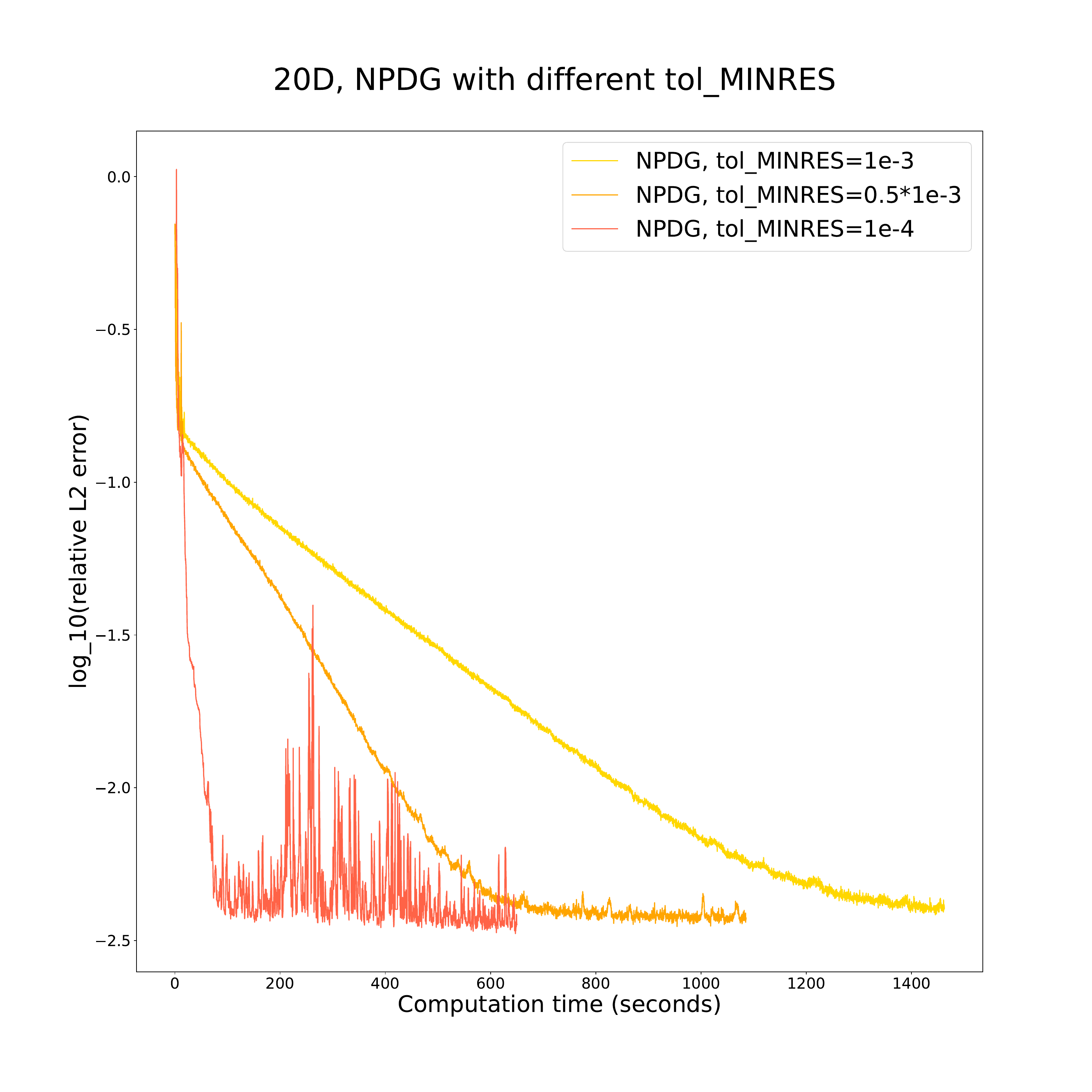}
        \caption{}\label{subfig: NPDG diff tol_minres l2 error 20d}
    \end{subfigure}    
    \begin{subfigure}[b]{0.24\textwidth}
        \includegraphics[width=\textwidth]{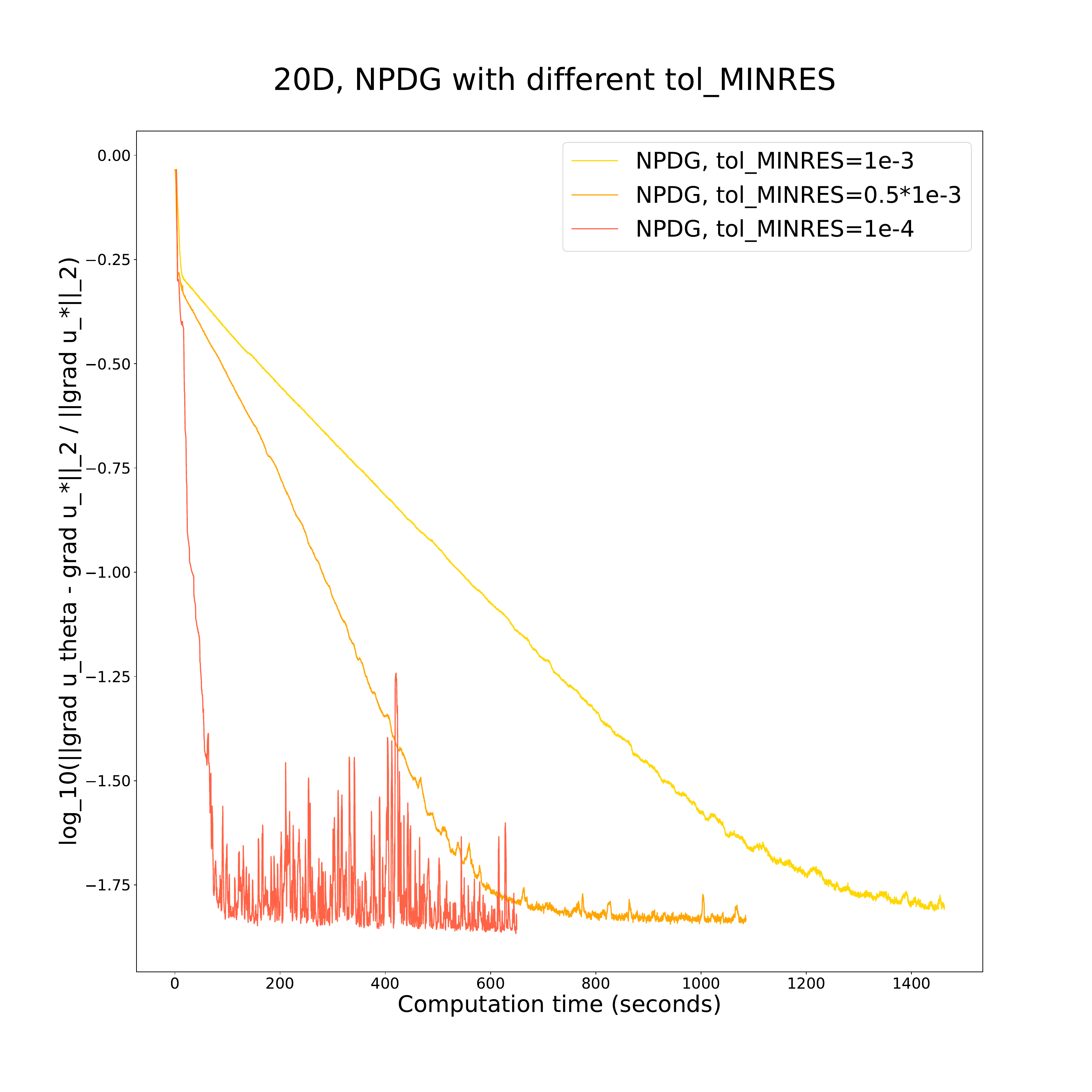}
        \caption{}\label{subfig: NPDG diff tol_minres H1 error 20d}
    \end{subfigure}
    \begin{subfigure}[b]{0.24\textwidth}
        \includegraphics[width=\textwidth]{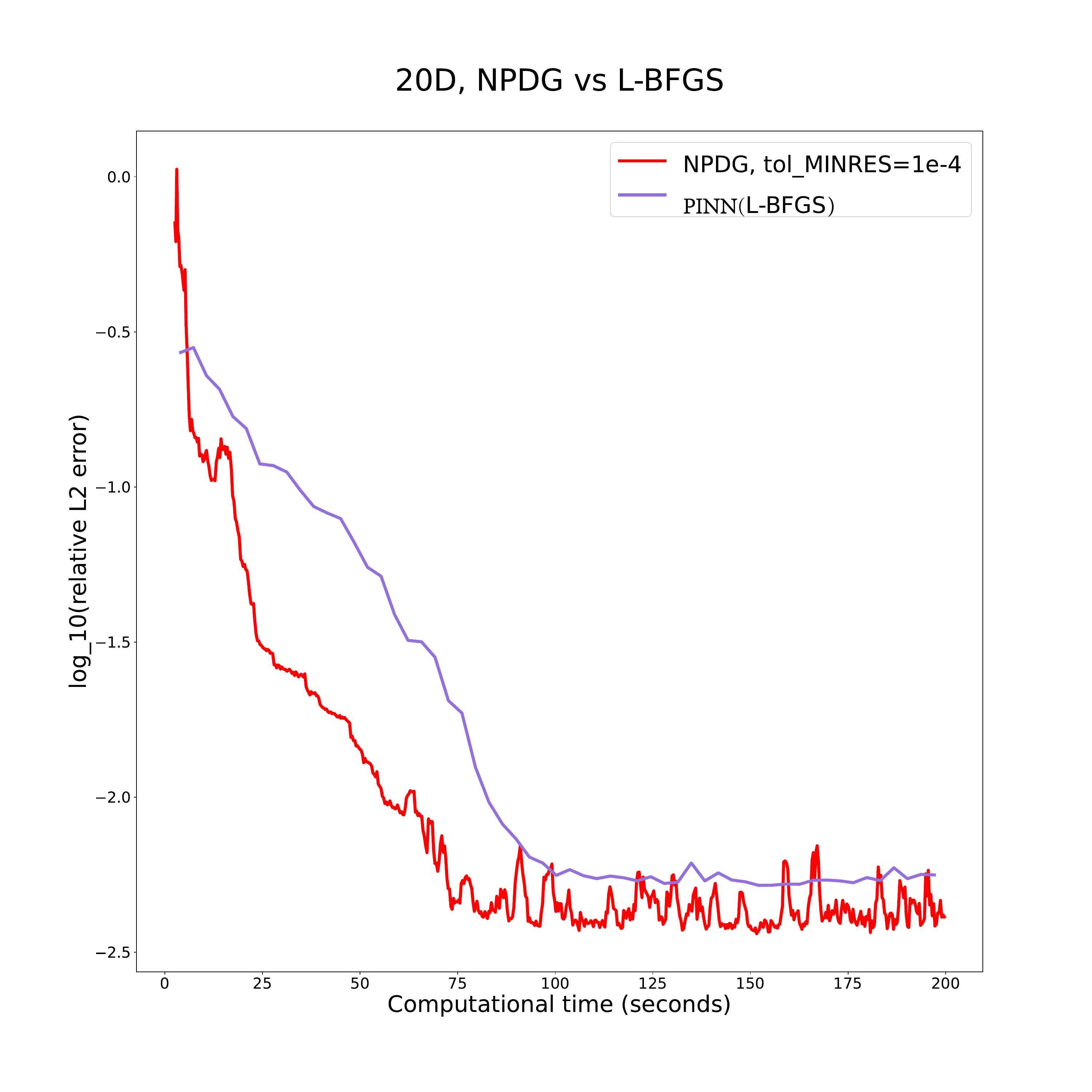}
        \caption{}\label{subfig: NPDG vs LBFGS l2 error 20d}
    \end{subfigure}
    \begin{subfigure}[b]{0.24\textwidth}
        \includegraphics[width=\textwidth]{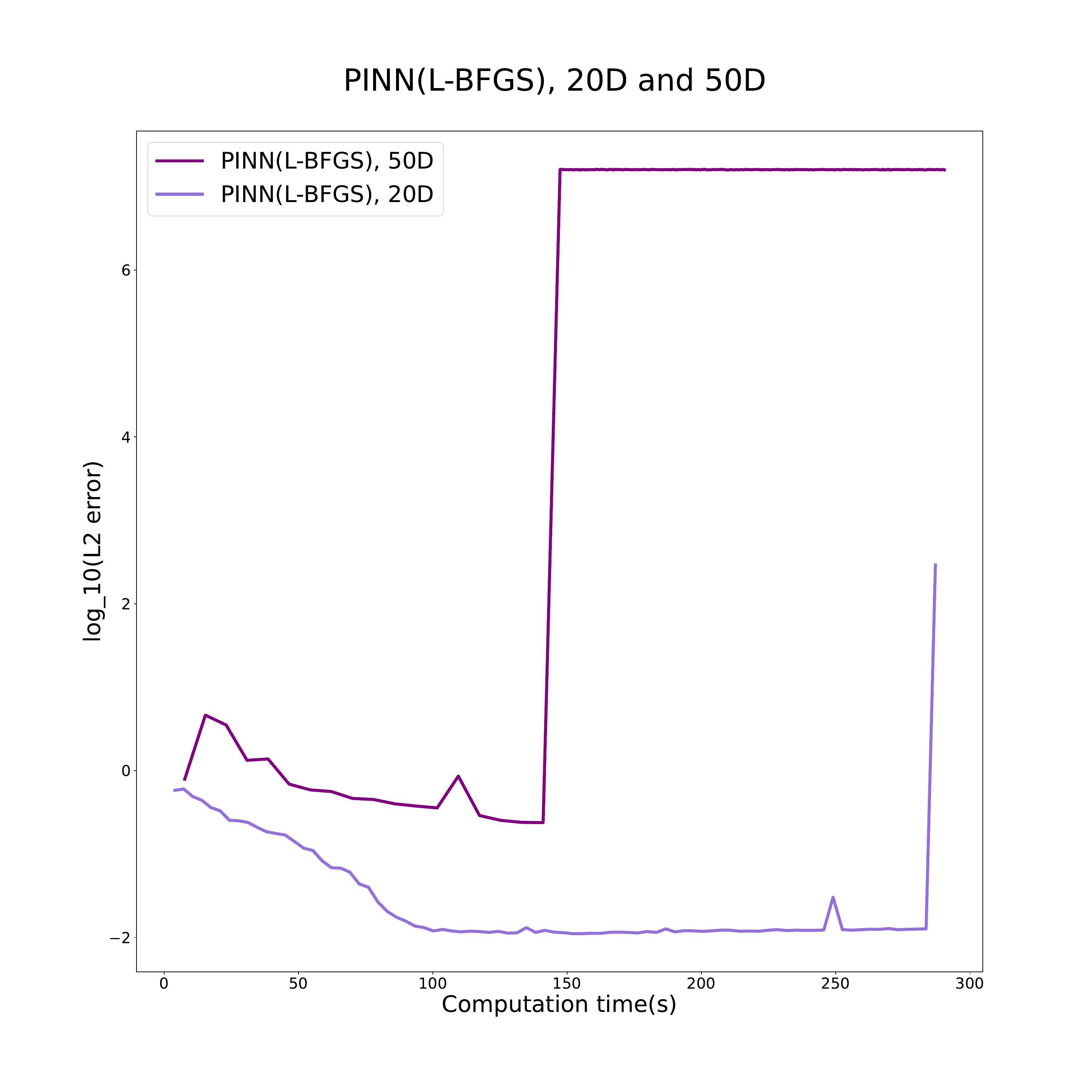}
        \caption{}\label{subfig: LBFGS l2 error blowup}
    \end{subfigure}
    \caption{\color{black}Figures \ref{subfig: NPDG diff tol_minres l2 error 20d}, \ref{subfig: NPDG diff tol_minres H1 error 20d}: Plots of relative error vs. computation time(seconds) with different $tol_{\textrm{MINRES}}=10^{-4}, 0.5 \cdot 10^{-3}, 10^{-3}$. Figure \ref{subfig: NPDG vs LBFGS l2 error 20d}: Plot of relative $L^2$ error vs. computation time (seconds), we compare NPDG with $tol_{\textrm{MINRES}} = 10^{-4}$ to PINN using L-BFGS optimizer. Figure \ref{subfig: LBFGS l2 error blowup}: Long-time behavior of L-BFGS optimizer when applied to 20D and 50D problems.}
    \label{fig:enter-label}
\end{figure}

\subsection{Nonlinear elliptic equation (5D)}  \label{subsec: nonlinear elliptic equ}
We consider the following nonlinear elliptic equation equipped with Dirichlet boundary condition on a $d$-dimensional ball with radius $R=3$
$${B}_{d, R} = \{ x \in \mathbb{R}^d ~ | ~ \|x\| \leq R \}.$$ 
\begin{equation}
  \frac{1}{2}\|\nabla u(x)\|^2 + V(x) = \Delta u(x), \quad u|_{\partial B_{d, R}} = 0.  \label{eq: nonlinear elliptic}
\end{equation}
Here we set
\[ V(x) = -\frac{\pi^2}{8}\sin^2(\frac{\pi}{2}r) - \frac{\pi^2}{4} \cos(\frac{\pi}{2}r) - \frac{\pi(d-1)}{2r} \sin(\frac{\pi}{2}r) \]
with $r=\|x\|$. The solution to this equation is the radial function
\[ u_*(x) = \cos(\frac{\pi}{2}r). \]
Similar to the previous examples, we introduce $\varphi, \psi$ to the equation and its boundary condition. We consider solving $\inf_{u}~\sup_{\varphi, \psi}~\widetilde{\mathscr{E}}(u, \varphi, \psi) := \mathscr{E}(u,\varphi, \psi) + \lambda \|\mathcal B u\|^2_{L^2(\mu_{\partial \Omega})}$ with 
\begin{align*}
\mathscr{E}(u,\varphi,\psi) =  \left(\int_\Omega \nabla\varphi(x)\cdot\nabla u(x) + \frac12 \|\nabla u(x)\|^2\varphi(x) + V(x)\varphi(x) \,\dd\mu - \frac{\blue{\nu}}{2}\int_\Omega \|\nabla \varphi(x)\|^2\,\dd\mu \right) & \\
+ {\lambda} \left(\int_{\partial \Omega} u\psi\,\dd\mu_{\partial\Omega}  - \frac{\blue{\nu}}{2} \int_{\partial \Omega} \psi^2 \,\dd\mu_{\partial\Omega} \right)&.
\end{align*}
It is still unclear what is the optimal way to precondition the nonlinear term in this equation. In our treatment, we only focus on the linear part $\Delta u$ and set 
\[ \mathcal M_p = \mathcal M_d = \nabla \]
for the preconditioning matrices $M_p(\theta), M_d(\eta), M_{bdd}(\xi).$

We test this example with $d=5$, we set %
\[ u_\theta = \texttt{MLP}_{\tanh}(d, 256, 1, 4), ~ \varphi_\eta = \texttt{MLP}_{\tanh}(d, 256, 1, 4), \quad \psi_\xi = \texttt{MLP}_{\tanh}(d, 128, 1, 4).  \]
The stepsizes are chosen as $\tau_u = 0.05, \tau_\varphi=0.095, \tau_\psi=0.095$. We apply Monte-Carlo method to evaluate the loss function, in order to sample uniformly from $B_{d, R}$, we first randomly sample $N_{in} =4000$ points $\rho_1, \dots, \rho_{N_{in}}$ from the interval $[0, R]$ following the density function $p(\rho)= \frac{d+1}{R}\left( \frac{\rho}{R} \right)^d, \rho \in [0, R]$\footnote{This can be done by first sampling $n_\rho$ points $r_1,\dots,r_{n_\rho}$ uniformly from $[0, 1]$ and then transforming each $r_i$ to $\rho_i = r_i^{\frac{1}{d}}\cdot R^{1-\frac{1}{d}}$ for $1\leq i \leq n_\rho$.}. Then we sample $N_{in}$ points $\textbf{w}_1, \dots, \textbf{w}_{N_{in}}$ from the standard Gaussian distribution $\mathcal N(0, I_d)$. Thus, we obtain $N_{in}$ sample points in $B_{d, R}$ by forming $x_i = \rho_i \frac{\textbf{w}_i}{\|\textbf{w}_i\|+e_0}$, $1\leq i\leq N_{in}$. We add $e_0=10^{-8}$ to prevent division by zero. We run the proposed method for $N_{iter} = 10000$ iterations.

In this example, we also test the PINN(Adam/L-BFGS) and WAN methods. The hyperparameters for these methods are provided in Table \ref{tab: setup for PINN DeepRitz WAN PDAdam}.  Log-log plots of the relative error vs. the computation time among the methods are provided in Figure \ref{fig: Nonlinear1}.
\begin{figure}[htb!]
    \centering
    \begin{subfigure}[b]{0.28\textwidth}
        \includegraphics[width=\textwidth]{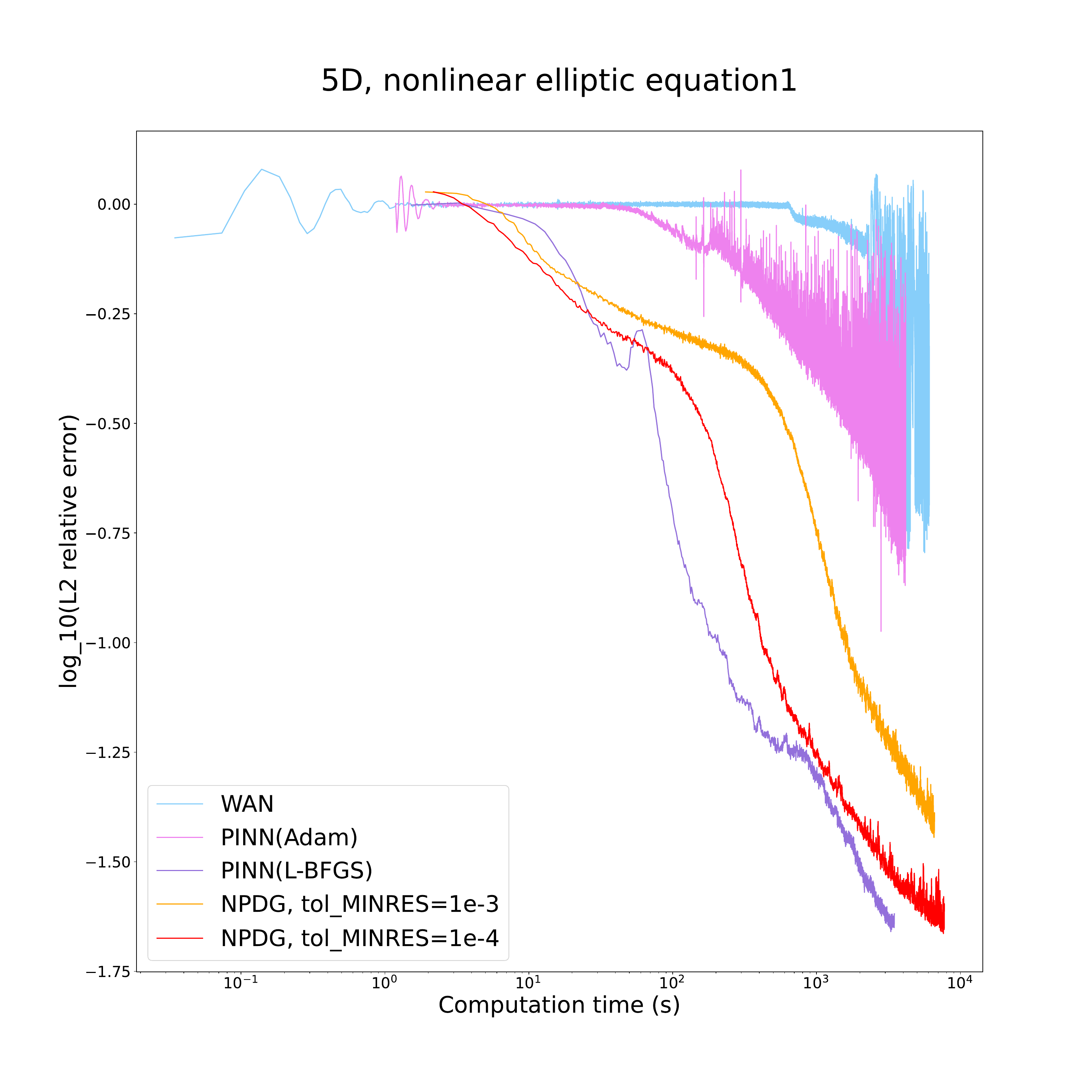}
        \caption{}\label{subfig: Nonlinear log rel l2 err vs log comp time}
    \end{subfigure}
    \begin{subfigure}[b]{0.28\textwidth}
        \includegraphics[width=\textwidth]{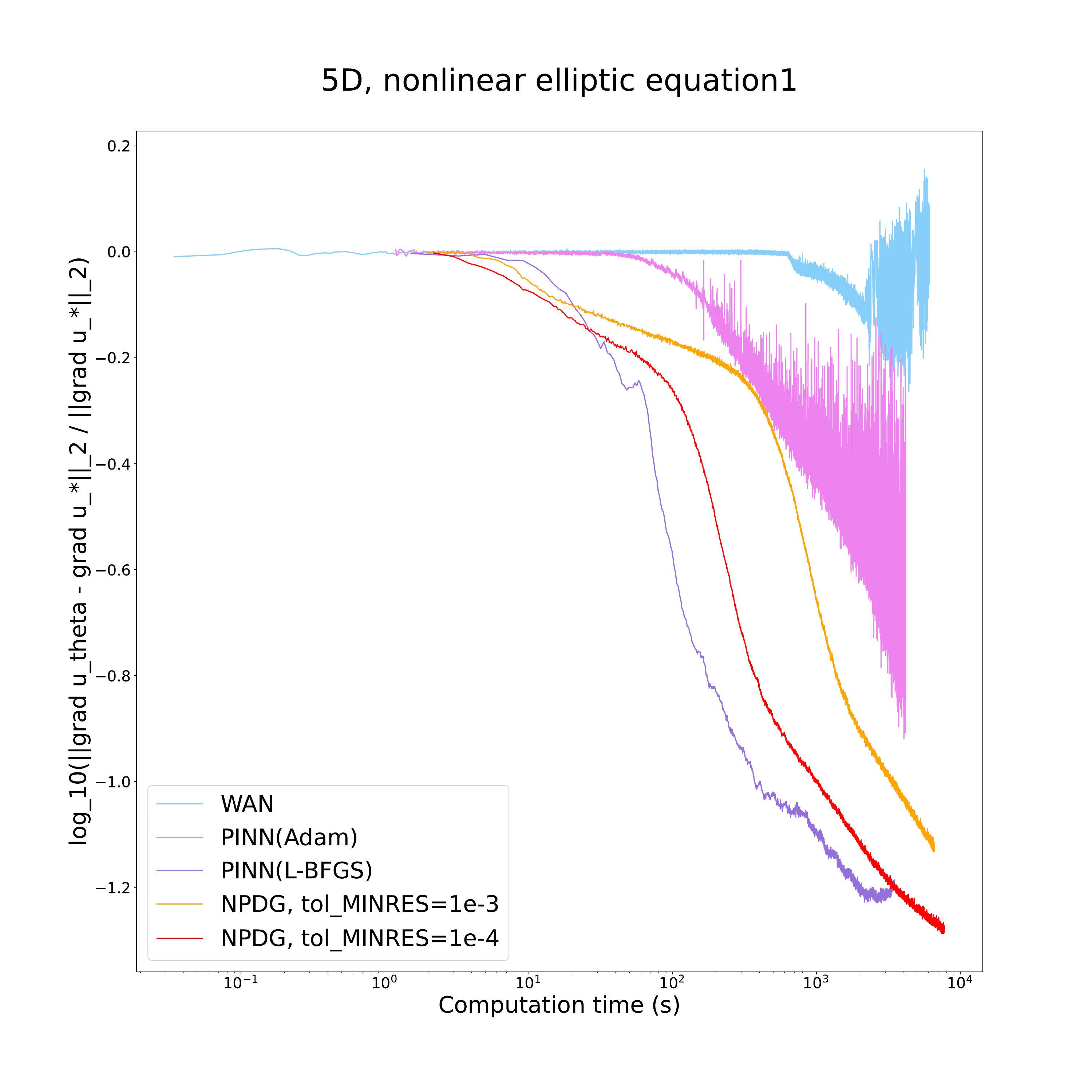}
        \caption{}\label{subfig: Nonlinear log rel H1 err vs log comp time}
    \end{subfigure}
    \begin{subfigure}[b]{0.3\textwidth}
        \includegraphics[width=\textwidth]{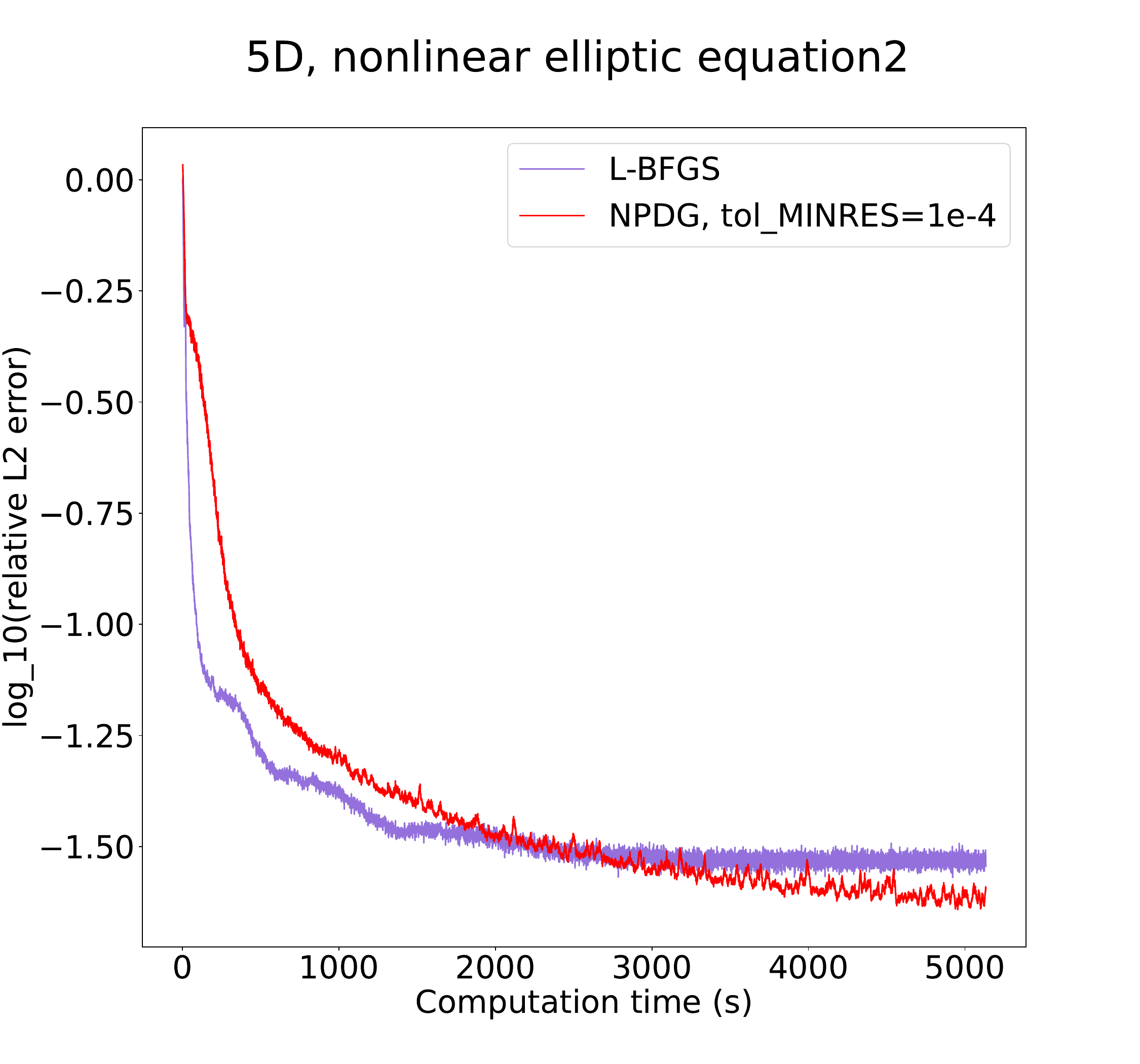}
        \caption{}\label{subfig: Nonlinear mild log rel l2 err vs comp time NPDG and LBFGS}
    \end{subfigure}
    \caption{Equation \eqref{eq: nonlinear elliptic}: \textbf{Left}: Log-log plot of relative $L^2$ error vs. computational time (seconds); \textbf{Middle}: Log-log plot of relative $H^1$ seminorm error vs. computational time (seconds). The values of $\|u_*\|_{L^2(\Omega, \mu)}$ and $\|\nabla u_*\|_{L^2(\Omega, \mu)}$ are provided in Table \ref{tab: real sol and norms}. Equation \eqref{eq: mild nonlinear elliptic}: \textbf{Right}: Semi-log plot of relative L2 error vs. computational time.
    }
    \label{fig: Nonlinear1}
\end{figure}
We plot the graph of $u_\theta$ obtained by the algorithm on the $1-2$ coordinate plane in Figure \ref{subfig:  graph u_theta}. We also plot the heat maps of the error function $|u_{\theta}(\cdot)-u_*(\cdot)|$ on various coordinate planes in Figures \ref{subfig: heatmap err 1-2 plane}-\ref{subfig: heatmap err 4-5 plane}.
\begin{figure}[htb!]
    \centering
    \begin{subfigure}[b]{0.19\textwidth}
        \includegraphics[width=\textwidth]{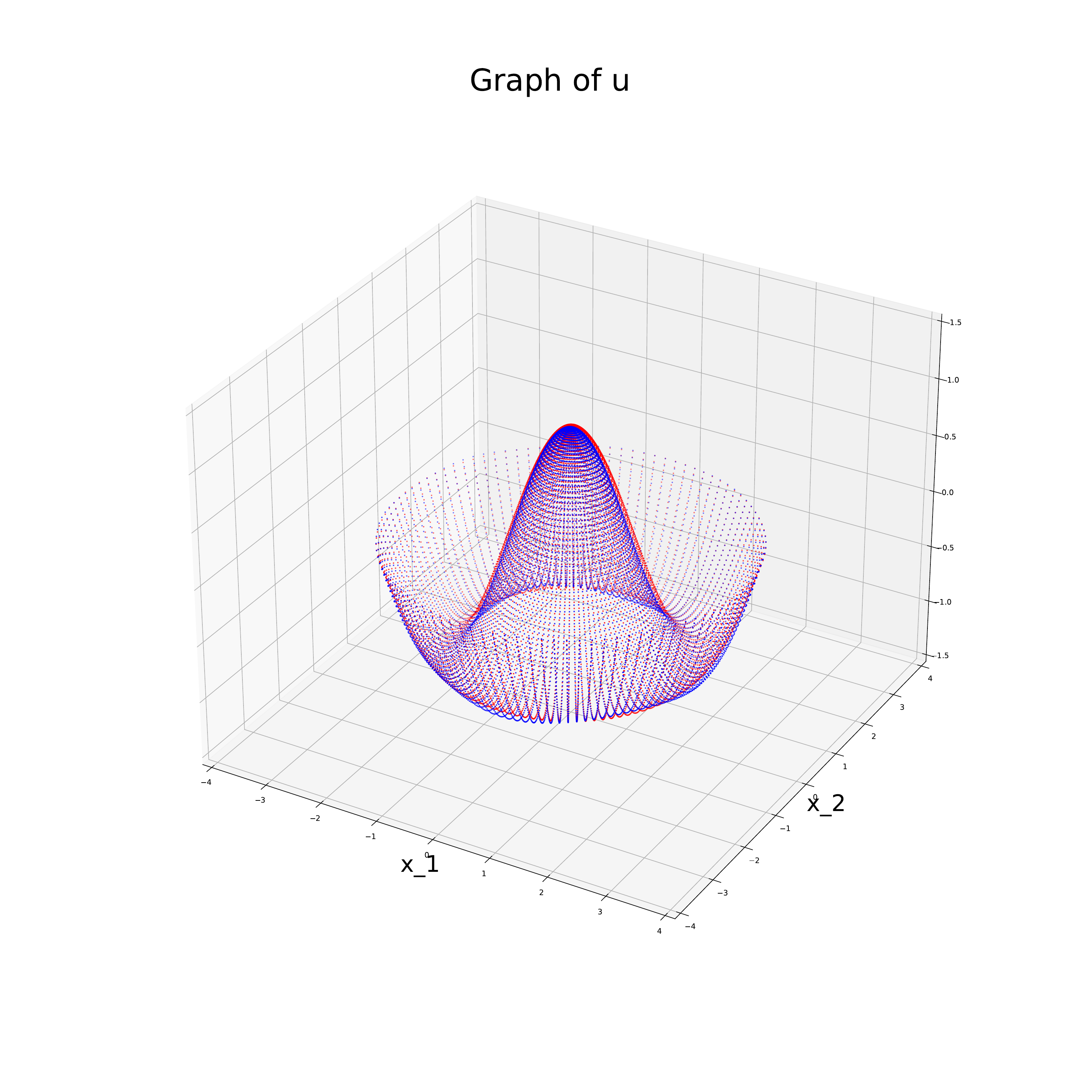}
        \caption{}\label{subfig:  graph u_theta}
    \end{subfigure}
    \begin{subfigure}[b]{0.19\textwidth}
        \includegraphics[trim={0 0 0 3.7cm}, clip, width=\textwidth]{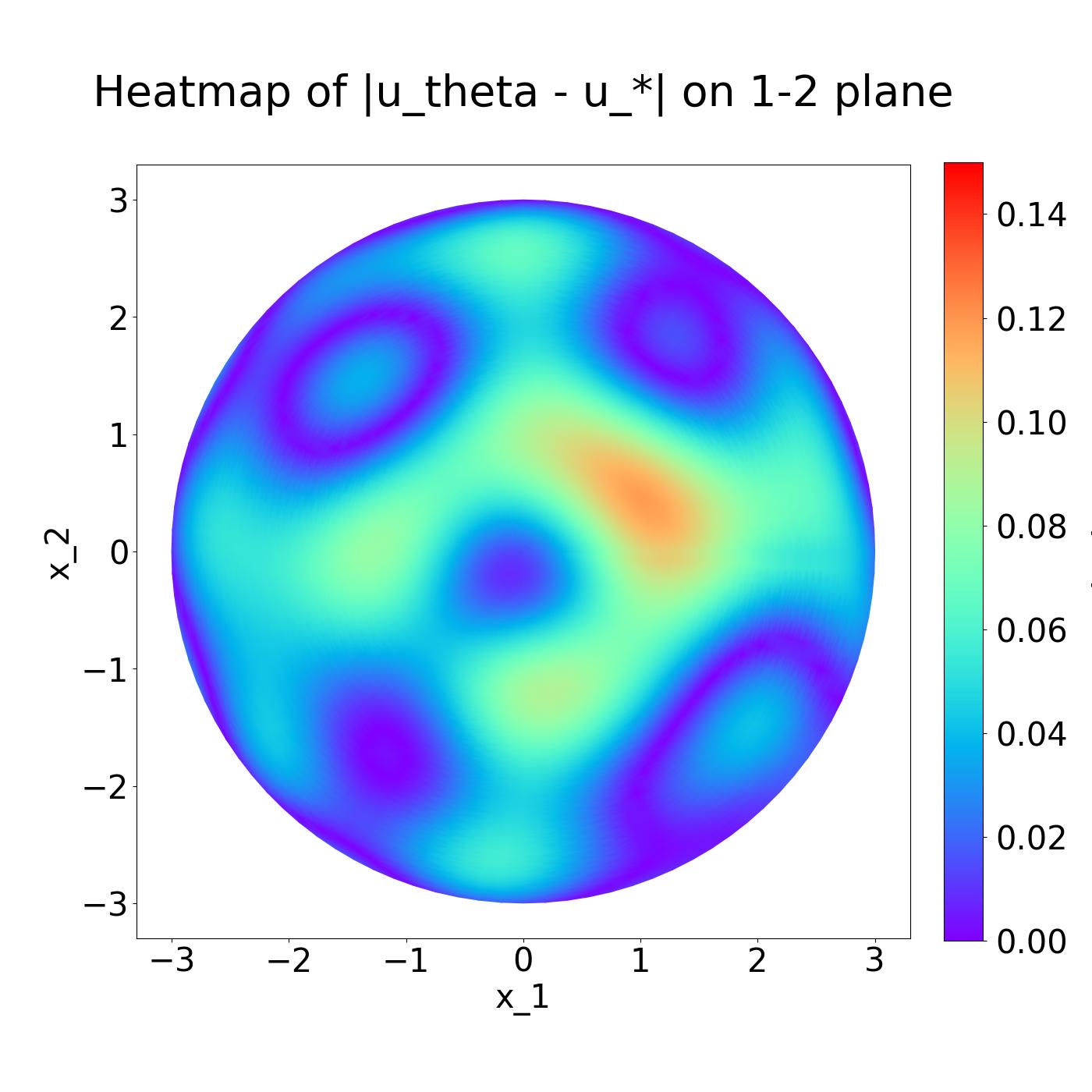}
        \caption{}\label{subfig: heatmap err 1-2 plane}
    \end{subfigure}
    \begin{subfigure}[b]{0.19\textwidth}
        \includegraphics[trim={0 0 0 3.7cm}, clip, width=\textwidth]{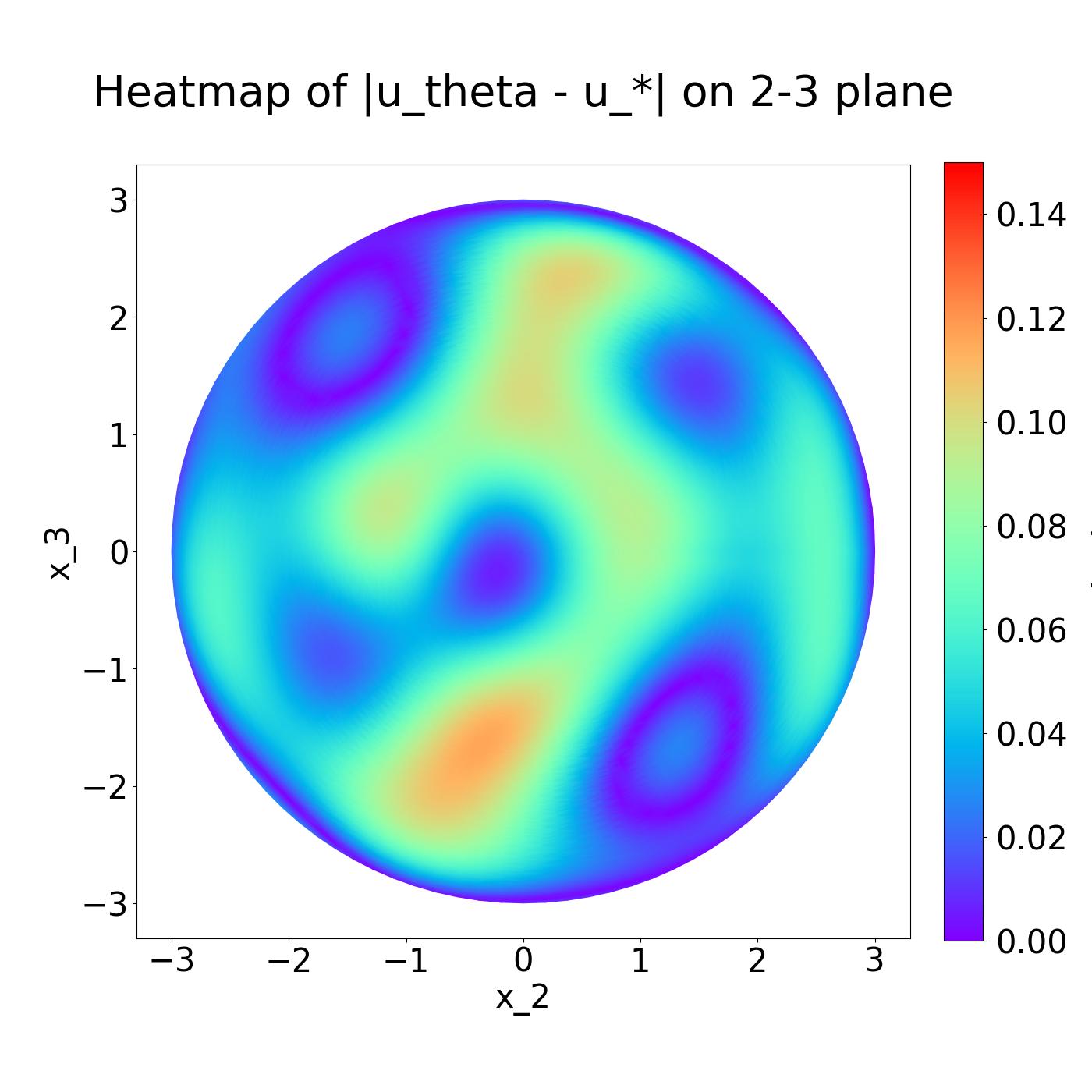}
        \caption{}\label{subfig: heatmap err 2-3 plane}
    \end{subfigure}
    \begin{subfigure}[b]{0.19\textwidth}
        \includegraphics[trim={0 0 0 3.7cm}, clip, width=\textwidth]{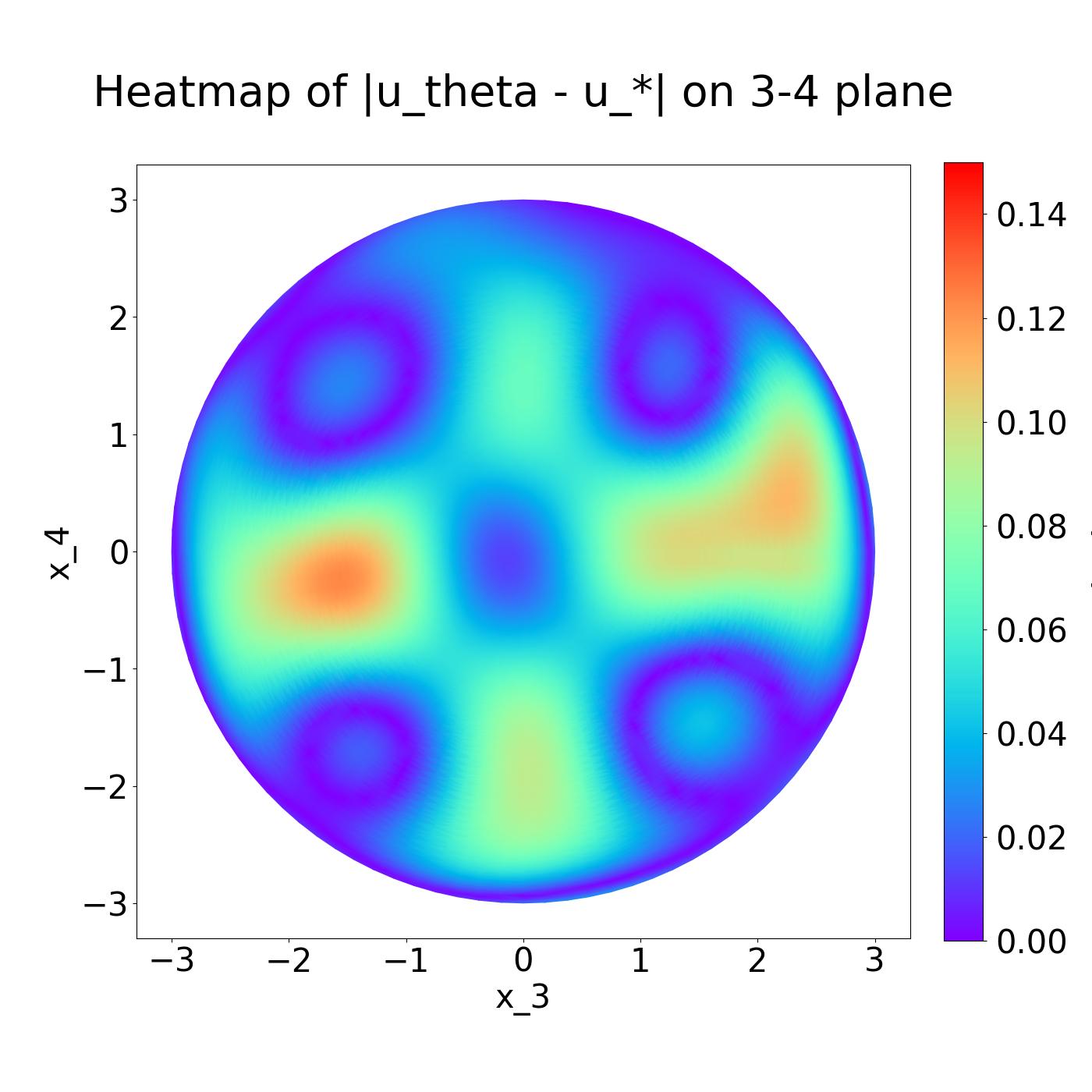}
        \caption{}\label{subfig: heatmap err 3-4 plane}
    \end{subfigure}
    \begin{subfigure}[b]{0.19\textwidth}
        \includegraphics[trim={0 0 0 3.7cm}, clip, width=\textwidth]{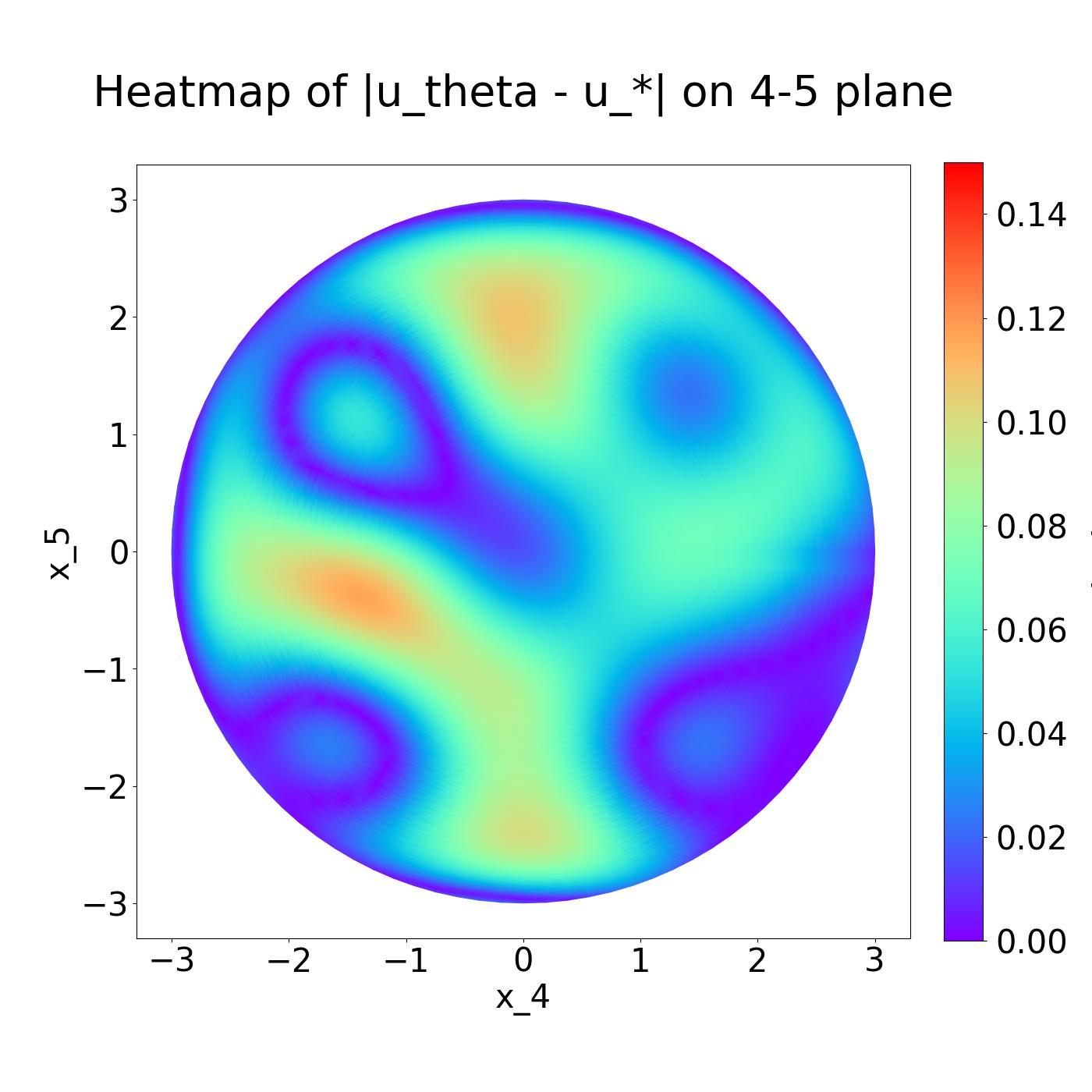}
        \caption{}\label{subfig: heatmap err 4-5 plane}
    \end{subfigure}
    \caption{Figure \ref{subfig:  graph u_theta}: Graph of $u_\theta$ on the $1-2$ coordinate plane (that is, the plane spanned by the first and second components with the remaining coordinates fixed to $0$). The parameter $\theta$ is obtained by the NPDG method after $10000$ iterations; Figures \ref{subfig: heatmap err 1-2 plane}-\ref{subfig: heatmap err 4-5 plane}: Heatmaps of $|u_\theta(\cdot)-u_*(\cdot)|$ plotted on $1-2$, $2-3$, $3-4$, $4-5$ coordinate planes.  }
    \label{fig: Nonlinear2}
\end{figure}
Similar to previous examples, we record the GPU times spent by different methods for achieving certain accuracy in Table \ref{tab: GPU time to accuracy } of Appendix \ref{append: compare among diff methods}. 
Furthermore, we also consider the following equation on the same region $B_{d, R}$ ($d=5$, $R=3$) with a weaker nonlinear term,
\begin{equation}
  \frac{\epsilon_0}{2}\|\nabla u(x)\|^2 + \Delta u(x)  =  V(x), \quad u|_{\partial B_{d, R}} = 0.  \label{eq: mild nonlinear elliptic}
\end{equation}
Here we set $\blue{\epsilon_0} = \frac{1}{10}$ and
\[ V(x) = \frac{\epsilon_0  \pi^2}{8}\sin^2(\frac{\pi}{2}r) - \frac{\pi^2}{4} \cos(\frac{\pi}{2}r) - \frac{\pi(d-1)}{2r} \sin(\frac{\pi}{2}r). \]
The solution to this equation is still $u_*(x) = \cos(\frac{\pi}{2}r)$. We apply the NPDG algorithm with exactly the same neural network architecture and hyperparameters as in \eqref{eq: nonlinear elliptic} to solve equation \eqref{eq: mild nonlinear elliptic}. We also test the L-BFGS optimizer to minimize the PINN loss of equation \eqref{eq: mild nonlinear elliptic}. Figure \ref{subfig: Nonlinear mild log rel l2 err vs comp time NPDG and LBFGS}
 indicates that the proposed method achieves performance that is comparable with L-BFGS in this example.

\subsection{Allen-Cahn equation }  \label{subsec: AllenCahn}
 We have discussed several examples of time-independent PDEs. We now briefly show how the proposed method is applied to resolve the time-implicit, semi-discrete schemes of the time-dependent equations. In this section, we primarily focus on the 1D and 2D Allen-Cahn equations to illustrate the main idea. Future research will explore additional approaches, such as adaptive sampling techniques \citep{wight2020solving} and extensions to higher dimensions. 

We consider the Allen-Cahn equation on a bounded domain $\Omega$ posed with the homogeneous Neumann boundary condition on time interval $[0, T]$.
\begin{equation*}
  \frac{\partial u(x,t)}{\partial t} = \epsilon_0\Delta u(x,t) - \frac{1}{\epsilon_0} W'(u), \quad \frac{\partial u}{\partial \textbf{n}}=0 ~\textrm{on } \partial \Omega, \quad u(\cdot, 0)=u_0(\cdot). 
\end{equation*}
Here we define the double-well potential function $W(u)=\frac{1}{4}(1-u^2)^2$, with $W'(u) = u^3 - u$. \blue{It is well-known that the Allen-Cahn equation can be viewed as the $L^2-$gradient flow of the energy functional $E(u)=\int_\Omega\frac{\epsilon_0}{2}\|\nabla u\|^2 + \frac{1}{2\epsilon_0}W(u) \, \dd x.$} 

In this research, we focus on resolving the time-implicit, semi-discrete numerical scheme of this equation. We divide the time interval into $N_t$ subintervals and consider
\begin{equation*}
  \frac{u^t(x) - u^{t-1}(x)}{h_t} = \epsilon_0 \Delta u^t(x) - \frac{1}{\epsilon_0} W'(u^t(x)), ~ \frac{\partial u^t}{\partial \textbf{n}}=0 ~ \textrm{on } \partial \Omega, 
\end{equation*}
sequentially for $1\leq t \leq N_t$ with $u^0(\cdot)$ set as $u_0(\cdot)$. \blue{One motivation for considering this implicit scheme is its energy stability, in the sense that
$E(u^t)\leq E(u^{t-1}),$ which respects the gradient-flow nature of the equation.
}

The problem boils down to solving $N_t$ consecutive elliptic equations with a cubic term as shown below,
\begin{equation} 
  u^t(x)-\epsilon_0 h_t\Delta u^t(x) + \frac{h_t}{\epsilon_0} ((u^t(x))^3 - u^t(x)) = u^{t-1}(x), ~ \frac{\partial u^t}{\partial \textbf{n} }=0 ~ \textrm{on }\partial \Omega, \quad 1\leq t \leq N_t.    \label{AC equ to elliptic equ with cubic term}
\end{equation}
In the implementation, we substitute the primal function $u$, and the test functions $\varphi, \psi$ with neural networks with $\mathrm{tanh}$ activations,
\[
    u_\theta = \texttt{MLP}_{\mathrm{tanh}}(d, 128, 1, 5), \quad \varphi_\eta = \texttt{MLP}_{\mathrm{tanh}}(d, 128, 1, 5)\cdot \zeta, \quad \psi_\xi = \texttt{MLP}_{\mathrm{tanh}}(d, 64, 1, 5).  
\]
\blue{We adopt different preconditioning strategies based on the magnitude of $\epsilon_0$. A detailed discussion is provided in Appendix~\ref{append: AC}.}

\vspace{0.5cm}

\noindent
\textbf{1D example} We first test the algorithm on the 1D example with $\Omega=[0, 2]$, initial data $u_0(x) = (1-\cos(\pi (x-1)))\cos(\pi (x-1)).$ \blue{We work on two cases in which $\epsilon_0=0.1, T=1, N_t=10$; and $\epsilon_0=0.01, T=0.08, N_t=80$.} We treat the distribution $\mu_{\partial \Omega} = \frac{1}{2}(\delta_0 + \delta_2)$ where $\delta_x$ denotes the Dirac measure\footnote{That is, $\delta_x(E) = 1$ for any measurable set $E\subset \mathbb{R}$ that contains $x$, and $\delta_x(E)=0$ for measurable sets that do not contain $x.$} concentrated on the point $x\in\mathbb{R}$. 

For the algorithm, we set $N_{in}=2000$ and $N_{bdd}=2$. Since $\partial \Omega=\{0,2\}$, one boundary sample is assigned to each endpoint. The boundary loss coefficient is chosen as $\lambda=1$. \blue{We keep $\tau_u = 0.05$, $\tau_\varphi = 0.095$, and $\tau_\psi = 0.095$ for $\epsilon_0 = 0.1$, and choose smaller stepsizes $\tau_u = 0.01$, $\tau_\varphi = 0.02$, and $\tau_\psi = 0.02$ for $\epsilon_0 = 0.01$.}

\blue{In Figure \ref{fig: AC1}, we plot the graphs of our numerical solution $u_{\theta_k}$ obtained at different time nodes $t_k=\frac{k}{N_t}$ ($1\leq k \leq N_t$) with the benchmark solution $\{U^k\}_{k=1}^{N_t}$ solved from time-implicit, finite difference scheme \eqref{implicit, FD} provided in Appendix \ref{append: benchmark}. The semi-log curve of $\sqrt{\frac{1}{N_x}\sum_{i=1}^{N_x} (u_{\theta_k}(x_i) - U_i^k)^2}$ vs. the computation time ($\epsilon_0=0.1$) is presented in Figure~\ref{subfig: AC semi-log MSE vs comptime}. The energy decay plot of $E(u_{\theta_k})$ versus $t_k$ for $\epsilon_0 = 0.01$ is included in Figure~\ref{subfig: AC energy decay eps=0.01}. The numerical solution $u_{\theta_k}$ exhibits monotonic decay of its energy and shows agreement with the benchmark solution.}
\begin{figure}[htb!]
    \centering
    \begin{subfigure}[b]{0.23\textwidth}
        \includegraphics[width=\linewidth]{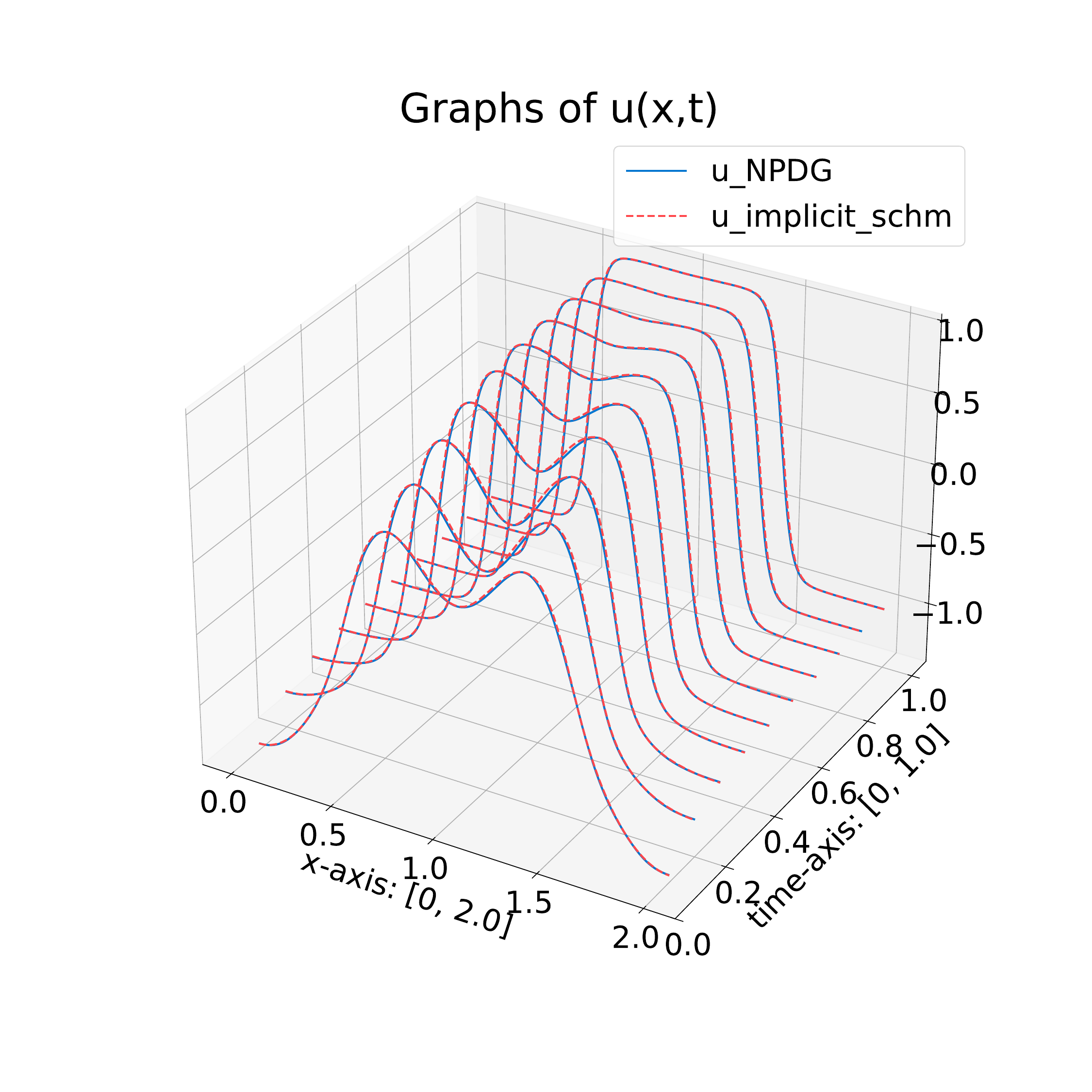}
        \caption{\footnotesize $\epsilon_0=0.1$}\label{subfig: AC graphs of ut}
    \end{subfigure}
    \begin{subfigure}[b]{0.23\textwidth}
        \includegraphics[width=\linewidth]{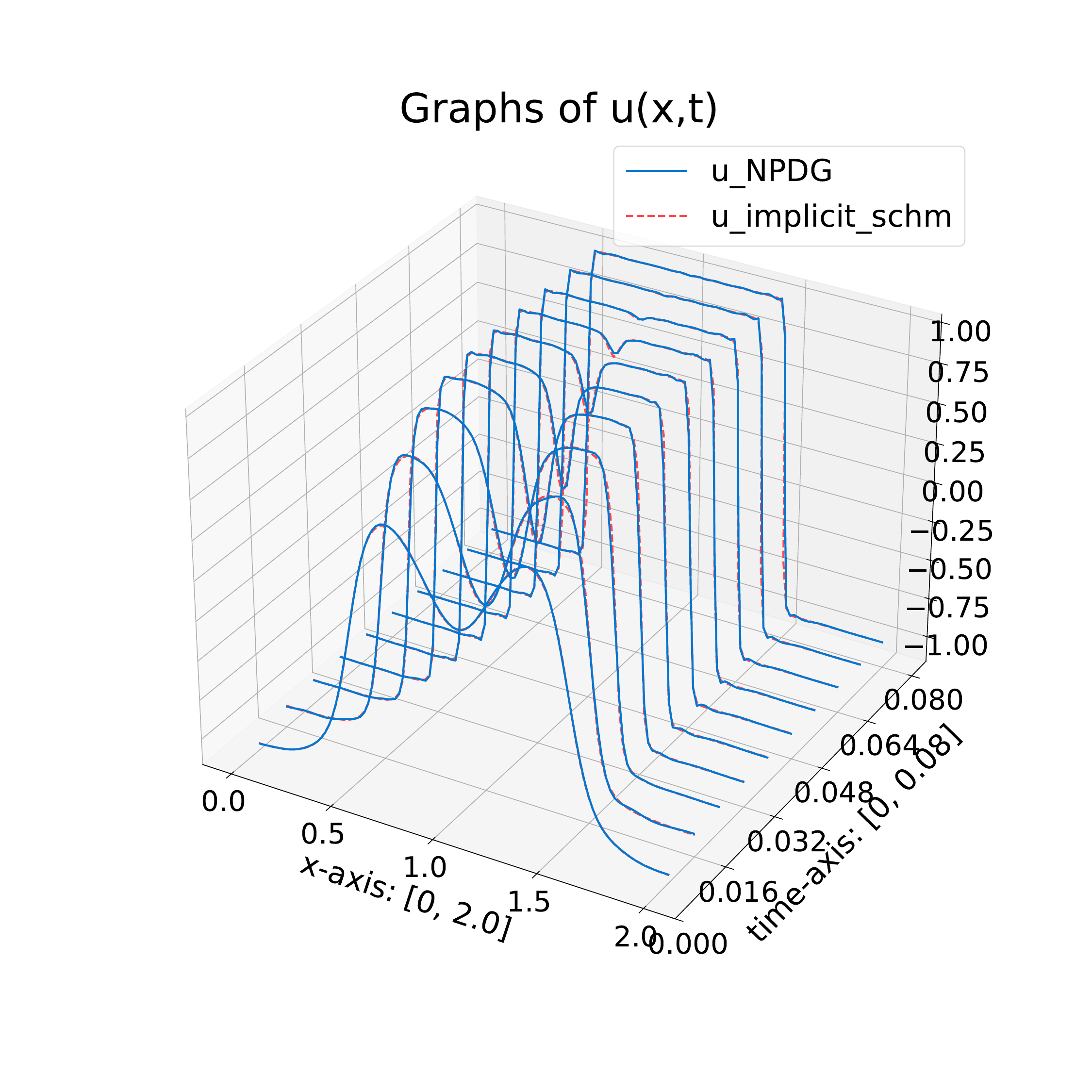}
        \caption{\footnotesize $\epsilon_0=0.01$}\label{subfig: AC graphs of ut eps=0.01}
    \end{subfigure}
    \begin{subfigure}[b]{0.23\textwidth}
        \includegraphics[width=\linewidth]{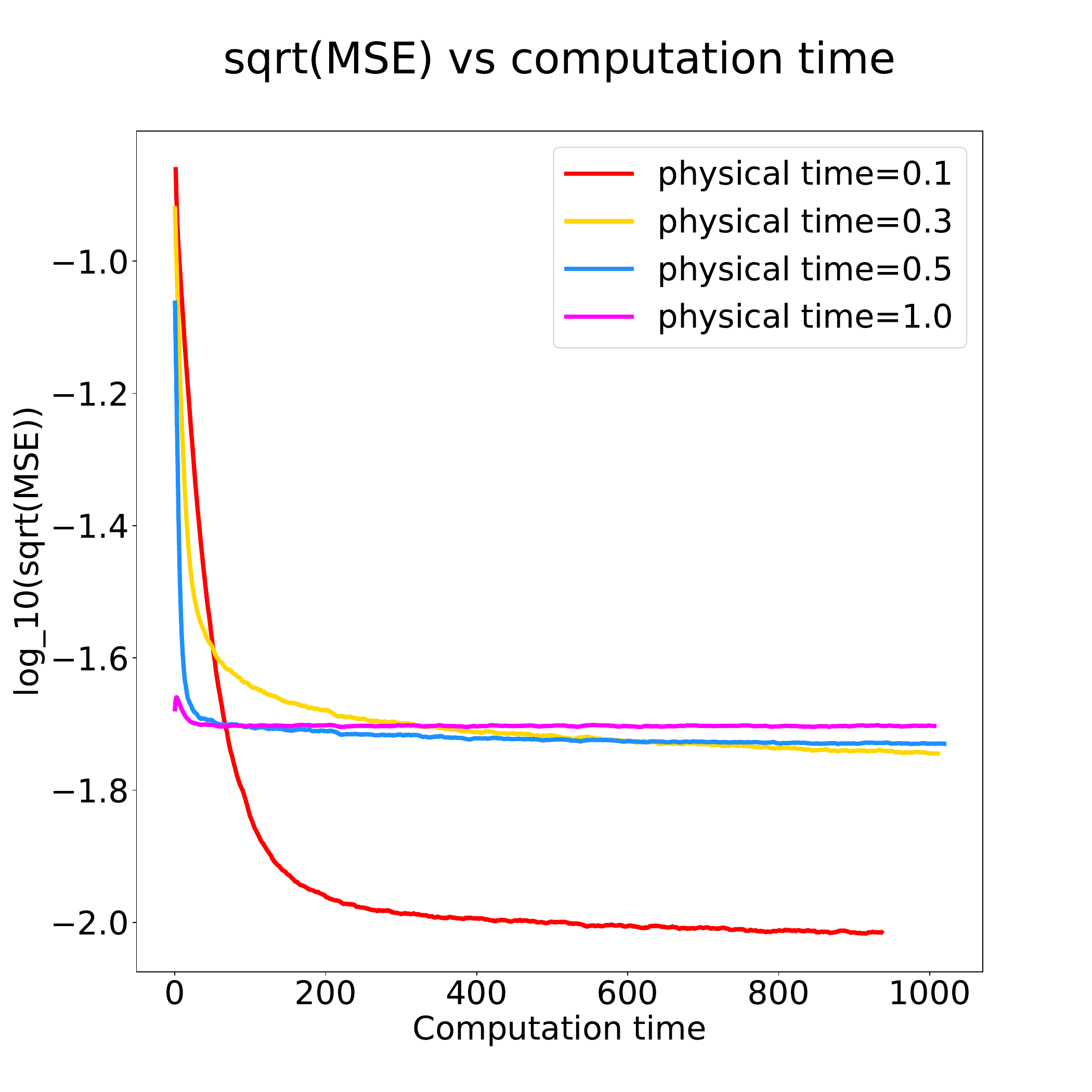}
        \caption{\footnotesize MSE vs. walltime}\label{subfig: AC semi-log MSE vs comptime}
    \end{subfigure}
    \begin{subfigure}[b]{0.23\textwidth}
        \includegraphics[width=\linewidth]{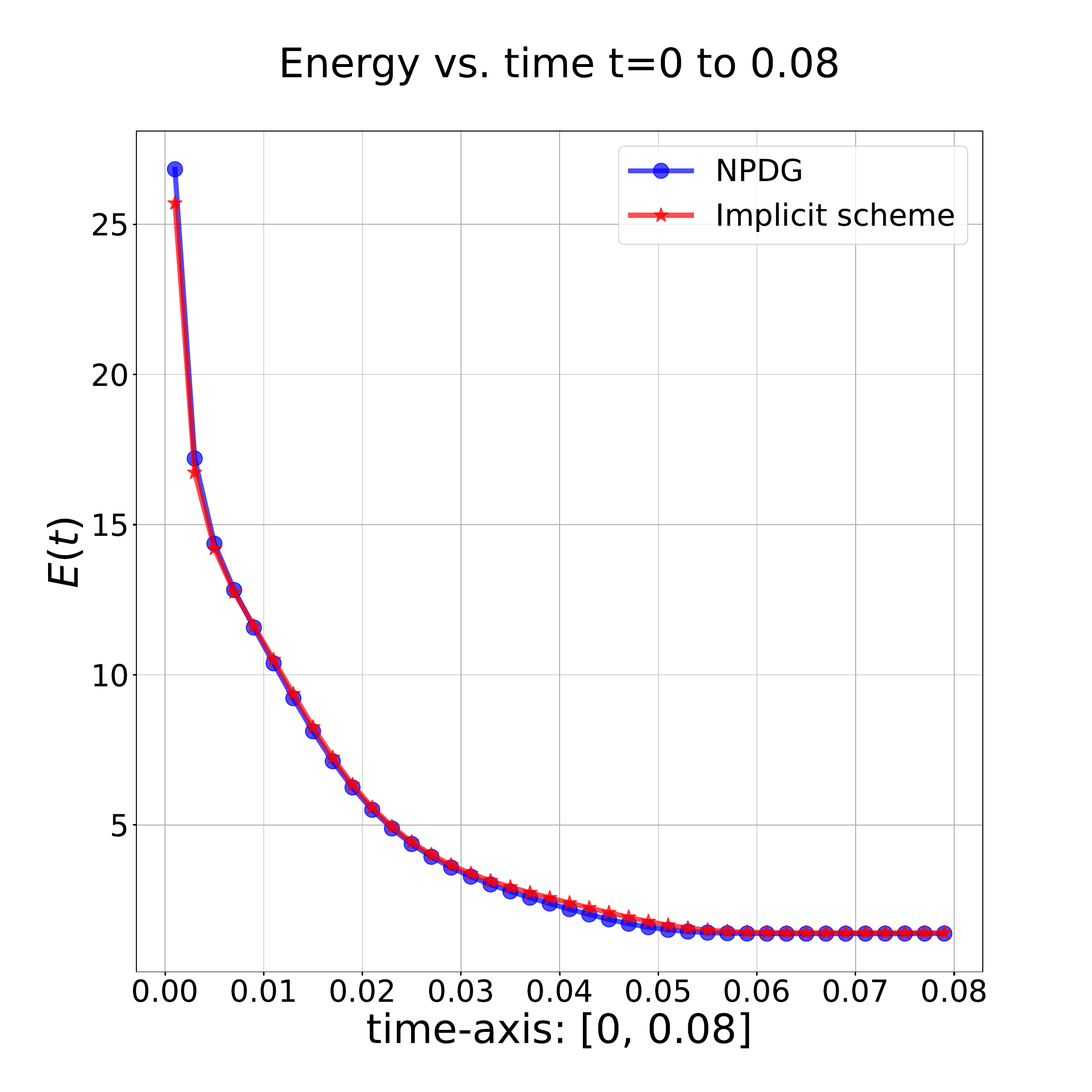}
        \caption{\footnotesize Energy decay}\label{subfig: AC energy decay eps=0.01}
    \end{subfigure}
    \caption{\blue{\ref{subfig: AC graphs of ut}\&\ref{subfig: AC graphs of ut eps=0.01}: The graph of $u_{\theta_k}(\cdot)$ (blue) obtained from the NPDG algorithm for $t_k = \frac{k}{N_t}$, $1\leq k \leq N_t$ together with the benchmark solution (red, dashed line).
    \ref{subfig: AC semi-log MSE vs comptime}: Semi-log plots of the $\sqrt{\textrm{MSE loss}}$ vs. computation time (seconds) at physical time $0.1, 0.3, 0.5, 1.0.$ \ref{subfig: AC energy decay eps=0.01}: Plot of energy $E(u_{\theta_k})$ versus time $t_k$.}}
    \label{fig: AC1}
\end{figure}

\vspace{0.5cm}

\noindent
\textbf{2D example} We further consider a 2D Allen-Cahn equation with $\Omega = [0, 2]^2$, $\epsilon_0=0.1$ and the initial condition $u_0(x) = \tanh\left( -\frac{ \|x-x_0\|-R }{s} \right)$ with $x_0 = (1, 1)^\top$, $R=0.5$ and $s = 0.1$. We set $T=1.5$ and $N_t=15$. We keep the hyperparameters of the NPDG algorithm the same as the previous example except that we set $N_{iter}=1000.$ The numerical results are provided in Appendix \ref{append: benchmark}.

\subsection{ Monge-Amp\`{e}re equation for the $L^2$-Optimal Transport problem  }\label{sec: MA}
In this section, we focus on the computation of the Monge-Ampère equation \eqref{insection Monge Ampere}. A PINN solver for this equation is proposed in \citep{singh2021physics}. Deep learning algorithms from the optimal transport perspective are discussed in \citep{korotin2019wasserstein, makkuva2020optimal, fan2023neural}, among other references.

As discussed in Section \ref{subsec: discuss on MA}, solving the equation is equivalent to solving the $L^2-$optimal transport problem. This can be further reduced to a sup-inf saddle point problem \eqref{saddle Monge with map T}. In this research, we assume that the samples of $\mu_0, \mu_1$ are available. In order to evaluate the functional $\mathscr{E}(T_\theta, \varphi_\eta),$ we generate samples $\{\boldsymbol{X}_i\}_{i=1}^N\sim\mu_0=\rho_0 \dd x$ and $\{\boldsymbol{Y}_i\}_{i=1}^N\sim \mu_1=\rho_1 \dd y$ and apply the Monte-Carlo algorithm,
\[ \mathscr{E}(T_\theta, \varphi_\eta) \approx \frac{1}{N} \sum_{i=1}^N \frac12 \|\boldsymbol{X}_i - T_\theta(\boldsymbol{X}_i)\|^2 + \varphi_\eta(T_\theta(\boldsymbol{X}_i)) - \varphi_\eta(\boldsymbol{Y}_i). \]
By applying Algorithm \ref{alg: mat-vec mult precond}, we calculate the natural (preconditioned) gradients of $\mathscr E(T_\theta, \varphi_\eta)$ with respect to $\theta, \eta$. We then apply the NPDG algorithm \ref{alg: NPDG} to solve the saddle point problem \eqref{saddle Monge with map T} for $T_*(\cdot)$ ($\nabla u(\cdot)$). 

In experiments, we use the Primal-Dual algorithm with the Adam optimizer ({PD-Adam}) proposed in \citep{fan2023neural} as a benchmark for the proposed method. A brief description of this method, as well as its hyperparameters used in all tests, are provided in Appendix \ref{append: PD Adam OT}. We test three numerical examples as a demonstration. The first two examples possess explicit formulas for the OT maps. In the third example, we compute the OT map from standard Gaussian to mixed Gaussian distributions embedded in 10D and 50D spaces. 
In the implementation, we set $T_\theta(\cdot)$, $\varphi_\eta(\cdot)$ as MLP with $\textrm{PReLU}$ activation function 
\begin{equation*}
  \textrm{PReLU}(x) = \begin{cases}
        x,  \quad \textrm{if  } x \geq 0\\
        ax, \quad \textrm{otherwise,}
  \end{cases}
\end{equation*}
where $a\in \mathbb R$ is a learnable parameter. 
The Input Convex Neural Networks (ICNN) architecture \citep{amos2017input} advocated in \citep{makkuva2020optimal} will be considered in future research.

\subsubsection{1D Gaussian to mixed Gaussian}\label{sec: MA1D}
We set $\rho_0 = \mathcal N(0, 1)$, $\rho_1 = \sum_{k=1}^m \lambda_k \mathcal N(\mu_k, \sigma_k^2)$ with $\lambda_k>0$, $\sum_{k=1}^m  \lambda_k = 1$, $\mu_k\in\mathbb{R}$, $\sigma_k > 0$. The optimal transport map takes the explicit form,
\[ T_*(x) = F_1^{-1}(F_0(x)), \quad F_0(x) = \sum_{ k = 1 }^m \frac{\lambda_k}{2}(1 + \mathrm{erf}\left(\frac{x-\mu_k}{\sqrt 2 \sigma_k}\right)), \quad F_1^{-1}(y) = \mathrm{erf}^{-1}(2y-1).  \]
In the example, we consider $m=2$, $\lambda_1 = \frac23, \mu_1 = -1, \sigma_1 = 0.5$; $\lambda_2 = \frac13, \mu_2 = 1, \sigma_2 = 0.5$. We set $T_\theta(\cdot)$ and $\varphi_\eta$ as
\begin{equation*}
  T_\theta = \texttt{MLP}_{\textrm{PReLU}}(1, 50, 1, 3), ~ \varphi_\eta = \texttt{MLP}_{\textrm{PReLU}}(1, 50, 1, 3).
\end{equation*}
We set the sample size $N = 800$, $\omega = 1$, and $\tau_u = \tau_\varphi = 1.5\cdot 10^{-1}$. We perform the NPDG algorithm for $6000$ iterations. Figure \ref{subfig: Monge Ampere 1D semi-log plot sqrt MSE} demonstrates the semi-log plots of the $L^2(\rho_0)$ error $\|T_\theta - T_*\|_{L^2(\rho_0)}$ versus the computation time. We make comparisons among the NPDG algorithms with different preconditioners (\eqref{pull Id as precond Monge problem} and \eqref{canonical precond for Monge problem}), as well as the PD-Adam method.

\subsubsection{5D Gaussian to Gaussian}\label{sec: MA5D}
For $\mu_0, \mu_1\in\mathbb R^5$ and positive-definite symmetric matrices $\Sigma_0, \Sigma_1\in\mathbb R^{5\times 5}$, we set $\rho_0=\mathcal N(\mu_0, \Sigma_0)$, $\rho_1=\mathcal N(\mu_1, \Sigma_1)$.
One can verify that the OT map takes the affine form $T_*(\textbf{x}) = A\textbf{x}+b$ with
\[ A = \sqrt{\Sigma_0}^{-1} (\sqrt{\Sigma_0} \Sigma_1 \sqrt{\Sigma_0})^{1/2} \sqrt{\Sigma_0}^{-1}, \quad b = \mu_1 - A\mu_0. \]

For simplicity, we set $\mu_0=\mu_1=0$ in the test example. The cases in which $\mu_0\neq \mu_1$ can be readily handled by the pre-translating technique introduced in \citep{kuang2017preconditioning}, which reduces the problem to the case in which $\mu_0=\mu_1$. We define 
\[ \Sigma_0 = \mathrm{diag}(1/4,1,1,1), \quad \Sigma_1 = \mathrm{diag}(1,1/4,1)\oplus \left[\begin{array}{cc}
5/8 & 3/8 \\
3/8 & 5/8 
\end{array} \right].
\]
Then the OT map is given by $T_*(x) = \sqrt{\Sigma_0^{-1}\Sigma_1} x,$ with 
\[ 
    \sqrt{\Sigma_0^{-1}\Sigma_1} 
     = \mathrm{diag}(2,1/2,1) 
    \oplus\left[\begin{array}{cc}
          3/4 & 1/4 \\
          1/4 & 3/4
    \end{array}\right].
\]
We set $T_\theta(\cdot)$ and $\varphi_\eta$ as
\begin{equation*}
  T_\theta = \texttt{MLP}_{\textrm{PReLU}} (5, 80, 5, 4),  ~  \varphi_\eta = \texttt{MLP}_{\textrm{PReLU}}(5, 80, 1, 4).
\end{equation*}
We set the sample size $N = 2000$, $\omega = 1$, and $\tau_u = 0.5\cdot 10^{-1}, \tau_\varphi = 0.95\cdot 10^{-1}$. We perform the NPDG algorithm for $20000$ iterations. Similar to the previous example, we present the semi-log plots of $L^2(\rho_0)$ error vs computation time in Figure \eqref{subfig: Monge Ampere 5D semi-log plot sqrt MSE}. The plots of the computed transportation map $T_\theta(\cdot)$ together with $T_*(\cdot)$ are provided in Figure \eqref{subfig: Monge Ampere 5D plot map 12} and \eqref{subfig: Monge Ampere 5D plot map 45}.
\begin{figure}
    \centering
    \begin{subfigure}[b]{0.24\textwidth}
        \includegraphics[width=\textwidth]{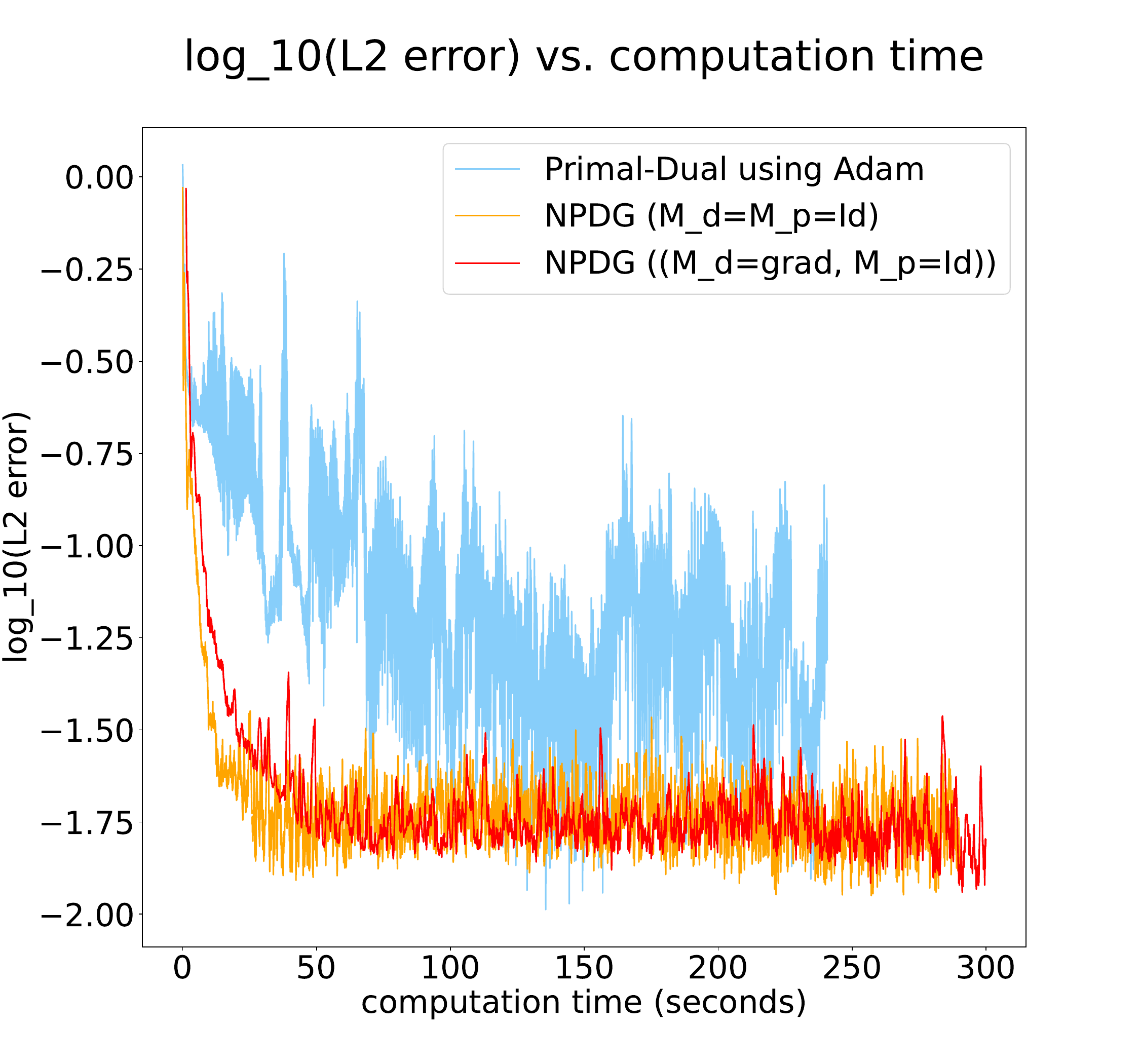}
        \caption{}\label{subfig: Monge Ampere 1D semi-log plot sqrt MSE}
    \end{subfigure}
    \begin{subfigure}[b]{0.23\textwidth}
        \includegraphics[width=\textwidth]{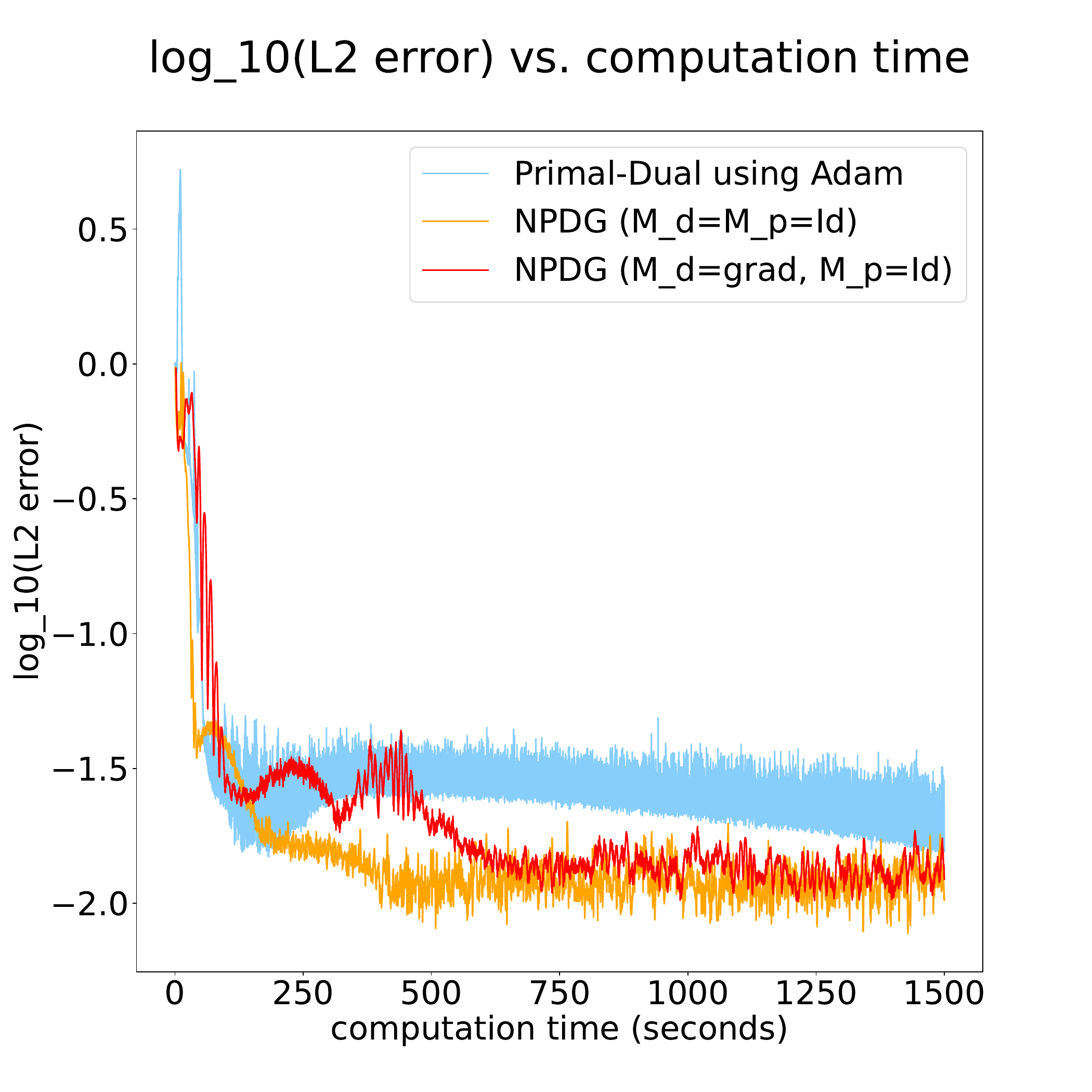}
        \caption{}\label{subfig: Monge Ampere 5D semi-log plot sqrt MSE}
    \end{subfigure}
    \begin{subfigure}[b]{0.23\textwidth}
        \includegraphics[width=\textwidth]{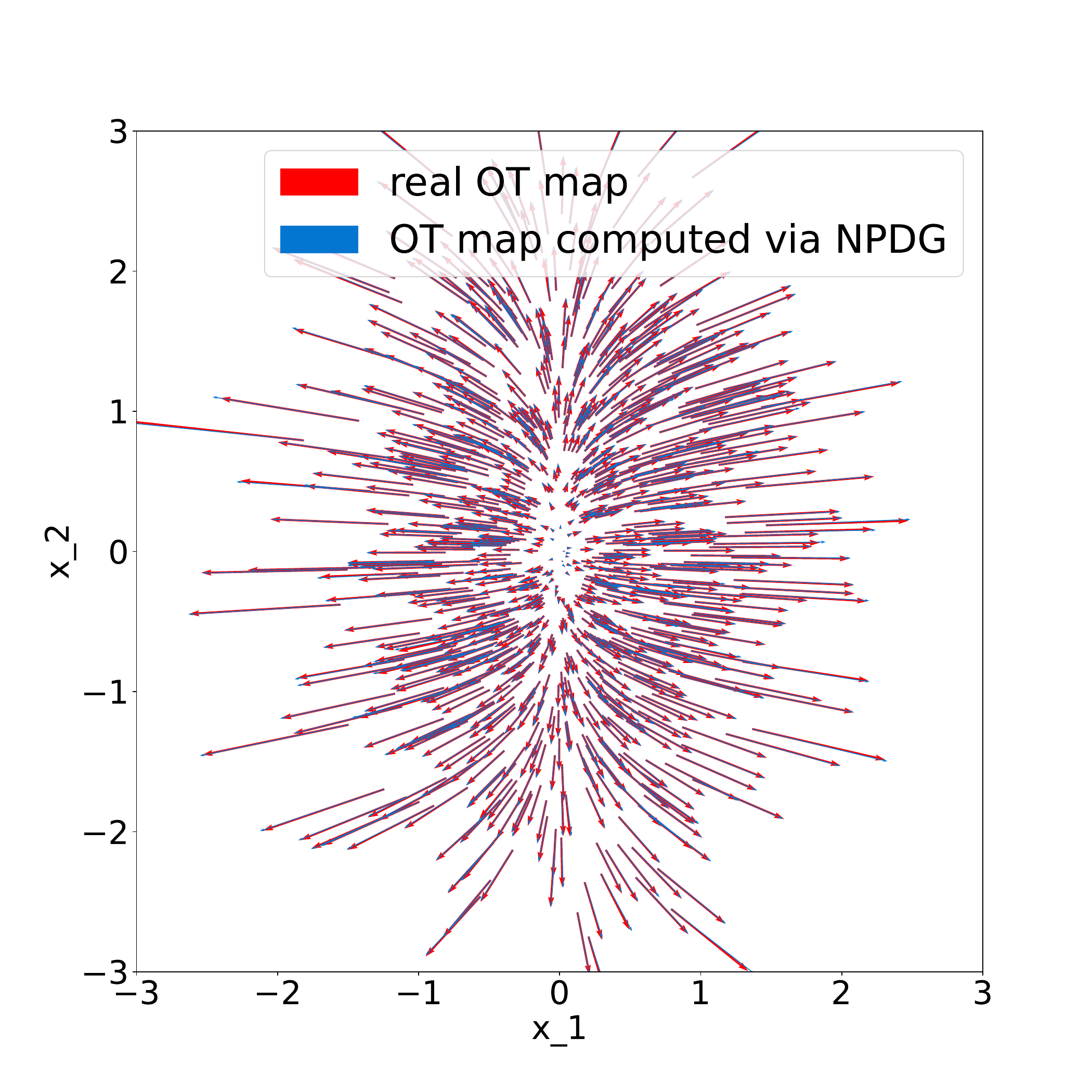}
        \caption{}\label{subfig: Monge Ampere 5D plot map 12}
    \end{subfigure}
    \begin{subfigure}[b]{0.23\textwidth}
        \includegraphics[width=\textwidth]{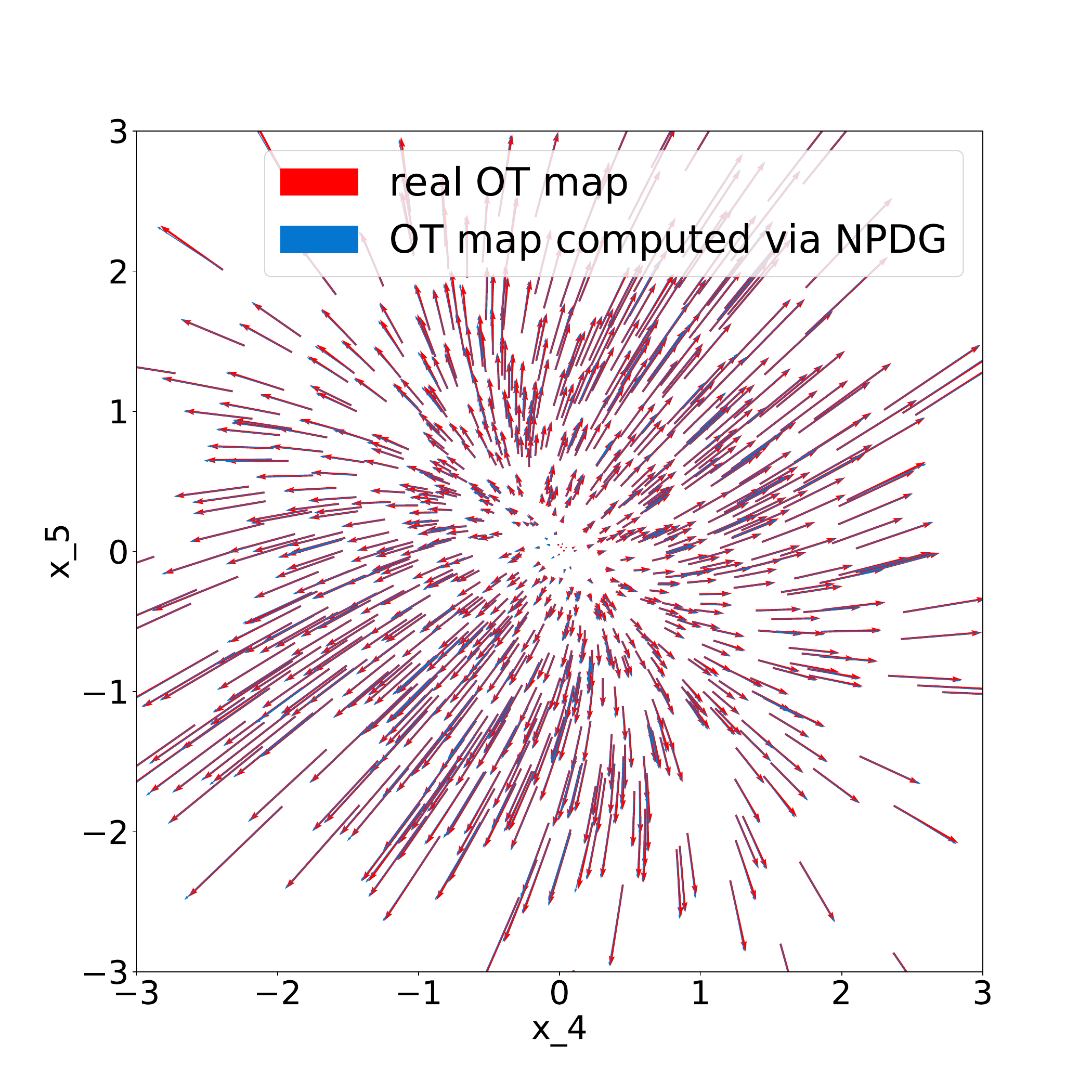}
        \caption{}\label{subfig: Monge Ampere 5D plot map 45}
    \end{subfigure}
    \caption{\textbf{OT problem (1D, 5D)}: \ref{subfig: Monge Ampere 1D semi-log plot sqrt MSE}: Semi-log plots of $\|T_\theta-T_*\|_{L^2(\rho_0)}$ vs computation time (seconds) for the 1D problem discussed in Section \ref{sec: MA1D}; \ref{subfig: Monge Ampere 5D semi-log plot sqrt MSE}: Semi-log plots of $\|T_\theta-T_*\|_{L^2(\rho_0)}$ vs computation time (seconds) for the 5D problem discussed in Section \ref{sec: MA5D};   \ref{subfig: Monge Ampere 5D plot map 12}: Plot of the computed transport map $T_\theta(\cdot)$ (blue) with real OT map $T_*(\cdot)$ (red) on 1-2 plane; \ref{subfig: Monge Ampere 5D plot map 45}: Plot of the computed transport map (blue) with real OT map (red) on 4-5 plane. }
    \label{fig: MA1}
\end{figure}

\subsubsection{High dimensional Gaussian to mixed Gaussian (10D, 50D)}\label{sec: MA10D50D}
We consider the mixed-Gaussian distribution $\sum_{k=1}^8 \lambda_k \mathcal N(\mu_k, \sigma_k^2I)$ defined on $\mathbb{R}^{d}$, where
  \[  \mu_k =   \left( 0, \dots, R\cos\left(\frac{k}{4}\pi\right), \dots, 
  R\sin\left(\frac{k}{4}\pi\right), \dots, 0  \right)^\top \textrm{ with } R=3 , \quad \sigma_k = \frac{4}{25}.
  \]
We assume that the two nonzero entries of $\mu_k$ are located in the $i_0$ and $i_1$ entries. We denote $\rho_a$ as equal mixed-Gaussian
\begin{equation}
  \rho_a = \sum_{k=1}^8 \lambda_k \mathcal N(\mu_k, \sigma_k^2I), \quad \lambda_k=\frac{1}{8},  ~ 1\leq k \leq 8;  \label{equal mixed gaussian lambda}
\end{equation}
we denote $\rho_b$ as a non-equally distributed mixed-Gaussian distribution with
\begin{equation*}  
  \rho_b = \sum_{k=1}^8 \lambda_k \mathcal N(\mu_k, \sigma_k^2I), \quad \lambda_k = \begin{cases}
               \frac{1}{5} \quad k \textrm{ is even,}\\
               \frac{1}{20} \quad k \textrm{ is odd.}
             \end{cases}, ~ 1\leq k \leq 8.
\end{equation*}

Consider $\rho_0=\mathcal N(0, I)$. We compute the optimal transport from $\rho_0$ to $\rho_a$, as well as $\rho_0$ to $\rho_b$, by solving the sup-inf problem \eqref{saddle Monge with map T} using the NPDG algorithm. In the implementation, we always set 
\begin{equation*}
  u_\theta(\cdot) = \texttt{MLP}_{\textrm{PReLU}}(d, 120, d, 6), ~ \varphi_\eta(\cdot) = \texttt{MLP}_{\textrm{PReLU}}(d, 120, 1, 6).
\end{equation*}
We first test the algorithm by setting $d=10$, and $i_0=4, i_1=8$. We choose \eqref{pull Id as precond Monge problem} as preconditioners for NPDG algorithm. we choose the sample size $N = 2000$, $\omega = 1$, and $\tau_u = 0.5\cdot 10^{-2}, \tau_\varphi = 0.95\cdot 10^{-2}$; we perform the NPDG algorithm for $15000$ iterations. We compute the optimal transport maps from $\rho_0$ to $\rho_a$ and $\rho_0$ to $\rho_b$ by applying the NPDG algorithm and the PD-Adam method. We compare the computational results in Figure \ref{fig: MA10D}. The pushforwarded distribution $T_{\theta  \sharp} \rho_0 $ of the proposed method outperforms PD-Adam in terms of homogeneity and shape of the mixed Gaussians.
\begin{figure}[htb!]
    \centering
    \begin{subfigure}[b]{0.24\textwidth}
         \centering
         \includegraphics[width=\textwidth, valign=t]{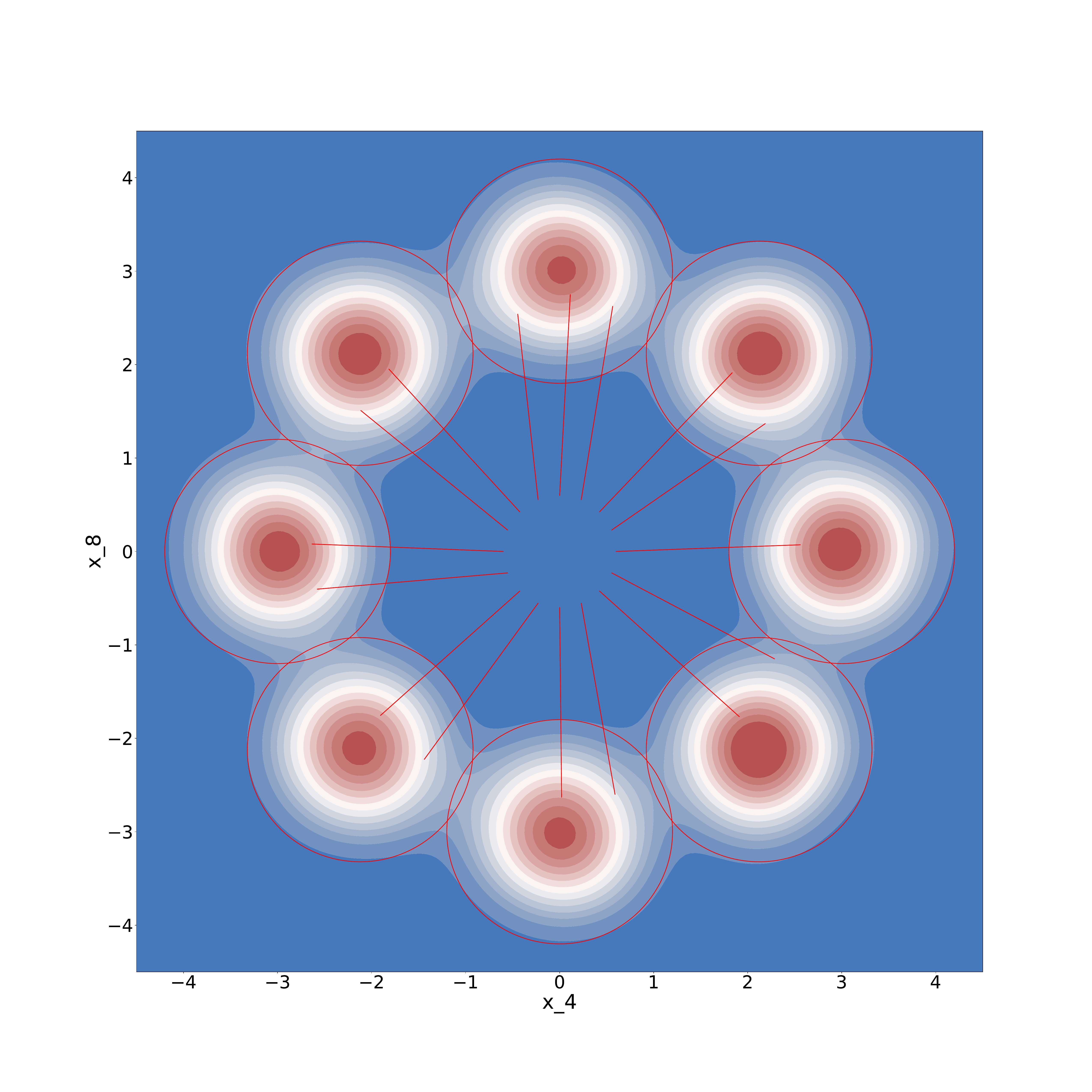}
         \caption{NPDG}\label{subfig: MA10D NPDG equal}
    \end{subfigure}
    \hfill
    \begin{subfigure}[b]{0.24\textwidth}
         \centering
         \includegraphics[width=\textwidth, valign=t]{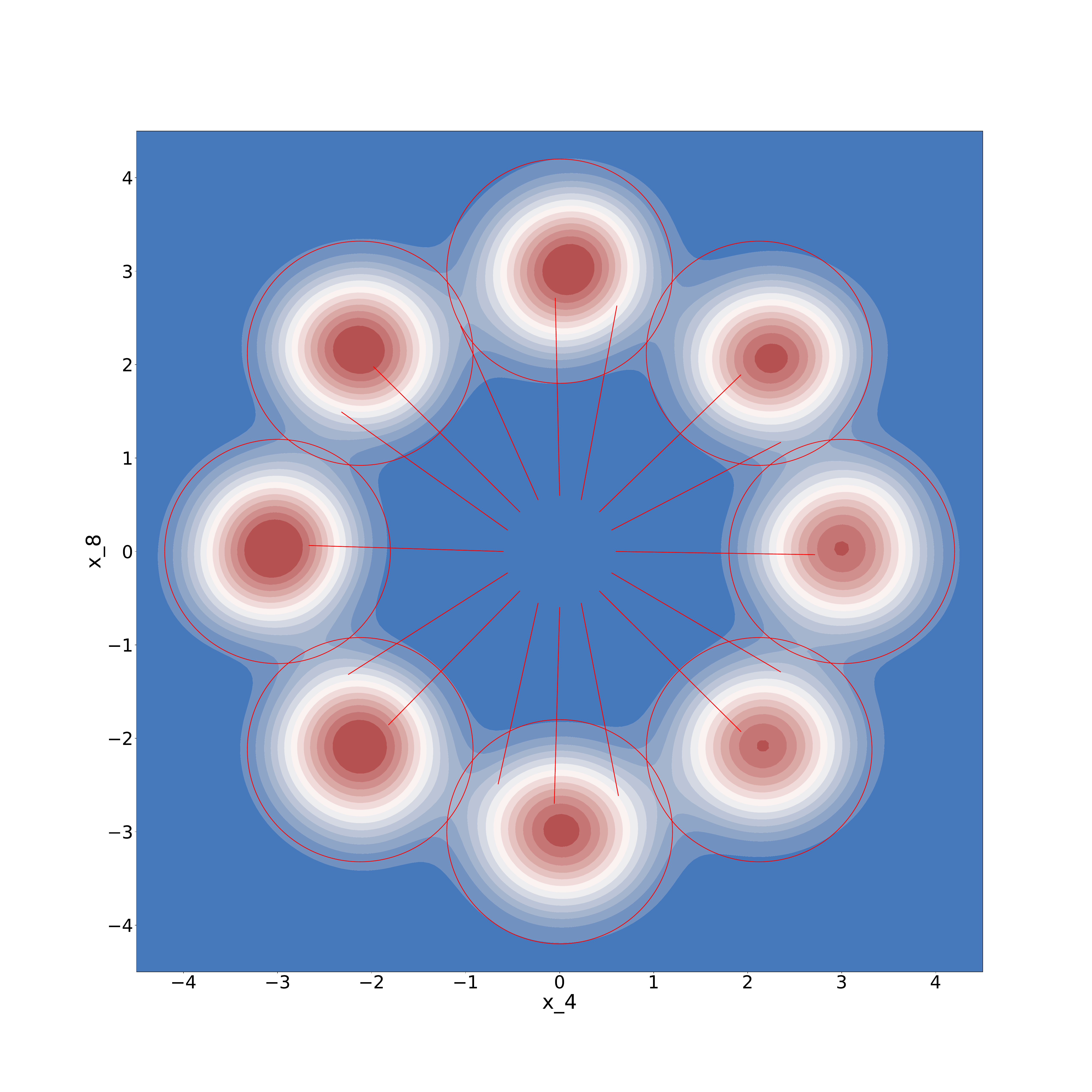}
         \caption{PD-Adam}\label{subfig: MA10D PD adam equal}
    \end{subfigure}
             \centering
         \begin{subfigure}[b]{0.24\textwidth}
         \centering
         \includegraphics[width=\textwidth, valign=t]{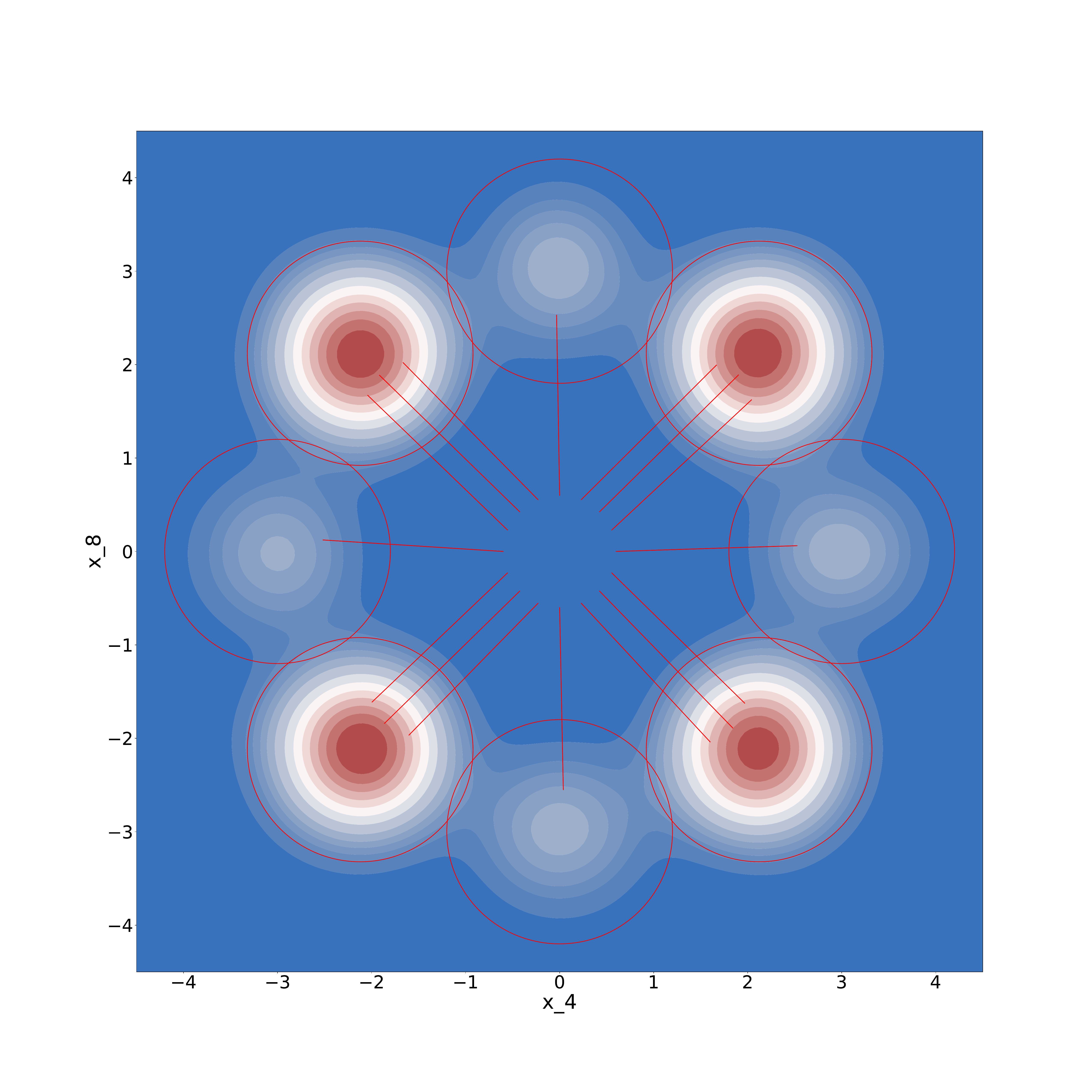}
         \caption{NPDG}\label{subfig: MA10D NPDG nonequal}
     \end{subfigure}
     \hfill
     \begin{subfigure}[b]{0.24\textwidth}
         \centering
         \includegraphics[width=\textwidth, valign=t]{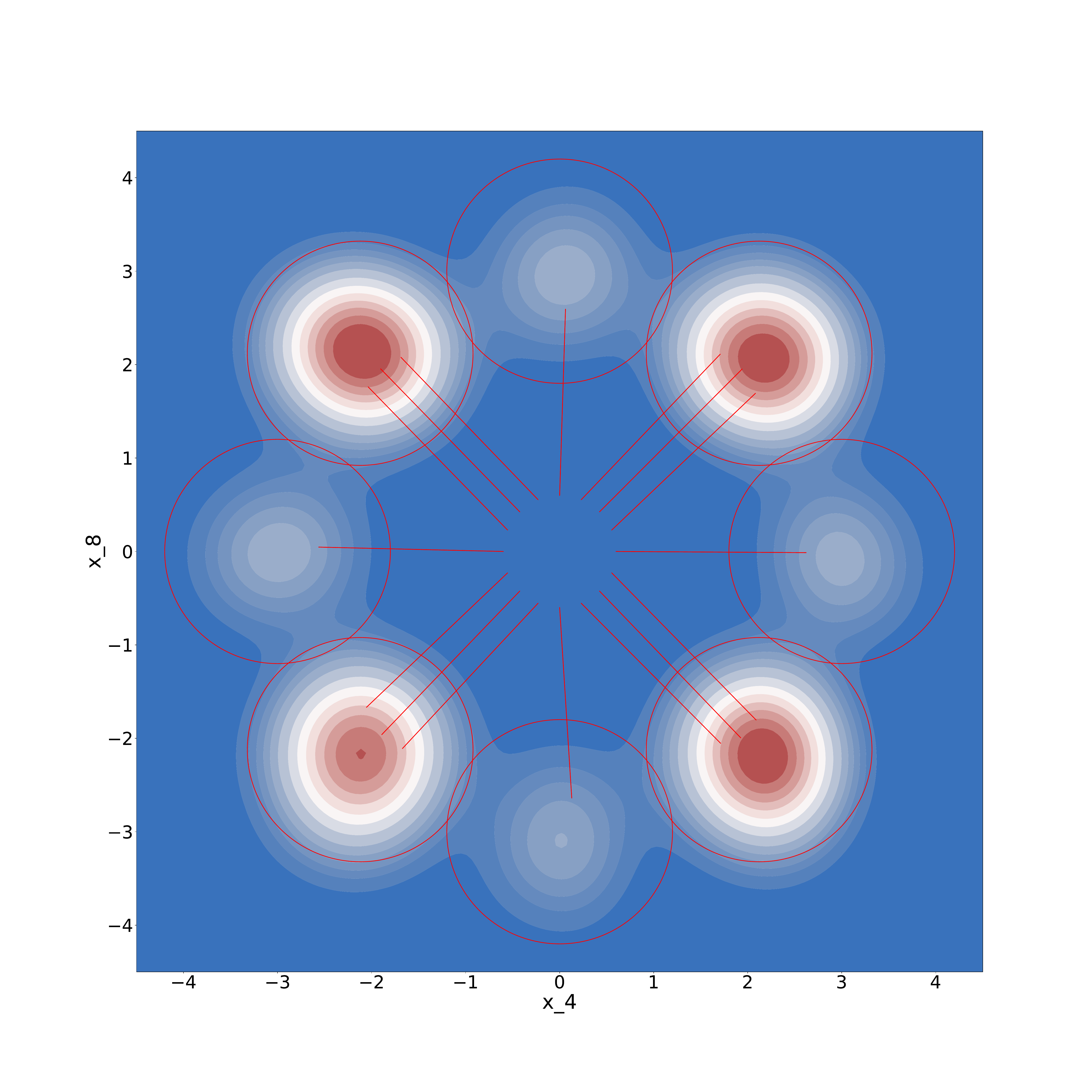}
         \caption{PD-Adam}\label{subfig: MA10D PD adam nonequal}
     \end{subfigure}
    \caption{\textbf{OT problem (10D)}: Plots of the pushforwarded density $T_{\theta\sharp}\rho_0$ by using Kernel Density Estimation (KDE), together with the optimal transport map (red segments). \textbf{Left two figures}: OT from $\rho_0$ to $\rho_a$, \ref{subfig: MA10D NPDG equal}: Numerical result obtained by NPDG, \ref{subfig: MA10D PD adam equal} Numerical result obtained by PD-Adam; \textbf{Right two figures}: OT from $\rho_0$ to $\rho_b$, \ref{subfig: MA10D NPDG nonequal}: Numerical result obtained by NPDG, \ref{subfig: MA10D PD adam nonequal}: Numerical result obtained by PD-Adam. All figures are plotted on the $4-8$ plane.}\label{fig: MA10D}
\end{figure}

We further consider the OT problem with dimension $d = 50$ with $i_0=10, i_1=20$ in which the NPDG algorithm performs more robustly and achieves more accurate solutions compared to the PD-Adam algorithm. We set $tol_{\textrm{MINRES}}=10^{-4}$. We choose the sample size $N = 2000$, the extrapolation coefficient $\omega=5$ and stepsizes $\tau_u = \tau_\varphi = 0.5 \cdot 10^{-2}$. We perform the NPDG algorithm for $20000$ iterations.

We first test the case of transporting $\rho_0$ to equally distributed mixed-Gaussian distribution $\rho_a$. We test the NPDG algorithm with various preconditioning \eqref{pull Id as precond Monge problem}, \eqref{canonical precond for Monge problem}, as well as the PD-Adam method. The results are presented in Figure \ref{fig: MA50D equal gaussians}. It is worth mentioning that upon comparing the transport maps shown in Figure \ref{subfig: MA50D equal gaussian NPDG Id} and \ref{subfig: MA50D equal gaussian NPDG grad}, the more canonical precondition \eqref{canonical precond for Monge problem} yields a solution with higher accuracy. We then test the case of transporting $\rho_0$ to non-equal mixed-Gaussian distribution $\rho_b$. The results are presented in Figure \ref{fig: MA50D nonequal gaussians}. Again, our NPDG algorithm with precondition \eqref{canonical precond for Monge problem} produces the transport map with better quality. Further plots on the numerical solutions can be found in Appendix \ref{append: MA}. The PD-Adam method does not behave as robustly as the NPDG algorithm in this 50D example.  
\begin{figure}[htb!]
     \centering
     \begin{subfigure}{0.24\textwidth}
         \centering
         \includegraphics[width=\textwidth, valign=t]{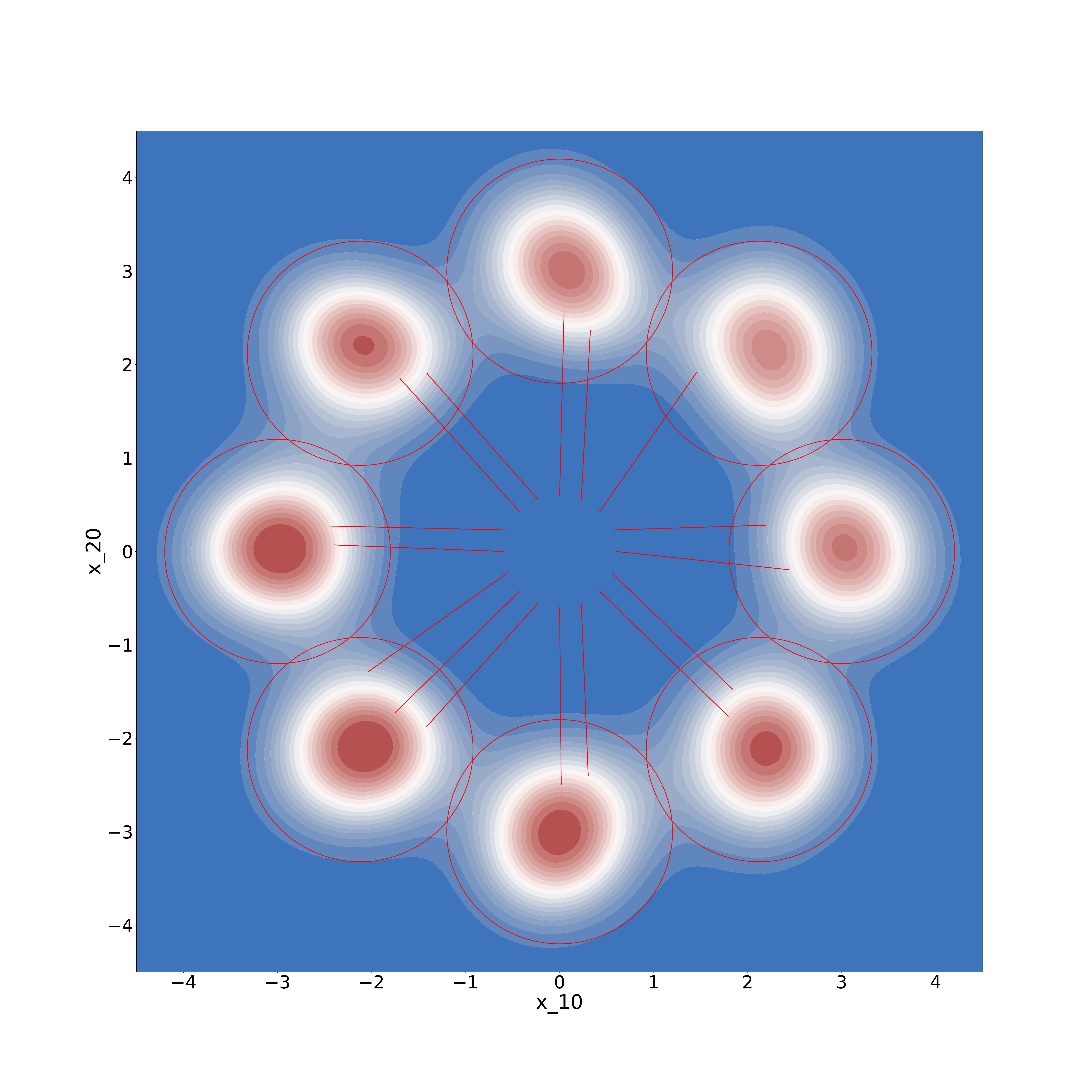}
         \caption{NPDG with \eqref{pull Id as precond Monge problem}}\label{subfig: MA50D equal gaussian NPDG Id}
     \end{subfigure}
     \hfill
     \begin{subfigure}{0.24\textwidth}
         \centering
         \includegraphics[width=\textwidth, valign=t]{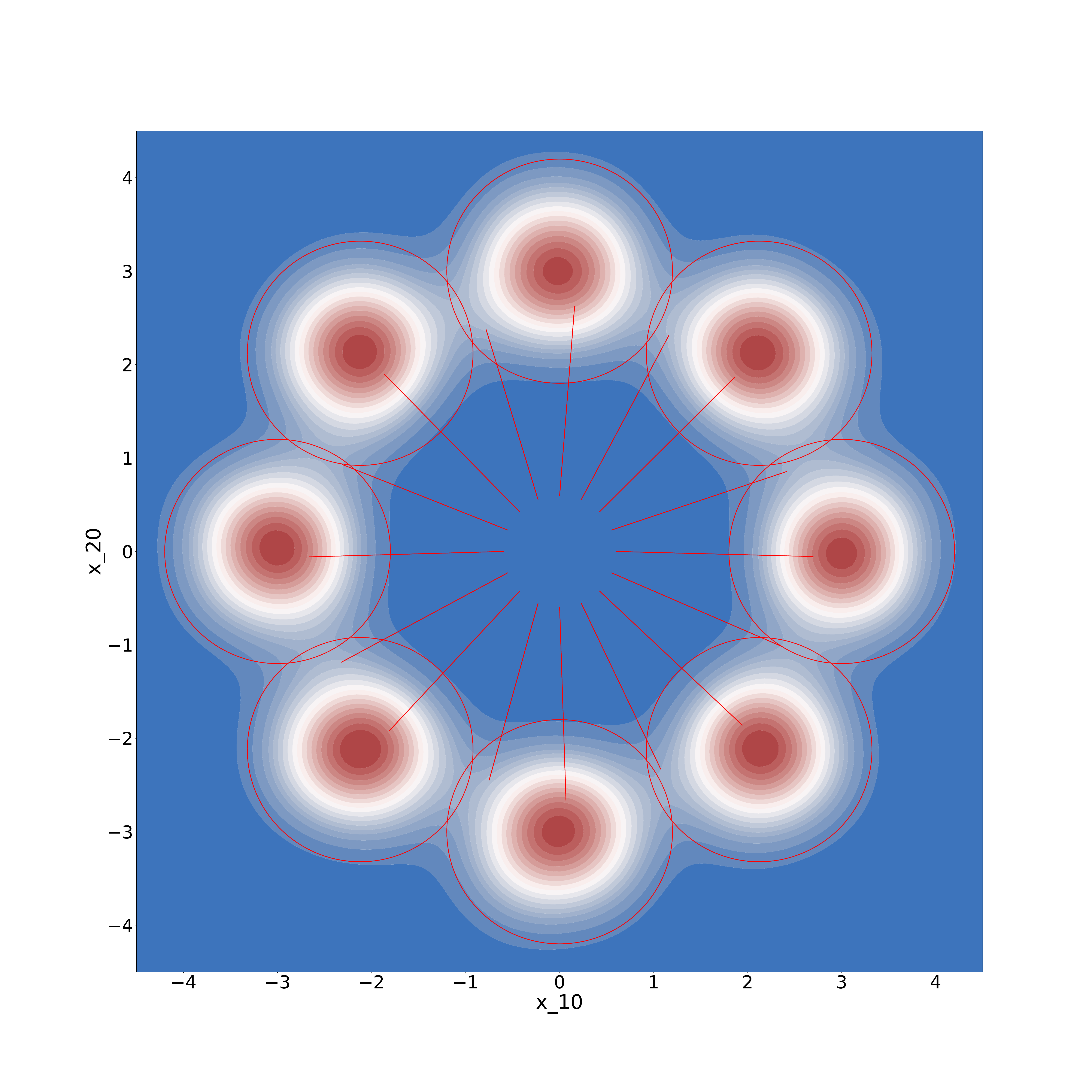}
         \caption{NPDG with \eqref{canonical precond for Monge problem}}\label{subfig: MA50D equal gaussian NPDG grad}
     \end{subfigure}
     \hfill
     \begin{subfigure}{0.24\textwidth}
         \centering
         \includegraphics[width=\textwidth, valign=t]{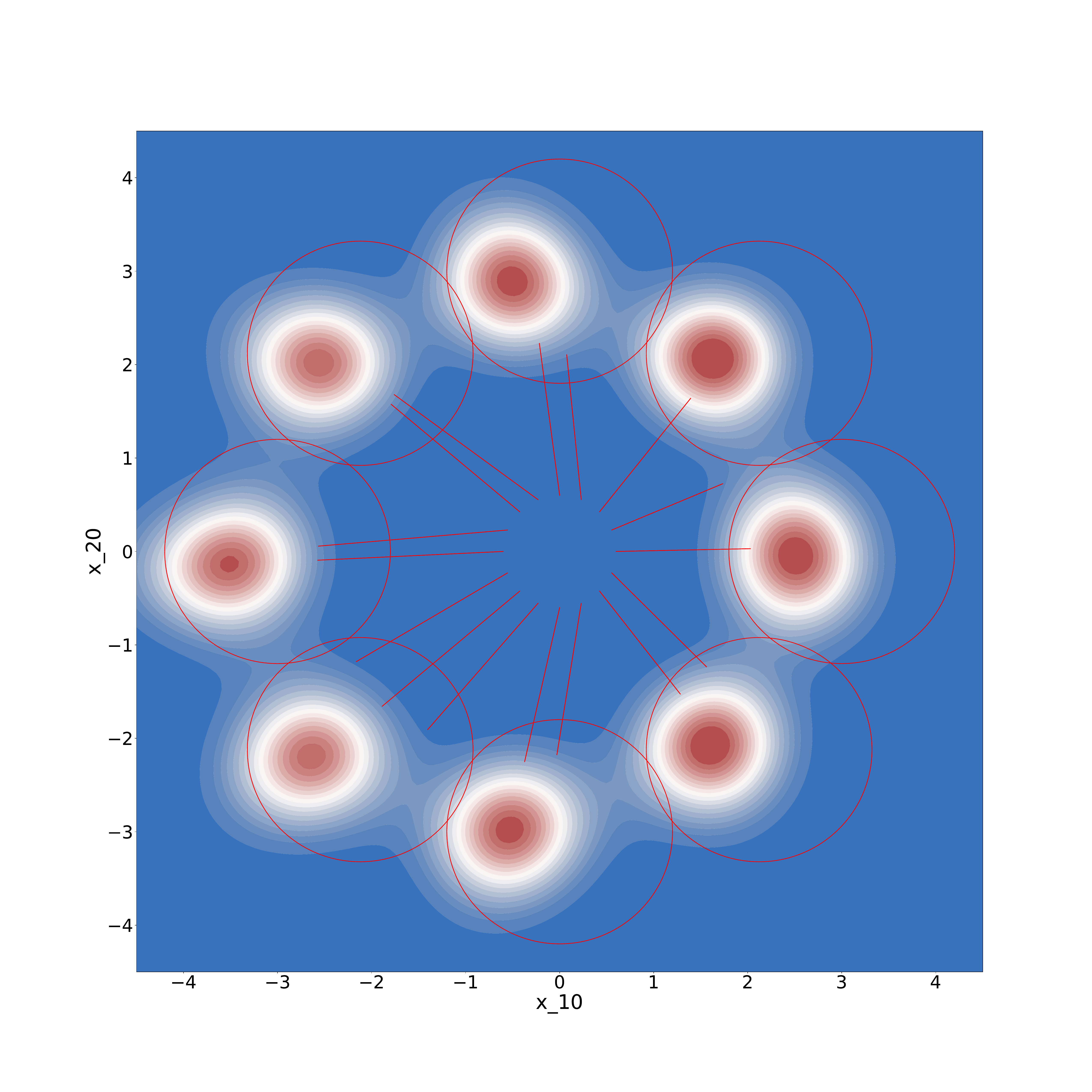}
         \caption{PD-Adam}\label{subfig: MA50D equal gaussian PD adam}
     \end{subfigure}
     \hfill
     \begin{subfigure}{0.24\textwidth}
         \centering
         \includegraphics[width=\textwidth, valign=t]{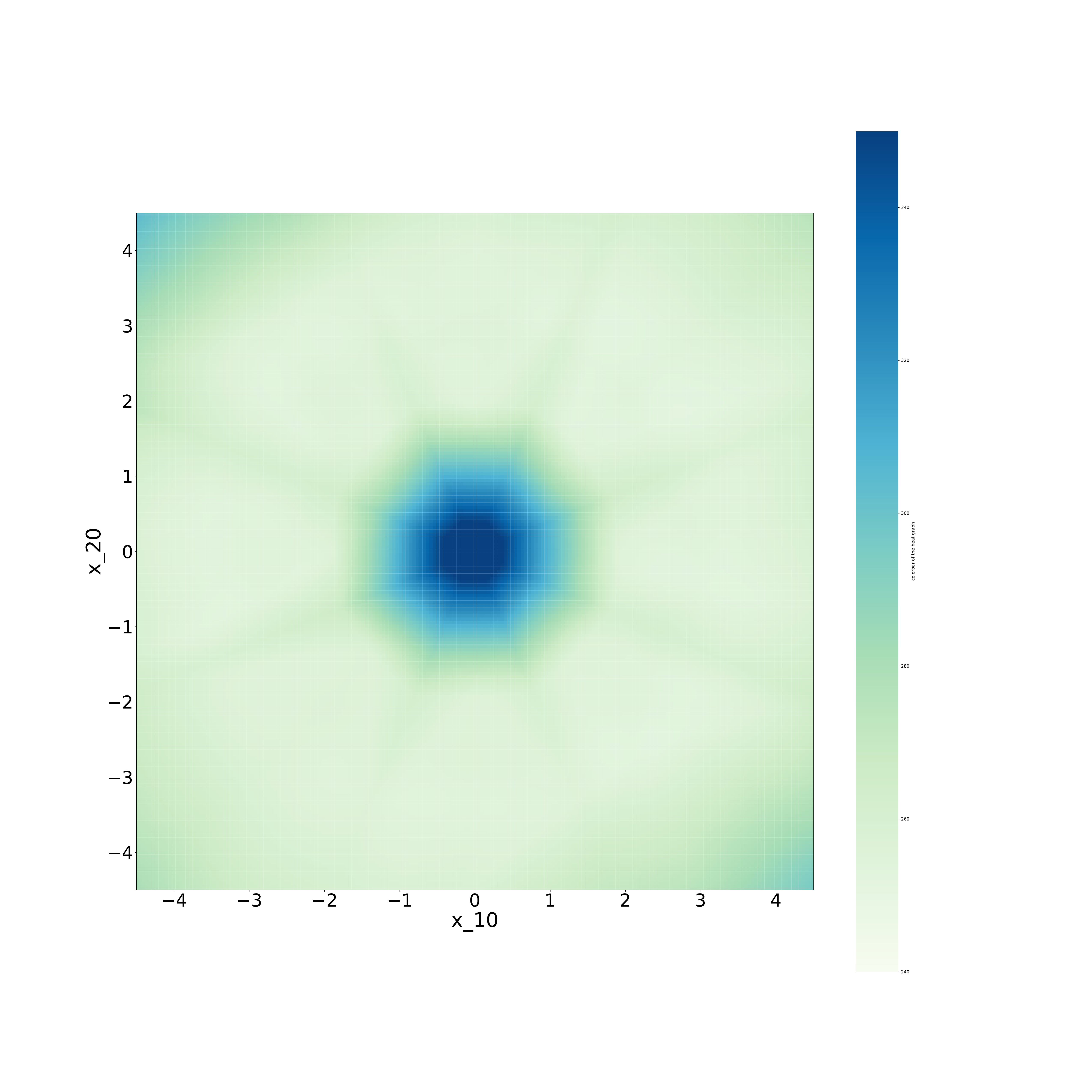}
         \caption{Heat graph of $\varphi_\eta(\cdot)$}\label{subfig: MA50D equal gaussian plot varphi}
     \end{subfigure}
     \caption{\textbf{OT problem from $\rho_0$ to $\rho_a$ (50D)}: Plots of the pushforwarded density $T_{\theta\sharp}\rho_0$ by using Kernel Density Estimation (KDE). \ref{subfig: MA50D equal gaussian NPDG Id}-\ref{subfig: MA50D equal gaussian PD adam}: Numerical results produced by NPDG method and PD-Adam method. \ref{subfig: MA50D equal gaussian plot varphi}: heat graph of the Kantorovich dual function $\varphi_\eta(\cdot)$ learned from NPDG algorithm with precondition \eqref{canonical precond for Monge problem}. All figures are plotted on the $10-20$ coordinate plane.} \label{fig: MA50D equal gaussians}
\end{figure}

\begin{figure}[htb!]
         \centering
         \begin{subfigure}{0.24\textwidth}
         \centering
         \includegraphics[width=\textwidth, valign=t]{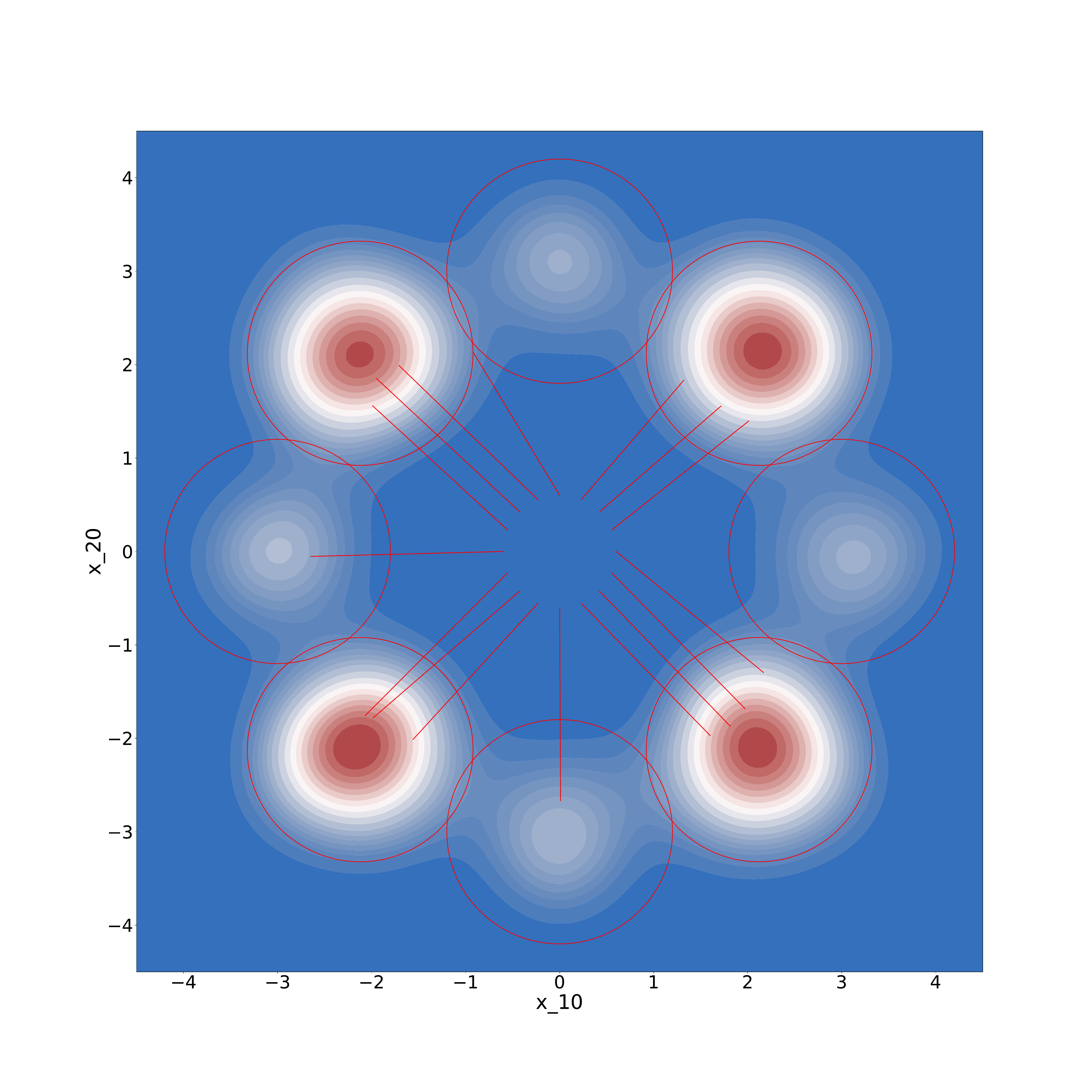}
         \caption{NPDG with \eqref{pull Id as precond Monge problem}}\label{subfig: MA50D nonequal gaussian NPDG Id}
     \end{subfigure}
     \hfill
     \begin{subfigure}{0.24\textwidth}
         \centering
         \includegraphics[width=\textwidth, valign=t]{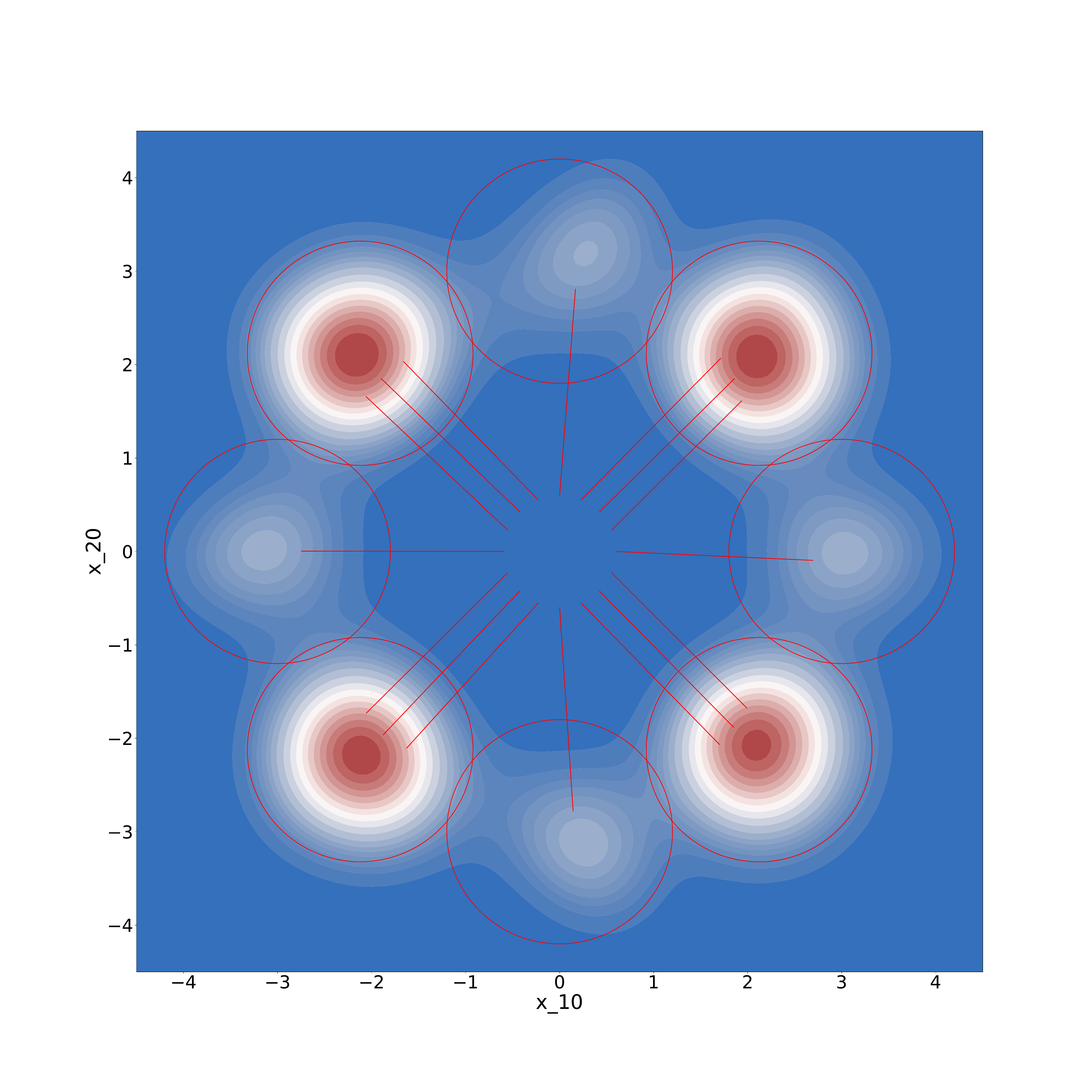}
         \caption{NPDG with \eqref{canonical precond for Monge problem}}\label{subfig: MA50D nonequal gaussian NPDG grad}
     \end{subfigure}
     \hfill
     \begin{subfigure}{0.24\textwidth}
         \centering
         \includegraphics[width=\textwidth, valign=t]{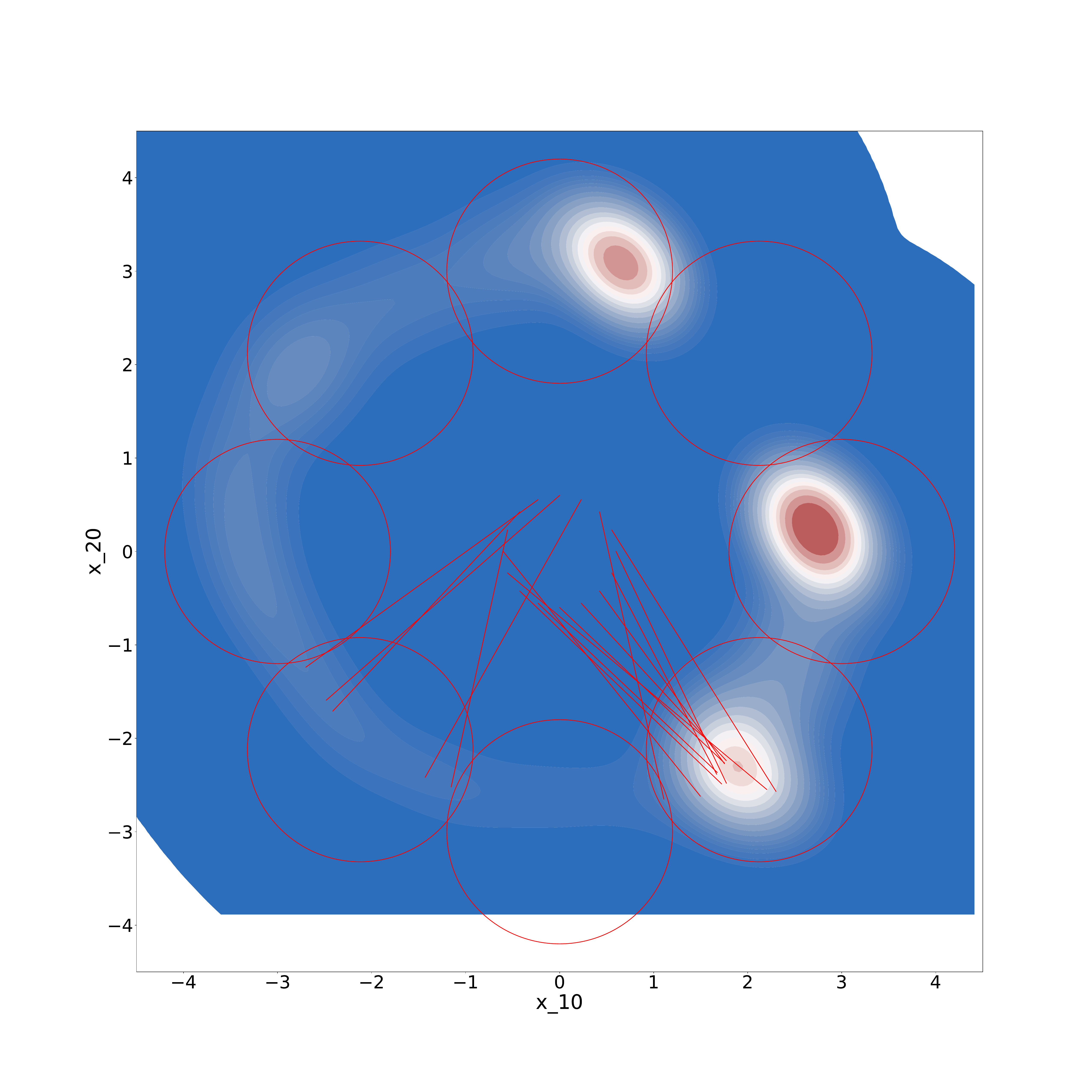}
         \caption{PD-Adam}\label{subfig: MA50D nonequal gaussian PD adam}
     \end{subfigure}
     \begin{subfigure}{0.24\textwidth}
         \centering
         \includegraphics[width=\textwidth, valign=t]{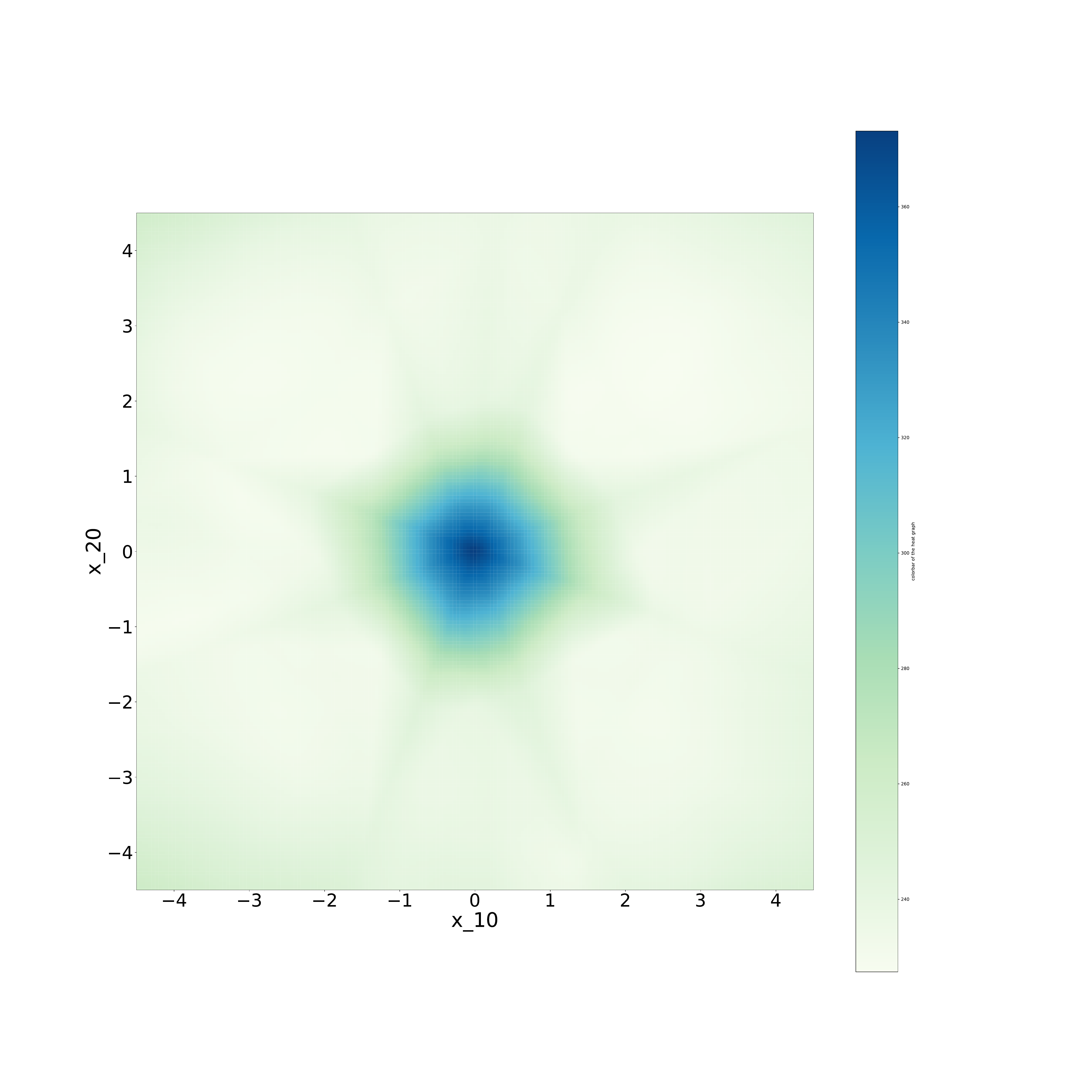}
         \caption{Heat graph of $\varphi_\eta(\cdot)$ }\label{subfig: MA50D nonequal gaussian plot varphi}
     \end{subfigure}
     \caption{\textbf{OT problem from $\rho_0$ to $\rho_b$ (50D)}: Plots of the pushforwarded density $T_{\theta\sharp}\rho_0$ by using Kernel Density Estimation (KDE). \ref{subfig: MA50D nonequal gaussian NPDG Id}-\ref{subfig: MA50D nonequal gaussian PD adam}: Numerical results produced by NPDG method and PD-Adam method. \ref{subfig: MA50D nonequal gaussian plot varphi}: heat graph of the Kantorovich dual function $\varphi_\eta(\cdot)$ learned from NPDG algorithm with precondition \eqref{canonical precond for Monge problem}. All figures are plotted on the $10-20$ coordinate plane.}\label{fig: MA50D nonequal gaussians}
\end{figure}

\section{Discussions}

In this paper, we design a preconditioned adversarial training algorithm called Natural Primal-Dual Hybrid Gradient (NPDG) for solving various PDEs. We incorporate the precondition operators $\mathcal M_p, \mathcal M_d$ in the precondition matrices $M_p(\theta), M_d(\eta)$ for computing the natural gradients. Alternative gradient descent and ascent algorithms, together with suitable extrapolation, are utilized to update the primal and dual neural network parameters. Linear convergence guarantees are established for the time-continuous version of the NPDG algorithm. In practice, we apply the MINRES iterative solver to handle natural gradients efficiently. The proposed algorithm outperforms classical machine learning–based approaches—including PINNs (Adam/LBFGS), the Deep Ritz method, and the Weak Adversarial Network / Primal–Dual Adam algorithm—in terms of convergence speed, robustness, and accuracy across various classes of PDEs, particularly in high-dimensional settings.

{\color{black}
Based on the numerical experiments, several critical questions about the proposed algorithm have arisen. The first concerns the convergence analysis of the time-discrete NPDG algorithm—namely, what are the optimal step sizes $\tau_u, \tau_\varphi, \tau_\psi$? Is it possible to adopt adaptive step sizes? Another crucial aspect that warrants more investigation is reducing the computational burden and improving the accuracy of the method by adopting refined strategies for evaluating natural gradients, such as Kronecker-factored Approximate Curvature (KFAC) \citep{martens2015optimizing, george2018fast, dangel2024kronecker} and randomized Nyström methods \citep{martinsson2020randomized, bioli2025accelerating}.

A primary motivation for developing the preconditioned primal–dual algorithm stems from the weak formulation obtained via integration by parts. However, this approach relies critically on the assumption that the dominant elliptic operator in the equation is of divergence form. Extensions to more general linear PDEs—such as elliptic equations in non-divergence form, as noted in Remark~\ref{rk: L non-divergence form}, are not addressed in the present work and constitute an important direction for future research.
    
Beyond time-dependent reaction–diffusion equations, extending the NPDG method to equation systems involving first-order convection terms, which commonly arise in the modeling of complex fluids, remains a challenging and largely unexplored direction. Other important time-dependent physical equations that are worthy of further investigation include Navier–Stokes equations and Maxwell’s equations.
    
Although the NPDG algorithm has demonstrated satisfactory performance on several classes of nonlinear PDEs, rigorous theoretical guarantees, particularly concerning the convergence of the method, remain open problems and will be pursued in the future work.

In addition to the task of handling various types of PDEs, the proposed research also paves the way for the future application of natural gradient algorithms in adversarial training of neural networks, including Generative Adversarial Networks (GANs) \citep{goodfellow2020generative, arjovsky2017wasserstein} and large-scale optimal transport problems \citep{fan2023neural, korotin2022neural}.
}

\vspace{0.2in}

\noindent\textbf{Acknowledgement:} S. Liu is partially supported by AFOSR YIP award No. FA9550-23-1-0087. S. Liu and S. Osher are partially funded by STROBE NSF STC DMR 1548924, AFOSR MURI FA9550-18-502, and ONR N00014-20-1-2787. W. Li is partially supported by AFOSR YIP award No. FA9550-23-1-0087, NSF DMS-2245097, and NSF RTG: 2038080. The authors would like to thank Prof. Xiaochuan Tian for constructive discussion. They would also appreciate the feedback from the anonymous reviewers that help improve the paper.

\newpage
\appendix

\section{Multiple Layer Perceptron (MLP)}\label{append: MLP}
In this research, we denote a Multiple Layer Perceptron (MLP) with activation function $f$, input dimension $\din$, hidden dimension $\dhidden$, output dimension $\dout$, and number of layers $\numlayer$ as $\texttt{MLP}_f(\din, \dhidden, \dout, \numlayer )$. Such MLP takes the form
\[ \texttt{MLP}_f(\din, \dhidden, \dout, \numlayer)(x) = h_{\numlayer}\circ \dots \circ h_2 \circ h_1(x),\]
where each $h_k(\cdot)$ is defined as
\begin{equation*}
  h_k(x) = \begin{cases}
    f(W_1 x + b_1) \quad \textrm{here } W_1 \in\mathbb{R}^{\dhidden \times \din}, b_1 \in \mathbb{R}^{\dhidden} \quad \textrm{if } k = 1\\
    f(W_k x + b_k) \quad \textrm{here } W_k \in\mathbb{R}^{\dhidden \times \dhidden}, b_k \in \mathbb{R}^{\dhidden} \quad \textrm{if } 2 \leq k \leq \numlayer - 1\\
    W_{\numlayer} x + b_{\numlayer} \quad    \textrm{here } W_{\numlayer}\in\mathbb{R}^{\dout \times \dhidden}, b_{\numlayer} \in \mathbb{R}^{\dout} \quad \textrm{if } k = \numlayer
    \end{cases} .
\end{equation*}
The parameters of the MLP are $(W_{\numlayer}, b_{\numlayer},\dots, W_1, b_1)$. The number of the parameters equals $\dout (\dhidden+1) + (\numlayer-2)\cdot \dhidden(\dhidden+1) + \dhidden (\din+1)$. The activation function $f$ of the MLP is usually chosen as a nonlinear function such as $\mathrm{ReLU(\cdot)}$, $\mathrm{tanh}(\cdot)$, etc\footnote{$\mathrm{ReLU}(x) = \max\{x, 0\}, \mathrm{tanh}(x)=\frac{e^{x}-e^{-x}}{e^x + e^{-x}}.$}.

\section{Proof of the consistency Theorem \ref{thm: consistency inf-sup and sol to PDE}}\label{append: proof of consistency thm}

{\color{black}
\begin{proof}
Without loss of generality, we always assume $\lambda = 1$ for brevity in this proof. We first show 
\begin{equation}
\underset{\varphi\in{\mathbb{K}^{test}}, \psi\in\mathbb{K}^{test}_{\partial \Omega} }{ \mathrm{sup}  } \mathscr{E}(\widehat{u}, {\varphi}, {\psi}) = 0.  \label{lemm1}
\end{equation}
Notice that for arbitrary $u \in \mathbb{H}$, we pick $\varphi = 0,$ $\psi=0.$ Then we obtain $\mathscr{E}(u, 0, 0) = 0$. This yields that $ \underset{\varphi, \psi }{\sup} ~ \mathscr{E}(u, \varphi, \psi) \geq 0$. For brevity, we omit the name of functional spaces for $\varphi, \psi$, and $u$ in this proof. This leads to
\begin{equation}
\inf_{u} \ \sup_{\varphi  , \psi  } ~ \mathscr{E}(u, \varphi, \psi)\geq 0.  \label{inf sup nonnegative}
\end{equation}
On the other hand, since $u_*$ is the solution to \eqref{linear PDE}, we have
\begin{align*}
\mathscr{E}(u_*, \varphi, \psi) & = \langle \mathcal L u_* - f, \varphi \rangle_{L^2(\Omega)} + \langle \mathcal B u_* - g, \psi \rangle_{L^2(\partial \Omega)} - \frac{\blue{\nu}}{2}( \|\mathcal M_d \varphi\|^2_{L^2(\Omega; \mathbb{R}^r)} + \|\psi\|^2_{L^2(\partial\Omega)}  )\\
& = - \frac{\blue{\nu}}{2}( \|\mathcal M_d \varphi\|^2_{L^2(\Omega; \mathbb{R}^r)} + \|\psi\|^2_{L^2(\partial \Omega)}  ),
\end{align*}
with the supremum value 
\[  \underset{\varphi, \psi}{\sup} ~ \mathscr{E}(u_*, \varphi, \psi)=0.  \]
Combining this with \eqref{inf sup nonnegative}, we obtain
\[  \inf_{u} \ \sup_{\varphi, \psi} ~ \mathscr{E}(u, \varphi, \psi) = 0.  \]
Since $(\widehat{u}, \widehat{\varphi}, \widehat{\psi})$ is the solution to the inf-sup problem \eqref{def: inf-sup new E}, we obtain \eqref{lemm1}.

Notice that equation \eqref{lemm1} further yields
\begin{equation*}
\begin{split}
  \langle \widetilde{\mathcal L} \mathcal M_p (\widehat{u} - u_*), \mathcal M_d    \varphi \rangle_{L^2(\Omega; \mathbb{R}^r)} - \frac{\blue{\nu}}{2} \|\mathcal M_d \varphi\|^2_{L^2(\Omega; \mathbb{R}^r)}  &  \\
  + \langle \mathcal B (\widehat{u} - u_*), \psi \rangle_{L^2(\partial \Omega)} - \frac{\blue{\nu}}{2}\|\psi\|^2_{L^2(\partial \Omega)}  &    \leq 0
\end{split}
\end{equation*}
for arbitrary $\varphi \in \blue{\mathbb{K}^{test}}, \ \psi \in \blue{\mathbb{K}^{test}_{\partial \Omega}}$. By setting $\psi=0$ and then $\varphi = 0$ in the above inequality, we obtain
\begin{equation} 
\langle \widetilde{\mathcal L} \mathcal M_p (\widehat{u} - u_*), \mathcal M_d    \varphi \rangle_{L^2(\Omega; \mathbb{R}^r)} - \frac{\blue{\nu}}{2} \|\mathcal M_d \varphi\|^2_{L^2(\Omega; \mathbb{R}^r)} \leq 0, \quad \forall ~ \varphi \in \blue{\mathbb{K}^{test}}   \label{inequality leading to orthogonality varphi}
\end{equation}
and 
\begin{equation}
 \langle \mathcal B (\widehat{u} - u_*), \psi \rangle_{L^2(\partial \Omega)} - \frac{\blue{\nu}}{2}\|\psi\|^2_{L^2(\partial \Omega)} \leq 0, \quad \forall ~ \psi \in \blue{\mathbb{K}^{test}_{\partial \Omega}}. \label{inequality leading to orthogonality psi}
\end{equation}

We first prove that \eqref{inequality leading to orthogonality varphi} leads to
\begin{equation}
  \langle \widetilde{\mathcal L} \mathcal M_p (\widehat{u} - u_*), \mathcal M_d \varphi   \rangle_{L^2(\Omega; \mathbb{R}^r)} = 0, \quad \forall ~ \varphi \in \blue{\mathbb{K}^{test}}. \label{orthogonality result vaprhi}
\end{equation}
Let us suppose \eqref{orthogonality result vaprhi} does not hold, then, there exists $\widetilde{\varphi}\in\blue{\mathbb{K}^{test}}$ such that
\[ \langle \widetilde{\mathcal L} \mathcal M_p (\widehat{u} - u_*), \mathcal M_d \widetilde{\varphi}   \rangle_{L^2(\Omega; \mathbb{R}^r)}  = \alpha \neq 0. \]
This also yields $\mathcal M_d \widetilde \varphi \neq 0$, otherwise, $\langle \widetilde{\mathcal L} \mathcal M_p (u - u_*), \mathcal M_d \widetilde \varphi \rangle_{L^2(\Omega; \mathbb{R}^r)} = 0$ leads to contradiction.

Now, substituting $\varphi = s \widetilde{\varphi}$ in \eqref{inequality leading to orthogonality varphi} leads to
\[  \langle \widetilde{\mathcal L} \mathcal M_p (\widehat{u} - u_*), \mathcal M_d    \varphi \rangle_{L^2(\Omega; \mathbb{R}^r)} - \frac{\blue{\nu}}{2} \|\mathcal M_d \varphi\|^2_{L^2(\Omega; \mathbb{R}^r)} = \alpha s - \frac{\blue{\nu}}{2} \|\mathcal M_d \widetilde{\varphi} \|_{L^2(\Omega; \mathbb{R}^r)}^2 \cdot s^2.   \]
Since $\mathcal M_d \widetilde \varphi \neq 0$, we have $\|\mathcal M_d \varphi\|_{L^2(\Omega; \mathbb{R}^r)}>0$. By setting $s = \frac{\alpha}{\blue{\nu} \|\mathcal M_d \widetilde{\varphi}\|_{L^2(\Omega; \mathbb{R}^r)}^2}$, the above inner product equals
\[ \frac{\blue{\alpha}^2}{2\nu\|\mathcal M_d \widetilde{\varphi}\|_{L^2(\Omega; \mathbb{R}^r)}^2}>0. \]
This is in contradiction to \eqref{inequality leading to orthogonality varphi}. We thus prove \eqref{orthogonality result vaprhi}. 

Using the similar argument, we prove 
\begin{equation}
\langle \mathcal B (\widehat{u} - u_*), \psi \rangle_{L^2(\partial \Omega)} = 0, \quad \forall ~ \psi \in \blue{\mathbb{K}^{test}_{\partial \Omega}}.  \label{orthogonality result psi}
\end{equation}
Now \eqref{orthogonality result vaprhi} further leads to
$\langle \mathcal M_d^* \widetilde{\mathcal L} \mathcal M_p(\widehat{u} - u_*), \varphi \rangle_{L^2(\Omega; \mathbb{R}^r)} = 0,$ for arbitrary $\varphi \in \blue{\mathbb{K}^{test}}.$
That is, 
\[ \langle \mathcal L (\widehat{u} - u_*), \varphi \rangle_{L^2(\Omega)} = 0,  \quad  \forall ~\varphi \in \blue{\mathbb{K}^{test}}. \]
Since $\blue{\mathbb{K}^{test}}$ is dense in $L^2(\Omega)$, this further leads to $\|\mathcal L (\widehat{u} - u_*)\|_{L^2(\Omega)} = 0.$ 

Recall that $\mathcal L u_* = f\in \mathbb{K} \subset L^2(\Omega),$ we thus have $\|\mathcal L \widehat{u} - f\|_{L^2(\Omega)}=0$. We deduce that $\mathcal L \widehat{u} = f,$ a.e. on $\Omega$. Similarly, we also prove that $\mathcal B \widehat{u} = g,$ a.e. on $\partial \Omega$.
\end{proof}

\begin{example}
Consider the Poisson equation $ -\Delta u = f, \quad u|_{\partial \Omega} = g,$ or the linear elliptic equation $-\Delta u + u = f, \quad u|_{\partial \Omega} = g,$ as mentioned in Example \ref{ex: 1} or \ref{ex: 2}, we set the test functional spaces $\blue{\mathbb{K}^{test}} = H_0^1(\Omega)$ and $\blue{\mathbb{K}^{test}_{\partial \Omega}} = L^2(\partial \Omega)$. Since $H_0^1(\Omega)$ is dense in $L^2(\Omega)$, Theorem \ref{thm: consistency inf-sup and sol to PDE} justifies the consistency between the solution to the inf-sup scheme \eqref{def: inf-sup new E} and the solutions to the equations.
\end{example}}

\section{Supplementary proofs and discussions regarding Section \ref{section: Numer Anal}
}\label{append: proof}
In this section, we present the proof to Lemma \ref{lemma: orthogonal projection }, Theorem \ref{thm: convergence analysis of NPDG flow} and provide further discussions regarding the theoretical work. We first prove Lemma \ref{lemma: orthogonal projection }.
\subsection{Proof of Lemma \ref{lemma: orthogonal projection }}\label{append: subsec proof of NG lemma}
\begin{proof}
    We first prove that $\nabla_\theta F(\theta) \in \mathrm{Ran}(M(\theta))$. We can first calculate
    \[ \nabla_\theta F(\theta) = \Big\langle  D_u\mathscr{F}(u_\theta), \frac{\partial u_\theta}{\partial \theta} \Big\rangle_{\mathbb{X}}. \]
    By decomposing $D_u \mathscr{F}(u_\theta)$ as
    \[ D_u \mathscr{F}(u_\theta) = \Pi_{\partial u_\theta} [D_u\mathscr{F}(u_\theta)] + \Pi_{\partial u_\theta^\perp } [D_u\mathscr{F} (u_\theta) ]. \]
    The first term can be written as the linear combination of $\{\frac{\partial u_\theta}{\partial \theta_k}\}_{k=1}^m$, i.e. $\Pi_{\partial u_\theta}[\mathscr{F}(u_\theta)] = \frac{\partial u_\theta}{\partial \theta} \mathbf{u}$ for certain $\mathbf{u}\in \mathbb{R}^m$. The inner product between $\Pi_{\partial u_\theta^\perp } [D_u\mathscr{F} (u_\theta) ]$ and $\frac{\partial u_\theta}{\partial \theta}$ equals $0$. As a result, we have
    \[ \nabla_\theta F(\theta) = \Big\langle \frac{\partial u_\theta}{\partial \theta} \mathbf{u} , \frac{\partial u_\theta}{\partial \theta} \Big\rangle_{\mathbb{X}} = M(\theta) \mathbf{u} \in \mathrm{Ran}(M(\theta)). \]
    On the other hand, we write
    \[ f(\zeta)  =  \Big\|D_u \mathscr{F}(u_\theta) - \frac{\partial u_\theta}{\partial \theta} \zeta \Big\|_{\mathbb{X}}^2 = 
 \zeta^\top M(\theta) \zeta - 2 \zeta^\top \nabla_\theta F(\theta) + \mathrm{Const}. \]
   Recall that $M(\theta)$ is a Gram matrix, it is positive semi-definite, thus $f(\zeta)$ is a convex function. Thus, $\mathbf{v}$ is a minimum of $f(\zeta)$ iff $\nabla f(\zeta) = 0$, which is equivalent to $M(\theta)\mathbf{v} = \nabla_\theta F(\theta).$

   To show the orthogonality, consider arbitrary $\mathbf{w}\in\mathbb{R}^m$, for any $s\in\mathbb{R}$, $f(\mathbf{v}+s\mathbf{w}) \geq f(\mathbf{v})$. This yields
   \[ 0 = \frac{d}{ds} f(\mathbf{v} + s\mathbf{w})\Big|_{s=0} = \Big\langle D_u \mathscr{F}(u_\theta) - \frac{\partial u_\theta}{\partial \theta} \mathbf{v}, \frac{\partial u_\theta}{\partial \theta} \mathbf{w}   \Big\rangle_{\mathbb{X}}  \quad \textrm{for any } \mathbf{w}\in\mathbb{R}^m. \]
    This verifies the fact that $D_u \mathscr{F}(u_\theta) - \frac{\partial u_\theta}{\partial \theta} \mathbf{v}$ is orthogonal to the subspace $\mathrm{span}\{ \frac{\partial u_\theta}{\partial\theta_1},\dots, \frac{\partial u_\theta}{\partial\theta_m} \}$.
\end{proof}

\subsection{Proof of Theorem \ref{thm: convergence analysis of NPDG flow}}\label{append: sec proof a posteriori convergence analysis}

\begin{proof} 
We first recall the functional ${\mathscr{E}}:\mathbb{H} \times \blue{\mathbb{K}^{test}} \times \mathbb{K}_{\partial\Omega}^{test} \rightarrow \mathbb{R}$ defined in \eqref{preconditioned J loss funcitional },
\begin{align*} 
  {\mathscr{E}}(u, \varphi, \psi) = & \langle  \mathcal L u - f, \varphi    \rangle_{L^2(\Omega)} + \lambda \langle \mathcal B u - g, \psi \rangle_{L^2(\partial \Omega)}    \\ 
  & - \frac{\nu}{2}( \|\mathcal M_d \varphi\|^2_{L^2(\Omega; \mathbb{R}^r)} + \|\psi\|^2_{L^2(\partial \Omega)}  ) \nonumber\\  
  = & \Big\langle \mathcal M^*_d \widetilde{\mathcal L} \mathcal M_p (u-u_*), \varphi \Big\rangle_{L^2(\Omega)} + \lambda \Big\langle \mathcal B (u - u_*), \psi \Big\rangle_{L^2(\partial\Omega)}  \\
  & - \frac{\nu}{2}( \|\mathcal M_d \varphi\|_{L^2(\Omega; \mathbb{R}^r)}^2  +  \lambda \|\psi\|_{L^2(\partial \Omega)}^2)  \\
  = & \Big\langle \widetilde{\mathcal L} \mathcal M_p  (u - u_*),  ~  \mathcal M_d \varphi \Big\rangle_{L^2(\Omega; \mathbb{R}^{r})} + \Big\langle \sqrt{\lambda}\mathcal B (u-u_*), \sqrt{\lambda} \psi \Big\rangle_{L^2(\partial \Omega)} \\ 
  & - \frac{\nu}{2}(\|\mathcal M_d \varphi\|_{L^2(\Omega, \mathbb{R}^{r})}^2 + \lambda \| \psi \|_{L^2(\partial\Omega)}^2)\\
  = & \Big\langle 
      \left(\begin{array}{cc}
         \widetilde{\mathcal L}  &  \\
           &   \textrm{Id}
      \end{array}
      \right)
      \left(\begin{array}{c}
        \mathcal M_p (u-u_*) \\
        \sqrt{\lambda}\mathcal B (u-u_*) 
      \end{array}\right), 
      \left(\begin{array}{c}
          \mathcal M_d \varphi \\
          \sqrt{\lambda}\psi
      \end{array}\right) \Big\rangle_{{ \mathbb{L}^2 }} - \frac{\nu}{2}\Big\| \left(\begin{array}{c}
        \mathcal M_d \varphi  \\
        \sqrt{\lambda} \psi \end{array}\right) \Big\|_{{ \mathbb{L}^2 }}^2.
\end{align*}
We now substitute $u, \varphi, \psi$ with parametrized functions $u_\theta, \varphi_\eta, \psi_\xi$, with $\theta\in\Theta_\theta\subseteq\mathbb{R}^{m_\theta}, \eta\in\Theta_{\eta}\subseteq\mathbb{R}^{m_\eta}, \xi\in\Theta_{\xi} \subseteq \mathbb{R}^{m_\xi}$. Recall that we define as $\widehat{E}(\theta; \eta, \xi) = {\mathscr{E}}(u_\theta; \varphi_\eta, \psi_\xi)$. In our discussion, we assume that $\mathcal M_p(u_\theta - u_*)$, $\mathcal B(u_\theta - u_*)$, $\mathcal M_d \varphi_{\eta}$ and $\psi_\xi$ are differentiable w.r.t. parameters $\theta, \eta, \xi$; and $\frac{\partial }{\partial \theta} (\mathcal M_p (u_\theta - u_*)) \in \widetilde{\mathbb H}$, $\frac{\partial }{\partial \eta} (\mathcal M_d \varphi_\eta) \in \blue{\widetilde{\mathbb{K}}^{test}}$, and $\frac{\partial }{\partial \xi} (\sqrt{\lambda} \psi_\xi) \in \blue{\mathbb{K}^{test}_{\partial \Omega}}$ for arbitrary $\theta\in\Theta_{\theta}, \eta\in\Theta_{ \eta }, \xi\in\Theta_{ \xi }$.

Now recall the preconditioning matrices introduced in \eqref{def: M_d}, \eqref{def: M_bdd} and \eqref{def: M_p}, they can be formulated as: 
\begin{align*}
  (M_p(\theta))_{ij} & = \Big\langle \frac{\partial }{\partial \theta_i}  \left( 
  \begin{array}{c}
       \mathcal M_p (u_\theta - u_*)    \\
       \sqrt{\lambda} \mathcal B  (  u_\theta - u_* ) 
  \end{array}
  \right) , ~\frac{\partial}{\partial \theta_j}\left( 
  \begin{array}{c}
      \mathcal M_p (u_\theta - u_*)     \\
      \sqrt{\lambda} \mathcal B  (u_\theta - u_*) 
  \end{array}
  \right)     
  \Big\rangle_{{ \mathbb{L}^2 }}
  \\
  (M_d(\eta))_{ij} & =  \Big\langle \frac{\partial }{\partial \eta_i}(\mathcal M_d \varphi_\eta) , ~ \frac{\partial }{\partial \eta_j}(\mathcal M_d \varphi_\eta)   \Big\rangle_{L^2(\Omega;  \mathbb{R}^{r})} 
  \\
  (M_{bdd}(\xi))_{ij} &  =  \Big\langle \frac{\partial }{\partial \xi_i}(\sqrt{\lambda}\psi_\xi), \frac{\partial }{\partial \xi_j } (\sqrt{ \lambda } \psi_\xi ) \Big\rangle_{L^2(\partial\Omega)}. 
\end{align*}
To alleviate our notation, we denote $M_{d, bdd}(\eta, \xi) = M_d(\eta)\oplus M_{bdd}(\xi).
$ We further denote
\begin{equation}\label{def: U Phi}
\begin{split}
    \textbf{U}_\theta = \left(\begin{array}{c}
         \mathcal M_p (u_\theta - u_*) \\
         \sqrt{\lambda} \mathcal B  (u_\theta - u_*)
      \end{array}\right)  \in  \widetilde{\mathbb{H}} \times \mathbb{K}_{\partial \Omega} \subseteq   { \mathbb{L}^2 }, \quad  
      \boldsymbol{\Phi}_{ \eta,  \xi  } = \left(\begin{array}{c}
           \mathcal M_d \varphi_\eta \\
           \sqrt{\lambda} \psi_\xi
      \end{array}      
      \right) \in \blue{\widetilde{\mathbb{K}}^{test}} \times \blue{\mathbb{K}^{test}_{\partial \Omega}} \subseteq { \mathbb{L}^2 }.
\end{split}
\end{equation}
By slightly abusing the notation, we denote
$\widetilde{\mathscr{E}}: { \mathbb{L}^2 } \times { \mathbb{L}^2 } \rightarrow  \mathbb{R}$ as
\[ {\mathscr{E}}(\textbf{U}_\theta, \boldsymbol{\Phi}_{\eta, \xi}) = \Big\langle (\widetilde{\mathcal L}\oplus \mathrm{Id})\textbf{U}_{\theta}, \boldsymbol{\Phi}_{\eta, \xi}\Big\rangle_{{ \mathbb{L}^2 }} - \frac{\blue{\nu}}{2}\|\boldsymbol{\Phi}_{\eta, \xi }\|_{{ \mathbb{L}^2 }}^2, \]
which is equal to the previous functional ${\mathscr{E}}(u_\theta, \varphi_\eta, \psi_\xi )$.

Notice that \eqref{tilde phi tilde phi} is denoted as $\boldsymbol{\Phi}_{\eta_t, \xi_t}+\gamma \dot{\boldsymbol{\Phi}}_{\eta_t, \xi_t}$ by using our new notation, the NPDG flow \eqref{PDHG flow} can be formulated as
\begin{equation}
  \begin{split}\label{PDHG flow simplify form}
      {(\dot\eta_t^\top, \dot\xi_t^\top)^\top} = & M_{d,bdd}(\eta_t, \xi_t)^\dagger \nabla_{\eta, \xi }  {\mathscr{E}}(\textbf{U}_{\theta_t}, \boldsymbol{\Phi}_{\eta_t, \xi_t })  \\ 
      \dot \theta_t = & - M_p(\theta_t)^{\dagger} \nabla_\theta {\mathscr{E}}(\textbf{U}_{\theta_t}, \boldsymbol{\Phi}_{\eta_t, \xi_t } + \gamma \dot{\boldsymbol{\Phi}}_{\eta_t, \xi_t }). 
  \end{split}
\end{equation}
Now suppose $(\theta_t, \eta_t, \xi_t )$ solves \eqref{PDHG flow simplify form}; we compute
\begin{equation} 
  \dot{\boldsymbol{\Phi}}_{\eta_t, \xi_t} = \frac{\partial \boldsymbol{\Phi }_{\eta_t \xi_t }}{\partial (\eta, \xi ) } M_{d, bdd}(\eta_t, \xi_t )^\dagger \nabla_{\eta, \xi } {\mathscr{E}}(\textbf{U}_{\theta_t}, \boldsymbol{\Phi}_{\eta_t, \xi_t } ).  \label{dynamic of varphi_pt }
\end{equation}
By treating $\mathbb{X}={ \mathbb{L}^2 }$ and $\mathscr{F}(\cdot)$ as ${\mathscr{E}}(\textbf{U}_\theta, \cdot)$ in Lemma \ref{lemma: orthogonal projection }, the right-hand side of \eqref{dynamic of varphi_pt } is nothing but the orthogonal projection of $D_{\boldsymbol{\Phi}}{\mathscr{E}}(\textbf{U}_{\theta_t}, \boldsymbol{\Phi}_{\eta_t, \xi_t })=(\widetilde{\mathcal L}\oplus \mathrm{Id})\textbf{U}_{\theta_t} - \blue{\nu} \boldsymbol{\Phi}_{\eta_t, \xi_t }$ onto the tangent space $\partial_{\eta, \xi } \boldsymbol{\Phi}_{\eta_t, \xi_t }$, that is,
\begin{align*}
  \frac{\partial \boldsymbol{\Phi}_{\eta_t, \xi_t }}{\partial (\eta, \xi)} M_{d, bdd}(\eta_t, \xi_t )^\dagger \nabla_{\eta, \xi} {\mathscr{E}}(\textbf{U}_{\theta_t}, \boldsymbol{\Phi}_{\eta_t, \xi_t} ) = &\Pi_{\partial_{\eta, \xi} \boldsymbol{\Phi}_{\eta_t, \xi_t }}[ D_{\boldsymbol{\Phi}} {\mathscr{E}}(\textbf{U}_{\theta_t}, \boldsymbol{\Phi}_{\eta_t, \xi_t })  ] \\
  = & \Pi_{\partial_{\eta, \xi} \boldsymbol{\Phi}_{\eta_t, \xi_t }}[ (\widetilde{\mathcal L}\oplus \mathrm{Id})\textbf{U}_{\theta_t} - \nu \boldsymbol{\Phi}_{\eta_t , \xi_t } ].
\end{align*}
Similarly
\begin{equation} 
    \dot{\textbf{U}}_{\theta_t} = - \frac{\partial \textbf{U}_{\theta_t}}{\partial \theta} M_p(\theta_t)^\dagger \nabla_{\theta} {\mathscr{E}}(\textbf{U}_{\theta_t}, \boldsymbol{\Phi}_{\eta_t, \xi_t} + \gamma \dot{\boldsymbol{\Phi}}_{\eta_t, \xi_t }).  \label{dynamic of u_theta_t }
\end{equation}
By denoting $\widetilde{\mathcal L}^*$ as the adjoint operator\footnote{In the sense that
\[ \Big\langle \widetilde{ \mathcal{ L } } {v}, w \Big\rangle_{L^2(\Omega;\mathbb{R}^{r})} = \Big\langle v, \widetilde{\mathcal L}^* w \Big\rangle_{L^2(\Omega; \mathbb{R}^{r})}, \quad \forall~ v \in \widetilde{\mathbb{H}}, ~  w \in \widetilde{\mathbb{K}}^{test}. \]
} of $\widetilde{\mathcal L}$, we have
\begin{align*} 
  {\mathscr{E}}(\textbf{U}, \boldsymbol{\Phi}) 
  = \Big\langle (\widetilde{\mathcal L} \oplus \mathrm{Id})  \textbf{U}, \boldsymbol{\Phi} \Big\rangle_{{ \mathbb{L}^2 }} - \frac{\blue{\nu}}{2}\|\boldsymbol{\Phi}\|_{{ \mathbb{L}^2 }}^2 = \Big\langle \textbf{U} , (\widetilde{\mathcal L}^* \oplus \mathrm{Id}) \boldsymbol{\Phi} \Big\rangle_{{ \mathbb{L}^2 }} - \frac{\blue{\nu}}{2}\|\boldsymbol{\Phi}\|_{{ \mathbb{L}^2 }}^2.
\end{align*}
Then, the right-hand side of \eqref{dynamic of u_theta_t } equals
\begin{equation*}
  - \Pi_{\partial_\theta\textbf{U}_{\theta_t}} [ D_{\textbf{U}}{\mathscr{E}}(\textbf{U}_{\theta_t}, {\boldsymbol{\Phi}}_{\eta_t, \xi_t } + \gamma \dot{\boldsymbol{\Phi}}_{\eta_t, \xi_t })   ] = - \Pi_{\partial_\theta \textbf{U}_{\theta_t}}[ (\widetilde{\mathcal L}^*\oplus \mathrm{Id})(\boldsymbol{\Phi}_{\eta_t, \xi_t } + \gamma \dot{\boldsymbol{\Phi}}_{\eta_t, \xi_t }) ],
\end{equation*}
Thus the corresponding dynamic of \eqref{PDHG flow simplify form} in the functional space can be formulated as
\begin{equation}\label{NPDG flow on functional spaces}
    \begin{split}
        & \dot{\boldsymbol{\Phi}}_{\eta_t, \xi_t} = \Pi_{\partial_{\eta, \xi} \boldsymbol{\Phi}_{\eta_t, \xi_t}}[ (\widetilde{\mathcal L}\oplus \mathrm{Id})\textbf{U}_{\theta_t} - \blue{\nu} \boldsymbol{\Phi}_{\eta_t, \xi_t  } ],  \\
        & \dot{\textbf{U}}_{\theta_t} = - \Pi_{\partial_\theta \textbf{U}_{\theta}}[ (\widetilde{\mathcal L}^* \oplus \mathrm{Id})(\boldsymbol{\Phi}_{\eta_t, \xi_t } + \gamma \dot{\boldsymbol{\Phi}}_{\eta_t, \xi_t }) ].
    \end{split}
\end{equation}
We now consider the Lyapunov functional
\begin{align}
  \mathcal I(\textbf{U}, \boldsymbol{\Phi}) = & \frac12 (\|\mathcal M_p (u - u_* )\|_{L^2(\Omega; \mathbb{R}^r)}^2 + \lambda \|\mathcal B (u - u_*) \|_{L^2(\partial \Omega) }^2 + \|\mathcal M_d \varphi\|_{L^2(\Omega; \mathbb{R}^r)}^2 + \lambda \|\psi\|_{L^2(\partial  \Omega)}^2  )  \nonumber  \\
  = & \frac12 \|\textbf{U}\|_{{ \mathbb{L}^2 }}^2 + \frac12 \|\boldsymbol{\Phi}\|_{{ \mathbb{L}^2 }}^2.  \label{Lyapunov func }
\end{align}
We shall study the decay of this Lyapunov functional along $\{(\textbf{U}_{\theta_t}, \boldsymbol{\Phi}_{\eta_t, \xi_t})\}$. We calculate
\begin{equation}\label{calculate dI dt}
\begin{split}
  \frac{d}{dt}\mathcal I (\textbf{U}_{\theta_t}, \boldsymbol{\Phi}_{\eta_t, \xi_t }) & = \bigla \ut, \dotut \bigra_{{ \mathbb{L}^2 }} + \bigla \boldsymbol{\Phi}_{\eta_t, \xi_t}, \dot{\boldsymbol{\Phi}}_{\eta_t, \xi_t} \bigra_{{ \mathbb{L}^2 }} \\
  & = \underbrace{\bigla \ut, - \Pi_{\partial_\theta \textbf{U}_{\theta_t}}[ (\widetilde{\mathcal L}^*\oplus \mathrm{Id})(\boldsymbol{\Phi}_{\eta_t, \xi_t  } + \gamma \dot{\boldsymbol{\Phi}}_{\eta_t, \xi_t  }) ]  \bigra_{{ \mathbb{L}^2 }}}_{(1)} \\ 
  &  \quad + \underbrace{\bigla \varphit, ~ \Pi_{\partial_{\eta, \xi} \boldsymbol{\Phi}_{\eta_t, \xi_t }}[ (\widetilde{\mathcal L}\oplus \mathrm{Id})\textbf{U}_{\theta_t} - \blue{\nu} \boldsymbol{\Phi}_{\eta_t, \xi_t   } ] \bigra_{{ \mathbb{L}^2 }}}_{(2)}
\end{split}
\end{equation}

We further compute (1) as:
\begin{equation}\label{estimation of term (1)}
\begin{split}
  (1) = & ~  - \bigla \ut, \quad \Pi_{\partial\textbf{U}_{\theta_t}}[ (\widetilde{\mathcal L}^*\oplus \mathrm{Id})(\boldsymbol{\Phi}_{\eta_t, \xi_t  } + \gamma \Pi_{\partial\boldsymbol{\Phi}_{\eta_t, \xi_t }}[ (\widetilde{\mathcal L}\oplus \mathrm{Id})\textbf{U}_{\theta_t} - \blue{\nu} \boldsymbol{\Phi}_{\eta_t, \xi_t   } ] ) ]  \bigra_{{ \mathbb{L}^2 }} \\
  = & ~ - \bigla \Pi_{\partial\textbf{U}_{\theta_t}}[\ut], \quad (\widetilde{\mathcal L}^*\oplus \mathrm{Id})(\boldsymbol{\Phi}_{\eta_t, \xi_t  } + \gamma (\widetilde{\mathcal L} \oplus \mathrm{Id})\ut - \gamma\blue{\nu} \boldsymbol{\Phi}_{\eta_t, \xi_t }  )    \bigra_{{ \mathbb{L}^2 }}  \\
  & ~ + \bigla \Pi_{\partial \ut} [ \ut ] , \quad  \gamma(\widetilde{\mathcal L}^* \oplus \mathrm{Id}) \Pi_{\partial \boldsymbol{\Phi}_{\eta_t, \xi_t}^{\perp}} ((\widetilde{\mathcal L}\oplus \mathrm{Id}) \ut - \blue{\nu} \boldsymbol{\Phi}_{\eta_t, \xi_t} )   \bigra_{{ \mathbb{L}^2 }}\\
  = & ~ \underbrace{- \bigla \ut, \quad  (\widetilde{\mathcal L}^*\oplus \mathrm{Id})((1-\gamma\blue{\nu})\boldsymbol{\Phi}_{\eta_t, \xi_t  } + \gamma (\widetilde{\mathcal L} \oplus \mathrm{Id})\ut  ) \bigra_{{ \mathbb{L}^2 }}}_{(A)} \\
  & ~ + \underbrace{\bigla \Pi_{\partial\textbf{U}_{\theta_t}^{\perp}}[\ut], \quad (\widetilde{\mathcal L}^*\oplus \mathrm{Id})((1-\gamma\blue{\nu} )\boldsymbol{\Phi}_{\eta_t, \xi_t  } + \gamma (\widetilde{\mathcal{L} } \oplus \mathrm{Id})\ut)  \bigra_{{ \mathbb{L}^2 }}}_{(R1)} \\
  & ~ + \underbrace{\gamma\bigla (\widetilde{\mathcal L} \oplus \mathrm{Id}) ~\Pi_{\partial \ut} [ \ut ] , \quad \Pi_{\partial \boldsymbol{\Phi}_{\eta_t, \xi_t}^{\perp}} [ (\widetilde{\mathcal L}\oplus \mathrm{Id}) \ut - \blue{\nu} \boldsymbol{\Phi}_{\eta_t, \xi_t} ]     \bigra_{{ \mathbb{L}^2 }}}_{(R2)}.
\end{split}
\end{equation}
For the second equality, we use the fact that the orthogonal projection $\Pi_{\partial \ut}$ is self-adjoint on ${ \mathbb{L}^2 }$. 

Furthermore, the term (2) equals
\begin{equation}\label{estimation of term (2)}
\begin{split}
  (2) = & ~ \bigla \varphit,  \quad  (\widetilde{\mathcal L}\oplus \mathrm{Id})\textbf{U}_{\theta_t} - \blue{\nu} \boldsymbol{\Phi}_{\eta_t, \xi_t   }     \bigra_{{ \mathbb{L}^2 }} + \bigla \varphit,  \quad  \Pi_{\partial \boldsymbol{\Phi}_{\eta_t, \xi_t}^\perp}[ (\widetilde{\mathcal L}\oplus \mathrm{Id})\textbf{U}_{\theta_t} - \blue{\nu} \boldsymbol{\Phi}_{\eta_t, \xi_t }]    \bigra_{{ \mathbb{L}^2 }}  \\
  = & ~ \underbrace{\bigla \varphit,  \quad  (\widetilde{\mathcal L}\oplus \mathrm{Id})\textbf{U}_{\theta_t} - \blue{\nu} \boldsymbol{\Phi}_{\eta_t, \xi_t   }     \bigra_{{ \mathbb{L}^2 }}}_{(B)} + \underbrace{\bigla \Pi_{\partial\boldsymbol{\Phi}_{\eta_t, \xi_t}^\perp} [\varphit],  \quad  (\widetilde{\mathcal L}\oplus \mathrm{Id})\textbf{U}_{\theta_t} - \blue{\nu} \boldsymbol{\Phi}_{\eta_t, \xi_t }    \bigra_{{ \mathbb{L}^2 }}}_{(R3)}
\end{split}
\end{equation}

Then one can calculate
\begin{align}
   & ~ ~ (A) + (B)  \nonumber  \\
   & = -\bigla \ut, (\widetilde{\mathcal L}^*\oplus \mathrm{Id})((1-\gamma\blue{\nu})\boldsymbol{\Phi}_{\eta_t, \xi_t  } + \gamma  (\widetilde{\mathcal L}\oplus \mathrm{Id})\textbf{U}_{\theta_t} )  ) \bigra_{{ \mathbb{L}^2 }} \! + \! \bigla \varphit,  (\widetilde{\mathcal L}\oplus \mathrm{Id})\textbf{U}_{\theta_t} - \blue{\nu} \boldsymbol{\Phi}_{\eta_t, \xi_t   }     \bigra_{{ \mathbb{L}^2 }}  \nonumber  \\
  & = - \gamma \bigla \ut, (\widetilde{\mathcal L}^* \oplus \mathrm{Id})(\widetilde{\mathcal L} \oplus \mathrm{Id}) \ut \bigra_{{ \mathbb{L}^2 }} + \gamma\blue{\nu} \bigla \varphit, (\widetilde{\mathcal L} \oplus \mathrm{Id}) \ut  \bigra_{{ \mathbb{L}^2 } } - \blue{\nu}\bigla \varphit, \varphit  \bigra_{{ \mathbb{L}^2 } }  \label{calculate  A+B  }
\end{align}
Recall the assumption \eqref{boundness of L, L_0, L_1}, we have: 
\begin{align*}
  \|(\widetilde{\mathcal L} \oplus \mathrm{Id}) \textbf{U}\|_{{ \mathbb{L}^2 }}^2 & = \|\widetilde{\mathcal L} \textbf{u}\|_{L^2(\Omega; \mathbb{R}^r)}^2 + \|w\|_{L^2(\partial \Omega)}^2 \\
  & \leq L_1^2 \|\textbf{u}\|_{L^2(\Omega; \mathbb{R}^r)}^2 + \|w\|_{L^2(\partial \Omega)}^2 \\
  & \leq (L_1^2 \vee 1)
  \cdot (\|\textbf{u}\|_{L^2(\Omega; \mathbb{R}^r)}^2 + \|w\|_{L^2(\partial \Omega)}^2) = (L_1^2 \vee 1) \cdot \|\textbf{U}\|_{{ \mathbb{L}^2 }}^2. 
\end{align*}
That is, $\|(\widetilde{\mathcal L} \oplus \mathrm{Id}) \textbf{U}\|_{{ \mathbb{L}^2 }} \leq (L_1 \vee 1) \cdot \|\textbf{U}\|_{{ \mathbb{L}^2 }}$. Similarly, we have $\|(\widetilde{\mathcal{L}} \oplus \mathrm{Id}) \textbf{U} \|_{{ \mathbb{L}^2 }} \geq (L_0 \wedge 1) \|\textbf{U}\|_{{ \mathbb{L}^2 }}.$  

We can verify that \eqref{calculate  A+B  } yields
\begin{equation}\label{est (A)+(B)}
  (A) + (B) \leq - \gamma (L_0\wedge 1)^2\|\textbf{U}_{\theta_t}\|_{{ \mathbb{L}^2 }}^{\blue{2}} + \gamma \blue{\nu} (L_1 \vee 1) \|\boldsymbol{\Phi}_{\eta_t, \xi_t }\|_{{ \mathbb{L}^2 }}\cdot \|\textbf{U}_{\theta_t}\|_{{ \mathbb{L}^2 }} - \blue{\nu} \|\boldsymbol{\Phi}_{\eta_t, \xi_t }\|_{{ \mathbb{L}^2 }}^2.        
\end{equation}
Moreover, by Cauchy-Schwarz inequality, we  estimate the remainder terms $(R1), (R2), (R3)$ as
\begin{align*}
  (R1) \leq &  ~  \|\Pi_{\partial \ut^\perp}[\ut]\|_{{ \mathbb{L}^2 }}  \cdot  ((L_1\vee 1)|1-\gamma\blue{\nu}|\|\boldsymbol{\Phi}_{\eta_t, \xi_t }\|_{{ \mathbb{L}^2 }} + \gamma (L_1 \vee 1)^2 \|\ut\|_{{ \mathbb{L}^2 }}) \\
  \leq &  ~  \alpha \|\ut\|_{{ \mathbb{L}^2 }} \cdot ((L_1\vee 1)|1-\gamma\blue{\nu}|\|\boldsymbol{\Phi}_{\eta_t, \xi_t }\|_{{ \mathbb{L}^2 }} + \gamma (L_1 \vee 1)^2 \|\ut\|_{{ \mathbb{L}^2 }})\\
  = &  ~\alpha \cdot (L_1\vee 1) \cdot |1-\gamma \blue{\nu} | \cdot \|\ut\|\cdot \|\boldsymbol{\Phi}_{\eta_t, \xi_t }\|_{{ \mathbb{L}^2 }} + \alpha    \cdot \gamma \cdot (L_1\vee 1)^2 \cdot \|\ut\|_{{ \mathbb{L}^2 }}^2 . 
\end{align*}
\begin{align*}
  (R2) \leq &  ~  \gamma \cdot (L_1\vee 1) \|\Pi_{\partial \ut} [\ut] \|_{{ \mathbb{L}^2 }} \cdot \|\Pi_{\partial \boldsymbol{\Phi}_{\eta_t, \xi_t}^{\perp}} [ (\widetilde{\mathcal L}\oplus \mathrm{Id}) \ut - \blue{\nu} \boldsymbol{\Phi}_{\eta_t, \xi_t} ] \|_{{ \mathbb{L}^2 }} \\
  \leq & ~  \gamma \cdot (L_1 \vee 1)  \cdot  \|\ut\|_{{ \mathbb{L}^2 }} \cdot ( 
  \|\Pi_{\partial \boldsymbol{\Phi}_{\eta_t, \xi_t }^\perp }[(\widetilde{\mathcal L}\oplus \mathrm{Id}) \ut]\|_{{ \mathbb{L}^2 } }  +  \blue{\nu} \| \Pi_{\partial \boldsymbol{\Phi}_{\eta_t, \xi_t }} [\boldsymbol{\Phi}_{\eta_t, \xi_t }] \|_{{ \mathbb{L}^2 }} )\\
  \leq & ~  \gamma \cdot (L_1 \vee 1) \cdot \|\ut\|_{{ \mathbb{L}^2 }} \cdot ( \beta_1 \|(\widetilde{\mathcal L}  \oplus  \mathrm{Id}) \ut \|_{{ \mathbb{L}^2 }} + \blue{\nu} \beta_2 \|\boldsymbol{\Phi}_{\eta_t, \xi_t}\|_{{ \mathbb{L}^2 }} )  \\
  \leq & ~ \gamma \cdot (L_1 \vee 1)^2 \cdot \beta_1 \|\ut\|_{{ \mathbb{L}^2 }}^2 + \gamma \blue{\nu} \cdot (L_1 \vee 1) \cdot \beta_2 \cdot \|\ut\|_{{ \mathbb{L}^2 }}  \cdot    \| \boldsymbol{\Phi}_{\eta_t, \xi_t} \|_{{ \mathbb{L}^2 }}.
\end{align*}
\begin{align*}
  (R3) \leq & ~  \|\Pi_{\partial\boldsymbol{\Phi}_{\eta_t, \xi_t}^\perp} [\varphit]\|_{{ \mathbb{L}^2 } } \cdot \|(\widetilde{\mathcal L}\oplus \mathrm{Id})\textbf{U}_{\theta_t} - \blue{\nu} \boldsymbol{\Phi}_{\eta_t, \xi_t}\|_{ { \mathbb{L}^2 } } \\
  \leq & ~ \beta_2 \cdot \|\boldsymbol{\Phi}_{\eta_t, \xi_t}\|_{{ \mathbb{L}^2 }  } \cdot ((L_1 \vee 1) \|\ut\|_{{ \mathbb{L}^2 }} + \blue{\nu} \|\boldsymbol{\Phi}_{\eta_t, \xi_t}\|_{{ \mathbb{L}^2 }}  ) \\
  = & ~  \beta_2 \cdot (L_1 \vee 1) \cdot \|\ut\|_{{ \mathbb{L}^2 }} \cdot \|\boldsymbol{\Phi}_{\eta_t, \xi_t}\|_{ { \mathbb{L}^2 } } + \beta_2 \cdot \blue{\nu} \cdot \|\boldsymbol{\Phi}_{\eta_t, \xi_t}\|_{{ \mathbb{L}^2 }}^2.
\end{align*}
Here, we denote
\begin{align}
\alpha = &  \max_{t\in[0, T]} ~ \frac{\|\Pi_{\partial \ut^\perp}[\ut]\|_{{ \mathbb{L}^2 }}}{\|\ut\|_{{ \mathbb{L}^2 }}}; \label{def: alpha}\\
 \beta_1 = &  \max_{t\in[0, T]} ~ \frac{\|\Pi_{\partial \boldsymbol{\Phi}_{\eta_t, \xi_t }^\perp }[(\widetilde{\mathcal L}\oplus \mathrm{Id}) \ut]\|_{{ \mathbb{L}^2 }}}{\| (\widetilde{\mathcal L}\oplus \mathrm{Id}) \ut \|_{{ \mathbb{L}^2 }}};  \label{def: beta1}\\
 \beta_2 = &  \max_{t\in[0, T]} ~ \frac{\|\Pi_{\partial \boldsymbol{\Phi}_{\eta_t, \xi_t }^{\perp  }  }[\boldsymbol{\Phi}_{\eta_t, \xi_t}]\|_{{ \mathbb{L}^2 }}}{\|\boldsymbol{\Phi}_{\eta_t, \xi_t }\|_{{ \mathbb{L}^2 }}} .  \label{def: beta2}
\end{align}
It is not hard to tell that $0 \leq \alpha, \beta_1, \beta_2 \leq 1$. 

Now, recall \eqref{calculate dI dt} and \eqref{calculate  A+B  }, together with the estimates on the remainder terms $(R1), (R2), (R3)$ we obtain
\begin{align*}
 \frac{d}{dt}\mathcal I (\textbf{U}_{\theta_t}, \boldsymbol{\Phi}_{\eta_t, \xi_t  } ) \leq & - \gamma \cdot ((L_0\wedge 1)^2 - (L_1 \vee 1)^2 (\alpha + \beta_1) )  \cdot \|\ut\|_{{ \mathbb{L}^2 }}^2    \nonumber \\
 & + (L_1 \vee 1) \cdot ((1+\beta_1)\gamma\blue{\nu} +  \beta_2  + \alpha|1-\gamma\blue{\nu}| ) \cdot  \| \varphit\|_{{ \mathbb{L}^2 }} \cdot \|\ut\|_{{ \mathbb{L}^2 }}  \nonumber\\
 &  -  \blue{\nu} \cdot (1 - \beta_2) \cdot \|\boldsymbol{\Phi}_{\eta_t, \xi_t }\|_{{ \mathbb{L}^2 }}^2.  \nonumber  \\
 \leq & - \left[\|\ut\|_{{ \mathbb{L}^2 }}, \|\boldsymbol{\Phi}_{\eta_t, \xi_t}\|_{{ \mathbb{L}^2 }}\right] \underbrace{\left[ \begin{array}{cc}
     \Gamma_{\textbf{U}\textbf{U}} & \Gamma_{\boldsymbol{\Phi}\textbf{U}}/2  \\
     \Gamma_{\boldsymbol{\Phi}\textbf{U}}/2 & \Gamma_{\boldsymbol{\Phi}\boldsymbol{\Phi}}
 \end{array} \right]}_{\Gamma} \left[\begin{array}{c}
       \|\ut\|_{{ \mathbb{L}^2 }} \\
       \|\boldsymbol{\Phi}_{\eta_t, \xi_t }\|_{{ \mathbb{L}^2 }}
 \end{array}\right].  
\end{align*}

Here we denote 
\begin{equation*}
\begin{split}
  & \Gamma_{\textbf{U}\textbf{U}} = \gamma \cdot ((L_0\wedge 1)^2 - (L_1 \vee 1)^2(\alpha + \beta_1)), \quad  \Gamma_{\boldsymbol{\Phi}\boldsymbol{\Phi}} = \blue{\nu} (1-\beta_2), \\ 
  & \Gamma_{\boldsymbol{\Phi}\textbf{U}} = -(L_1 \vee 1)\cdot ((1+\beta_1)\gamma\blue{\nu} + \beta_2 + \alpha|1-\gamma\blue{\nu}|). 
\end{split}
\end{equation*}
Since we assumed that $\frac{1}{\widetilde{\kappa}^2} > \alpha + \beta_1$, this yields $\Gamma_{\textbf{U}\textbf{U}}>0$; and $\beta_2 < 1$ yields $\Gamma_{\boldsymbol{\Phi}\boldsymbol{\Phi}}>0$; moreover, \eqref{ineq condition for posdef of Gamma} is equivalent to $\mathrm{det}(\Gamma) = \Gamma_{\textbf{U}\textbf{U}}\Gamma_{\boldsymbol{\Phi}\boldsymbol{\Phi}}-\frac14\Gamma_{\boldsymbol{\Phi}\textbf{U}}^2>0$. In conclusion, these lead to the fact that $\Gamma$ is positive definite. Further, we denote the smaller eigenvalue of $\Gamma$ as
\begin{equation}
  r = \frac12 \left(\Gamma_{\textbf{U}\textbf{U}}+\Gamma_{\boldsymbol{\Phi}\boldsymbol{\Phi}}- \sqrt{(\Gamma_{\textbf{U}\textbf{U}} - \Gamma_{\boldsymbol{\Phi}\boldsymbol{\Phi}})^2  +  \Gamma_{\boldsymbol{\Phi}\textbf{U}}^2}\right).  \label{def: convergence rate}
\end{equation}
Thus, $r > 0$,
and we obtain
\begin{equation*}
    \frac{d}{dt}\mathcal I (\textbf{U}_{\theta_t}, \boldsymbol{\Phi}_{\eta_t, \xi_t  } ) \leq - r \cdot \mathcal I (\textbf{U}_{\theta_t}, \boldsymbol{\Phi}_{\eta_t, \xi_t  }), \quad t\in [0, T].
\end{equation*}
Applying the Gr\"{o}nwall's inequality yields
\begin{equation*}
  \mathcal I(\textbf{U}_{\theta_t}, \boldsymbol{\Phi}_{\eta_t, \xi_t  }) \leq \exp(-rt)\cdot \mathcal I(\textbf{U}_{\theta_0}, \boldsymbol{\Phi}_{\eta_0, \xi_0}),
\end{equation*}
for $t\in[0, T]$. Recall definition \eqref{Lyapunov func }, we have proven the theorem
\begin{equation*}
  \|\mathcal M_p (u_{\theta_t} - u_* )\|_{L^2(\Omega; \mathbb{R}^r)}^2 + \lambda \|\mathcal B (u_{\theta_t} - u_*) \|_{L^2(\partial \Omega) }^2 \leq 2  \exp(-rt)  \cdot \mathcal I(\textbf{U}_{\theta_0}, \boldsymbol{\Phi}_{\eta_0, \xi_0}), \quad 0 \leq t \leq T. 
\end{equation*}
\end{proof}

{\color{black}
\subsection{Some definitions related to the fractional Sobolev space}\label{append: subsec frac Sobolev spc}
We give a brief definition of the fractional Sobolev space $H^{1/2}(\partial \Omega)$ and state a useful result to be used in the next section characterizing its norm.
\begin{definition}[$H^{1/2}(\partial \Omega)$ space]
  For open bounded domain $\Omega\subset\mathbb{R}^d$ with Lipschitz boundary $\partial \Omega$, we define 
  \begin{equation*}
      H^{1/2}(\partial \Omega) = \left\{u \in L^2(\partial\Omega)\,:\,\|u\|_{H^{1/2}(\partial\Omega)} < \infty\right\},
  \end{equation*}
  where we define the fractional Sobolev norm (also known as the Sobolev-Slobodeckii norm using general $L^p$ norm, see \citep{gagliardo1957caratterizzazioni} and the references therein for more details)
  \begin{equation}\label{def: H^1/2 norm}
    \|u\|^2_{H^{1/2}(\partial\Omega)}
    := \|u\|_{L^2(\partial\Omega)}^2
    + |u|_{H^{1/2}(\partial\Omega)}^2,
  \end{equation}
  with seminorm
  \begin{equation*}
    |u|_{H^{1/2}(\partial\Omega)}^2
    := \int_{\partial\Omega}
    \int_{\partial\Omega}
    \frac{|u(x) - u(y)|^2}{\|x - y\|^{d}}
    \, \mathrm{d}s_x \, \mathrm{d}s_y.
  \end{equation*}
\end{definition}
One can then define the bounded, surjective trace operator $\mathcal{B}:H^1(\Omega)\rightarrow H^{1/2}(\partial \Omega)$ by extending it from the smooth function space \citep{mclean2000strongly, evans2022partial}. We have the following theorem:
\begin{theorem}\label{char of H12 norm inducing from H1}
  The $H^{1/2}(\partial \Omega)$ norm is equivalent to the following norm $\|\cdot\|_\star$ up to a constant:
  \begin{equation} 
    \|g\|_{\star} := \inf_{u\in H^1(\Omega), \mathcal{B} u = g} \|u\|_{H^1(\Omega)}, \quad \textrm{ for any } g\in \mathcal{B}(H^{1}(\Omega))=H^{1/2}(\partial \Omega).  \label{def: star norm}
  \end{equation}
  That is, there exist constants $C_{\Omega} > c_{\Omega} > 0$ only depending on $\Omega$ s.t. for any $g\in H^{1/2}(\partial \Omega),$
  \[ C_\Omega\|g\|_{\star} \geq \|g\|_{H^{1/2}(\partial \Omega)} \geq c_\Omega \|g\|_{\star}.
  \]
\end{theorem}
We refer the readers to Theorem 1.I in \citep{gagliardo1957caratterizzazioni} as a proof. More comprehensive discussions regarding this result can be found in \citep{mclean2000strongly, pechstein2013boundary}.}

{\color{black}
\subsection{Proof of Theorem \ref{thm: convergence analysis on elliptic PDE of divergence type}.}\label{append: subsec proof of refined thm}
As we focus on the Dirichlet boundary problem, we always treat $\mathcal B$ as the trace operator throughout this section. Before working on this proof, we slightly modify the definitions of several notations that are used in the previous proof of Theorem \ref{thm: convergence analysis of NPDG flow} for the current result. In this case, we have $\mathbb{H}=H^2(\Omega), \widetilde{\mathbb{H}}=H^1(\Omega; \mathbb{R}^d), \widetilde{\mathbb{K}}=H^1(\Omega;\mathbb{R}^d), \mathbb{K}=L^2(\Omega), \, \mathbb{K}_{\partial \Omega}=\mathcal X$ and $\widetilde{\mathbb{K}}^{test}=L^2(\Omega; \mathbb{R}^d), \mathbb{K}^{test}=H_0^1(\Omega), \mathbb{K}^{test}_{\partial \Omega}=\mathcal X.$ The operators $\mathcal M_p, \widetilde{\mathcal L}, \mathcal M_d$ are defined as 
\[ { \mathcal M_p}: \mathbb{H}\rightarrow \widetilde{ \mathbb{H} }, u \mapsto \sqrt{A(\cdot)}\nabla u(\cdot), \quad \widetilde{\mathcal L} = \mathrm{Id}: \widetilde{\mathbb{H}}\rightarrow \widetilde{\mathbb{K}}, \quad \mathcal M_d: \blue{\widetilde{\mathbb{K}}^{test}} \rightarrow \blue{\mathbb{K}^{test}}, \varphi\mapsto \sqrt{A(\cdot)}\nabla \varphi(\cdot).  \]
In this proof, we define $\mathbb{L}^2 := L^2(\Omega; \mathbb{R}^d)\times \mathcal X$. We keep the notations of $\textbf{U}_\theta, \boldsymbol{\Phi}_{\eta, \xi}$ as 
\begin{equation*}
\begin{split}
    \textbf{U}_\theta = \left(\begin{array}{c}
         \mathcal M_p (u_\theta - u_*) \\
         \sqrt{\lambda} \mathcal B  (u_\theta - u_*)
      \end{array}\right)\in\mathbb{L}^2, \quad  
      \boldsymbol{\Phi}_{ \eta,  \xi  } = \left(\begin{array}{c}
           \mathcal M_d \varphi_\eta \\
           \sqrt{\lambda} \psi_\xi
      \end{array}      
      \right)\in\mathbb{L}^2,
\end{split}
\end{equation*}
and define the preconditioning matrices 
\begin{align*}
  (M_p(\theta))_{ij} & = \Big\langle \frac{\partial \textbf{U}_\theta}{\partial \theta_i} , ~\frac{\partial \textbf{U}_\theta}{\partial \theta_j}\Big\rangle_{{ \mathbb{L}^2 }}, \quad 
  (M_d(\eta))_{ij} =  \Big\langle \frac{\partial \mathcal M_d \varphi_\eta}{\partial \eta_i} , ~ \frac{\partial \mathcal M_d \varphi_\eta}{\partial \eta_j}   \Big\rangle_{L^2(\Omega;  \mathbb{R}^d)},  \\ 
  (M_{bdd}(\xi))_{ij} & =  \Big\langle \frac{\partial (\sqrt{\lambda}\psi_\xi)}{\partial \xi_i}, \frac{\partial (\sqrt{ \lambda } \psi_\xi)}{\partial \xi_j } \Big\rangle_{\mathcal X}.
\end{align*}
Again, we assume the differentiability of $\textbf{U}_\theta, \boldsymbol{\Phi}_{\eta, \xi}$ w.r.t. parameters $\theta, \eta, \xi$; and $\frac{\partial }{\partial \theta} (\mathcal M_p (u_\theta - u_*)) \in \widetilde{\mathbb H}$, $\frac{\partial }{\partial \eta} (\mathcal M_d \varphi_\eta) \in \blue{\widetilde{\mathbb{K}}^{test}}$, and $\frac{\partial }{\partial \xi} (\sqrt{\lambda} \psi_\xi) \in \blue{\mathbb{K}^{test}_{\partial \Omega}}$ for arbitrary $\theta, \eta,$ and $\xi.$

Recall that $\Pi_{\partial \textbf{U}_\theta}:\mathbb{L}^2\rightarrow\mathbb{L}^2$ denotes the orthogonal projection onto $\mathrm{span}\{\partial_{\theta_k}\textbf{U}_\theta\}$ w.r.t. the $\mathbb{L}^2$ inner product, while similarly, $\Pi_{\partial \boldsymbol{\Phi}_{\eta, \xi}} = \Pi_{\partial \boldsymbol{\varphi}_\eta} \oplus \Pi_{\partial \boldsymbol{\psi}_\xi}$ with $\Pi_{\partial \boldsymbol{\varphi}_\eta}, \Pi_{\partial \boldsymbol{\psi}_\xi}$ denote the orthogonal projections onto $\partial \boldsymbol{\varphi}_\eta = \mathrm{span}\{\partial_{\eta_k}\mathcal M_d \varphi_\eta\}$ and $\mathrm{span}\{\partial_{\xi_k} \psi_\xi\}$ w.r.t. $L^2(\Omega;\mathbb{R}^d)$ and $\mathcal X$ inner products, respectively. 

Before we present the proof, we need the following two lemmas.
\begin{lemma}\label{lemma: elliptic eq assoc variational problem}
For a given $w\in H^2(\Omega)$, the following variational problem admits a unique minimizer $\widehat{\varphi}\in H_0^1(\Omega),$
\begin{equation} 
  \min_{\varphi \in H_0^1(\Omega)} \;  |\varphi - w|_{H^1(\Omega, A)}^2.  
  \label{variational problem assoc. proj}
\end{equation}
Denote $\mathcal T:H^2(\Omega)\rightarrow H_0^1(\Omega), w\mapsto \widehat{\varphi}$, then $\mathcal T$ is a linear operator. Furthermore, $\sqrt{A(\cdot)}(\nabla\widehat{\varphi}-\nabla w)$ is orthogonal to all $\sqrt{A(\cdot)}\nabla\phi$ with $\phi\in H_0^1(\Omega)$ w.r.t. $L^2(\Omega; \mathbb{R}^d)$ inner product. 
\end{lemma}
\begin{proof}
The existence and uniqueness of the minimizer of \eqref{variational problem assoc. proj} is a standard result in calculus of variations. Readers are referred to \citep{evans2022partial} (c.f. Theorem 2 and Theorem 3 in Chap. 8) for a proof. It can be verified that $\mathcal T w=\widehat{\varphi}$ is the weak solution to the Euler-Lagrange equation, which is a linear elliptic equation:
\begin{equation}
    -\nabla\cdot(A(x)\nabla\varphi(x)) = -\nabla\cdot(A(x)\nabla w(x)), ~ \textrm{on }\Omega, \quad  \varphi = 0 ~ \textrm{on }\partial \Omega. \label{EL of the variational problem}
\end{equation}
This yields that $\int_\Omega \nabla(\widehat{\varphi}(x)-w(x))^\top A(x) \nabla\phi(x) \dd x = 0$ for arbitrary $\phi\in H_0^1(\Omega).$ This verifies the orthogonality assertion of the Lemma.
Conversely, any weak solution $\varphi$ of \eqref{EL of the variational problem} is a minimizer of \eqref{variational problem assoc. proj}. See Section 8.2.3 of \citep{evans2022partial} for a detailed discussion. The equivalence between the minimizer of the variational problem and the solution to Euler-Lagrange equation verifies the linearity of $\mathcal T.$

\end{proof}

\begin{lemma}\label{lemma: bounding the proj norm with H12 norm}
  For arbitrary $w\in H^1(\Omega),$ denote $\widehat{\varphi} = \mathcal T w = \underset{\varphi\in H_0^1(\Omega)}{\mathrm{argmin}} \, |w - \varphi|_{ H_1(\Omega, A)},$ we have the inequality
  \begin{equation}
    |w - \widehat{\varphi}|_{ H_1(\Omega, A)} \leq \sqrt{\overline{A}} \|\mathcal{B} w\|_\star.
  \end{equation}
\end{lemma}
\begin{proof}
  Let us first consider the variational problem
  \begin{equation} 
    \min_{\phi\in H_0^1(\Omega)} \|\phi + w\|_{H^1(\Omega)}^2.  \label{variational problem for H12 norm}
  \end{equation}
  By using the similar arguments for proving Lemma \ref{lemma: elliptic eq assoc variational problem}, one can show that there exists unique $\phi_*\in H_0^1(\Omega)$ that minimizes \eqref{variational problem for H12 norm}. 

  On the other hand, it is straightforward to verify the equivalence between \eqref{variational problem for H12 norm} and \eqref{def: star norm}. Therefore, the optimal value for \eqref{variational problem for H12 norm} yields $\|\phi_* + w\|_{H^1(\Omega)} = \|\mathcal{B} w\|_\star.$ 

  Furthermore, we have 
  \begin{align*} 
    |w - \widehat{\varphi}|_{H^1(\Omega, A)}^2 = \min_{\varphi\in H_0^1(\Omega)} |w - \varphi|_{ H_1(\Omega, A)}^2 \leq |w + \phi_*|_{ H_1(\Omega, A)}^2, 
  \end{align*}
  and 
  \begin{equation*}
      |w + \phi_*|_{ H_1(\Omega, A)}^2 = \int_\Omega \nabla (w + \phi_*)^\top A(x) \nabla(w + \phi_*) \dd x \leq \overline{A}\|w+\phi_*\|_{H^1(\Omega)}^2 = \overline{A}\|\mathcal{B} w\|_{\star}^2.
  \end{equation*}
  Combining the two inequalities proves the assertion.

\end{proof}

We are now ready to prove the result:
\begin{proof}
  Recall that $\{(\theta_t, \eta_t, \xi_t)\}_{t\geq 0}$ denotes the solution to the NPDG flow associated with the functional $\mathscr{E}_{\mathcal X }(u_\theta, \varphi_\eta, \psi_\xi)$ defined in \eqref{def: new loss funcitional with modif bdd err}. We again consider the Lyapunov functional 
  \[ \mathcal I(\Utheta, \Phietaxi) = \frac12(\|\Utheta\|^2_{\mathbb{L}^2} + \|\Phietaxi\|_{\mathbb{L}^2}^2), \]
  defined in \eqref{Lyapunov func }. Using almost the identical derivation demonstrated in the proof of Theorem \ref{thm: convergence analysis of NPDG flow}, we obtain the time derivative of $\mathcal I(\Utheta, \Phietaxi)$ as
  \begin{equation}\label{decompose dI/dt for est}
  \begin{split}
      \frac{d}{dt} I(\Utheta, \Phietaxi) = &  (1) + (2)\\
                                         = & [(\textrm{A}) + (\textrm{R1}) + (\textrm{R2})] + [(\textrm{B}) + (\textrm{R3})],
  \end{split}
  \end{equation}
  where the terms (1) and (2) take the same forms as in \eqref{calculate dI dt}. They are further decomposed as (A)+(R1)+(R2) as in \eqref{estimation of term (1)} and (B)+(R3) as in \eqref{estimation of term (2)} respectively. 

  We first estimate the remainder terms (R1) and (R2) in \eqref{estimation of term (1)}. Instead of introducing the relative errors $\alpha, \beta_1, \beta_2$, we keep  the approximation errors in the present estimation. For (R1), we have
  \begin{equation}\label{est (R1)}
  \begin{split}
     (R1) \leq & \|\Pi_{\parUt^\perp}[\Uthetat]\|_{\mathbb{L}^2}\cdot \|\Phietaxit+\gamma\Pi_{\partial \Phietaxit}[\Uthetat - \blue{\nu}\Phietaxit]\|_{\mathbb{L}^2}\\
     \leq & \err(\mathbf U_{\theta_t} \!\! \mid \! \partial \mathbf U_{\theta_t}) 
     \cdot ((1+\gamma\blue{\nu})\|\Phietaxit\|_{\mathbb{L}^2} + \gamma \|\Uthetat\|_{\mathbb{L}^2})\\
     \leq & 2\;((1+\gamma\blue{\nu})\vee\gamma)\;\cdot\err(\mathbf U_{\theta_t} \!\! \mid \! \partial \mathbf U_{\theta_t}) \cdot \sqrt{\mathcal I(\Uthetat, \Phietaxit)}.
  \end{split}
  \end{equation}
  The first inequality is due to the Cauchy-Schwarz inequality and $\widetilde{\mathcal L}=\mathrm{Id}$.
  Here, we denote the approximation error 
  $\err(\mathbf U_\theta \mid \partial \mathbf U_\theta):=\|\Pi_{\partial \textbf{U}_\theta^\perp}[\textbf{U}_\theta]\|_{\mathbb{L}^2}$, which can be formulated as\footnote{\blue{For matrix $A\in \mathbb{R}^{d\times d},$ we denote the vector norm $\|\textbf{x}\|_{A}^2:=\textbf{x}^\top A \textbf{x}$ for $\textbf{x}\in\mathbb{R}^d$.}}
  \begin{align}\label{def: err paru to u}
    \err(\mathbf U_\theta \mid \partial \mathbf U_\theta)^2 = \min_{\zeta \in \mathbb{R}^{m_\theta}} \int_\Omega \|\nabla\langle \partial_\theta u_\theta(x), \zeta\rangle -\nabla(u_\theta-u_*)\|_{A(x)}^2 \, \dd x + \lambda \|\langle \partial_\theta u_\theta, \zeta\rangle - (u_\theta - u_*)\|_{\mathcal X}^2,
  \end{align}
  where we denote $\langle \partial_\theta u_\theta(x), \zeta\rangle = \sum_{k=1}^{m_\theta}\zeta_k \partial_{\theta_k}u_\theta(x)\in \parUt.$
  For the term (R2) in \eqref{estimation of term (1)}, we have
  \begin{equation}\label{est (R2)}
    \begin{split}
      (R2) & = \gamma \cdot \langle \Pi_{\parUt}[\Uthetat], \; \Pi_{\parPhit^\perp} [\Uthetat - \blue{\nu} \Phietaxit] \rangle_{\mathbb{L}^2} \\
      & = \gamma \cdot \langle \Pi_{\parUt}[\Uthetat], \; \Pi_{\parPhit^\perp} [\Uthetat] \rangle_{\mathbb{L}^2} - \gamma \blue{\nu} \cdot \langle \Pi_{\parUt}[\Uthetat], \; \Pi_{\parPhit^\perp} [\Phietaxit] \rangle_{\mathbb{L}^2}\\
      & \leq \gamma \cdot \|\Uthetat\|_{\mathbb{L}^2} \cdot \|\Pi_{\parPhit^\perp} [\Uthetat]\|_{\mathbbL} + \gamma\blue{\nu} \cdot \|\Uthetat\|_{\mathbbL} \cdot \|\Pi_{\parPhit}[\Phietaxit]\|_{\mathbbL}.
    \end{split}
  \end{equation}
  To estimate $\|\Pi_{\parPhit^\perp} [\Uthetat]\|_{\mathbbL}$ in the first term above, we have
  \begin{equation*}
    \|\Pi_{\parPhit^\perp} [\Uthetat]\|_{\mathbbL}^2 = \|\Pi_{\parvarphit^\perp} [\mathcal M_p (u_{\theta_t} - u_*)]\|_{L^2(\Omega; \mathbb{R}^d)}^2 + \|\Pi_{\parpsit^\perp}[u_\theta - u_*]\|_{\mathcal X}^2.
  \end{equation*}
  The estimation of $\|\Pi_{\parvarphit^\perp} [\mathcal M_p (u_{\theta_t} - u_*)]\|_{L^2(\Omega; \mathbb{R}^d)}^2$ requires more effort as it accounts for the approximation of using elements in $H_0^1(\Omega)$ to approximate the vector in $H^1(\Omega)$, thus yielding non-negligible discrepancy. To deal with this term, we can decompose
  \[ 
      \mathcal M_p (u_{\theta_t} - u_*) = \sqrt{A(\cdot)}\nabla(u_\theta - u_*) = \sqrt{A(\cdot)}(\nabla(u_\theta - u_*) - \nabla \widehat{\varphi}) + \sqrt{A(\cdot)} \nabla \widehat{\varphi}(\cdot).
  \]
  Here we denote $\widehat{\varphi} = \mathcal T(u_\theta-u_*)\in H_0^1(\Omega),$ with the operator $\mathcal T:H^1(\Omega)\rightarrow H_0^1(\Omega)$ defined in Lemma \ref{lemma: elliptic eq assoc variational problem}.

  Then we have 
  \begin{align} 
    \Pi_{\parvarphit^\perp} [\mathcal M_p(u_{\theta_t} - u_*)] = & \Pi_{\parvarphit^\perp}[\sqrt{A(\cdot)}(\nabla(u_\theta - u_*) - \nabla \widehat{\varphi})] + \Pi_{\parvarphit^\perp}[\sqrt{A(\cdot)}\nabla\widehat{\varphi}]\nonumber\\
    = & \sqrt{A(\cdot)}(\nabla(u_\theta - u_*) - \nabla \widehat{\varphi}) + \Pi_{\parvarphit^\perp}[\sqrt{A(\cdot)}\nabla\widehat{\varphi}].\label{decomposition of Pi_phi[Mp(u)]}
  \end{align}
  The second equality is due to Lemma \ref{lemma: elliptic eq assoc variational problem}, which asserts 
  that $\sqrt{A(\cdot)}(\nabla(u_\theta - u_*) - \nabla \widehat{\varphi})$ is orthogonal to each $\partial_{\eta_k} \mathcal M_p \varphi_{\eta_t}\in \parvarphit$ for $k=1, \dots, m_\eta$. 
  
  Furthermore, we have
  \begin{align*}  
    \Pi_{\parvarphit^\perp}[\sqrt{A(\cdot)}\nabla\widehat{\varphi}] = &  (\mathrm{Id}-\Pi_{\parvarphit})[\sqrt{A(\cdot)}\nabla\widehat{\varphi}]\\
    = & \sqrt{A(\cdot)}\nabla\widehat{\varphi} - \Pi_{\parvarphit}[\sqrt{A(\cdot)}\nabla\widehat{\varphi}]  \in\{\sqrt{A(\cdot)}\nabla \phi\,|\,\phi\in H_0^1(\Omega)\}.
  \end{align*}
  Thus, the two vectors in \eqref{decomposition of Pi_phi[Mp(u)]} are orthogonal to each other in $L^2(\Omega; \mathbb{R}^d)$ thanks to Lemma \ref{lemma: elliptic eq assoc variational problem}. We can then compute
  \begin{equation*}
      \|\Pi_{\parvarphit^\perp}[\mathcal M_p(u_{\theta_t}-u_*)]\|_{L^2(\Omega;\mathbb{R}^d)}^2 = |(u_{\theta_t} - u_*) - \widehat{\varphi}|_{H^1(\Omega, A)}^2 + \|\Pi_{\parvarphit^\perp}[\sqrt{A(\cdot)}\nabla\widehat{\varphi}]\|_{L^2(\Omega; \mathbb{R}^d)}^2.
  \end{equation*}
  Here we denote the seminorm $|\cdot|_{H^1(\Omega, A)}$ as defined in \eqref{def: seminorm dot H1(Omega, A)}. Lemma \ref{lemma: bounding the proj norm with H12 norm} leads to the estimation 
  \[ |(u_{\theta_t} - u_*) - \widehat{\varphi}|_{H^1(\Omega, A)} \leq \sqrt{\overline{A}}\|\mathcal{B} [u_{\theta_t} - u_*]\|_{\star} \leq \frac{\sqrt{\overline{A}}}{c_\Omega} \|u_{\theta_t} - u_*\|_{H^{1/2}(\partial \Omega)}. \] 
  The second inequality is due to Theorem \ref{char of H12 norm inducing from H1}.

  On the other hand, we denote 
  \begin{equation} \label{def: err parphi to u}
    \err(u_{\theta_t} \!\! \mid \!\parvarphit):=\|\Pi_{\parvarphit^\perp}[\sqrt{A(\cdot)}\nabla\widehat{\varphi}]\|_{L^2(\Omega; \mathbb{R}^d)} = \inf_{\zeta\in\mathbb{R}^{m_\eta}} \, |\langle \partial_\eta \varphi_{\eta_t}, \zeta \rangle - \widehat{\varphi}|_{H^1(\Omega, A)},
  \end{equation}
  with $\langle \partial_\eta \varphi_\eta(x), \zeta\rangle = \sum_{k=1}^{m_\eta}\zeta_k \partial_{\eta_k}\varphi_\eta(x)\in \parvarphit$.
  And analogously, 
  \begin{equation} 
    \err(\mathcal{B} u_{\theta_t} \!\! \mid \! \parpsit):= \|\Pi_{\parpsit^\perp}[u_{\theta_t} - u_*]\|_{\mathcal X} = \inf_{\zeta\in\mathbb{R}^{m_\xi}} \, \| \langle \partial_\xi \psi_{\xi_t}, \zeta \rangle - (u_{\theta_t} - u_*) \|_{\mathcal X},  \label{def: err bdd psi to u}
  \end{equation}
  with $\langle \partial_\xi \psi_\xi(x), \zeta\rangle = \sum_{k=1}^{m_\xi}\zeta_k \partial_{\xi_k}\psi_\xi(x)\in \parpsit$. 
  
  As a result, we can bound $\|\Pi_{\parPhit^\perp} [\Uthetat]\|_{\mathbbL}^2$ as
  \begin{equation} 
    \|\Pi_{\parPhit^\perp} [\Uthetat]\|_{\mathbbL}^2 \leq \left(\frac{\sqrt{\overline{A}}}{c_\Omega}\right)^2\|u_{\theta_t} - u_*\|^2_{H^{1/2}(\partial \Omega)} + \err(u_{\theta_t} \!\! \mid \!\parvarphit)^2 + \lambda \, \err(\mathcal{B} u_{\theta_t} \!\! \mid \! \parpsit)^2.   \label{bound Pi_Phi_U}
  \end{equation}

  We define
  \begin{align}
    \textrm{err}(\varphietat\!\!\mid\!\parvarphit) = &  \min_{\zeta\in \mathbb{R}^{m_\eta}} |\langle \partial_\eta \varphi_{\eta_t}, \zeta \rangle - \varphi_{\eta_t}|_{H^1(\Omega, A)}.
    \label{def: err parphi to phi} \\
    \textrm{err}(\psixit\!\!\mid\!\parpsit) = &  \min_{\zeta\in\mathbb{R}^{m_\xi}} \|\langle \partial_\xi \psi_{\xi_t }, \zeta \rangle - \psi_{\xi_t}\|_{\mathcal X}. \label{def: err parpsi to psi}
  \end{align}
  Then, $\|\Pi_{\parPhit}[\Phietaxit]\|_{\mathbb{L}^2}^2 = \textrm{err}(\varphietat\!\!\mid\!\parvarphit)^2 + \lambda \, \textrm{err}(\psixit\!\!\mid\!\parpsit)^2.$

  As a result, the remainder term (R2) in \eqref{est (R2)} can be bounded by
  \begin{equation}\label{final est (R2)}
  \begin{split}
    (R2) \leq \sqrt{2} \gamma (1+\blue{\nu}) \|\Uthetat\|_{\mathbbL}
    \cdot \Big( & \left(\frac{\sqrt{\overline{A}}}{c_\Omega}\right)^2\|u_{\theta_t} - u_*\|^2_{H^{1/2}(\partial \Omega)} + \err(u_{\theta_t} \!\! \mid \!\parvarphit)^2 \\ 
    & + \lambda \, \err(\mathcal{B} u_{\theta_t} \!\! \mid \! \parpsit)^2 + \textrm{err}(\varphietat\!\!\mid\!\parvarphit)^2 + \lambda \, \textrm{err}(\psixit\!\!\mid\!\parpsit)^2 \Big)^{\frac12}.
  \end{split}
  \end{equation}

  The remainder term (R3) in \eqref{estimation of term (2)} can be bounded using
  \begin{equation}\label{est (R3)}
  \begin{split}
    (R3) \leq &  \|\Pi_{\parPhit}[\Phietaxit]\|_{\mathbbL} \cdot \|\Uthetat - \blue{\nu} \Phietaxit\|_{\mathbbL} \\
    \leq & (\textrm{err}(\varphietat\!\!\mid\!\parvarphit)^2 + \lambda \, \textrm{err}(\psixit\!\!\mid\!\parpsit)^2 )^{\frac12}  \cdot  (\|\Uthetat\|_{\mathbbL} + \blue{\nu} \| \Phietaxit\|_{\mathbbL}) \\
    \leq & \frac{1 \vee \blue{\nu}}{2} (\textrm{err}(\varphietat\!\!\mid\!\parvarphit)^2 + \lambda \, \textrm{err}(\psixit\!\!\mid\!\parpsit)^2 )^{\frac12} \cdot \sqrt{\mathcal I(\Uthetat, \Phietaxit)}.
  \end{split}
  \end{equation}

  We have now established estimations for the remainder terms (R1), (R2), (R3) in \eqref{decompose dI/dt for est}. The term (A)+(B) can be estimated using the same technique presented in \eqref{est (A)+(B)}. However, before estimating (A)+(B), recall that $(\textrm{A}) = -\gamma\|\Uthetat\|_{\mathbbL}^2 - (1-\gamma\blue{\nu})\langle \Uthetat, \Phietaxit  \rangle_{\mathbbL},$
  we shall separate (A) as
  \[ (\textrm{A}) = \underbrace{-\frac{\gamma}{2}\|\Uthetat\|_{\mathbbL}^2 - (1-\gamma\blue{\nu})\langle \Uthetat, \Phietaxit  \rangle_{\mathbbL}}_{(\textrm{A'})} - \frac{\gamma}{2}\|\Uthetat\|_{\mathbbL}^2, \]
  where the $- \frac{\gamma}{2}\|\Uthetat\|_{\mathbbL}^2$ term will be used to offset the boundary term $\|u_{\theta_t} - u_*\|_{H^{1/2}(\partial \Omega)}$ arising in the estimation \eqref{final est (R2)} of (R2).
  
  Recall that $\widetilde{\mathcal L}=\mathrm{Id}$, hence $L_1=L_0=1,$ we have
  \begin{equation}\label{est (A')+(B)}
  \begin{split}
    (\textrm{A'})+(\textrm{B}) = \left[\|\ut\|_{{ \mathbb{L}^2 }}, \|\boldsymbol{\Phi}_{\eta_t, \xi_t}\|_{{ \mathbb{L}^2 }}\right] \left[ \begin{array}{cc}
     -\frac{\gamma}{2} & \frac{\gamma\blue{\nu}}{2}\\
     \frac{\gamma\blue{\nu}}{2} & - \blue{\nu} 
     \end{array} \right] \left[\begin{array}{c}
           \|\ut\|_{{ \mathbb{L}^2 }} \\
           \|\boldsymbol{\Phi}_{\eta_t, \xi_t }\|_{{ \mathbb{L}^2 }}
     \end{array}\right] \leq &  -r\cdot(\|\Uthetat\|_{\mathbbL}^2 + \|\Phietaxit\|_{\mathbbL}^2) \\
     = &  -2r\cdot \mathcal I(\Uthetat, \Phietaxit).  
 \end{split}
 \end{equation}
 \vspace{-0.1cm}
 Here we denote the larger eigenvalue of the above $2\times 2$ matrix as $-r$ with
 \begin{equation}
   r = \frac{\gamma + 2\blue{\nu}}{4} - \sqrt{\left(\frac{\gamma + 2\blue{\nu}}{4}\right)^2 - \left(\frac{\gamma\blue{\nu}}{2} - \frac{(\gamma\blue{\nu})^2}{4}\right)}.  \label{new convergence rate}
  \end{equation}
  \vspace{-0.1cm}
  One can verify that $r>0$ as long as $\gamma \blue{\nu}< 2.$ Furthermore, $r > \frac{\frac{\gamma\blue{\nu}}{2} - \frac{(\gamma\blue{\nu})^2}{4}}{2 (\frac{\gamma + 2\blue{\nu}}{4}) } = \frac12\cdot\frac{\gamma\blue{\nu}(2-\gamma\blue{\nu})}{\gamma+2\blue{\nu}}.$
  
  Finally, we combine our estimations \eqref{est (A')+(B)}, \eqref{est (R1)}, \eqref{est (R2)}, and \eqref{est (R3)} together to obtain
  \begin{equation}\label{total est}
    \begin{split}
      \frac{d}{dt}\mathcal I_t = &  [(\textrm{A'}) + (\textrm{B})] - \frac{\gamma}{2}\|\Uthetat\|_{\mathbbL}^2 + (\textrm{R1}) + (\textrm{R2}) + (\textrm{R3}) \\
      \leq & - 2r\cdot \mathcal I_t - \frac{\gamma}{2}\|\Uthetat\|_{\mathbbL}^2  \\
      & + 2\;((1+\gamma\blue{\nu})\vee\gamma)\;\cdot\err(\mathbf U_{\theta_t} \!\! \mid \! \partial \mathbf U_{\theta_t}) \cdot \sqrt{\mathcal I_t} \\
      & + \sqrt{2} \gamma (1+\blue{\nu}) \|\Uthetat\|_{\mathbbL}
        \cdot \Big( \left(\frac{\sqrt{\overline{A}}}{c_\Omega}\right)^2\|u_{\theta_t} - u_*\|^2_{H^{1/2}(\partial \Omega)} + \err(u_{\theta_t} \!\! \mid \!\parvarphit)^2 \\
      & \quad \quad \quad \quad \quad  + \lambda\,\err(\mathcal{B} u_{\theta_t} \!\! \mid \! \parpsit)^2 + \textrm{err}(\varphietat\!\!\mid\!\parvarphit)^2 + \lambda\,\textrm{err}(\psixit\!\!\mid\!\parpsit)^2 \Big)^{\frac12}\\
      & + \frac{1 \vee \blue{\nu}}{2} (\textrm{err}(\varphietat\!\!\mid\!\parvarphit)^2 + \lambda\,\textrm{err}(\psixit\!\!\mid\!\parpsit)^2 )^{\frac12} \cdot \sqrt{\mathcal I_t}\\
      \leq &  - 2r\cdot \mathcal I_t + \textrm{Err}(\theta_t, \eta_t, \xi_t, \gamma, \nu, \lambda) \cdot \sqrt{\mathcal I_t}  \\
      & - \frac{\gamma}{2}\|\Uthetat\|_{\mathbbL}^2 + \sqrt{2} \gamma (1+\blue{\nu}) \|\Uthetat\|_{\mathbbL} \cdot \frac{\sqrt{\overline{A}}}{c_\Omega}\|u_{\theta_t} - u_*\|_{H^{1/2}(\partial \Omega)},
    \end{split}
  \end{equation}
  where we denote $\mathcal I_t = \mathcal I(\Uthetat, \Phietaxit)$ for brevity. For the second inequality, we use the fact that $\|\Uthetat\|_{\mathbbL}\leq \sqrt{\mathcal I_t}$, and $(a^2+b^2)^{\frac12}\leq |a|+|b|$ for any $a, b\in\mathbb{R}$. The quantity $\textrm{Err}(\theta_t, \eta_t, \xi_t, \lambda)$ collects all the approximation errors
  \begin{equation}\label{def: Err(t)}
        \begin{split}
        \mathrm{Err}(\theta_t,\eta_t,\xi_t,\gamma,\nu,\lambda)
        = {} & 2\bigl((1+\gamma\blue{\nu})\vee\gamma\bigr)
              \cdot \err\!\bigl(\mathbf U_{\theta_t}\!\mid\!\partial\mathbf U_{\theta_t}\bigr)
        \\
        & + \sqrt{2}\,\gamma(1+\blue{\nu})
          \Bigl(
              \err\!\bigl(u_{\theta_t}\!\mid\!\parvarphit\bigr)^2
            + \lambda\,\err\!\bigl(\mathcal B u_{\theta_t}\!\mid\!\parpsit\bigr)^2
        \\[-0.2em]
        & \hspace{6.7em}
            + \err\!\bigl(\varphietat\!\mid\!\parvarphit\bigr)^2
            + \lambda\,\err\!\bigl(\psixit\!\mid\!\parpsit\bigr)^2
          \Bigr)^{\frac12}
        \\
        & + \frac{1\vee\blue{\nu}}{2}
          \Bigl(
              \err\!\bigl(\varphietat\!\mid\!\parvarphit\bigr)^2
            + \lambda\,\err\!\bigl(\psixit\!\mid\!\parpsit\bigr)^2
          \Bigr)^{\frac12}.
        \end{split}
  \end{equation}
  Recall that $\err(\mathbf U_{\theta_t} \!\! \mid \! \partial \mathbf U_{\theta_t}), \err(u_{\theta_t} \!\! \mid\! \parvarphit), \err(u_{\theta_t} \!\!\mid\! \parpsit), \textrm{err}(\varphietat\!\!\mid\!\parvarphit), \textrm{err}(\psixit\!\!\mid\!\parpsit)$ are defined in \eqref{def: err paru to u}, \eqref{def: err parphi to u}, \eqref{def: err bdd psi to u}, \eqref{def: err parphi to phi}, \eqref{def: err parpsi to psi} respectively. In the following discussion, we denote $\textrm{Err}(\theta_t, \eta_t, \xi_t, \gamma, \nu, \lambda)$ as $\textrm{Err}(\theta_t, \eta_t, \xi_t)$ for simplicity.
  
  The last terms in \eqref{total est} yield
  \begin{equation}\label{remaining bdd estimation}
      \begin{split}
          & - \frac{\gamma}{2}\|\Uthetat\|_{\mathbbL}^2 + \sqrt{2} \gamma (1+\blue{\nu}) \|\Uthetat\|_{\mathbbL} \cdot \frac{\sqrt{\overline{A}}}{c_\Omega}\|u_{\theta_t} - u_*\|_{H^{1/2}(\partial \Omega)} \\
          = & -\frac{\gamma\|\Uthetat\|_{\mathbbL}}{2}\left(\|\Uthetat\|_{\mathbbL} - 2\sqrt{2}\gamma(1+\blue{\nu})\frac{\sqrt{\overline{A}}}{c_\Omega}\|u_{\theta_t} - u_*\|_{H^{1/2}(\partial \Omega)}\right) \\
          \leq & -\frac{\gamma\|\Uthetat\|_{\mathbbL}}{2}\left(\sqrt{\lambda}\|u_{\theta_t} - u_*\|_{\mathcal X} - 2\sqrt{2}\gamma(1+\blue{\nu})\frac{\sqrt{\overline{A}}}{c_\Omega}\|u_{\theta_t} - u_*\|_{H^{1/2}(\partial \Omega)}\right).
      \end{split}
  \end{equation}
  Now recall that the boundary norm $\|\cdot\|_\mathcal X$ satisfies that for arbitrary $w\in H^1(\Omega),$
  \[ \|\mathcal B w\|_{\mathcal X} \geq C_\mathcal X\|\mathcal B w\|_{H^{1/2}(\partial \Omega)}, \]
  together with fact that
  \[ 
        \sqrt{\lambda} \geq \frac{2\sqrt{2}\gamma(1+\blue{\nu})\sqrt{\overline{A}}}{ C_\mathcal X \cdot c_\Omega},
  \]
  we know the quantity \eqref{remaining bdd estimation} $\leq 0$.

  In conclusion, \eqref{total est} yields 
  \[ \frac{d\mathcal I_t}{dt} \leq -2r\cdot \mathcal I_t + \textrm{Err}(\theta_t, \eta_t, \xi_t) \cdot \sqrt{\mathcal I_t}, \]
  which further leads to $\frac{d}{dt}(e^{2rt}\mathcal I_t)\leq e^{2rt}\cdot\textrm{Err}(\theta_t, \eta_t, \xi_t) \cdot \sqrt{\mathcal I_t}.$ 
  
  Now, denoting $\mathcal J_t = e^{2rt}\mathcal I_t$ yields 
  \[ \frac{d\mathcal J_t}{dt}\leq e^{rt}\cdot\textrm{Err}(\theta_t, \eta_t, \xi_t) \cdot \sqrt{\mathcal J_t}. \]
  As long as $\mathcal J_t > 0,$ we have
  \[ \frac{d}{dt}\sqrt{\mathcal J_t}\leq\frac{e^{rt}}{2}\textrm{Err}(\theta_t, \eta_t, \xi_t). \]
  Integration on the interval $[0, t]$ yields
  \begin{equation}\label{final bound}
    \mathcal I_t \leq  \left(e^{-rt}\sqrt{\mathcal I_0} + \int_0^t \frac{1}{2}e^{-r(t-\tau)} \textrm{Err}(\theta_\tau, \eta_\tau, \xi_\tau) \dd \tau \right)^2.
  \end{equation}
  This proves the result.

\end{proof}

\subsection{Proof of Corollary \ref{coro: a priori convergence analysis}}\label{append: coro proof}

\begin{proof}
  Notice that $u_\theta$ is the linear combination of basis functions $\{u_k\}_{k=1}^{m_\theta},$ 
  we have
  \begin{equation}
  \begin{split}
      \overline{A} \, \mathcal E_u  \geq & \inf_{\zeta\in\mathbb{R}^{m_\theta}} \int_\Omega \|\sum_{k=1}^{m_\theta}\zeta_k\nabla u_k - \nabla u_*\|_{A(x)}^2 \dd x + \lambda \|\sum_{k=1}^{m_\theta}\zeta_k\mathcal B u_k - g\|_\mathcal X^2 \\
      = & \inf_{\zeta\in\mathbb{R}^{m_\theta}} \int_\Omega \|\sum_{k=1}^{m_\theta}\zeta_k\nabla u_k - \nabla (u_\theta - u_*)\|_{A(x)}^2 \dd x + \lambda \|\sum_{k=1}^{m_\theta}\zeta_k\mathcal B u_k - (\mathcal B u_\theta - g)\|_\mathcal X^2\\
      \overset{\eqref{def: err paru to u}}{=} & \err(\mathbf U_{\theta_t} \!\! \mid \! \partial \mathbf U_{\theta_t})^2.
  \end{split} 
  \end{equation}
  This is 
  \begin{equation}
      \err(\mathbf U_{\theta_t} \!\! \mid \! \partial \mathbf U_{\theta_t})\leq \sqrt{\overline{A} \, \mathcal E_{u}}.
  \end{equation}
  Recall the approximation error $\err(u_{\theta_t}\!\! \mid\! \parvarphit)$ defined in \eqref{def: err parphi to u}. Notice that $\widehat{\varphi}=\mathcal T(u_{\theta_t} - u_*)$, as $\mathcal T$ is linear, this yields $\widehat{\varphi} = \sum_{k=1}^{m_\theta}\theta_k \mathcal T u_k - \mathcal T u_*$. We thus have
  \begin{equation*}
  \begin{split}
    \err(u_{\theta_t}\!\! \mid\! \parvarphit)^2 = & \inf_{\zeta\in\mathbb{R}^{m_\eta}} |\langle \partial_\eta \varphi_{\eta_t}, \zeta \rangle - \widehat{\varphi}|_{H^1(\Omega, A)}^2 \\
    = &  \inf_{\zeta\in\mathbb{R}^{m_\eta}} \int_\Omega \|\sum_{k=1}^{m_\eta}\zeta_k\nabla\varphi_k(x) - \left(\sum_{k=1}^{m_\theta}\theta_k \nabla \mathcal T(u_k)(x) - \nabla \mathcal T(u_*)(x)\right)\|^2_{A(x)} \dd x.
  \end{split}
  \end{equation*}
  Now, as we have assumed that $\mathrm{span}\{\mathcal T u_k\}\subseteq \mathrm{span}\{\varphi_k\},$ we obtain
  \begin{equation}
    \overline{A} \, \mathcal E_{\nabla\varphi} \geq \inf_{\zeta\in\mathbb{R}^{m_\eta}} \int_\Omega \|\sum_{k=1}^{m_\eta} \zeta_k \nabla\varphi_k(x) - \nabla \mathcal T(u_*)(x) \|^2_{A(x)}\dd x = \err(u_{\theta_t}\!\! \mid\! \parvarphit)^2,
  \end{equation}
  which leads to
  \begin{equation}
      \err(u_{\theta_t}\!\! \mid\! \parvarphit)\leq \sqrt{\overline{A} \, \mathcal E_{\nabla\varphi}}.
  \end{equation}
  Moreover, as $\mathrm{span}\{\mathcal B u_k\} \subseteq \mathrm{span}\{\psi_k\},$ we have 
  \begin{equation*}
    \err(u_{\theta_t} \!\!\mid\! \parpsit) = \sqrt{\mathcal E_\psi}.
  \end{equation*}
  Furthermore, as $\varphi_\theta \in \mathrm{span}\{\varphi_k\}, \psi_\xi \in \mathrm{span}\{\psi_k\}$, we obtain
  \begin{equation*}
      \textrm{err}(\varphietat\!\!\mid\!\parvarphit) = 0, \; \textrm{err}(\psixit\!\!\mid\!\parpsit) = 0.
  \end{equation*}
  Plugging these estimations into \eqref{refined thm bound} of Theorem \ref{thm: convergence analysis on elliptic PDE of divergence type} yields the result.
\end{proof}

}

{\color{black}
\subsection{Comparison with previous works}\label{apped: subsec_comp_prev_works}

The Lyapunov-based proof framework for the NPDG flow developed in this work is inspired by earlier studies in \citep{liu2022primal}, \citep{liu2024numerical}. Nevertheless, we shall clarify that there are several fundamental differences that distinguish the present theoretical analysis from these previous works.

First, the methods in \citep{liu2022primal, liu2024numerical} are formulated within a finite-difference setting, where the primal and dual variables \(u\) and \(\varphi\) are approximated by their values on discrete grid points. In contrast, our approach employs intact parametrized functions as computational models. This leads to substantially different convergence analyses: the former relies primarily on spectral estimations of the preconditioned matrices arising from spatial discretization, whereas the latter exploits the projection properties between primal and test functional spaces induced by natural gradient structures.

Moreover, in \citep{liu2022primal, liu2024numerical}, the discretized differential operator is incorporated into the preconditioner without splitting, namely, $\mathcal L = \mathcal M_d^*\widetilde{\mathcal L} \mathcal M_p$ with $\mathcal M_d = \mathrm{Id}, \widetilde{\mathcal L}=\mathrm{Id}, \mathcal M_p = \mathcal L,$ and the dual variable is allowed to vary freely without boundary constraints. By contrast, our framework accommodates general operator-splitting strategies based on integration by parts. This naturally enforces the choice of the test space \(\mathbb{K} = H_0^1(\Omega)\), leading to a more canonical formulation but also necessitating a more delicate treatment of boundary error term and a more refined convergence analysis as demonstrated in Theorem \ref{thm: convergence analysis on elliptic PDE of divergence type}.}

\section{Basic settings for the methods tested in Section \ref{sec: numerical examples }}

We provide the loss function, as well as the hyperparameters of the three methods PINN, Deep Ritz, and WAN tested in experiments in the following Table \ref{tab: setup for PINN DeepRitz WAN PDAdam}. In the following Table \ref{tab: real sol and norms}, we summarize the real solutions and their norms for equation \eqref{eq: Laplace}, \eqref{eq: VarCoeff} and \eqref{eq: nonlinear elliptic} tested in our experiments. 
{
\begin{table}[htb!]
{\tiny
\begin{center}
\begin{tabular}{|c|c|c|c|c|}
\hline
\multicolumn{2}{|c|}{}  &  PINN &  Deep Ritz    &  WAN/Primal-Dual using Adam  \\ \hline \hline 
\multirow{4}{*}{\begin{tabular}[c]{@{}c@{}} Poisson  \eqref{eq: Laplace} \\ ($d=50$) \end{tabular}}      &  \begin{tabular}[c]{@{}c@{}} loss \\ function \end{tabular}    &   \begin{tabular}[c]{@{}c@{}} $\int_\Omega |-\Delta u_\theta-f|^2\dd x$\\ $+\lambda \int_{\partial\Omega} |u_\theta-g|^2\dd\sigma$  \end{tabular}      &  \begin{tabular}[c]{@{}c@{}} $\int_\Omega \frac12\|\nabla u_\theta\|^2-fu_\theta \dd x$\\ $+\lambda \int_{\partial\Omega} |u_\theta-g|^2\dd\sigma$  \end{tabular}   & \begin{tabular}[c]{@{}c@{}} $\log\left( |\int_\Omega \nabla u_\theta\cdot\nabla\varphi_\eta - f \varphi_\eta \dd x |^2 \right) $\\ $ - \log \left( \int_\Omega  \varphi_\eta^2 \dd x \right) $  \\ $+\lambda \int_{\partial\Omega} |u_\theta-g|^2\dd\sigma$   \end{tabular}     \\ \cline{2-5} 
                    & $\lambda$ &      $10^4$   &  $10^4$       &   $10^4$  \\ \cline{2-5}
                    & $lr$ & $lr = 10^{-4}$         &  $lr = 10^{-4}$       &   \begin{tabular}[c]{@{}c@{}} $\tau_\theta = 0.5\cdot 10^{-3}$ \\  $\tau_\eta = 0.5 \cdot 10^{-2}$ \end{tabular}      \\ \cline{2-5} 
                    & $N_{iter}$ & \multicolumn{3}{c|}{\begin{tabular}[c]{@{}c@{}} Iterate till GPU time reaches $200$s ($d=10$)/$8000$s ($d=50$)  \\  \end{tabular} }\\ \cline{2-5}
                    &  $(N_{in}, N_{bdd})$ &      $(4000,80d)$   &  $(4000,80d)$       &  $(10000,60d)$ \\ \cline{2-5}
                    & NN & \multicolumn{3}{c|}{$u_\theta = \texttt{MLP}_{\mathrm{tanh}}(d, 256, 1, 6), \quad \varphi_\eta = \texttt{MLP}_{\mathrm{tanh}}(d, 256, 1, 6)\cdot \zeta$}\\  \hline 
\multirow{5}{*}{\begin{tabular}[c]{@{}c@{}} VarCoeff \eqref{eq: VarCoeff} \\ ($d=10, 20, 50$)  \end{tabular}} &  \begin{tabular}[c]{@{}c@{}} loss\\ function \end{tabular}  &  \begin{tabular}[c]{@{}c@{}} $\int_\Omega |-\nabla\cdot(\kappa \nabla u_\theta)-f|^2\,\dd x$\\ $+\lambda \int_{\partial\Omega} |u_\theta-g|^2\,\dd\sigma$  \end{tabular}    & \begin{tabular}[c]{@{}c@{}} $\int_\Omega \kappa\|\nabla u_\theta\|^2\,\dd x$\\ $+\lambda \int_{\partial\Omega} |u_\theta-g|^2\, \dd\sigma$       \end{tabular}      &  \begin{tabular}[c]{@{}c@{}} $\log\left(|\int_\Omega \kappa \nabla u_\theta\cdot\nabla \varphi_\eta\,\dd x|^2\right)$\\ $-\log\left(\int_\Omega \varphi_\eta^2\,\dd x\right)$ \\ $+\lambda \int_{\partial\Omega} |u_\theta-g|^2\,\dd\sigma$    \end{tabular}  \\ \cline{2-5} 
                          & $\lambda$ &      $10^4$   &  $10^3$       &   $10^4$  \\ \cline{2-5}
                          &  $lr$      &     $lr=10^{-4}$  &   $lr=0.5\cdot 10^{-3}$ 
                          & \begin{tabular}[c]{@{}c@{}} 
                          $(d=10)$ \begin{tabular}[c]{@{}c@{}} $\tau_\theta=0.5\cdot 10^{-2}$\\ $\tau_\eta=0.5\cdot 10^{-1}$ \end{tabular} 
                          \\ 
                          $(d=20, 50)$ \begin{tabular}[c]{@{}c@{}} $\tau_\theta=0.5\cdot 10^{-3}$\\ $\tau_\eta=0.5\cdot 10^{-2}$ \end{tabular}    \end{tabular} 
                          
                          \\ \cline{2-5}
                          &  \begin{tabular}[c]{@{}c@{}} $N_{iter}$  
                          \end{tabular}  &  \multicolumn{3}{c|}{Iterate till GPU time reaches $500$s ($d=10$)/$1500$s ($d=20$) }  \\  \cline{3-5}
                          &  \begin{tabular}[c]{@{}c@{}}   \end{tabular}  &  14000 ($d=50$)  &  10000 ($d=50$) &  12000 ($d=50$) \\ \cline{2-5} 
                           &  $(N_{in}, N_{bdd})$ &      $(4000,80d)$     &  $(4000,80d)$       &   $(4000,80d)$ 
                          \\ \cline{2-5}
                          &  NN  &  \multicolumn{3}{c|}{ 
                          \begin{tabular}{c} 
                          $u_\theta = \texttt{MLP}_{\mathrm{softplus}}(d, 256, 1, 4), \quad \varphi_\eta = \texttt{MLP}_{\mathrm{softplus}}(d, 256, 1, 4)\cdot \zeta$ for $d=10,20$\\
                          $u_\theta = \texttt{MLP}_{\mathrm{softplus}}(d, 256, 1, 6), \quad \varphi_\eta = \texttt{MLP}_{\mathrm{softplus}}(d, 256, 1, 6)\cdot \zeta$ for $d=50$
                          \end{tabular}
                          }  \\    \hline   
\multirow{5}{*}{\begin{tabular}[c]{@{}c@{}} Nonlinear Elliptic \\ \eqref{eq: nonlinear elliptic} $d=5$  \end{tabular} } &  \begin{tabular}[c]{@{}c@{}} loss\\ function \end{tabular}  & \begin{tabular}[c]{@{}c@{}} $\int_\Omega |\frac12 \|\nabla u_\theta\|^2 + V - \Delta u_\theta|^2\dd x$\\ $+\lambda \int_{\partial\Omega} u_\theta^2\dd\sigma$  \end{tabular} &  N.A.  &  \begin{tabular}[c]{@{}c@{}} $\log(|\int_\Omega \nabla u_\theta \cdot \nabla\varphi_\eta$ \\  $+  \frac12 \|\nabla u_\theta\|^2 \varphi_\eta + V \varphi_\eta \dd x|^2)$\\ $-\log\left( \int_\Omega \varphi_\eta^2 \dd x \right)$ \\ $ + \lambda \int_{\partial \Omega} u_\theta^2\dd\sigma $ \end{tabular}    \\ \cline{2-5}
                          & $\lambda$ & $10^4$ &  N.A.  &  $ 10^3 $  \\ \cline{2-5}
                          & $lr$ & $10^{-4}$ & N.A. & $0.5\cdot 10^{-3}, 0.5\cdot 10^{-2}$ \\ \cline{2-5}
                          &  $N_{iter}$  &  20000  &  N.A.  &  20000  \\    \cline{2-5}
                          &  $(N_{in}, N_{bdd})$ & $(4000, 40d)$ & N.A. & $(4000, 40d)$ \\ \cline{2-5}
                          & NN &  \multicolumn{3}{c|}{$u_\theta = \texttt{MLP}_{\tanh}(d, 256, 1, 4), \quad \varphi_\eta = \texttt{MLP}_{\tanh}(d, 256, 1, 4)\cdot \zeta$ } \\    \hline 
\end{tabular}
\end{center}
}
\caption{Loss functions and hyperparameters of the different methods tested in our experiments.}\label{tab: setup for PINN DeepRitz WAN PDAdam}
\end{table}}

\begin{table}[htb!]
\footnotesize
\centering
\renewcommand{\arraystretch}{1.2}
\setlength{\tabcolsep}{4pt}

\begin{tabular}{c|c|c|c|c}
\toprule
 & Domain $\Omega$ & Solution $u_*$
 & $\|u_*\|_{L^2(\Omega, \mu)}$
 & $\|\nabla u_*\|_{L^2(\Omega, \mu)}$ \\
\midrule

\makecell[c]{Poisson\\(50)}
& \makecell[c]{$[0,1]^d$\\ $|\Omega|=1$}
& $\sum_{k=1}^d \sin\!\bigl(\tfrac{\pi}{2}x_k\bigr)$
& \makecell[l]{$5d:\;3.2566$\\ $10d:\;6.4402$\\ $20d:\;12.8066$\\ $50d:\;31.9052$}
& \makecell[l]{$5d:\;2.4836$\\ $10d:\;3.5124$\\ $20d:\;4.9673$\\ $50d:\;7.8539$}
\\
\midrule

\makecell[c]{VarCoeff\\(51)}
& \makecell[c]{$[-1,1]^d$\\ $|\Omega|=2^d$}
& \makecell[c]{$\tfrac12 x^\top\Lambda^{-1}x$\\ $\lambda_0=1,\;\lambda_1=4$}
& \makecell[l]{$10d:\;1.0969$\\ $20d:\;2.1392$\\ $50d:\;5.2647$}
& \makecell[l]{$10d:\;1.4434$\\ $20d:\;2.0412$\\ $50d:\;3.2275$}
\\
\midrule

\makecell[c]{Nonlinear\\Elliptic\\(53)}
& \makecell[c]{$B_{d,3}$\\ $|\Omega|=\dfrac{\pi^{d/2}3^d}{\Gamma(\frac d2+1)}$}
& $\cos\!\bigl(\tfrac{\pi}{2}\|x\|\bigr)$
& $5d:\;0.6285$
& $5d:\;1.2218$
\\

\bottomrule
\end{tabular}

\caption{Solutions and their $L^2(\Omega, \mu)$ norms used for benchmarking.}
\label{tab: real sol and norms}
\end{table}

\section{Comparison among different methods}\label{append: compare among diff methods}
In the following Table \ref{tab: GPU time to accuracy }, we test four different methods with various step sizes on different equations. The step sizes used for each method are summarized below.
\begin{itemize}
    \item \textbf{NPDG} ($\tau_u, \tau_\varphi, \tau_\psi$): A.($1.5\cdot 10^{-1}, 1.5\cdot 10^{-1}, 1.5\cdot 10^{-1}$), B.($10^{-1}, 10^{-1}, 10^{-1}$), C.($0.5\cdot 10^{-1}, 0.95\cdot 10^{-1}, 0.95\cdot 10^{-1}$), D.($0.5\cdot 10^{-1}, 0.5\cdot 10^{-1}, 0.5\cdot 10^{-1}$); \\
    We fix $tol_{\textrm{MINRES}}=10^{-3}$ for $d=5, 10, 20$, and $tol_{\textrm{MINRES}}=10^{-4}$ for $d=50$.
    \item \textbf{PINN(Adam)} ($lr$): A.($0.5\cdot 10^{-2}$) B.($10^{-3}$)	C.($0.5\cdot 10^{-3}$) D.($10^{-4}$) E.($0.5\cdot 10^{-4}$);
    \item \textbf{DeepRitz} ($lr$): A.($0.5\cdot 10^{-2}$) B.($10^{-3}$)	C.($0.5\cdot 10^{-3}$) D.($10^{-4}$) E.($0.5\cdot 10^{-4}$);
    \item \textbf{WAN} ($\tau_\theta, \tau_\eta$): A.($0.5\cdot 10^{-2}, 0.5 \cdot 10^{-1}$), B.($10^{-3}, 10^{-2}$), C.($0.5\cdot 10^{-3}, 0.5 \cdot 10^{-2}$), D.($10^{-4}, 10^{-3}$), E.($0.5\cdot 10^{-4}, 0.5\cdot 10^{-3}$).
\end{itemize}
We record the computation time (seconds) spent by each method to achieve accuracy $\delta$ in Table \ref{tab: GPU time to accuracy }, we only present the time for the most efficient step size(s). \blue{When applying NPDG to VarCoeff, we adopt the $H^1(\partial \Omega, \mu_{\partial \Omega})$ boundary loss as described in Section \ref{subsec: varcoeff}.}

\begin{table}[htb!]
\label{tab: GPU time comparison PINN DeepRitz WAN}
{\small
\begin{center}
\begin{tabular}{|c|c|c|c|c|c|c|}
\hline
           Equ     & $\delta^*$ &  $d$     &  PINN(Adam) &  Deep Ritz    &  WAN  &{NPDG} \\ \hline \hline
\multirow{4}{*}{Poisson}  & \multirow{4}{*}{$0.005$}    &  5D  &    $26.22$ {\tiny(A)}    &  $\textbf{25.11}$ {\tiny (A)}   &  $51.14$ {\tiny (B)}  &  $68.87$ {\tiny  (A)}  \\ \cline{3-7} 
                    &   &  10D  &   $44.83$ {\tiny (A)}    &  $43.45$ {\tiny (B)}   & $51.65$ \tiny{(C)}  &  $\textbf{40.98}$ \tiny{(B)} \\ \cline{3-7} 
                    &      & 20D &  $160.82$   {\tiny (C)}     &  $183.49$ {\tiny (B)}       &   $460.12$ {\tiny (D)}   &  $\textbf{110.42}$ {\tiny (B)}  \\ \cline{3-7}
                    &         &       50D       &  $1989.06$ {\tiny (C)}   &  $1452.29$ {\tiny (B)}    &   $2117.24$ {\tiny (D)}  &      \textbf{821.24} {\tiny (C)}        \\ \hline
\multirow{6}{*}{VarCoeff} & \multirow{3}{*}{$0.01$} & 10D &  --  &  $\textbf{105.2}$ {\tiny (C)} &   --  &  $\blue{151.80}$ {\tiny (C) }     \\ \cline{3-7}
                       &    & 20D &    --     &  $\textbf{228.55}$ {\tiny (C)}  &  --    &  $\blue{309.05}$ {\tiny (C)}  \\ \cline{3-7}
                       &    &  50D     &     ${774.70}$ {\tiny (D)}     & --       &    --    &  \blue{\textbf{419.15}} {\tiny (C)}  \\ \cline{2-7}
                       & \multirow{3}{*}{$0.005$} & 10D &    --      &   --     &   --    & $\blue{\textbf{231.49}}$ { \tiny (C) } \\ \cline{3-7}
                       &    & 20D &    --     & --       &   --   & $\blue{\textbf{382.59}}$ {\tiny (C)} \\ \cline{3-7}
                       &    &  50D     &   --  &  --   &  --  & $\blue{\textbf{674.18}}$ {\tiny (C)} \\ \hline
\multirow{2}{*}{Nonlinear Elliptic}  &  $0.1$   & 5D & $2805.92$ {\tiny (B)} &    N.A.  &  $1130.76$ {\tiny (C)} 
&  $\textbf{1086.35}$  {\tiny (C)}  \\   \cline{2-7}
                                     &  $0.05$   & 5D & -- & N.A. &  --  & $\textbf{1894.89  }  $ {\tiny (C)}  \\  \hline
\end{tabular}
\end{center}
}
\caption{GPU time (seconds) spent by different methods upon achieving the designated accuracy $\delta$. The uppercase letters inside each parenthesis indicate the optimal learning rate(s) used in the algorithm. We apply the Monte-Carlo method with 
sample size $10^5$ to evaluate the relative $L^2$ error of $u_\theta$. ``--'' denotes that the method does not achieve the designated accuracy in a given time.\\}\label{tab: GPU time to accuracy }
\end{table}

{\color{black}
\section{Algorithmic details for the Allen-Cahn equation}\label{append: AC}
In this section, we provide more details on the NPDG algorithm for the Allen-Cahn equation discussed in Section \ref{subsec: AllenCahn}. Given the time implicit scheme of the equation, our goal is to solve the semi-linear elliptic equation \eqref{AC equ to elliptic equ with cubic term} at each time step.

{\color{black}
\subsection{Handling Different Regimes of $\epsilon_0$}\label{append: AC diff epsilon_0}
When the positive diffusion coefficient $\epsilon_0$ is bounded away from $0$, equation \eqref{AC equ to elliptic equ with cubic term} is dominated by the linear Laplacian operator. We can further tame the nonlinear term $W'(u)=u^3-u$ by subtracting its linear approximation at the equilibrium state $\bar{u}=\pm 1$, i.e., we consider $R(u) = W'(u)-(W'(\bar{u}) + W''(\bar{u})(u-\bar{u}))$. One can verify $W''(\bar{u})=W''(\pm 1)=2.$ We then absorb the linear term $W''(\bar{u})u$ of $W'(\bar{u}) + W''(\bar{u})(u-\bar{u})$ to the linear portion of \eqref{AC equ to elliptic equ with cubic term} to obtain
\begin{equation*} 
  \underbrace{((1+\frac{h_t W''(\bar{u})}{\epsilon_0})\mathrm{Id} - h_t\epsilon_0\Delta)}_{\mathcal D}u + \frac{h_t}{\epsilon_0}R(u) = u^{t-1} - \underbrace{\frac{h_t}{\epsilon_0} (W'(\bar{u}) - W''(\bar{u})\bar{u})}_{\mathrm{Const}}.
\end{equation*}
\vspace{-0.14cm}
It is reasonable to precondition on the linear differential operator $\mathcal D$ for this equation. We introduce the operators
\begin{equation*}
  \mathcal M_p = \mathcal M_d : u \mapsto \left( \begin{array}{c}
       \sqrt{1+h_t W''(\bar{u})/\epsilon_0}  u  \\
       \sqrt{\epsilon_0 h_t} \nabla u
  \end{array} \right).
\end{equation*}
It is not difficult to verify that $\langle \mathcal M_p u, \mathcal M_d \varphi \rangle_{L^2} = \langle \mathcal D u, \varphi \rangle_{L^2}$ for arbitrary $\varphi\in H_0^1(\Omega)$. Thus, we introduce $\varphi\in H^1_0(\Omega), \psi\in {L^2(\partial \Omega)} $ for the equation and its boundary condition and design the loss functional 
\begin{equation*}
\begin{split}
    \mathscr{E}(u,\varphi, \psi|~u^{t-1}) = 
        & \int_\Omega (u-u^{t-1} + \frac{h_t}{\epsilon_0}W'(u))\varphi + \epsilon_0 h_t \nabla u \cdot \nabla \varphi\,\dd\mu(x) \\
        & - \frac{\blue{\nu}}{2} \left( \left(1-\frac{h_t}{\epsilon_0}W''(\bar{u})\right) \int_\Omega  \varphi^2\,\dd\mu(x) - \epsilon_0 h_t \int_\Omega \|\nabla \varphi\|^2 ~ \dd\mu(x) \right) \\
        & + \lambda \left( \int_{\partial \Omega} \frac{\partial u}{\partial \textbf{n} } \psi\,\dd\mu_{\partial \Omega}(y) - \frac{\blue{\nu}}{2} \int_{\partial \Omega} \psi^2\,\dd\mu_{\partial \Omega} \right).
\end{split}
\end{equation*}
In practice, we found that it makes the optimization more stable if we add the residual loss
\begin{equation*}
    \mathscr{E}_{\textrm{Res}}(u|u^{t-1}) = \int_{\Omega} \Big| u - u^{t-1} - \epsilon_0 h_t \Delta u + \frac{h_t}{\epsilon_0}W'(u) \Big|^2\,\dd\mu(x) + \lambda \int_{\partial \Omega} \Big| \frac{\partial u}{\partial \textbf{n}} \Big|^2 \,\dd\mu_{\partial \Omega},  
\end{equation*}
as a regularization term to $\mathscr{E}(u,\varphi, \psi)$, and consider
\[ \widetilde{\mathscr{E}}(u,\varphi, \psi\,|\,u^{t-1}) = \mathscr{E}(u,\varphi, \psi\,|\,u^{t-1}) + \mathscr{E}_{\textrm{Res}}(u,\varphi, \psi\,|\,u^{t-1} ). \] 
Correspondingly, the precondition matrices are set as 
\begin{equation}\label{def: precond matrices for AC large epsilon}
\begin{split}
  M_p(\theta) = &  \left(1+\frac{h_t W''(\bar{u})}{\epsilon_0} \right) \int_\Omega {\frac{\partial u_\theta}{\partial \theta}}^\top  \frac{\partial u_\theta}{\partial \theta}\,\dd\mu(x)+{h_t\epsilon_0} \int_\Omega \frac{\partial}{\partial \theta}(\nabla u_\theta)^\top  \frac{\partial}{\partial\theta}(\nabla u_\theta)\,\dd\mu(x)   \\
  &  +  \lambda  \int_{\partial \Omega} \frac{\partial}{\partial \theta}(\partial_{\textbf{n}} u_\theta)^\top \frac{\partial}{\partial \theta}(\partial_{\textbf{n}} u_\theta)\,\dd\mu_{\partial \Omega}(y),  
\end{split}
\end{equation}
\[ M_d(\eta) = {\left(1+\frac{h_t W''(\bar{u})}{\epsilon_0} \right)} \int_\Omega \frac{\partial \varphi_\eta}{\partial \eta}^\top  \frac{\partial \varphi_\eta}{\partial \eta}\,\dd\mu(x) + h_t\epsilon_0  \int_\Omega \frac{\partial}{\partial \eta}(\nabla \varphi_\eta)^\top \frac{\partial}{\partial\eta}(\nabla \varphi_\eta)\,\dd\mu(x), \]
\[ M_{bdd}(\xi) = {\lambda}  \int_{\partial \Omega}  {\frac{\partial}{\partial \xi} \psi_\xi}^\top \frac{\partial}{\partial \xi}\psi_\xi \, \dd\mu_{\partial \Omega}(y). \]
We apply this treatment to both the 1D and 2D examples in Section \ref{subsec: AllenCahn} with $\epsilon_0=0.1$.}

In the presence of strong reaction and weak diffusion, the parameter $\epsilon_0$ approaches $0$. Equation \eqref{AC equ to elliptic equ with cubic term} is dominated by the nonlinear term $\frac{1}{\epsilon_0}W'(u)$. Under such regime, we change our strategy and consider the test functions $\varphi\in L^2(\Omega), \psi\in L^2(\partial \Omega)$, together with the functional
\begin{equation}
\begin{split}
    \mathscr{E}(u, \varphi, \psi \, |\, u^{t-1}) = &  \int_\Omega (u-u^{t-1}-\epsilon_0h_t\Delta u+\frac{h_t}{\epsilon_0}W'(u))\varphi\,\dd\mu_\Omega(x) - \frac{\nu}{2}\int_\Omega\varphi^2\dd\mu_\Omega(x) \\
    & + \lambda \left( \int_{\partial \Omega} \frac{\partial u}{\partial \textbf{n} } \psi\,\dd\mu_{\partial \Omega}(y) - \frac{\blue{\nu}}{2} \int_{\partial \Omega} \psi^2\,\dd\mu_{\partial \Omega} \right)
\end{split}
\end{equation}
By dropping the Laplacian term in the Jacobian of $u-u^{t-1}-\epsilon_0h_t\Delta u+\frac{h_t}{\epsilon_0}W'(u)$, we approximate the Jacobian operator using $\mathcal G:=\mathrm{Id}+\frac{h_t}{\epsilon_0}W''(u):u\mapsto u + \frac{h_t}{\epsilon_0}W''(u)u.$ By incorporating $\mathcal G$ as the preconditioning operator, we obtain
\begin{equation}
  M_p(\theta) = \int_\Omega (1+\frac{h_t}{\epsilon_0}W''(u_\theta))^2\frac{\partial u_\theta}{\partial\theta}^\top\frac{\partial u_\theta}{\partial\theta}\,\dd \mu_\Omega(x) + \lambda  \int_{\partial \Omega} \frac{\partial}{\partial \theta}(\partial_{\textbf{n}} u_\theta)^\top \frac{\partial}{\partial \theta}(\partial_{\textbf{n}} u_\theta)\,\dd\mu_{\partial \Omega}(y).
\end{equation}
Meanwhile, the preconditioning matrices \( M_d(\eta) \) and \( M_{\mathrm{bdd}}(\xi) \) are derived by considering the identity operators on \( L^2(\Omega) \) and \( L^2(\partial \Omega) \). Consequently, $M_d(\eta)\;=\;\int_\Omega\frac{\partial \varphi_\eta}{\partial \eta}^{\!\top} \frac{\partial \varphi_\eta}{\partial \eta}\,\dd \mu_\Omega,$ while \( M_{\mathrm{bdd}}(\xi) \) remains the same as defined in \eqref{def: precond matrices for AC large epsilon}.

This treatment is adopted in the 1D example presented in
Section~\ref{subsec: AllenCahn} with $\epsilon_0 = 0.01$. An advantage of this treatment is that it avoids integration by parts, thereby allowing greater flexibility in the choice of the error-adaptive measure $\mu_\Omega$. In our implementation, we take $\mu_\Omega \;=\; \tfrac12\,\mathcal U[0,2] \;+\; \tfrac12\,\mathcal U[0.8,1.2],$ 
which places additional sampling weight near $x=1$ and thus enables a more accurate resolution of the solution profile in that region. Here, $\mathcal U[a,b]$ denotes the uniform distribution on the interval $[a,b]$.}

\subsection{Benchmark solution \& Comparison}\label{append: benchmark}
We use the numerical solution $\{U^k\}_{k=1}^{N_t}$ solved from the following time-implicit, finite difference scheme
\begin{align} 
   & \frac{U^k_i-U^{k-1}_i}{h_t} = \epsilon_0\frac{U^k_{i+1}-2U^k_{i}+U^k_{i-1}}{h_x^2} - \frac{1}{\epsilon_0}({U^k_i}^3 - U^k_i), \label{implicit, FD} \\
   & U_{-1}^k=U_0^k,~ U_{N_x+1}^k=U_{N_x}^k, \quad \forall ~0 \leq i \leq N_x, \quad \textrm{for }~ 1\leq k\leq N_t, \nonumber
\end{align}
as the benchmark for $u_\theta$ computed from the NPDG algorithm. In our computation, we set $N_x=400$, $h_x=2/N_x$, $U^0_i=u_0(\frac{2i}{N_x})$.

For the 2D example, in Figure \ref{fig: AC2}, we plot the graphs of the neural network solution $u_{\theta_k}$ together with the numerical solution $\{U_{ij}^k\}$ obtained via the time-implicit finite difference scheme. The semi-log curves for $\sqrt{\textrm{MSE loss}}$ versus training time are provided in Figure \ref{fig: AC2}. Further comparison plots and the heatmaps of the pointwise error $|u_{\theta_k}(\cdot)-U^k|$ are presented in Figure~\ref{fig: AC3}.
\begin{figure}[htb!]
    \centering
    \begin{subfigure}[b]{0.26\textwidth}
        \includegraphics[width=\textwidth]{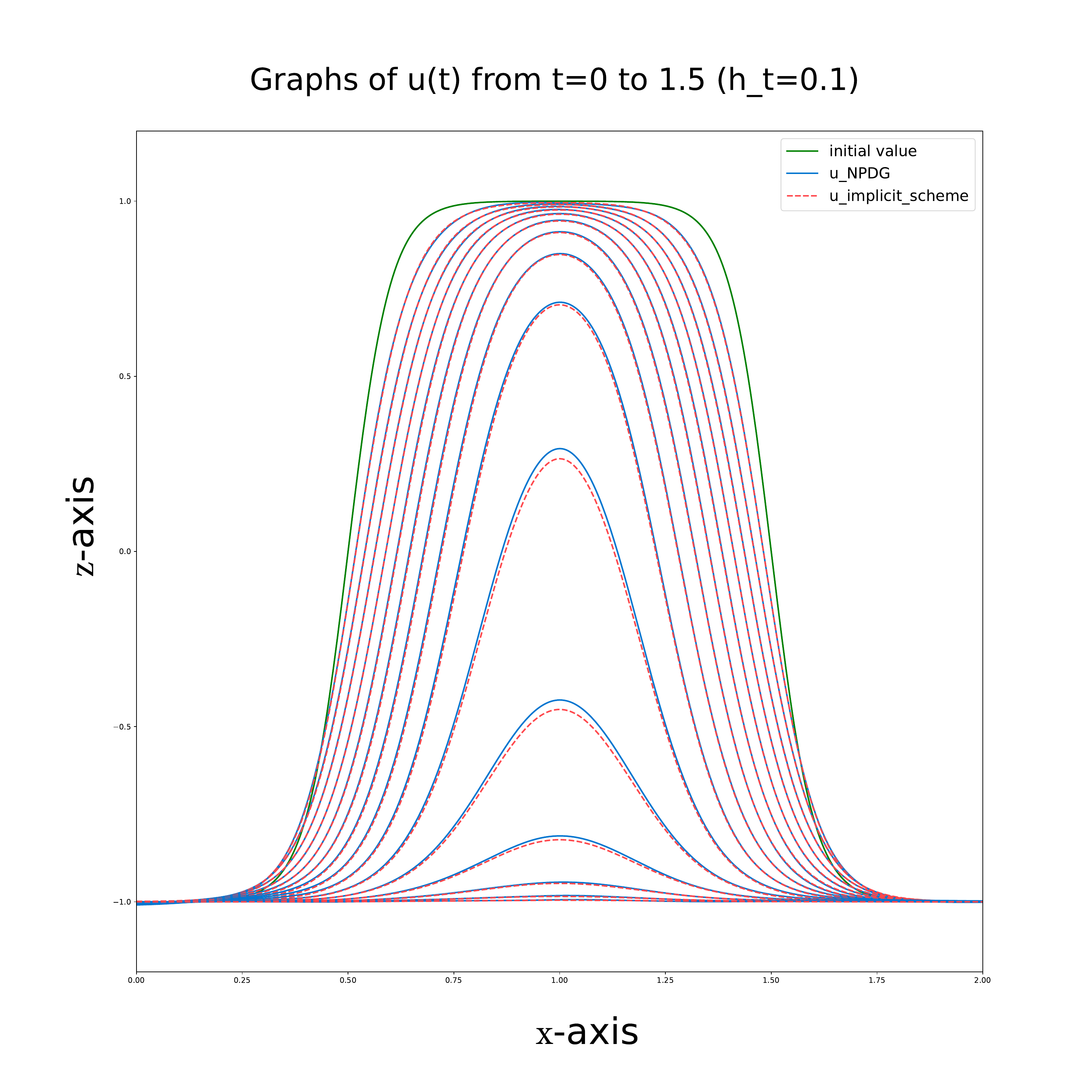}
        \caption{}\label{subfig: AC2 graphs on dim0}
    \end{subfigure}
        \begin{subfigure}[b]{0.26\textwidth}
        \includegraphics[width=\textwidth]{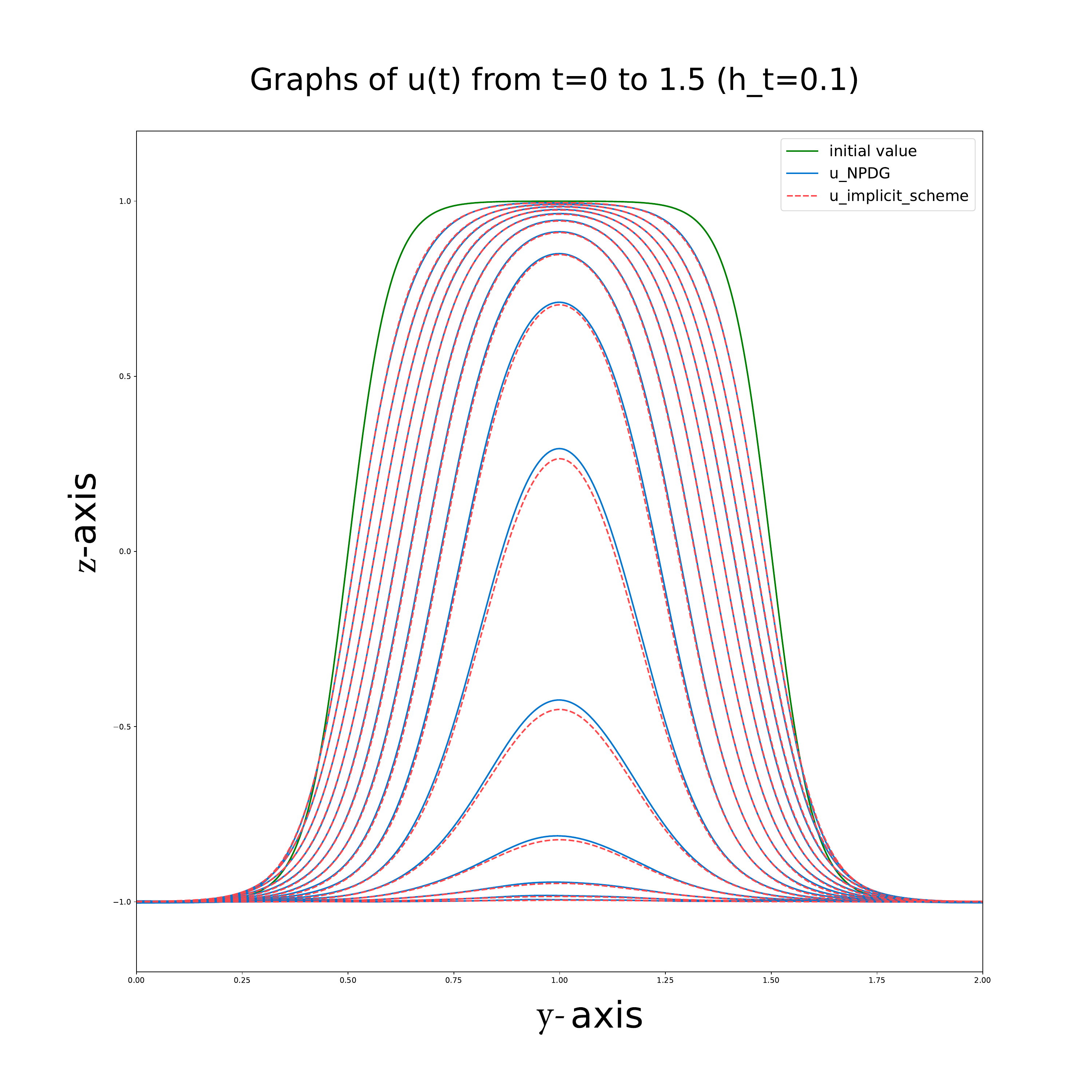}
        \caption{}\label{subfig: AC2 graphs on dim1}
    \end{subfigure}
        \begin{subfigure}[b]{0.26\textwidth}
        \includegraphics[width=\textwidth]{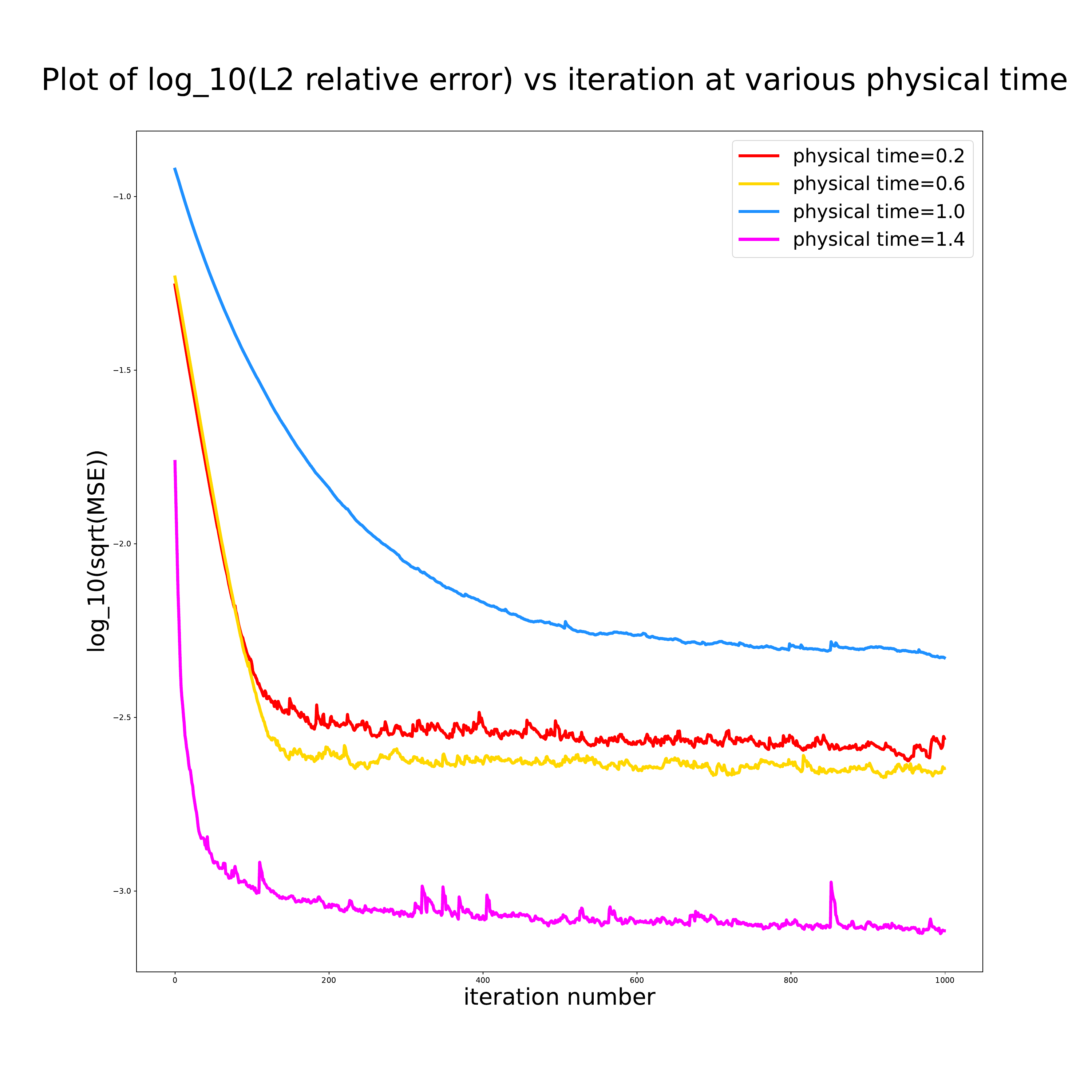}
        \caption{}\label{subfig: log MSE vs iteration}
    \end{subfigure}
    \caption{\ref{subfig: AC2 graphs on dim0} \& \ref{subfig: AC2 graphs on dim1}: Comparison of neural network solution $u_\theta(\cdot)$ (blue) and finite difference solution $U^k$ (red) along the $x$ and $y$ axis at time $t_k$, $1\leq k \leq 15$. \ref{subfig: log MSE vs iteration}: Semi-log plots of $\sqrt{\textrm{MSE loss}}$ vs computation time (seconds) at physical time $0.2, 0.6, 1.0, 1.4$. }
    \label{fig: AC2}
\end{figure}
\begin{figure}[htb!]
    \centering
    \begin{subfigure}[b]{0.18\textwidth}
        \includegraphics[trim={0 0 0 8cm},clip, width=\textwidth]{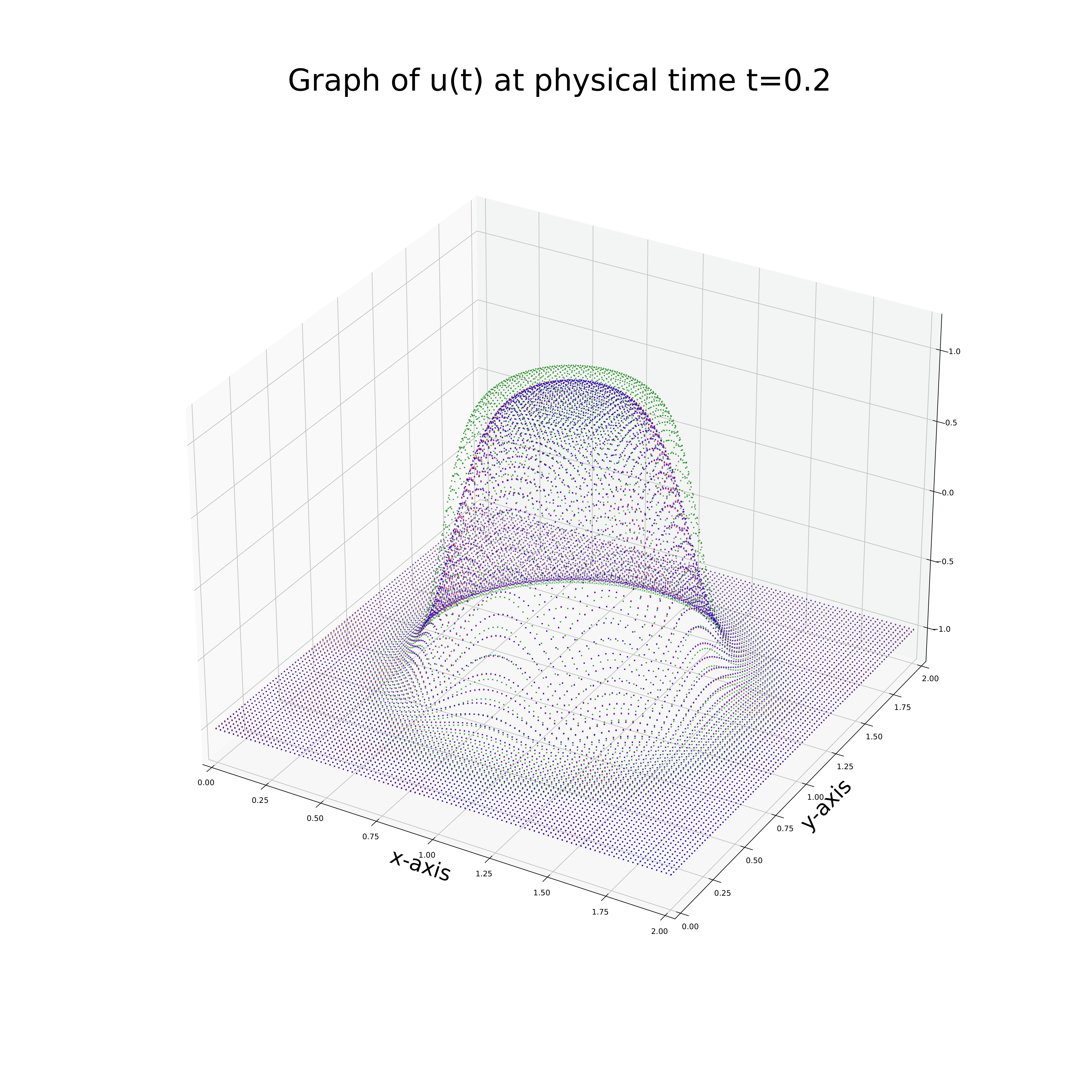}
    \end{subfigure}
        \begin{subfigure}[b]{0.18\textwidth}
        \includegraphics[trim={0 0 0 8cm},clip, width=\textwidth]{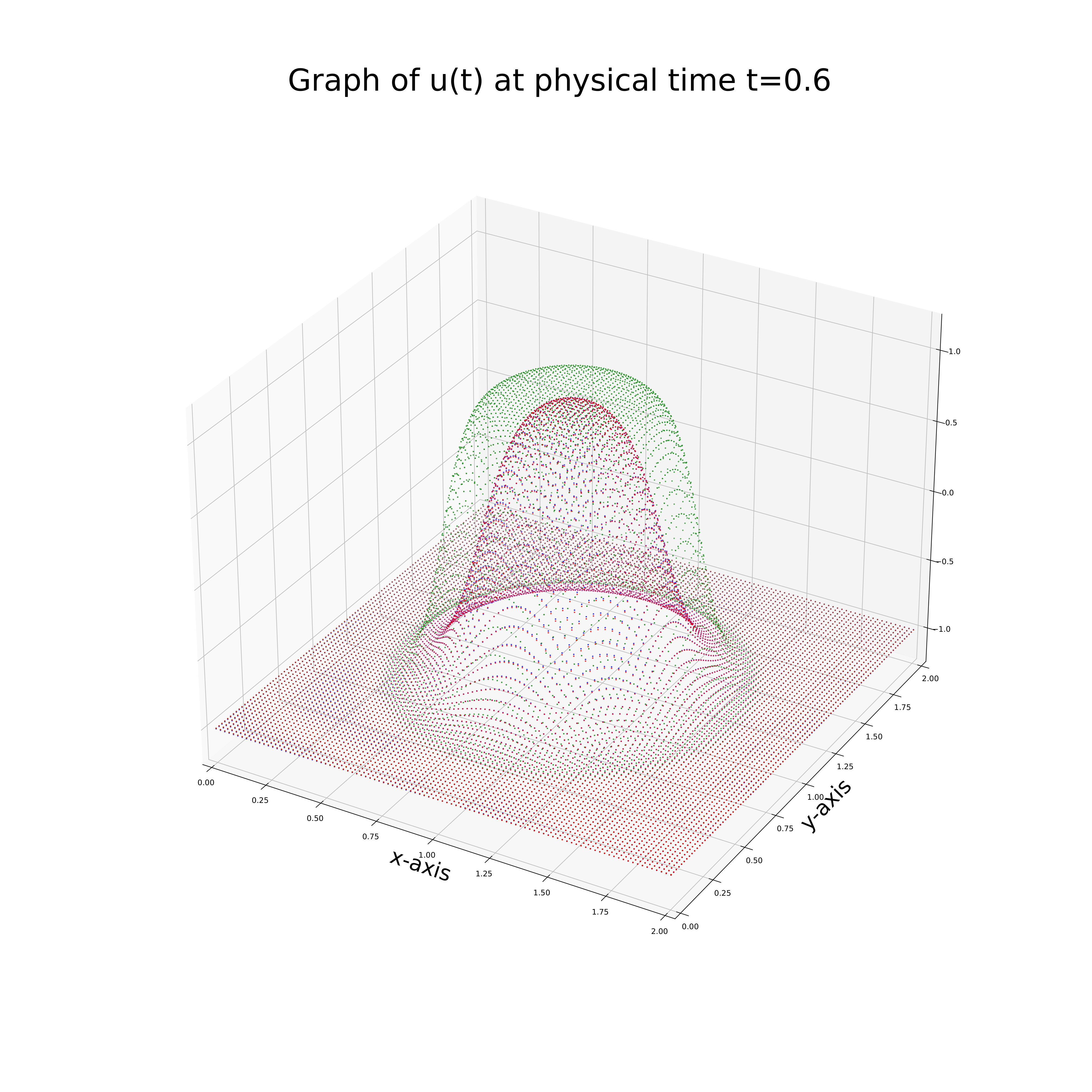}
    \end{subfigure}
        \begin{subfigure}[b]{0.18\textwidth}
        \includegraphics[trim={0 0 0 8cm},clip, width=\textwidth]{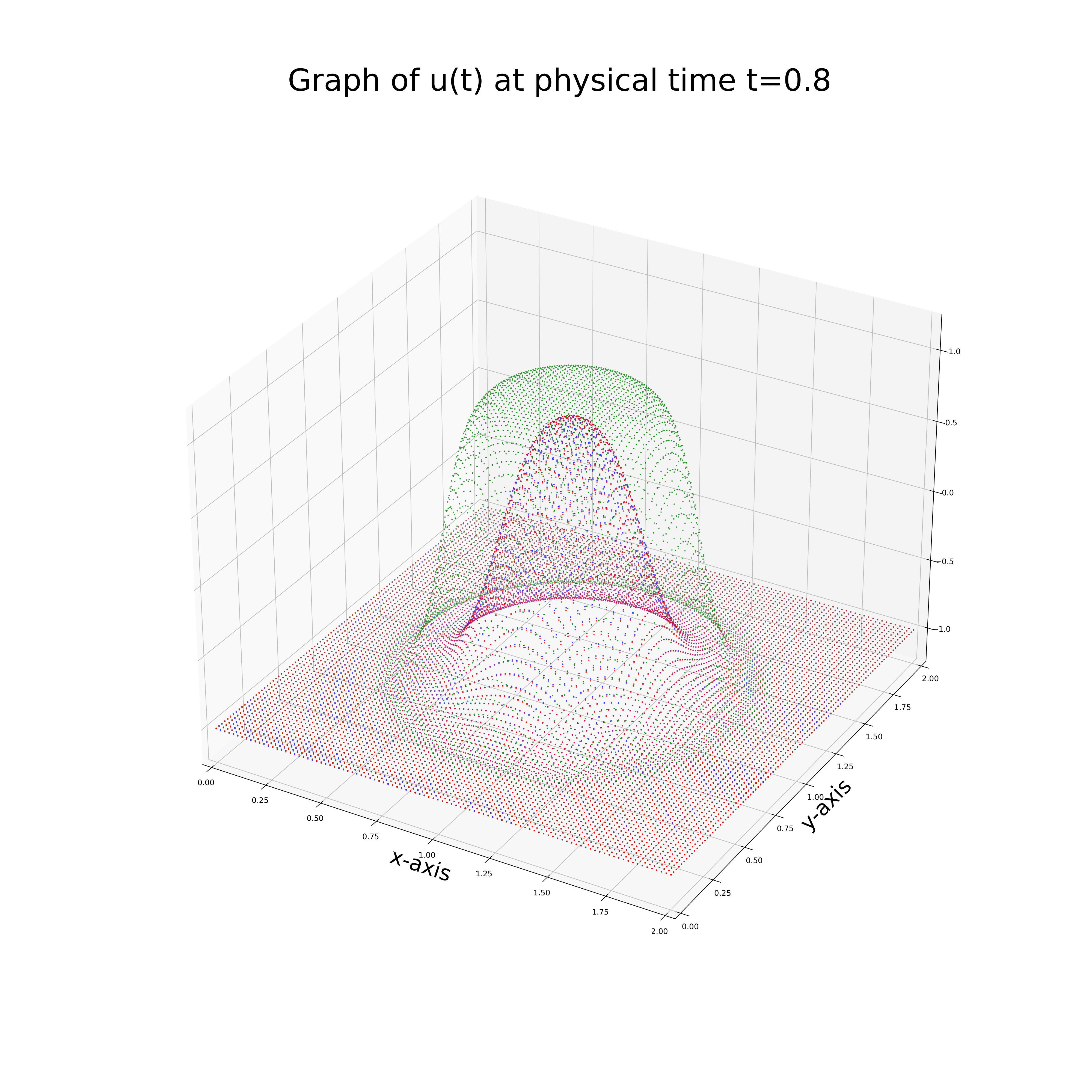}
    \end{subfigure}
        \begin{subfigure}[b]{0.18\textwidth}
        \includegraphics[trim={0 0 0 8cm},clip, width=\textwidth]{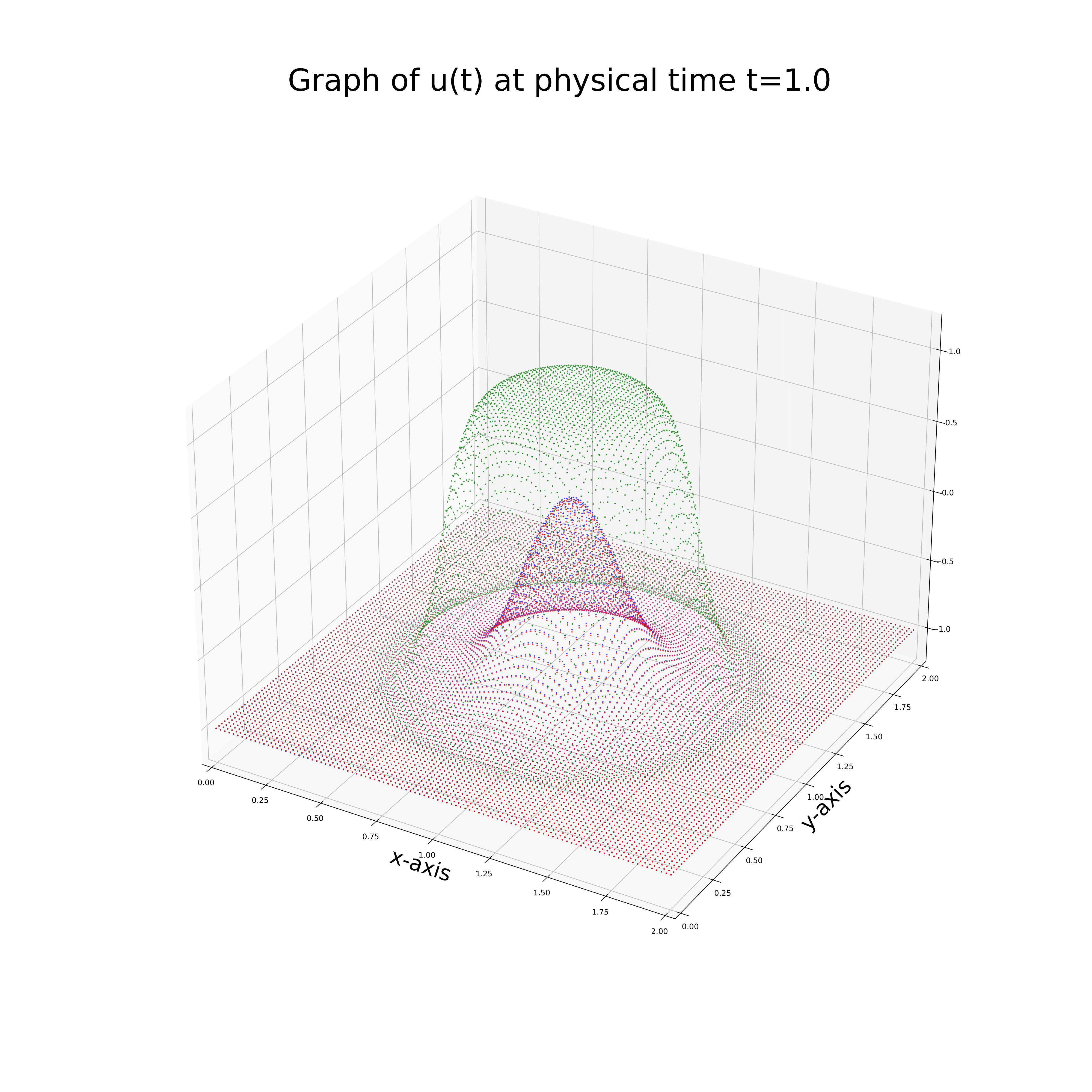}
    \end{subfigure}
        \begin{subfigure}[b]{0.18\textwidth}
        \includegraphics[trim={0 0 0 8cm},clip, width=\textwidth]{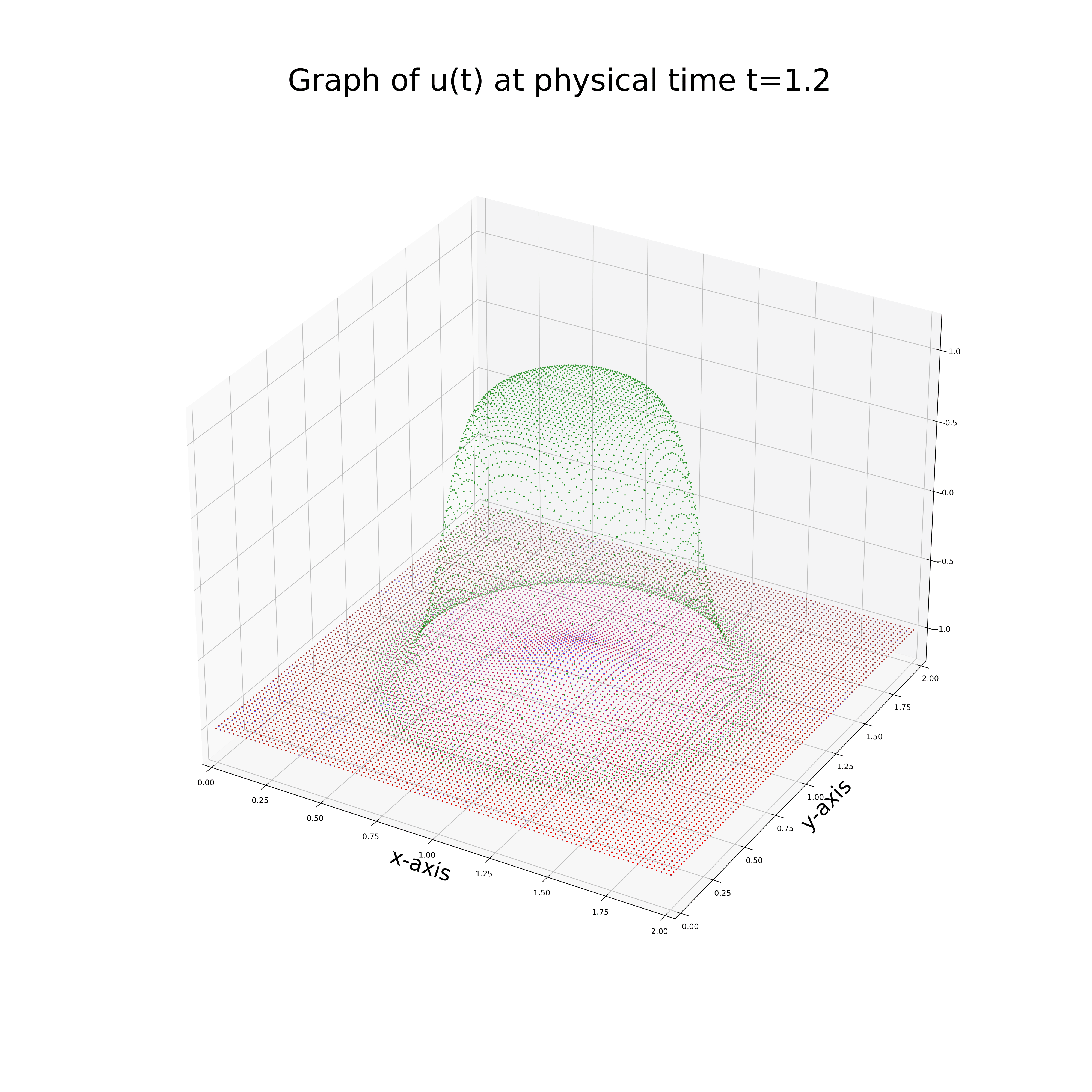}
    \end{subfigure}
    \begin{subfigure}[b]{0.18\textwidth}
        \includegraphics[trim={0 0 0 4.5cm},clip, width=\textwidth]{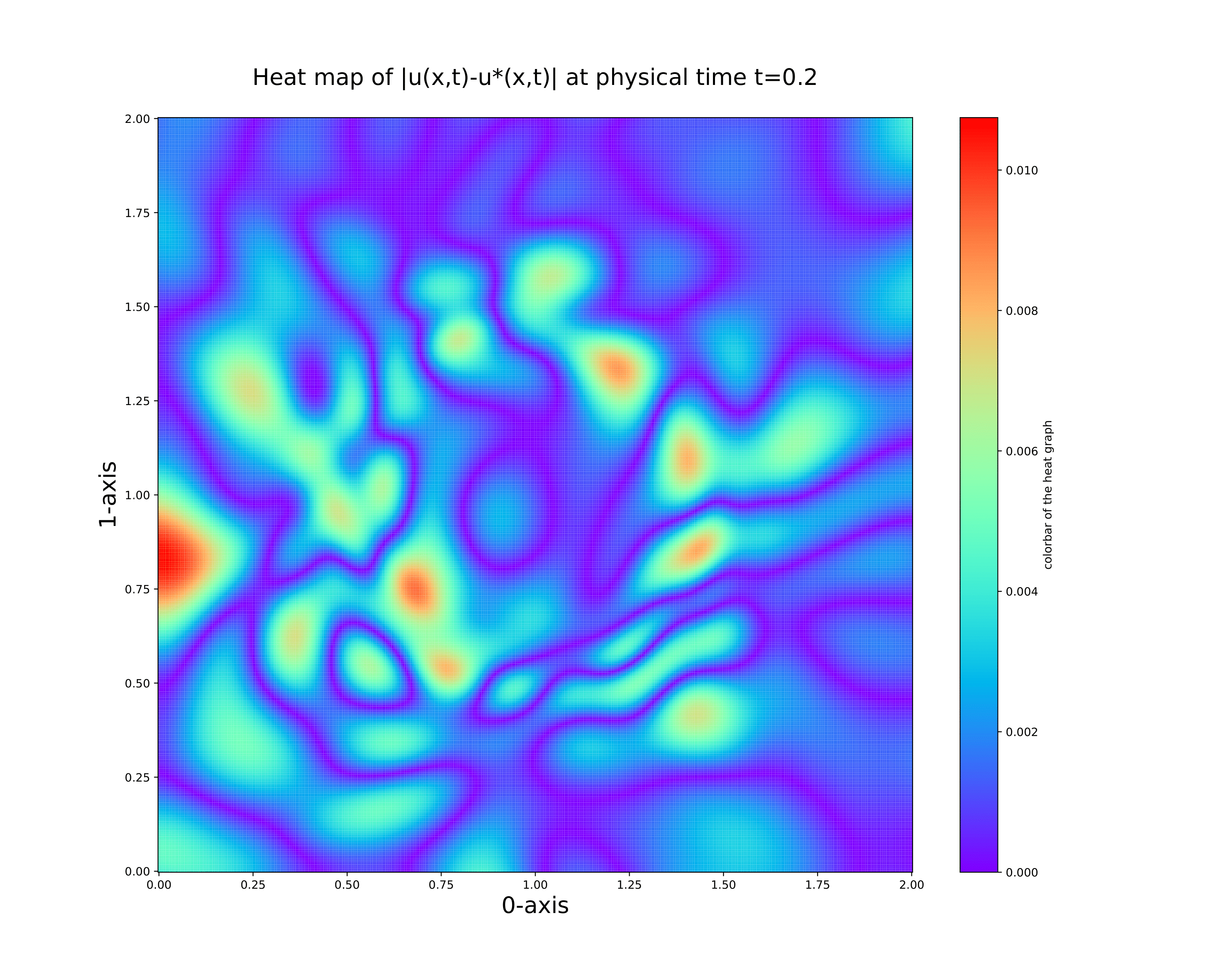}
    \end{subfigure}
        \begin{subfigure}[b]{0.18\textwidth}
        \includegraphics[trim={0 0 0 4.5cm},clip, width=\textwidth]{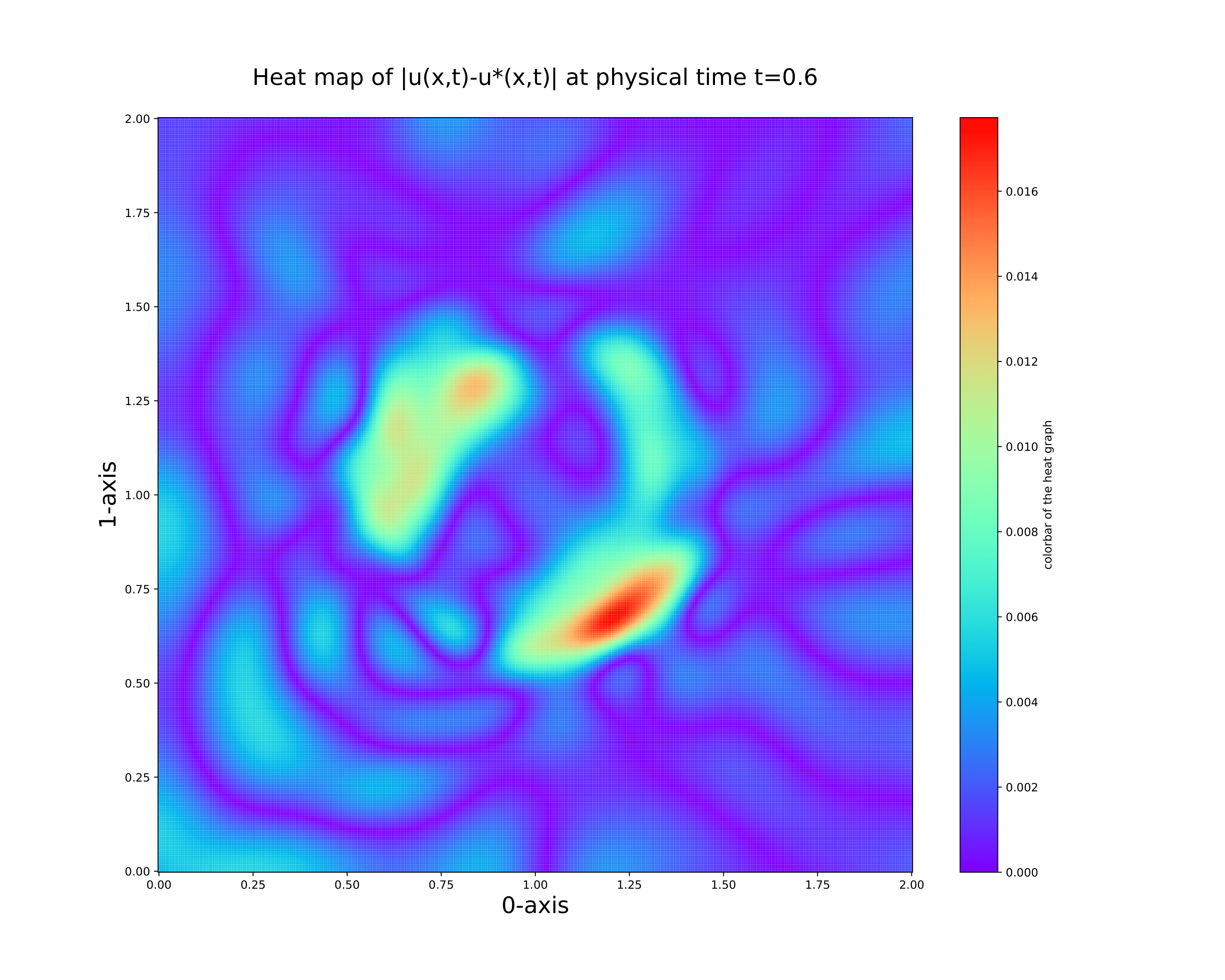}
    \end{subfigure}
        \begin{subfigure}[b]{0.18\textwidth}
        \includegraphics[trim={0 0 0 4.5cm},clip, width=\textwidth]{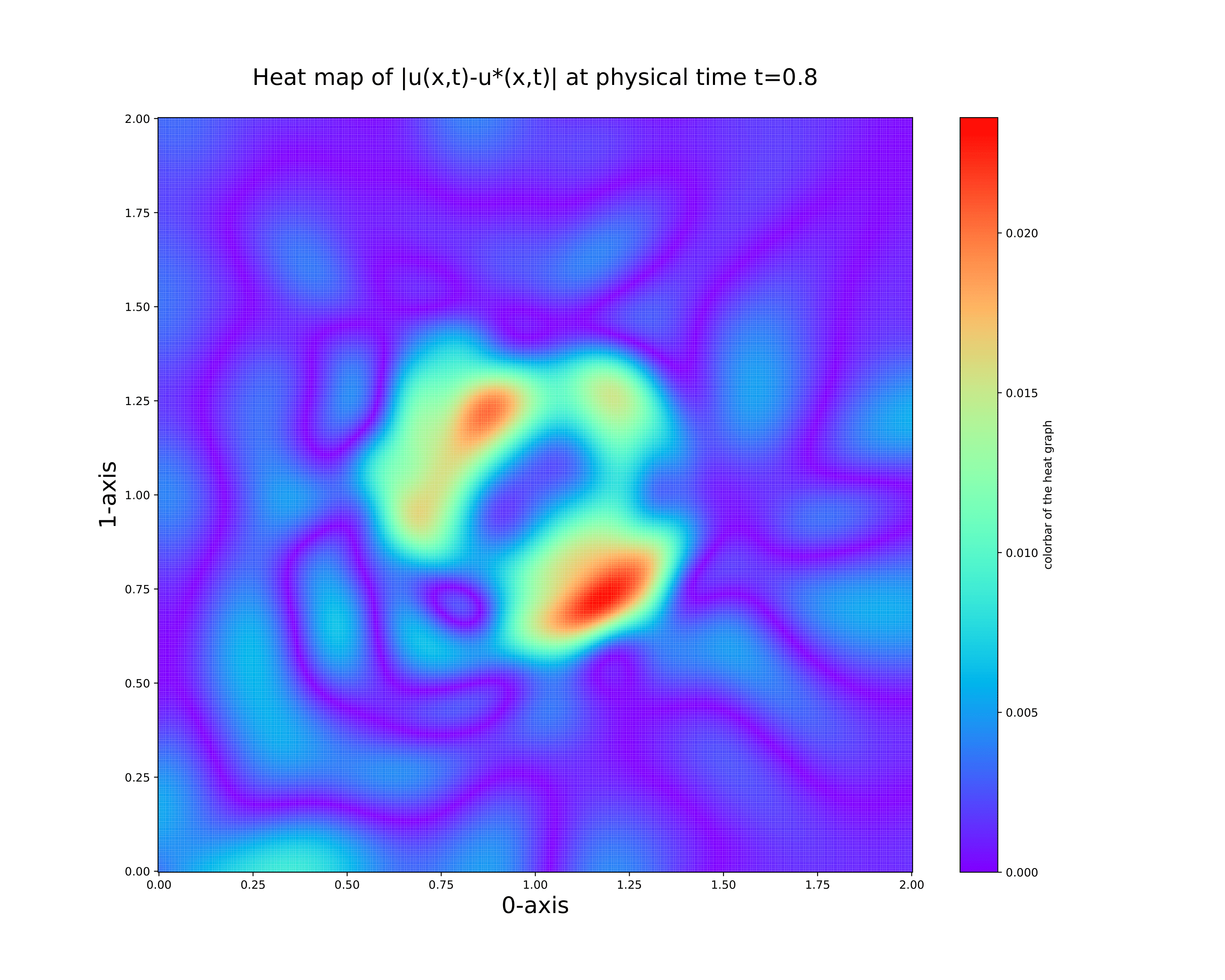}
    \end{subfigure}
        \begin{subfigure}[b]{0.18\textwidth}
        \includegraphics[trim={0 0 0 4.5cm},clip, width=\textwidth]{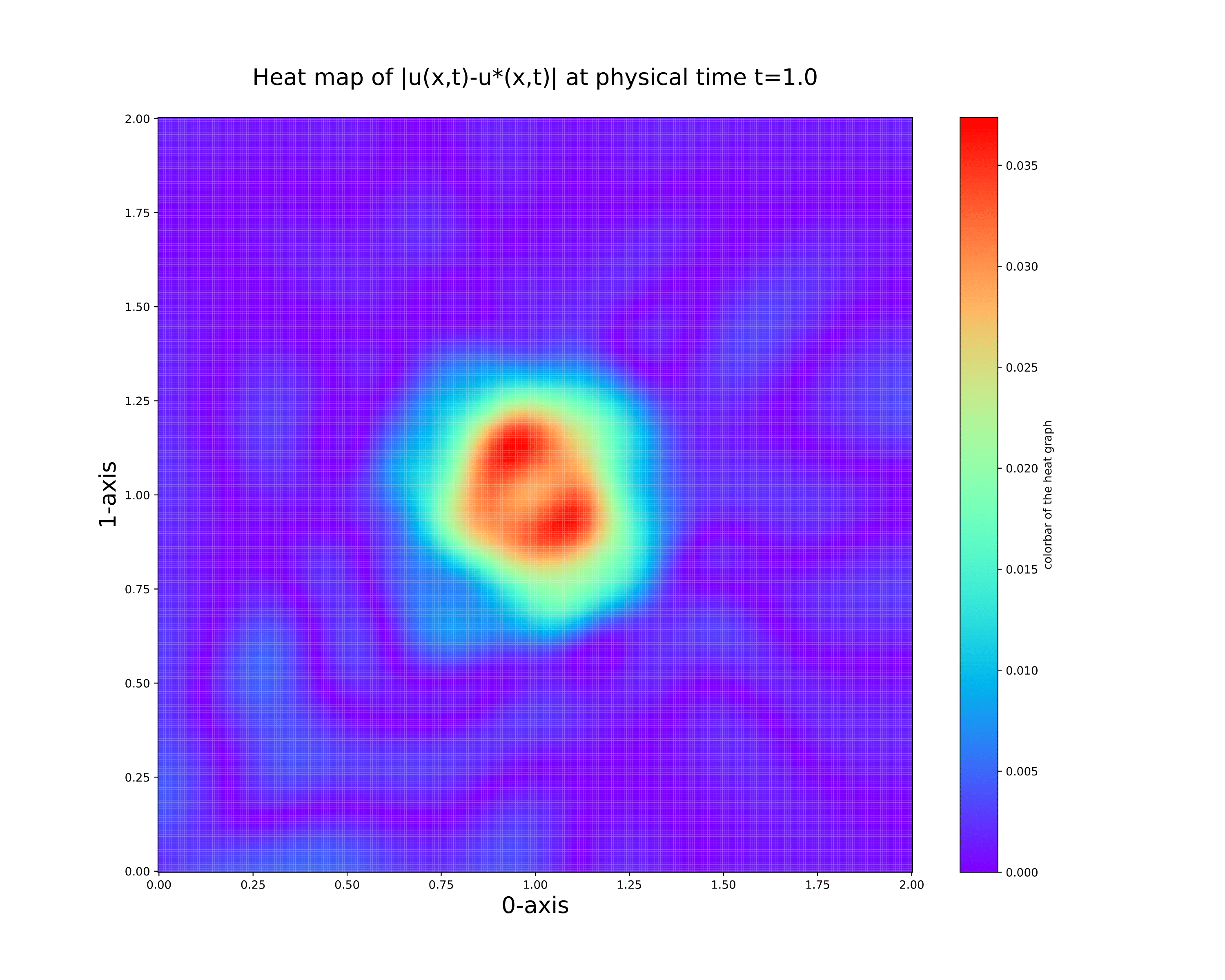}
    \end{subfigure}
        \begin{subfigure}[b]{0.18\textwidth}
        \includegraphics[trim={0 0 0 4.5cm},clip, width=\textwidth]{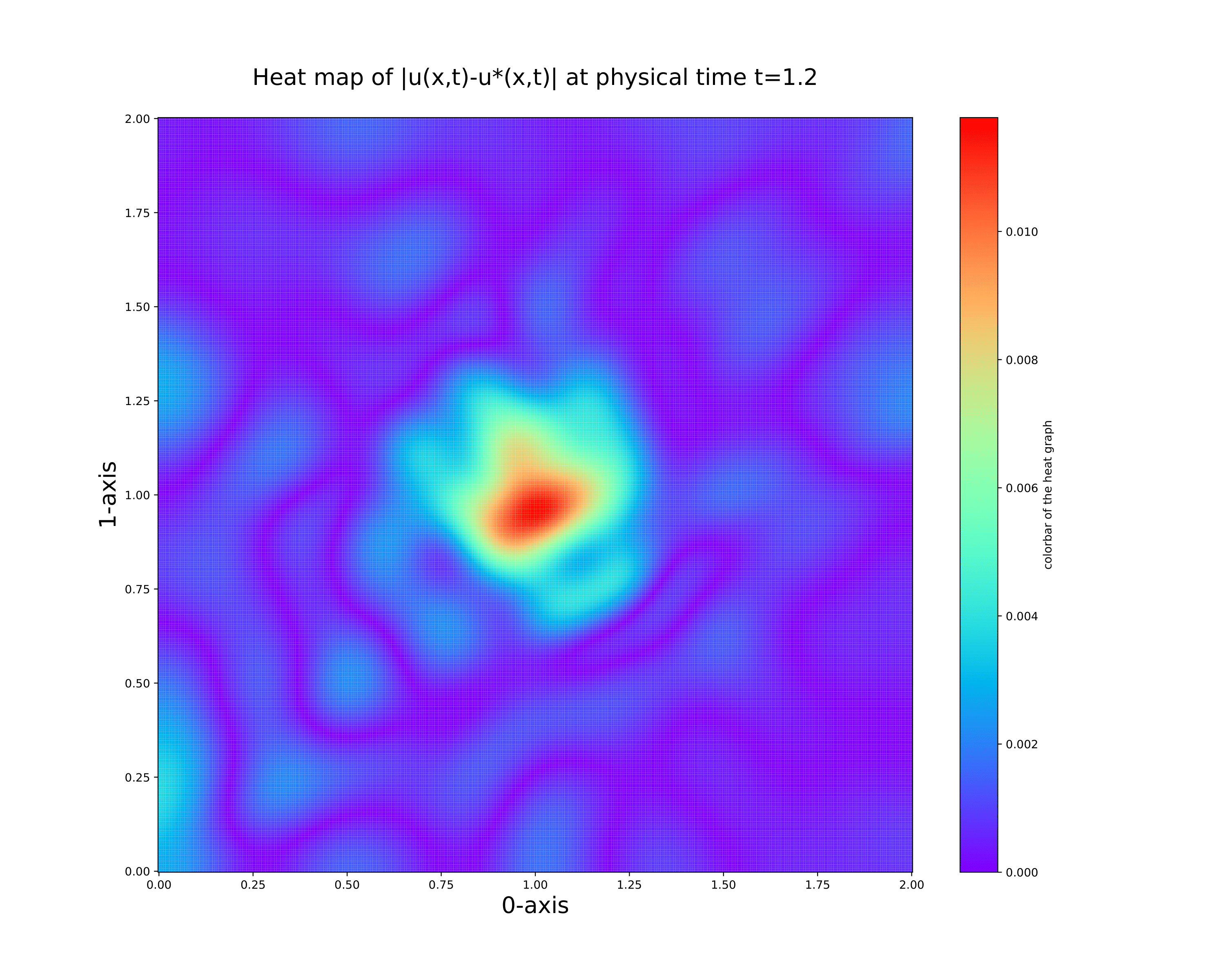}
    \end{subfigure}
    \caption{\textbf{Up row}: plots of $u_{\theta_k}$ (blue) together with the numerical solution $\{U_{ij}^k\}$ solved from implicit finite-difference scheme (red) on $\Omega$ at physical time $0.2, 0.6, 0.8, 1.0, 1.2$. The initial function $u_0$ is marked with green color; \textbf{Down row}: heatmaps of the error term $|u_{\theta_k}(\cdot)-U^k|$ at physical time $0.2, 0.6, 0.8, 1.0, 1.2$.  }
    \label{fig: AC3}
\end{figure}

{\color{black}
\begin{remark}
  Recall the semi-log plots of $\sqrt{\mathrm{MSE\; loss}}$ vs. training time for both 1D and 2D Allen Cahn equations presented in Figure \ref{subfig: AC semi-log MSE vs comptime} and \ref{subfig: log MSE vs iteration}, respectively. It is worth noting that in the 1D case, the accuracy of the numerical solution deteriorates as the physical time $t_k$ increases. This behavior can be attributed to the stationary profile of the equation: its highly localized and strongly curved geometry makes it increasingly difficult for the MLP to accurately approximate. In contrast, the stationary profile associated with the 2D equation collapses to a flat plane, making it fairly easy for the MLP to approximate. Thus, we observe increasing accuracy as $t_k$ increases.
\end{remark}}

\vspace{-0.3cm}
\section{Primal-Dual algorithm using Adam optimizer for Optimal Transport problem}\label{append: PD Adam OT}
In this section, we briefly describe the PD-Adam algorithm tested in Section \ref{sec: MA}. Recall the loss functional $\mathcal L(T, \varphi)$ defined in \eqref{MA loss functional L }, we parametrize both the map $T$ and the dual function $\varphi$ by neural networks $T_\theta, \varphi_\eta$. We aim at solving the following saddle point problem
\vspace{-0.2cm}
\begin{align}
  \max_\eta \min_\theta ~ \mathcal{L}(T_\theta,\varphi_\eta) := &  \int_{\mathbb{R}^d} \frac12  \|x - T(x)\|^2~\rho_0\,\dd x + \int_{\mathbb{R}^d} \varphi(T(x))~\rho_0\,\dd x - \int_{\mathbb{R}^d} \varphi(y)~\rho_1\,\dd y  \nonumber  \\ 
  \approx & \frac{1}{N} \sum_{i=1}^N \frac{1}{2}\|{\boldsymbol{X}_i} - T_\theta({\boldsymbol{X}_i}) \|^2 - \varphi_\eta(T_\theta( {\boldsymbol{X}_i})) + \varphi_\eta( {\boldsymbol{Y}_i}),
  \label{OT loss function PD-adam}
\end{align}
\vspace{-0.04cm}
where $N$ is the size of the datasets, $\{\boldsymbol{X}_i\}_{i=1}^N, \{\boldsymbol{Y}_i\}_{i=1}^N$ are samples drawn by $\rho_0$ and $\rho_1$. The PD-Adam algorithm is summarized in Algorithm \ref{alg: PD-adam}. 

\begin{algorithm}[!htb]
\caption{Computing optimal Monge map from $\rho_a$ to $\rho_b$}
\begin{algorithmic}
\State \textbf{Input}:
Marginal distributions $\rho_0$ and $\rho_1$, learning rate $lr_u, lr_{\varphi}$ of the Adam algorithm; Batch size $N$, total iteration number $N_{iter}$.
\State Initialize ${T}_{\theta}, {\varphi}_{\eta}$.
\For{ $iter=1$ to $N_{iter}$ }
\State Sample $\{\boldsymbol{X}_i\}_{i=1}^N$ $\sim \rho_a$. Sample $\{\boldsymbol{Y}_i\}_{i=1}^N$ $\sim \rho_b$.
\State Update ${\theta}$ to {decrease} \eqref{OT loss function PD-adam} by Adam algorithm with learning rate $lr_u$ for $K_1$ steps.
\State Update ${\eta}$ to {increase} \eqref{OT loss function PD-adam} by Adam algorithm with learning rate $lr_\varphi$ for $K_2$ steps.
\EndFor
\State \textbf{Output}: The transport map $T_\theta$.
\end{algorithmic}
\label{alg: PD-adam}
\end{algorithm}

In all tests, we always set $K_1=K_2=1$. We summarize all the other hyperparameters of the PD-Adam algorithm in Section \ref{sec: MA} in Table \ref{tab: PD-adam}. 
\begin{table}[htb!]
    \centering
    \begin{tabular}{|c|c|c|c|c|c|}
    \hline
         \multicolumn{2}{|c|}{} & $lr_u, lr_\varphi $ & $N_{iter}$ & $N$ &  NN architecture  \\  \hline \hline 
       \multicolumn{2}{|c|}{ \ref{sec: MA1D} (1D)} & $0.5\cdot 10^{-3}, 0.5\cdot 10^{-3}$ & $40000$ & $800$ & $\texttt{MLP}_{\textrm{PReLU}}(1, 50, 1, 3)$ \\  \hline
       \multicolumn{2}{|c|}{ \ref{sec: MA5D} (5D)} & $0.5\cdot 10^{-4}, 0.5\cdot 10^{-4} $ &  $ 200000 $  & $2000$ &  $\texttt{MLP}_{\textrm{PReLU}}(5, 80, 5, 4)$  \\  \hline
       \multirow{2}{*}{ \ref{sec: MA10D50D} } & {(10D)} & $0.5\cdot 10^{-4}, 0.5\cdot 10^{-4}$ & $100000$ & $2000$ & $\texttt{MLP}_{\textrm{PReLU}}(10, 120, 10, 6)$ \\    \cline{2-6}
        & {(50D)} & $10^{-5}, 10^{-5}$ & $300000$ & $2000$ & $\texttt{MLP}_{\textrm{PReLU}}(50, 120, 50, 6)$ \\    \hline
    \end{tabular}
    \caption{Some hyperparameters used in the PD-Adam algorithm tested in Section \ref{sec: MA}.}
    \label{tab: PD-adam}
\end{table}

\vspace{-0.2cm}

\section{Further numerical results regarding Section \ref{sec: MA10D50D}}\label{append: MA}
For the OT problem from $\rho_0$ to $\rho_b$, 
we provide the intermediate results obtained by the NPDG algorithms as well as the PD-Adam algorithms in Figure \ref{append fig: MA50D nonequal gaussians heat maps}. PD-Adam behaves unstably in this example, while NPDG method performs robustly for both preconditions.

\vspace{0.2cm}

\begin{figure}[ht!]
     \centering
     \begin{subfigure}{0.2\textwidth}
         \captionsetup{justification=centering}
         \centering
         \includegraphics[width=\textwidth, valign=t]{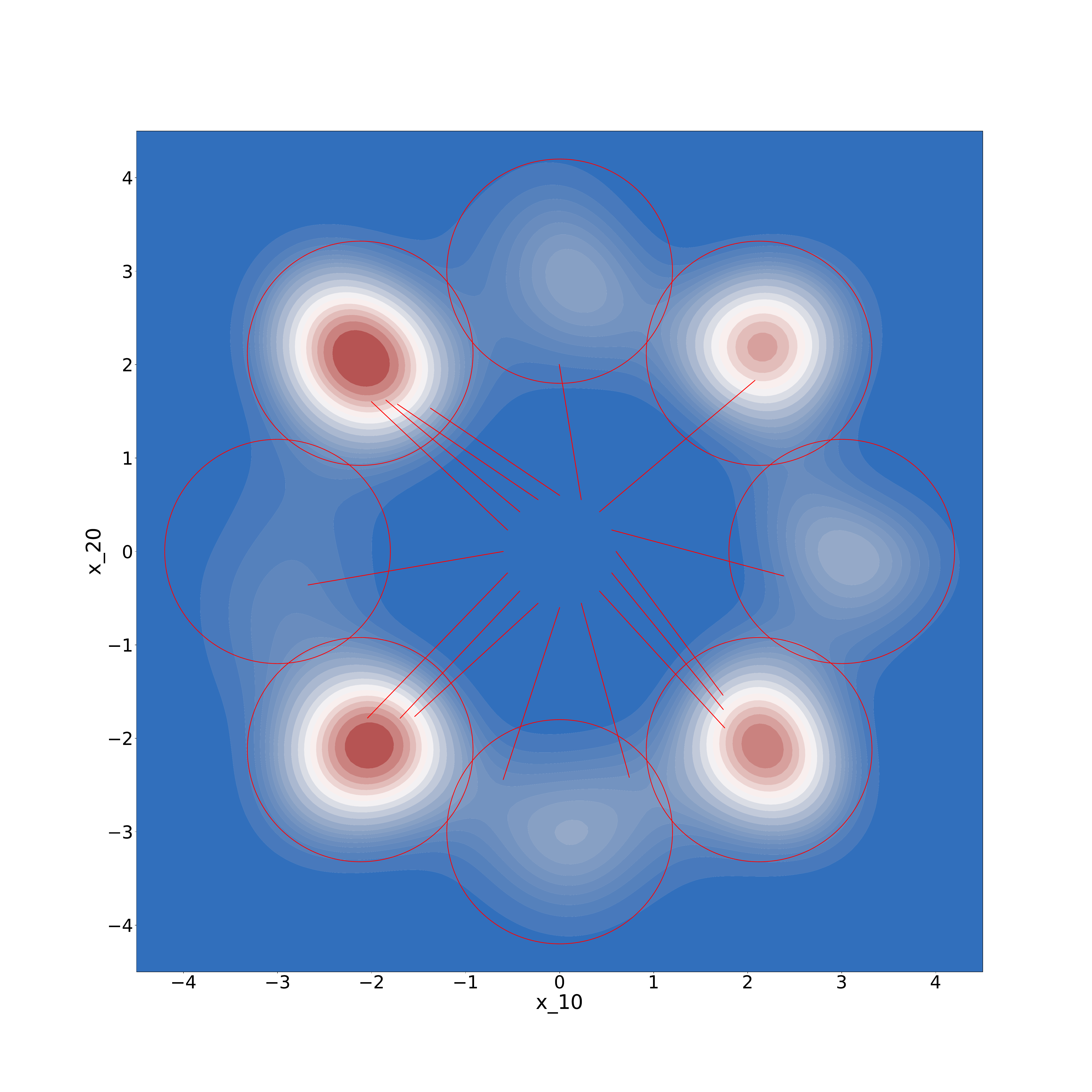}
     \end{subfigure}
     \hfill
     \begin{subfigure}{0.2\textwidth}
     \captionsetup{justification=centering}
         \centering
         \includegraphics[width=\textwidth, valign=t]{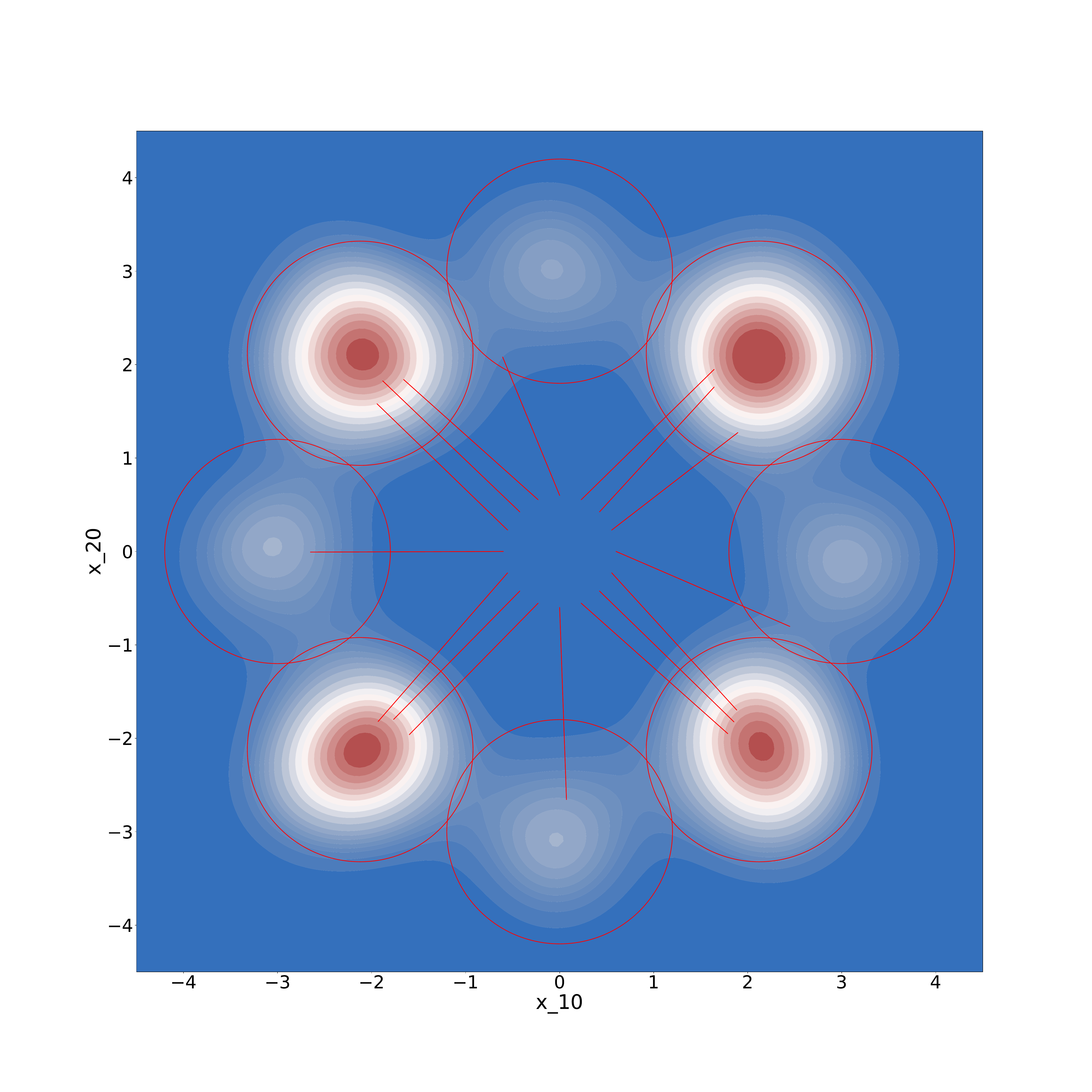}
     \end{subfigure}
     \hfill
     \begin{subfigure}{0.2\textwidth}
     \captionsetup{justification=centering}
         \centering
         \includegraphics[width=\textwidth, valign=t]{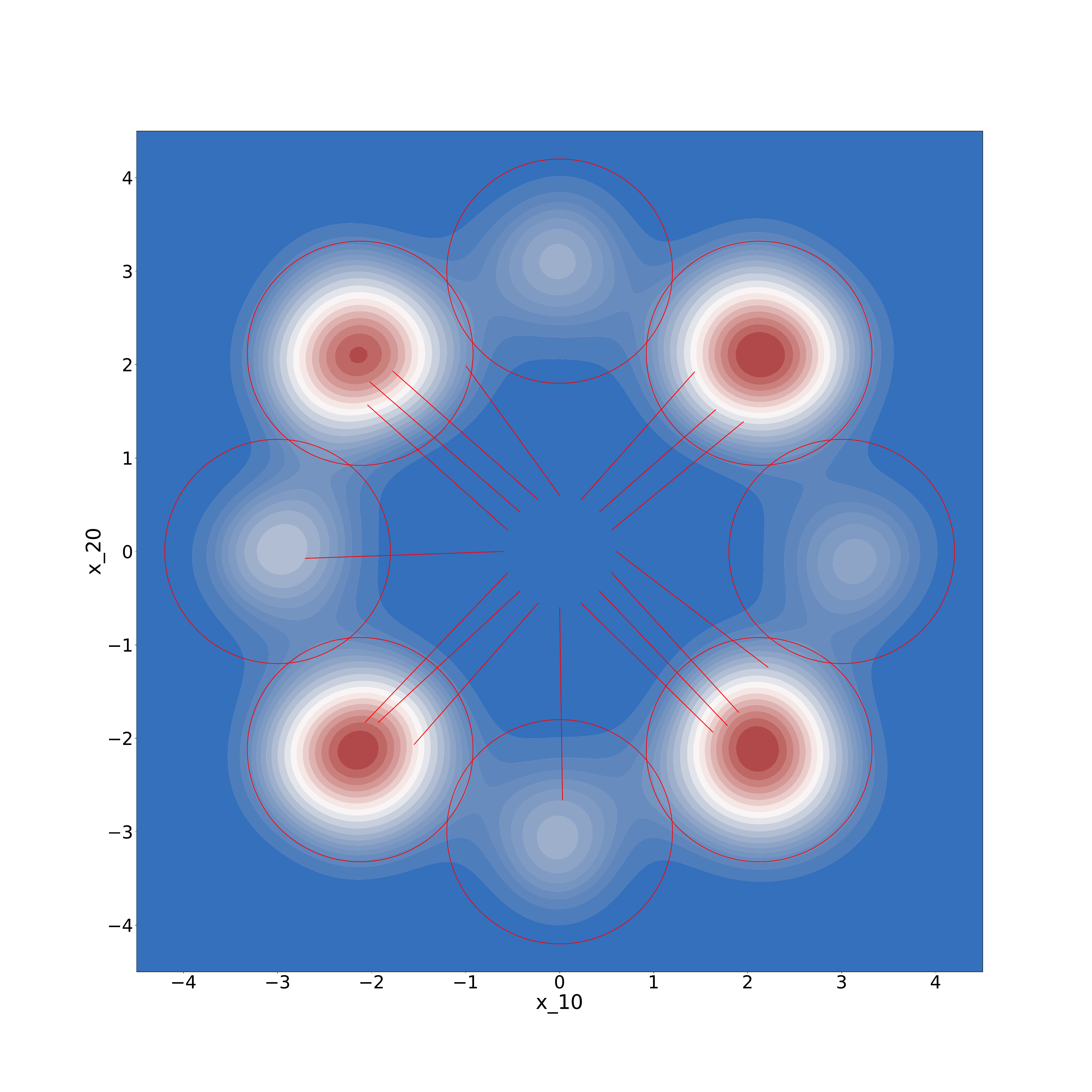}
     \end{subfigure}
     \hfill
     \begin{subfigure}{0.2\textwidth}
     \captionsetup{justification=centering}
         \centering
         \includegraphics[width=\textwidth, valign=t]{MongeAmpere/high_dim_gaussian_to_mixed_gaussian/50D_nonequal_gaussians/50D_NPDG_precond_id_nonequal_gaussians_20000.pdf}
     \end{subfigure}
     \centering
     \begin{subfigure}{0.2\textwidth}
     \captionsetup{justification=centering}
         \centering
         \includegraphics[width=\textwidth, valign=t]{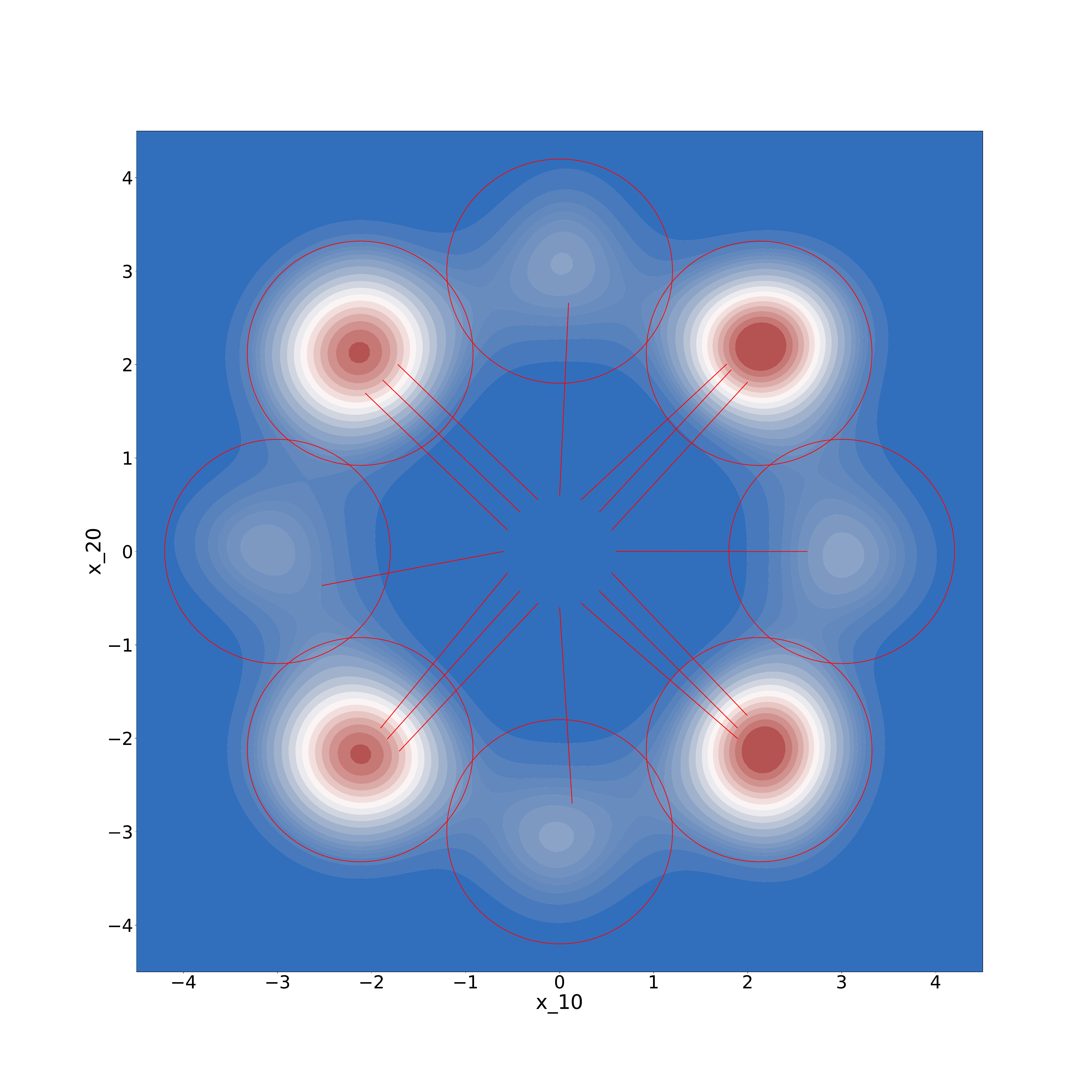}
         \caption{NPDG iter \\ $5000$}
         \label{subfig: MA50D nonequal gaussian NPDG grad 5000}
     \end{subfigure}
     \hfill
     \begin{subfigure}{0.2\textwidth}
     \captionsetup{justification=centering}
         \centering
         \includegraphics[width=\textwidth, valign=t]{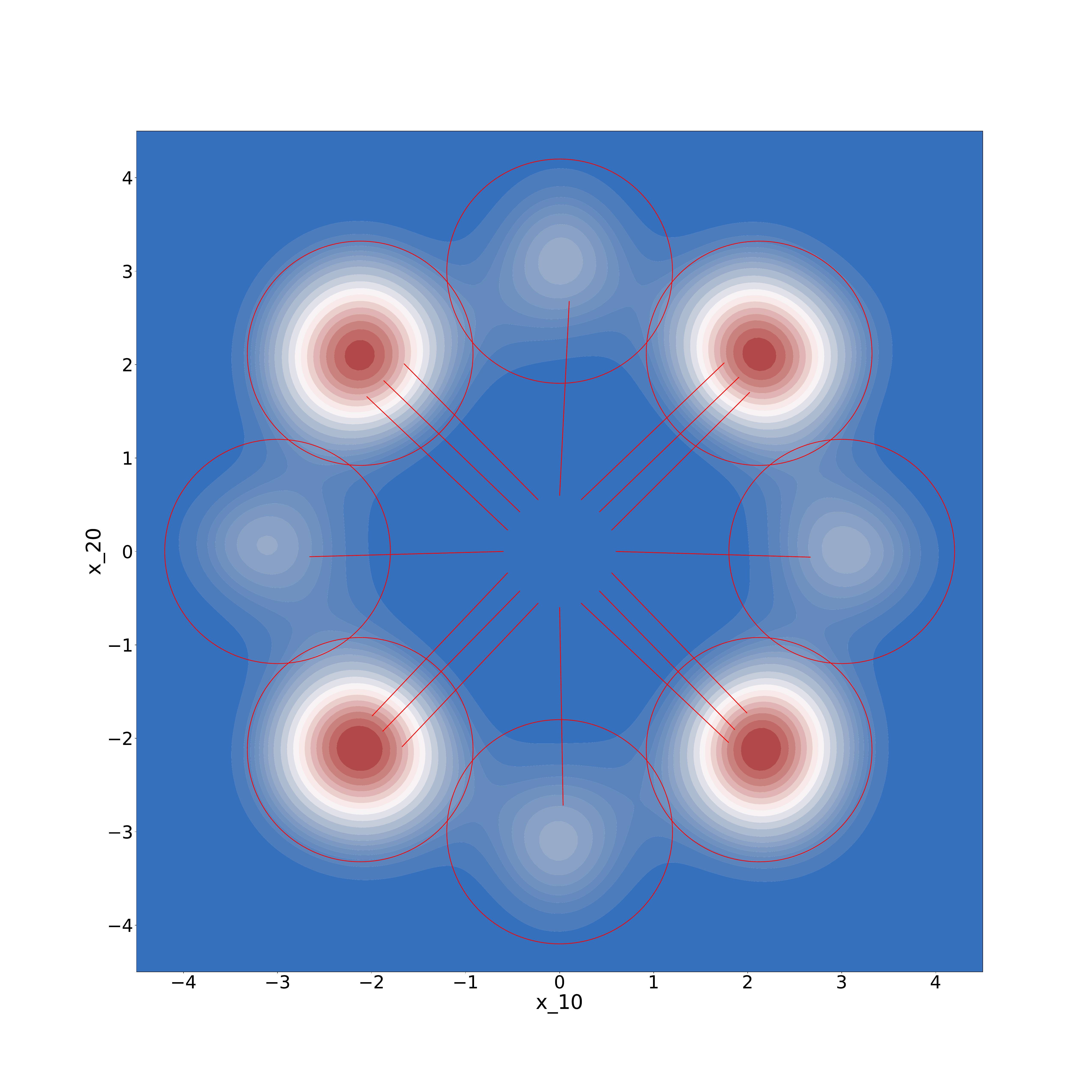}
         \caption{NPDG iter $10000$}
         \label{subfig: MA50D nonequal gaussian NPDG grad 10000}
     \end{subfigure}
     \hfill
     \begin{subfigure}{0.2\textwidth}
     \captionsetup{justification=centering}
         \centering
         \includegraphics[width=\textwidth, valign=t]{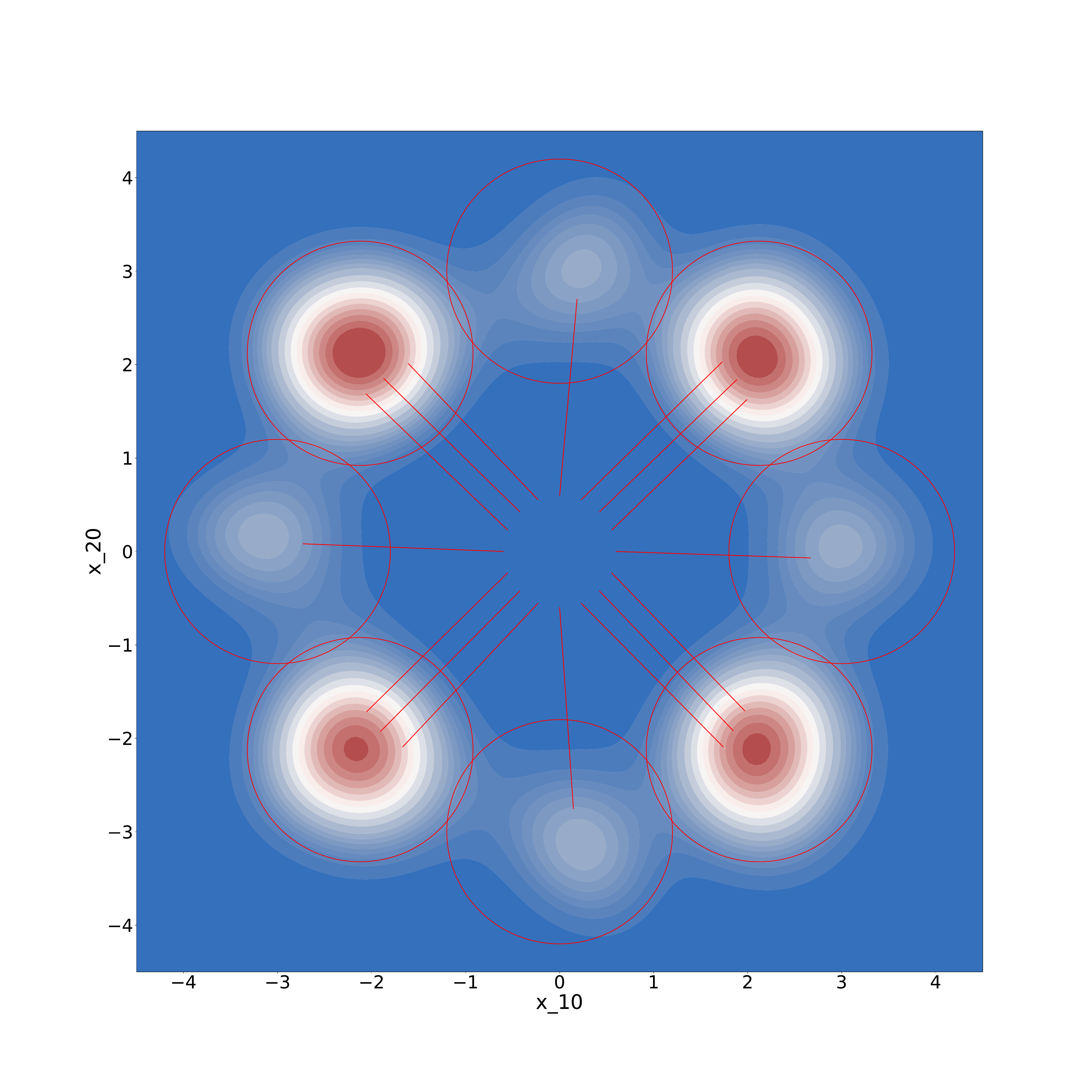}
         \caption{NPDG iter $15000$}
         \label{subfig: MA50D nonequal gaussian NPDG grad 15000}
     \end{subfigure}
     \hfill
     \begin{subfigure}{0.2\textwidth}
     \captionsetup{justification=centering}
         \centering
         \includegraphics[width=\textwidth, valign=t]{MongeAmpere/high_dim_gaussian_to_mixed_gaussian/50D_nonequal_gaussians/50D_NPDG_precond_grad_nonequal_gaussians_20000.pdf}
         \caption{NPDG iter $20000$}
         \label{subfig: MA50D nonequal gaussian NPDG grad 20000}
     \end{subfigure}
     \centering     
     \begin{subfigure}{0.2\textwidth}
     \captionsetup{justification=centering}
         \centering
         \includegraphics[width=\textwidth, valign=t]{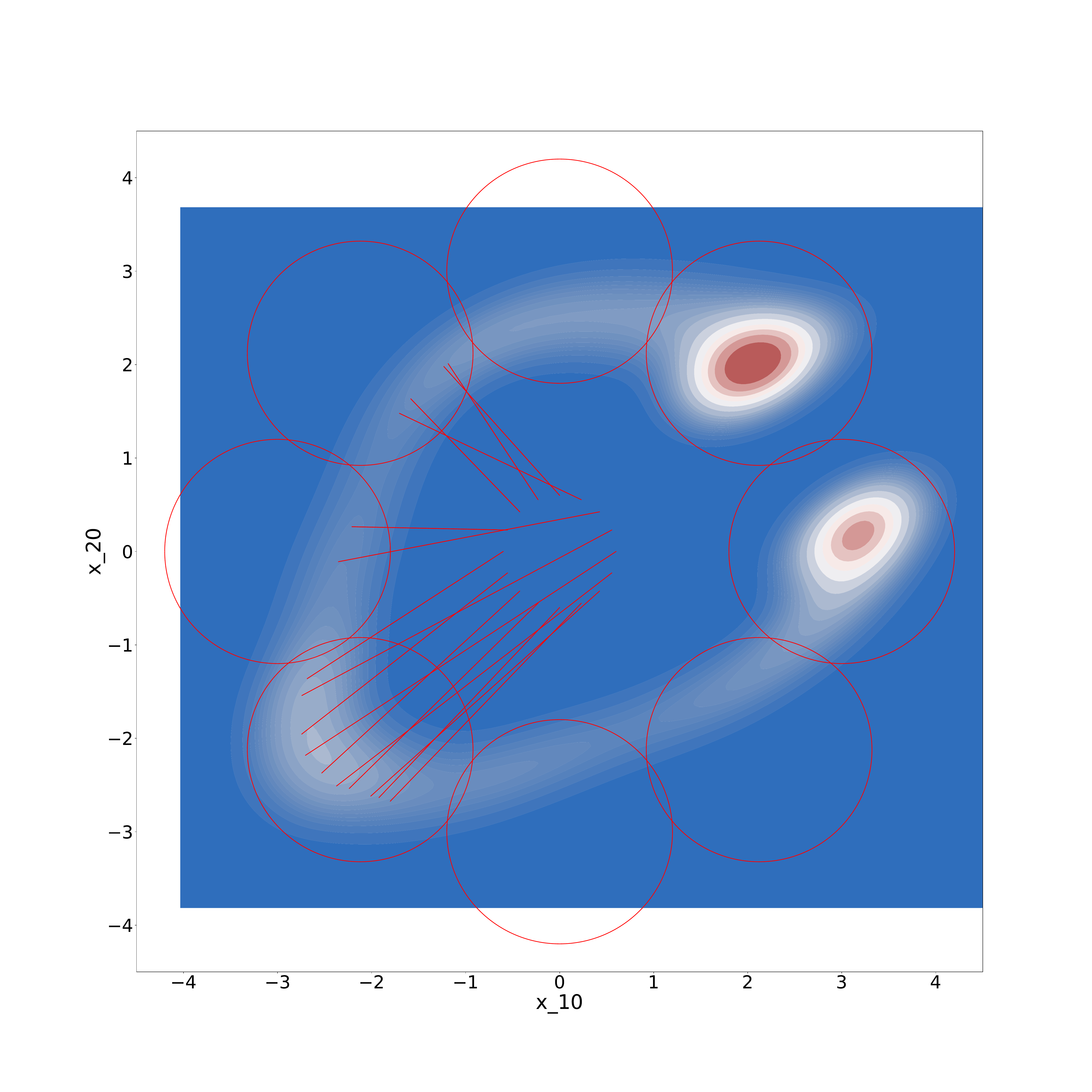}
         \caption{PD-Adam iter $150000$}
         \label{subfig: MA50D nonequal gaussian PD adam 150000}
     \end{subfigure}
     \hfill
     \begin{subfigure}{0.2\textwidth}
     \captionsetup{justification=centering}
         \centering
         \includegraphics[width=\textwidth, valign=t]{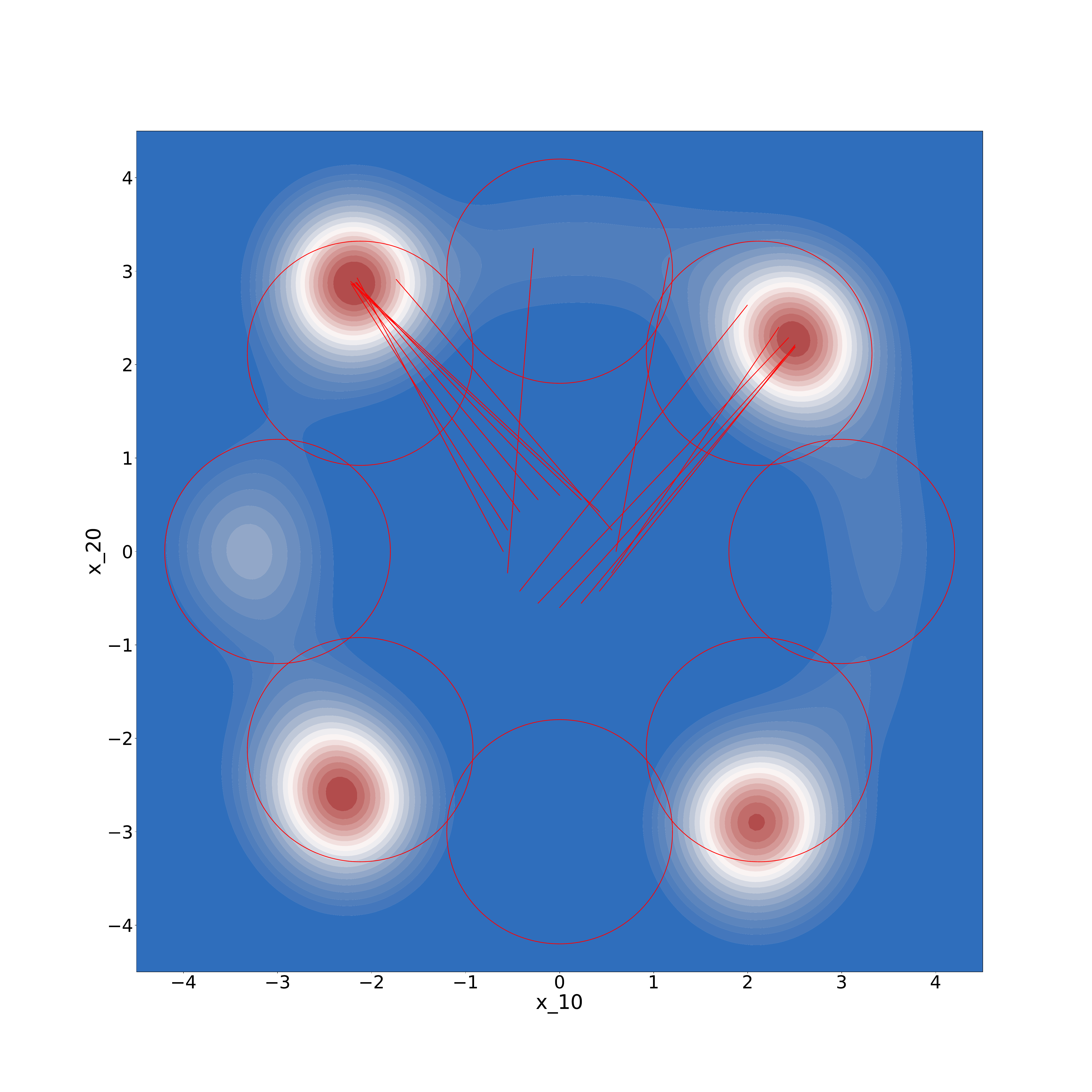}
         \caption{PD-Adam iter $200000$}\label{subfig: MA50D nonequal gaussian PD adam 200000}
     \end{subfigure}
     \hfill
     \begin{subfigure}{0.2\textwidth}
     \captionsetup{justification=centering}
         \centering
         \includegraphics[width=\textwidth, valign=t]{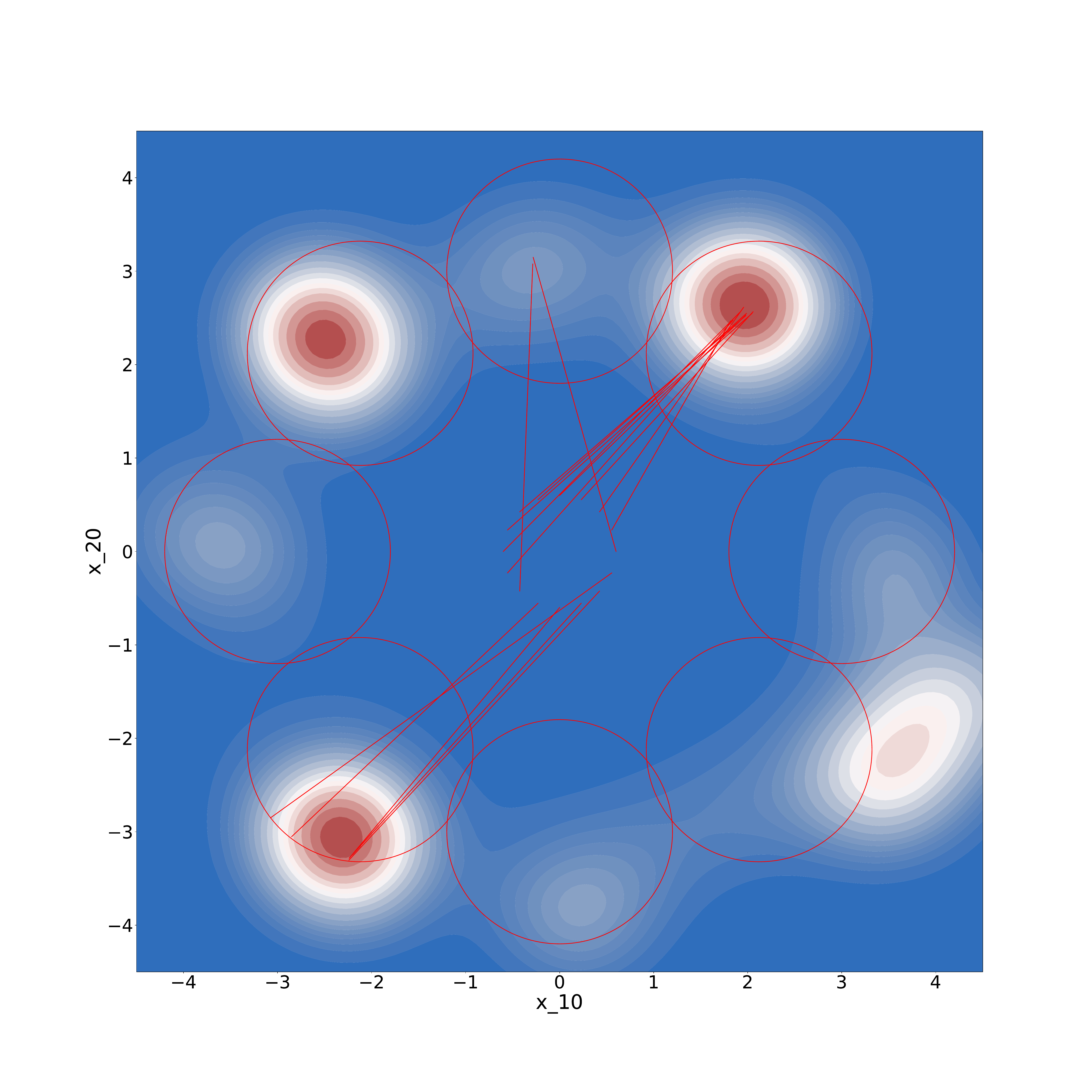}
         \caption{PD-Adam iter $250000$}\label{subfig: MA50D nonequal gaussian PD adam 250000}
     \end{subfigure}
     \hfill
     \begin{subfigure}{0.2\textwidth}
     \captionsetup{justification=centering}
         \centering
         \includegraphics[width=\textwidth, valign=t]{MongeAmpere/high_dim_gaussian_to_mixed_gaussian/50D_nonequal_gaussians/50D_PD_adam_nonequal_gaussians_300000.pdf}
         \caption{PD-Adam iter $300000$}\label{subfig: MA50D nonequal gaussian PD adam 300000}
     \end{subfigure}
     \caption{OT problem from $\rho_0$ to $\rho_b$: Plots of the pushforwarded densities $T_{\theta \sharp}\rho_0$ of the computed $T_\theta$ obtained by NPDG method (1st row (\eqref{pull Id as precond Monge problem} as preconditioning) \& 2nd row (\eqref{canonical precond for Monge problem} as preconditioning)) and PD-Adam method (3rd row). All figures are plotted on the $10-20$ coordinate plane.} \label{append fig: MA50D nonequal gaussians heat maps}
\end{figure}

\vskip 0.2in
\bibliography{references}

\end{document}